\def\Vcal{\mathcal{V}}
\def\Acal{\mathcal{A}}
\def\Dcal{\mathcal{D}}
\def\Ecal{\mathcal{E}}
\def\Scal{\mathcal{S}}
\def\Ical{\mathcal{I}}
\def\Ucal{\mathcal{U}}
\def\Vcal{\mathcal{V}}
\def\Hcal{\mathcal{H}}
\def\Wcal{\mathcal{W}}
\def\Pb{ \mathbb{P} }
\def\Eb{ \mathbb{E} }
\def\Rb{ \mathbb{R} }
\def\balpha{\boldsymbol{\alpha}}
\def\1{\mathbb{I}}
\def\ocbau{\mathcal{OCBA}^\mathcal{U}}
\newtheorem{lemma}{LEMMA}
\newtheorem{theorem}{THEOREM}
\newtheorem{proposition}{PROPOSITION}
\newtheorem{example}{EXAMPLE}
\title{Optimal Simulation Budget Allocation Under Unknown Sampling Variance}
\author[a]{Jianzhong Du}
\author[b]{Ilya O. Ryzhov}
\author[c]{Siyang Gao}
\affil[a]{School of Management, University of Science and Technology of China, Anhui, China, dujianzhong@ustc.edu.cn}
\affil[b]{Robert H. Smith School of Business, College Park, MD, USA, iryzhov@umd.edu}
\affil[c]{Department of Systems Engineering and Department of Data Science, City University of Hong Kong, Hong Kong, China, siyangao@cityu.edu.hk \footnote{Corresponding author: Siyang Gao}}
\date{\vspace{-8ex}}
\begin{document}
%%%%%%%%%%%%%%%%

\maketitle

\begin{abstract}
The ranking and selection problem is a popular framework in the simulation literature for studying optimal information collection. We study a version of this problem in which the simulation output for each design is normally distributed with both its mean and variance being unknown. Using a Bayesian representation of the probability of correct selection, which allows us to explicitly model uncertainty about the variance, we provide a theoretical characterization of the optimal allocation of the simulation budget. Prior work on optimal budget allocation was unable to distinguish between known and unknown sampling variance. We show the impact of this type of uncertainty on the allocation, and design new sequential procedures that can be guaranteed to learn the optimal allocation asymptotically without the need for tuning or forced exploration.
% Enter your abstract

\emph{Key words:} ranking and selection, unknown variance, probability of correct selection, large deviations
\end{abstract}

% Sample

% Fill in data. If unknown, outcomment the field

%%%%%%%%%%%%%%%%%%%%%%%%%%%%%%%%%%%%%%%%%%%%%%%%%%%%%%%%%%%%%%%%%%%%%%

\section{Introduction}

In the ranking and selection (R\&S) problem \citep{HoNe09,hong2021review}, the goal is to select the best among a finite set of ``designs'' or ``alternatives''. The value of each design is unknown, but can be estimated from expensive simulation experiments. The goal is to divide the budget between designs in a way that maximizes our chances of identifying the best after all experiments have concluded. R\&S has applications in, e.g., plant biology \citep{HuMc16}, medical decision-making \citep{du2024contextual}, and materials science \citep{kerfonta2024sequential}, but it is also popular in the simulation community as a canonical framework for the fundamental study of information collection. Any problem in which information is acquired exhibits the ``exploration/exploitation'' tradeoff, in which a decision-maker must choose between a decision that appears to be good, and one whose outcome is highly uncertain but may be better than expected. R\&S has a clean mathematical formulation in which this core tradeoff is fully present.

There are many ways to categorize the numerous approaches that have been developed for R\&S. For our purposes, two distinctions are particularly important:
\begin{itemize}
	\item \textit{Fixed budget vs. fixed precision.} In fixed-precision R\&S, one continues to run experiments until some termination criterion is satisfied. For example, indifference-zone procedures \citep{kim2001fully} sequentially screen out designs whose estimated values are sufficiently poor, and continue until only one design remains. On the other hand, in fixed-budget R\&S, the total simulation budget is known ahead of time and the goal is to use the available samples efficiently. For example, expected improvement, or EI \citep{jones1998efficient}, aims to maximize the efficiency of each individual experiment by optimizing a myopic criterion.
	\item \textit{Frequentist vs. Bayesian statistics.} A critical question in R\&S is how to model our uncertainty about the values of the designs. Frequentist methods \citep{glynn2004} use point estimates, simply averaging the results of all experiments conducted on a particular design. Bayesian methods \citep{Ch06} model the unknown value of each design as a random variable whose distribution allows us to make probabilistic forecasts of how far the true value might be from the current estimate.
\end{itemize}
Any combination of these categories is possible. Indifference-zone methods \citep{kim2001fully,kim2006asymptotic} are frequentist and fixed-precision, as are more recent screening techniques by \cite{fan2016indifference}, \cite{ma2017efficient} and \cite{wang2024bonferroni}; all of these papers use dynamic stopping rules. The literature on best-arm identification is almost exclusively fixed-precision, but includes both frequentist \citep{BuMuSt11,garivier2016optimal} and Bayesian \citep{russo2020simple,QiYo25} approaches. In the simulation literature, the early work by \cite{chick2001a,chick2001b} also used Bayesian models together with dynamic termination. On the other hand, the optimal computing budget allocation (OCBA) literature \citep{chen2000,chen2008,gao2017a} is predominantly frequentist and fixed-budget, as is the closely related work by, e.g., \cite{pasupathy2015}, \cite{gao2017}, and \cite{kim2024rate} on optimal sampling laws based on large deviations theory. Lastly, the family of methods based on some variant of EI \citep{chick2010,SaNeSt14}, including the related knowledge gradient method \citep{frazier2008}, is fixed-budget and Bayesian.

This paper takes a \textit{fixed-budget} and \textit{Bayesian} view of an R\&S problem in which the simulation output is independent across designs and normally distributed, with both the means and variances of the normal distributions being unknown. The assumption of normality is classical for R\&S, and appears in the vast majority of the simulation literature. Entire methodologies, including indifference-zone, OCBA, and EI, are built around normality; while some authors have examined more general distributions (including, e.g., \citealp{russo2020simple}, \citealp{chen2023balancing}, and \citealp{bandyopadhyay2024optimal}), others have argued \citep{ShBrZe18} that normality offers a good approximation in general settings. In any case, the computational tractability afforded by normality continues to attract attention even in very recent work, such as \cite{jourdan2023dealing} and \cite{QiYo25}.

We adopt the Bayesian view because it explicitly incorporates uncertainty about the sampling variance into the statistical model. Frequentist methods handle the known-variance and unknown-variance cases in a similar way, simply plugging in a point estimate in the latter case. In the Bayesian setting, uncertain variance changes the distribution of the unknown mean. For example, EI methods that measure the potential of a design by integrating a certain value function over its posterior distribution will now integrate over the joint density of the mean and the variance, producing a completely different sampling criterion \citep{chick2010,HaRyDe16}. Thus, Bayesian approaches are particularly attractive when unknown variance is a significant concern.

Fixed-budget procedures are appealing because they tend to be less conservative: empirically, they are often able to find the best design in a smaller number of samples.\footnote[1]{Conservativeness is a well-known issue in fixed-precision procedures; see, e.g., \cite{fan2025first}.} Moreover, while they cannot guarantee a certain level of precision at the time of termination, one can derive strong asymptotic guarantees using a technique pioneered by \cite{glynn2004}. This method uses large deviations theory to characterize an allocation of the budget that is optimal, in the sense that the probability of correctly identifying the best design converges to one at the fastest possible rate. This optimal allocation cannot be implemented directly, because it depends on the unknown problem parameters, but one can estimate it and adjust sampling decisions over time to recover it asymptotically. It has been observed that both OCBA \citep{chen2011} and EI \citep{ryzhov2016convergence} are essentially approximating this optimal allocation, giving rise to a stream of literature that sought to learn it exactly, whether through another variant of EI \citep{chen2019complete}, a hypersphere approximation of the probability of correct selection (PCS) \citep{peng2018ranking,zhang2023asymptotically}, or based on the optimality conditions directly \citep{gao2017,chen2023balancing,kim2024selection}.

The large deviations approach is frequentist in nature, and cannot distinguish between known and unknown variance. \cite{russo2020simple} derived similar convergence rates in a Bayesian-inspired setting, but the technique works with only one unknown parameter. A follow-up work by \cite{qin2017improving} likewise assumed known variance. To our knowledge, \cite{jourdan2023dealing} is the only prior work in this stream to directly engage with unknown variance (in a fixed-precision context), but it implicitly assumes certain differentiability properties that (as we show in this paper) may not actually hold. Thus, the problem of optimal sampling allocations under unknown variance has remained open, particularly under fixed budget.

The present paper solves this problem.\footnote[2]{A brief summary version of this work will appear in the {\it Proceedings of the 2025 Winter Simulation Conference} \citep{du2025wsc}. The proceedings paper omits all proofs and does not discuss how to learn the optimal allocation sequentially. The complete treatment is given only in the present paper.} First, we derive the convergence rate of the PCS (or, more precisely, of its complement) for a fixed sampling allocation. As in the large deviations approach, the rate turns out to be exponential, but the large deviations technique itself cannot be used to show this, because we work with a Bayesian representation of the probability: an integral over the joint posterior density of the unknown means and variances. This density does not meet the standard conditions that are typically used to derive large deviations laws, and the rate exponent that we derive behaves very differently from the much simpler setting of known variance. The exponent can be stated as the optimal value of a certain optimization problem. In the known-variance setting, this problem is convex and solvable in closed form. In our setting, it is \textit{not} convex, and its optimal solution may be discontinuous in the allocation parameters. It is this discontinuity that was not taken into account by \cite{jourdan2023dealing}.

Second, we formulate and solve the optimal allocation problem. The formulation is similar to what is done in the large deviations approach: having derived the rate exponent, we choose an allocation that maximizes it, thus speeding up the convergence rate. In the known-variance setting, the optimality conditions can be derived using standard convex programming techniques (they are simply the KKT conditions of this maximization problem). In our setting, the discontinuity present in the rate exponent renders this approach unusable. Nonetheless, we develop a different analytical technique to show that the optimal allocation still exists and is unique, and we derive a set of conditions that characterize it. In instances where the discontinuity is not present, these conditions simplify to an analog of those seen in large deviations-based methods, but we provide a general form that has a solution in all instances.

Third, we propose a computationally efficient sequential procedure that is guaranteed to learn the optimal allocation (whether or not the discontinuity is present) asymptotically. Structurally, the algorithm uses the balancing principle of \cite{chen2023balancing}, but the analysis is substantially different because the optimality conditions are more complex and one cannot take the differentiability of certain objects for granted. The algorithm does not require any tuning (unlike the top-two method of \citealp{russo2020simple}) or forced exploration (which remains present in many recent papers, such as \citealp{bandyopadhyay2024optimal}). We also conduct numerical comparisons to demonstrate the benefits of explicitly incorporating unknown variance into the sampling allocation, and to provide intuition for why our optimality conditions improve efficiency in that setting.

Fourth, as a byproduct of our analysis, we obtain new insights into some prior results on R\&S with unknown variance. \cite{ryzhov2016convergence} observed that, under known variance, the original, unmodified EI method of \cite{jones1998efficient} behaves like OCBA, producing an approximation of the optimal allocation derived by \cite{glynn2004}. That paper conjectured that similar approximation behavior may be taking place under unknown variance. Since our paper is the first to derive the optimal allocation for the unknown-variance setting, we are able to verify this conjecture: indeed, EI makes an OCBA-like approximation of the optimal solution. Our results also answer an open question raised in \cite{jourdan2023dealing}. As mentioned, this paper adopted a fixed-precision view, but the authors speculated whether their results would be applicable to the fixed-budget setting. We show that the rate exponent that we use as our objective function is equivalent to the objective derived in their work, and therefore (modulo our correction of the discontinuity issue) the same allocation is optimal for both frameworks. This observation provides a bridge between fixed-budget and fixed-precision models.

Finally, while our paper focuses on one particular R\&S model, we believe that our analysis has broader theoretical interest, in that it starkly illustrates the difficulties that can arise when we are learning multiple unknown parameters per design rather than just one. When we state the rate exponent as the optimal value of an optimization problem, the decision variable of that problem influences the objective through both unknown parameters simultaneously. This simultaneous dependence is what causes the problem to become non-convex. It is likely that any future research involving multi-parameter uncertainty will also encounter this issue.

\section{Preliminaries}\label{sec:pre}

Section \ref{sec:formulation} defines relevant notation and formulates the Bayesian optimal computing budget allocation problem. Section \ref{sec:comparison} relates this problem to the more widely studied frequentist variant. For ease of reference, we provide a summary of the main notation used in this paper in Table \ref{tabadd:1}.

\begin{table}[htbp]
	\centering
	\small
	\renewcommand{\arraystretch}{1.05}
	\setlength{\tabcolsep}{4pt}
	\caption{Main notation used in the paper.}
	\begin{tabular}{>{\raggedright\arraybackslash}p{2.9cm}
			>{\raggedright\arraybackslash}p{5.7cm}
			>{\raggedright\arraybackslash}p{6.7cm}}
		\toprule
		Symbol & Meaning & Notes/Where used \\
		\midrule
		$k$ & Number of designs (alternatives) & Fixed positive integer \\
		$i, j$ & Design indices & $i,j \in \{1,\dots,k\}$ \\
		$i^*$ & True best design & $i^* = \arg\max_i \mu_i$, unique \\
		$n$ & Total sampling budget & $n=\sum_i N_i$ \\
		$N_i$ & Samples allocated to design $i$ & Decision variables (Section \ref{sec:formulation}) \\
		$\alpha_i$ & Sampling proportion & $\alpha_i=N_i/n$, $\sum_i\alpha_i=1$ \\
		$\boldsymbol{\alpha}$ & Vector of sampling proportions & $(\alpha_1,\dots,\alpha_k)$ \\
		$X_{i,l}$ & $l$th observation of design $i$ & $X_{i,l}\sim \mathcal{N}(\mu_i,\sigma_i^2)$, independent across $i$\\
		$\bar X_i^n$ & Sample average of design $i$ & $\bar X_i^n=\frac{1}{N_i}\sum_{l=1}^{N_i}X_{i,l}$ \\
		$(\mu_i,\sigma_i^2)$ & True mean/variance & Unknown constants \\
		$\boldsymbol{\mu}, \boldsymbol{\sigma}^2$ & Vectors of means/variances & $\boldsymbol{\mu}\in\mathbb{R}^k$, $\boldsymbol{\sigma}^2\in\mathbb{R}_+^k$ \\
		$\phi_i,\psi_i$ & Generic mean/variance variables & In posteriors/integrals \\
		$\boldsymbol{\phi},\boldsymbol{\psi}$ & Vectors of $\phi_i,\psi_i$ & Integration variables \\
		$\pi^0(\boldsymbol{\phi},\boldsymbol{\psi})$ & Prior density & General prior (Section \ref{sec:prior}) \\
		$\pi^n(\boldsymbol{\phi},\boldsymbol{\psi})$ & Posterior after $n$ samples & Bayesian update and likelihood \\
		$L^n(\phi_i,\psi_i)$ & Likelihood for design $i$ & $L^n(\phi_i,\psi_i)=\prod_{l=1}^{N_i} f(X_{i,l}\mid \phi_i,\psi_i)$ \\
		$f(\cdot\mid \phi_i,\psi_i)$ & $\mathcal{N}(\phi_i,\psi_i)$ density & \\
		$\hat{\mu}_i^n,(\hat{\sigma}_i^n)^2$ & Posterior means of $\mu_i,\sigma_i^2$ & Section \ref{sec:formulation} \\
		$i^{*,n}$ & Posterior best design & $i^{*,n}=\arg\max_i \hat{\mu}_i^n$ \\
		$PCS_B^n$ & Bayesian prob. of correct selection & Eq. \eqref{eq:pcs_bayes} \\
		$PCS_F^n$ & Frequentist prob. of correct selection & Eq. \eqref{ineq:pcs_freq} \\
		$PFS_B^n,\ PFS_F^n$ & Bayes/freq. prob. of false selection & $PFS_B^n=1-PCS_B^n$, $PFS_F^n=1-PCS_F^n$ \\
		$\mathcal{G}(\boldsymbol{\alpha})$ & LD-rate exponent for $PCS_F^n$ & Eqs. \eqref{glynn}, \eqref{eq:freqrate} \\
		$\Vcal_i(\alpha_i,\alpha_{i^*})$ & Rate (unknown var., Bayesian PCS) & Eq. \eqref{eq:vdef} \\
		$g_i(\phi_i,r)$ & Auxiliary objective & Eq. \eqref{eq:taudef} \\
		$\Wcal_i(r)$ & Rescaled objective & Eq. \eqref{eq:taudef} \\
		$r$ & Proportion ratio & $r=\alpha_i/\alpha_{i^*}$ \\
		$\phi_i^{\min}(r),\phi_i^{\max}(r)$ & Smallest/largest minimizers of $\Wcal_i$ & Lemma \ref{lem:phimono0} \\
		$\Ucal_i^{\min}(r)$ & Log-term at $\phi_i^{\min}(r)$ & $\log(1+\frac{(\mu_i-\phi_i^{\min}(r))^2}{\sigma_i^2})$; Eq. \eqref{eq:ucaldef} \\
		$\Ucal_i^{*,\min}(r)$ & Best-design log-term at $\phi_i^{\min}(r)$ & $\log(1+\frac{(\mu_{i^*}-\phi_i^{\min}(r))^2}{\sigma_{i^*}^2})$; Eq. \eqref{eq:ucalmaxdef} \\
		$\Ucal_i^{\max}(r),\Ucal_i^{*,\max}(r)$ & Analogous logs at $\phi_i^{\max}(r)$ & Defined analogously to $\Ucal_i^{\min},\Ucal_i^{*,\min}$\\
		$\Xi_i$ & Pairwise ``error" set & $\Xi_i=\{(\boldsymbol{\phi},\boldsymbol{\psi}): \phi_{i^*}\le \phi_i\}$ \\
		$H_w$ & Prior mass lower-bounded region & Eq. \eqref{ineq:prior_reg} \\
		%$\Delta$ & Small positive constant & In proofs (Appendix) \\
		$\tilde S_i^n$ & MLE standard deviation & Technical lemmas \\
		$\hat{\Vcal}_i^m$ & Plug-in estimate of $\Vcal_i$ at time $m$ & Eq. \eqref{eq:vcalm} \\
		$\hat{\Ucal}_i^{m},\,\hat{\Ucal}_i^{*,m}$ & Plug-in estimates of $\Ucal_i^{\min}(r),\Ucal_i^{*,\min}(r)$ & Section \ref{sec:algorithms} \\
		$\hat{\alpha}_i^m$ & Estimated proportion at time $m$ & $\hat{\alpha}_i^m=N_i^m/m$ \\
		$i^{*,m}$ & Estimated best design at time $m$ & $i^{*,m}=\arg\max_i \hat{\mu}_i^m$ \\
		$\hat{\Wcal}_i^m$ & Estimate of $\Wcal_i$ & $\hat{\Wcal}_i^m=\frac{2}{\hat{\alpha}_{i^{*,m}}^m}\hat{\Vcal}_i^m$ \\
		$\ocbau$ & Proposed algorithm & Algorithm \ref{alg:ocba} \\
		\bottomrule
	\end{tabular}
	\label{tabadd:1}
\end{table}

\subsection{Bayesian Formulation}\label{sec:formulation}

Let there be $k$ designs, with $\mu_i$ being the unknown true value of design $i \in \left\{1,...,k\right\}$. Let $\left\{X_{i,l}\right\}^{\infty}_{l=1}$ be a sequence of noisy observations, each of which is drawn from the distribution $\mathcal{N}\left(\mu_i,\sigma^2_i\right)$, where the variance $\sigma^2_i$ is also unknown. Two samples $X_{i,l}$ and $X_{j,l'}$ are independent if $i \neq j$ or $l \neq l'$. We assume that the best design $i^* = \arg\max_i \mu_i$ is unique.

The decision-maker approaches the problem from a Bayesian perspective, modeling the unknown means and variances $\left(\boldsymbol{\mu},\boldsymbol{\sigma}^2\right)$ as a random vector with the joint prior density $\pi^{0}(\boldsymbol{\phi},\boldsymbol{\psi})$, where $\boldsymbol{\phi} \in \Rb^k$ and $\boldsymbol{\psi} \in \Rb^k_{+}$ denote possible values of $\boldsymbol{\mu}$ and $\boldsymbol{\sigma}^2$. Suppose that $N_i$ samples are collected for each design $i$, with $n = \sum_i N_i$ being the total budget. Let $f\left(\cdot\mid \phi_i,\psi_i\right)$ denote the $\mathcal{N}\left(\phi_i,\psi_i\right)$ density. Then, the posterior joint density $\pi^{n}(\boldsymbol{\phi},\boldsymbol{\psi})$ can be calculated by applying Bayes' rule to the prior $\pi^{0}\left(\boldsymbol{\phi},\boldsymbol{\psi}\right)$ and the joint conditional likelihood $L^n\left(\phi_i,\psi_i\right) = \prod^k_{i=1} \prod^{N_i}_{l=1} f\left(X_{i,j}\mid \phi_i,\psi_i\right)$ of the observations.

Let
\begin{equation*}
	\hat{\mu}^n_i = \int_{\Rb^{k}} \int_{\Rb^{k}_+} \phi_i \pi^n(\boldsymbol{\phi},\boldsymbol{\psi}) d \boldsymbol{\psi} d \boldsymbol{\phi}, \qquad \left(\hat{\sigma}^{n}_i\right)^2 = \int_{\Rb^{k}} \int_{\Rb^{k}_+} \psi_i \pi^n(\boldsymbol{\phi},\boldsymbol{\psi}) d \boldsymbol{\psi} d \boldsymbol{\phi}
\end{equation*}
represent the posterior beliefs about $\mu_i$ and $\sigma^2_i$, respectively, and denote by $i^{*,n} = \arg\max_i \hat{\mu}^n_i$ the design believed to be the best. The Bayesian PCS is defined as
\begin{align}
	PCS^n_{B} \triangleq&  \int_{\Rb^{k}} \int_{\Rb^{k}_+} \1\{ \cap_{i \ne i^{*,n}} \{\phi_{i^{*,n}} > \phi_i\} \}  \pi^n (\boldsymbol{\phi},\boldsymbol{\psi}) d \boldsymbol{\psi} d \boldsymbol{\phi}, \label{eq:pcs_bayes}
\end{align}
where $\1\{\cdot\}$ is the indicator function. The quantity $PCS^n_B$ represents the decision-maker's belief (according to the posterior after $n$ total samples) about how likely the estimated best design $i^{*,n}$ is to actually be the best.

The \textit{optimal computing budget allocation} (OCBA) problem, for this Bayesian setting, can be formulated as
\begin{align}\label{eq:bayesPCS}
	\max_{N_1,\dots,N_k} PCS^n_{B} \quad \mbox{s.t. } \sum_{i=1}^k N_i = n, N_i \in \mathbb{Z}_+ \text{ for }i=1,2,...,k.
\end{align}
In words, the decision-maker divides the samples between designs ahead of time to maximize the posterior probability that the design with the highest posterior mean value also has the highest true value. The OCBA literature \citep{chen2000,chen2011} typically recasts the decision variables in terms of proportions $\alpha_i = \frac{N_i}{n}$ that are allowed to take values in $\left[0,1\right]$ and satisfy $\sum_i \alpha_i = 1$. This literature also takes $n$ to be very large, so that the continuous relaxation of $N_i$ is not an issue. We will also adopt this approach in our analysis.

\subsection{Comparison With Frequentist Formulation}\label{sec:comparison}

It is relevant to compare (\ref{eq:bayesPCS}) with a frequentist variant of OCBA in which $PCS^n_B$ is replaced by
\begin{align}\label{ineq:pcs_freq}
	PCS^n_{F} \triangleq&  \Eb_{F} [ \1\{ \cap_{i \ne i^*} \{ \bar{X}^n_{i^*}
	> \bar{X}^n_i \} \} ],
\end{align}
where $\bar{X}^n_i = \frac{1}{N_i}\sum^{N_i}_{l=1} X_{i,l}$ is the sample average for design $i = 1,...,k$. The expectation $\Eb_F$ is taken over the joint distribution of the sample averages. Note that this distribution is completely different from the posterior density used in (\ref{eq:pcs_bayes}). However, like its Bayesian counterpart, $PCS^n_F$ is not expressible in closed form.

\cite{glynn2004} pioneered an analytical approach, based on large deviations theory, for optimizing the asymptotic convergence rate of $PCS^n_F$. Like OCBA, this approach also takes $n$ to be large and works with the proportions $\boldsymbol{\alpha} = \left(\alpha_1,...,\alpha_k\right)$, each one satisfying $\alpha_i = \frac{N_i}{n}$, rather than with the sample sizes $N_i$ directly. In brief, one first derives a function
\begin{equation}\label{glynn}
	\mathcal{G}\left(\boldsymbol{\alpha}\right) = -\lim_{n\rightarrow\infty} \frac{1}{n}\log\left(1- PCS^n_F\right).
\end{equation}
As long as $\alpha_i > 0$ for all $i$, the quantity $\mathcal{G}\left(\boldsymbol{\alpha}\right)$ is strictly positive, meaning that the probability of \textit{false} selection (PFS) converges to zero at an exponential rate with exponent $-\mathcal{G}\left(\boldsymbol{\alpha}\right)$. One can speed up convergence by choosing the allocation $\boldsymbol{\alpha}$ to maximize $\mathcal{G}$, giving rise to the OCBA-like problem
\begin{equation}\label{eq:glynnocba}
	\max_{\boldsymbol{\alpha}} \mathcal{G}\left(\boldsymbol{\alpha}\right) \quad \mbox{s.t. } \sum_{i=1}^k \alpha_i = 1, \alpha_i \geq 0\text{ for }i=1,2,...,k.
\end{equation}
Unlike the PCS itself, $\mathcal{G}\left(\boldsymbol{\alpha}\right)$ has a clean expression under the assumption of normal sampling distributions. Example 1 of \cite{glynn2004} shows, in that setting, that
\begin{equation}\label{eq:freqrate}
	\mathcal{G}\left(\boldsymbol{\alpha}\right) = \min_{i\neq i^*} \frac{(\mu_{i^*}-\mu_i)^2}{2(\sigma_{i^*}^2/\alpha_{i^*} + \sigma_i^2/\alpha_i)}.
\end{equation}
Essentially, the convergence rate of the probability of false selection is identical to the slowest convergence rate among the probabilities $P\left(\bar{X}^n_{i^*}\le \bar{X}^n_i\right)$ for pairwise comparisons between $i^*$ and individual $i$. To put it a different way, these probabilities all converge to zero at exponential rates with different exponents, and the smallest among the exponents determines the asymptotic behavior of the overall PCS. This smallest exponent is precisely the right-hand side of (\ref{eq:freqrate}).

Plugging (\ref{eq:freqrate}) into (\ref{eq:glynnocba}), it is possible to show that the optimal $\alpha_i$ values are completely characterized by the equations
\begin{eqnarray}
	\sum_{i \ne i^*} \frac{\sigma^2_{i^*}/\alpha^2_{i^*}}{\sigma_i^2/\alpha^2_i} &=& 1,  \label{glynn1}\\
	\frac{(\mu_{i^*}-\mu_i)^2}{2(\sigma_{i^*}^2/\alpha_{i^*} + \sigma_i^2/\alpha_i)} &=&  \frac{(\mu_{i^*}-\mu_j)^2}{2(\sigma_{i^*}^2/\alpha_{i^*} + \sigma_j^2/\alpha_j)}, \ i,j \ne i^*. \label{glynn2}
\end{eqnarray}
In a special case where $\alpha_{i^*}\gg \alpha_i$ for $i\neq i^*$, (\ref{glynn2}) reduces to
\begin{align}
	\frac{(\mu_{i^*}-\mu_i)^2}{\sigma_i^2/\alpha_i} =  \frac{(\mu_{i^*}-\mu_j)^2}{\sigma_j^2/\alpha_j}, \ i,j \ne i^*,\label{glynn3}
\end{align}
which matches the well-known OCBA equations, originally derived by \cite{chen2000} using an approximation of $PCS^n_F$.

This analysis is attractive because it provides us with a very strong, yet analytically tractable notion of optimality. Of course, if we do not know the true means and variances, we cannot directly solve (\ref{glynn1})-(\ref{glynn2}). However, it is possible to design sequential sampling algorithms \citep{gao2017,chen2023balancing} that replace the unknown quantities in these equations with plug-in estimators and allocate samples to designs one at a time in a manner that provably converges to the optimal allocation. In this way, the convergence rate analysis of $PCS^n_F$ provides principled guidance for the development of practical, computationally efficient procedures.

The drawback of this approach is that, because the rate is derived for fixed $\mu_i$ and $\sigma^2_i$, it does not involve any sense of parameter uncertainty. However, the strength of the results that can be obtained has motivated efforts to study the Bayesian version of the problem in a similar way. It should be understood, however, that such analyses (as well as the analysis in this paper) are not fully Bayesian, but rather, a hybrid of frequentist and Bayesian concepts. As in a frequentist analysis, one treats the true parameter values as fixed quantities. Strictly speaking, $\pi^{n}$ is then no longer a posterior distribution, because $\left(\boldsymbol{\mu},\boldsymbol{\sigma}^2\right)$ are non-random. Nonetheless, one can still define and study the measure $\pi^{n}$ obtained by applying Bayesian updating to the observations; as each design is sampled more and more, this measure will concentrate around the fixed true values.

Using this style of analysis, \cite{ryzhov2016convergence} found that EI, a popular Bayesian allocation procedure dating back to \cite{jones1998efficient}, asymptotically solves (\ref{glynn3}). \cite{chen2011} optimized an approximation of $PCS^n_B$ and arrived at (\ref{glynn1}) and (\ref{glynn3}), similarly to frequentist OCBA. \cite{chen2019complete} studied a modification of EI that solved (\ref{glynn1})-(\ref{glynn2}). Most notably, \cite{russo2020simple} found that, when the variances $\sigma^2_i$ are \textit{known}, an analog of (\ref{glynn}) holds for $PCS^n_B$ and, in fact, coincides with the result obtained by \cite{glynn2004}. In other words, equations (\ref{glynn1})-(\ref{glynn2}) describe the optimal allocation in both a frequentist setting \textit{and} a Bayesian setting with known variance. However, as we will show in the remainder of this paper, this is no longer the case when the sampling variance is unknown.

\section{Optimal Sampling Allocations Under Unknown Sampling Variance}

This section characterizes the optimal budget allocation for the Bayesian PCS. Similarly to the discussion in Section \ref{sec:comparison}, we adopt a partially Bayesian style of analysis where $\mu_i$ and $\sigma^2_i$ are fixed (though unknown to the decision-maker).

Our analysis works for a general prior that potentially allows correlations across designs, as in, e.g., \cite{frazier2009knowledge} or \cite{HaRyDe16}. Section \ref{sec:prior} states the assumptions that we impose on such a prior. Section \ref{sec:pcsrate} then derives a law of the form (\ref{glynn}), but for $PCS^n_B$.  Section \ref{sec:opt_cond} presents optimality conditions for the allocation $\boldsymbol{\alpha}$.

\subsection{Assumptions on the Prior}\label{sec:prior}

For the setting of normally distributed rewards with unknown mean and variance, two conjugate Bayesian models (namely, normal-gamma and normal-inverse-gamma) are available \citep{degroot2005optimal}. If one chooses either of these models, the assumptions below become unnecessary and one can skip ahead to Section \ref{sec:pcsrate}. However, these models assume independent beliefs across designs. If one wishes to allow the use of correlated beliefs as in, e.g., \cite{frazier2009knowledge}, some regularity conditions on the prior density are required.

We assume that the prior joint density is bounded above, i.e., there exists a constant $\bar{c} > 0$ such that $\pi^{0}(\boldsymbol{\phi},\boldsymbol{\psi}) \le \bar{c}$ for all $(\boldsymbol{\phi},\boldsymbol{\psi}) \in \Rb^k \times \Rb_+^k$. For ease of presentation, we additionally require
\begin{align}\label{ineq:prior_int}
	\int_{\Rb^k} \int_{\Rb^k_+} \pi^{0}(\boldsymbol{\phi},\boldsymbol{\psi}) d\boldsymbol{\phi} d\boldsymbol{\psi} = 1.
\end{align}
Technically, this rules out the use of improper or noninformative priors, but conceivably one could collect a small number of initial samples to update such a prior into a density that integrates to one.

We further assume that there exists a constant $\underline{c} > 0$ such that $\pi^{0}(\boldsymbol{\phi},\boldsymbol{\psi}) \ge \underline{c}$ for $\left(\boldsymbol{\phi},\boldsymbol{\psi}\right)\in H_w$, where $H_w \subseteq \Rb^k \times \Rb_+^k$ contains the unknown true values. Specifically, we can let
\begin{align}\label{ineq:prior_reg}
	H_{w} \triangleq [ \mu_{\min} - \bar\epsilon, \mu_{\max} + \bar\epsilon]^k  \times [\sigma_{\min}^2 - \bar\epsilon, \sigma_{\max}^2 + \bar\epsilon + ( \mu_{\max} - \mu_{\min} + 2 \bar\epsilon )^2 ]^k,
\end{align}
where $\mu_{\min}$ and $\mu_{\max}$ (respectively, $\sigma^2_{\min}$ and $\sigma^2_{\max}$) are the smallest and largest of the true means (respectively, sampling variances), and $\bar\epsilon < \sigma_{\min}^2/2$ is a small positive constant.

Under these conditions, Bayesian updating will produce consistent estimators of the true values. We show that, for sufficiently large $n$, the posterior estimates $\hat{\mu}^n_i$ and $\left(\sigma^n_i\right)^2$ will become arbitrarily close to, respectively, the sample mean $\bar{X}^n_i$ and the maximum likelihood estimator $\left(\tilde{S}^n_i\right)^2$ of the variance. Of course, the specific $n$ that is ``sufficiently large'' depends on the sample path. Consistency is standard if one uses a conjugate prior, and one certainly expects it to hold more generally, but for completeness we provide a proof in the Electronic Companion (EC).

\begin{lemma}\label{lem:bayes_consist}
	Suppose that $N_i \to \infty$ as $n \to \infty$. Then, for $\varepsilon \le \min\{ 1 / (2k),\bar{\epsilon}/4 \}$, we have $|\hat{\mu}^n_i - \bar{X}^n_i| \le \varepsilon$ and $|\left(\hat{\sigma}^n_i\right)^2 - \frac{N_i}{N_i-5} \left(\tilde{S}^n_i\right)^2| \le \varepsilon$ for all sufficiently large $n$.
\end{lemma}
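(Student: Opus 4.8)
The plan is to compare the posterior $\pi^n$ against a reference posterior for which the two claimed quantities are \emph{exact}, and to exploit that the normal likelihood concentrates far faster than a merely bounded prior can distort. Let $q^n$ be the (proper, once $N_i$ is large) posterior obtained by replacing $\pi^0$ with the improper uniform prior $\equiv 1$ while keeping the same likelihood $L^n$, and write $Z_\pi=\int \pi^0 L^n\, d\boldsymbol\phi\, d\boldsymbol\psi$ and $Z_q=\int L^n\, d\boldsymbol\phi\, d\boldsymbol\psi$. Since the normal likelihood factors across designs, $q^n=\prod_i q_i^n$, where under $q_i^n$ the variable $\psi_i$ is inverse-gamma with shape $(N_i-3)/2$ and scale $N_i(\tilde S_i^n)^2/2$ and $\phi_i\mid\psi_i\sim\mathcal{N}(\bar X_i^n,\psi_i/N_i)$. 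Standard moment formulas then give, for $N_i$ large, $\Eb_{q^n}[\phi_i]=\bar X_i^n$, $\Eb_{q^n}[\psi_i]=\frac{N_i}{N_i-5}(\tilde S_i^n)^2$, $\mathrm{Var}_{q^n}(\phi_i)=(\tilde S_i^n)^2/(N_i-5)$, and $\mathrm{Var}_{q^n}(\psi_i)=O(1/N_i)$; in particular the two expectations are exactly the reference values in the lemma, so it suffices to show $\int\phi_i\,\pi^n$ and $\int\psi_i\,\pi^n$ differ from them by at most $\varepsilon$. I would also invoke the strong law of large numbers along the subsequence of observations of design $i$ to get $\bar X_i^n\to\mu_i$ and $(\tilde S_i^n)^2\to\sigma_i^2$ almost surely, so that on a probability-one event there is a (path-dependent) threshold past which $|\bar X_i^n-\mu_i|\le\bar\epsilon/4$ and $|(\tilde S_i^n)^2-\sigma_i^2|\le\bar\epsilon/4$ for every $i$.

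The key identity is $\pi^n=(Z_q/Z_\pi)\,\pi^0 q^n$, which yields, for any $\pi^n$-integrable $h$,
\[
\int h\,\pi^n \;=\; \frac{\Eb_{q^n}[h\,\pi^0]}{\Eb_{q^n}[\pi^0]}, \qquad\text{hence}\qquad \int h\,\pi^n-\Eb_{q^n}[h] \;=\; \frac{\mathrm{Cov}_{q^n}(h,\pi^0)}{\Eb_{q^n}[\pi^0]}.
\]
Because $0\le\pi^0\le\bar c$, the Cauchy--Schwarz inequality bounds the numerator by $\bar c\,\sqrt{\mathrm{Var}_{q^n}(h)}$, so the whole task reduces to keeping $\Eb_{q^n}[\pi^0]$ bounded away from zero. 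This is where the cushion $\bar\epsilon$ built into $H_w$ is used: for all large $n$ the point $(\bar X_i^n,(\tilde S_i^n)^2)$ sits at coordinatewise distance at least $3\bar\epsilon/4$ inside $H_w$, so Chebyshev's inequality together with $\mathrm{Var}_{q^n}(\phi_i),\mathrm{Var}_{q^n}(\psi_i)\to0$ gives $q^n(H_w)\to1$, and therefore $\Eb_{q^n}[\pi^0]\ge\underline c\,q^n(H_w)\ge\underline c/2$ eventually.

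Combining these, $|\hat\mu_i^n-\bar X_i^n|\le\frac{2\bar c}{\underline c}\sqrt{(\tilde S_i^n)^2/(N_i-5)}$ and $|(\hat\sigma_i^n)^2-\frac{N_i}{N_i-5}(\tilde S_i^n)^2|\le\frac{2\bar c}{\underline c}\sqrt{\mathrm{Var}_{q^n}(\psi_i)}$; both right-hand sides tend to $0$ as $n\to\infty$, hence both are at most $\varepsilon$ for all sufficiently large $n$ (for any fixed $\varepsilon$, in particular $\varepsilon\le\min\{1/(2k),\bar\epsilon/4\}$), and a union bound over the finitely many designs handles all $i$ at once. The main obstacle, in my view, is not the algebra but two moment-integrability points buried in the covariance bound: one must check that $\phi_i$ and $\psi_i$ are square-integrable under $q^n$ and that $\mathrm{Var}_{q^n}(\psi_i)$ genuinely vanishes (this is exactly why the statement features $N_i-5$ and only holds for large $N_i$), and one must establish $\liminf_n q^n(H_w)>0$ with no continuity assumption on $\pi^0$ --- which is precisely what the covariance form of the error, rather than a naive ``the prior is locally constant'' argument, makes available.
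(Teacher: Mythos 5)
Your proposal is, at its core, the paper's own argument: both rest on (a) the normal--inverse-gamma structure of the flat-prior posterior $q^n$, whose exact first moments are $\bar X_i^n$ and $\tfrac{N_i}{N_i-5}(\tilde S_i^n)^2$ and whose variances are $(\tilde S_i^n)^2/(N_i-5)$ and $\tfrac{2N_i^2(\tilde S_i^n)^4}{(N_i-5)^2(N_i-7)}$; (b) the two-sided prior bounds $\underline c \le \pi^0 \le \bar c$ on $H_w$; and (c) Chebyshev concentration of $q^n$ on $H_w$. The paper bounds $\Eb_B[(\mu_i-\bar X_i^n)^2]$ by $\tfrac{\bar c}{\underline c\,c_B}\Eb_{q^n}[(\phi_i-\bar X_i^n)^2]$ and then applies H\"older, whereas you bound the first-moment deviation directly through the identity $\int h\,\pi^n-\Eb_{q^n}[h]=\mathrm{Cov}_{q^n}(h,\pi^0)/\Eb_{q^n}[\pi^0]$ plus Cauchy--Schwarz; this is a cleaner piece of bookkeeping that yields the same $O(\bar c/\underline c\cdot N_i^{-1/2})$ rate, so I would count it as the same route.

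The one substantive omission is your treatment of the denominator when not every design is sampled infinitely often. The lemma's conclusion is asserted for each design $i$ with $N_i\to\infty$, but other designs $j$ may have $N_j$ frozen at a finite value; for such $j$ the factor $q_j^n$ does not concentrate (and is not even normalizable unless $N_j\ge 4$, nor does $\phi_j$ have a finite $q_j^n$-variance unless $N_j\ge 6$), so your claim that $q^n(H_w)\to 1$ fails. What survives, and what the paper's constant $c_B$ encodes, is that the frozen factors contribute a fixed, path-dependent positive mass to the rectangle $H_w$, so that $\liminf_n q^n(H_w)>0$ still holds --- which, as you yourself note, is all your covariance bound needs. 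Patch the concentration step to say ``$q_j^n(H_w$-projection$)\to 1$ for the infinitely-sampled designs and is eventually a fixed positive constant for the rest, hence $\Eb_{q^n}[\pi^0]\ge\underline c\,c_B$ for some $c_B>0$,'' and the argument is complete.
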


We also show that the posterior estimates for design $i$ have finite limits when $i$ is not sampled infinitely often. This is obvious if the prior is independent across designs, but less so when correlated beliefs are present since our beliefs about $i$ can be updated when we sample other designs. We provide a proof under the additional assumption that the prior density is uniformly continuous.

\begin{lemma}\label{lem:posterior_non}
	Suppose the prior density $\pi^0(\boldsymbol{\phi},\boldsymbol{\psi})$ is uniformly continuous in $\Rb^k \times \Rb_+^k$ and $N_i \ge 6$. If $N_i$ does not increase to infinity as $n \to \infty$, then $\hat{\mu}^n_i$ and $\left(\sigma^n_i\right)^2$ still converge to finite limits.
\end{lemma}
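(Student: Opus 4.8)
The plan is to exploit that the likelihood factorizes across designs, to write the posterior mean and second moment of design $i$ as ratios of integrals over $\Rb^k\times\Rb^k_+$, and to show that integrating out the coordinates of the designs that are sampled infinitely often is, in the limit, the same as freezing those coordinates at the true parameter values; the remaining integral then involves only a fixed, finite data set. Concretely, I would first reduce to eventually-constant sample counts: since $N_i$ is nondecreasing in $n$ and does not diverge, there are $\bar N_i\ge 6$ and $n_0$ with $N_i=\bar N_i$ for all $n\ge n_0$; doing this for every design whose count stays bounded and enlarging $n_0$, I partition $\{1,\dots,k\}=T\cup S$ with $i\in T$, so that every $m\in T$ is frozen at its final count $\bar N_m$ for $n\ge n_0$ while $N_m\to\infty$ for $m\in S$. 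Writing $L^n_m(\phi_m,\psi_m)=\prod_{l=1}^{N_m}f(X_{m,l}\mid\phi_m,\psi_m)$ so that $\pi^n\propto\pi^0\prod_{m=1}^k L^n_m$, I express $\hat\mu^n_i=A^n_i/B^n$ and $(\hat\sigma^n_i)^2=C^n_i/B^n$, where $B^n=\int_{\Rb^k}\int_{\Rb^k_+}\pi^0\prod_m L^n_m$ and $A^n_i$, $C^n_i$ are the same integrals with an extra factor $\phi_i$, resp.\ $\psi_i$. For $n\ge n_0$ the factors $L^n_m$, $m\in T$, equal the fixed functions $L^{n_0}_m$; for $m\in S$, using $\sum_l(X_{m,l}-\phi_m)^2=N_m[(\phi_m-\bar X^n_m)^2+(\tilde S^n_m)^2]$,
\[
L^n_m(\phi_m,\psi_m)=(2\pi\psi_m)^{-N_m/2}\exp\Bigl(-\frac{N_m}{2\psi_m}\bigl[(\phi_m-\bar X^n_m)^2+(\tilde S^n_m)^2\bigr]\Bigr),
\]
which I normalize into the probability density $\tilde L^n_m:=L^n_m/c^n_m$, with $c^n_m:=\int_{\Rb}\int_{\Rb_+}L^n_m<\infty$ (finite once $N_m\ge 4$); dividing $A^n_i,B^n,C^n_i$ by $\prod_{m\in S}c^n_m$ leaves the two ratios unchanged. (Here $\bar N_i\ge 6$ is exactly what makes $\int\psi_i L^{n_0}_i<\infty$, i.e.\ $(\hat\sigma^n_i)^2$ finite; I also take for granted, as is implicit throughout, that the posterior is a proper density and that the frozen designs are sampled enough for the $T$-integrals below to converge.)

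Next I would show that $\prod_{m\in S}\tilde L^n_m$ concentrates at the true values $(\boldsymbol{\mu}_S,\boldsymbol{\sigma}^2_S)$. For $m\in S$, conditionally on $\psi_m$ the density $\tilde L^n_m$ makes $\phi_m$ normal with mean $\bar X^n_m$ and variance $\psi_m/N_m$, and its $\psi_m$-marginal is inverse-gamma with shape $(N_m-3)/2\to\infty$ and scale $N_m(\tilde S^n_m)^2/2$, whose scale-to-shape ratio tends to $\sigma^2_m$. By the strong law of large numbers $\bar X^n_m\to\mu_m$ and $(\tilde S^n_m)^2\to\sigma^2_m$ almost surely, so for every $\delta>0$ the $\tilde L^n_m$-mass outside the $\delta$-ball around $(\mu_m,\sigma^2_m)$ tends to zero.

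Then I would pass to the limit in $A^n_i/B^n$ and $C^n_i/B^n$ by dominated convergence over the $T$-coordinates. Writing $(\boldsymbol{\phi}_T,\boldsymbol{\psi}_T)$ and $(\boldsymbol{\phi}_S,\boldsymbol{\psi}_S)$ for the corresponding sub-vectors and fixing $(\boldsymbol{\phi}_T,\boldsymbol{\psi}_T)$, the bound $\pi^0\le\bar c$ together with the uniform continuity of $\pi^0$ and the concentration above gives
\[
\int_{\Rb^{|S|}}\int_{\Rb^{|S|}_+}\pi^0(\boldsymbol{\phi},\boldsymbol{\psi})\prod_{m\in S}\tilde L^n_m\,d\boldsymbol{\phi}_S\,d\boldsymbol{\psi}_S\ \longrightarrow\ \pi^0(\boldsymbol{\phi}_T,\boldsymbol{\mu}_S,\boldsymbol{\psi}_T,\boldsymbol{\sigma}^2_S),
\]
with the left side $\le\bar c$ uniformly in $n$ and in $(\boldsymbol{\phi}_T,\boldsymbol{\psi}_T)$. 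The leftover factor $\phi_i\prod_{m\in T}L^{n_0}_m$ (resp.\ $\psi_i\prod_{m\in T}L^{n_0}_m$, resp.\ $\prod_{m\in T}L^{n_0}_m$) is absolutely integrable over the $T$-coordinates, using the Gaussian tail in $\phi_i$ and the inverse-gamma tail in $\psi_i$ of $L^{n_0}_i$ ($\bar N_i\ge 4$ for the first moment, $\bar N_i\ge 6$ for the second). Dominated convergence over the $T$-coordinates, with dominating function $|\phi_i|\,\bar c\prod_{m\in T}L^{n_0}_m$ (resp.\ $\psi_i\,\bar c\prod_{m\in T}L^{n_0}_m$, $\bar c\prod_{m\in T}L^{n_0}_m$), then shows that $A^n_i,B^n,C^n_i$, each divided by $\prod_{m\in S}c^n_m$, converge respectively to the integrals over the $T$-coordinates of $\phi_i\pi^0(\boldsymbol{\phi}_T,\boldsymbol{\mu}_S,\boldsymbol{\psi}_T,\boldsymbol{\sigma}^2_S)\prod_{m\in T}L^{n_0}_m$, $\pi^0(\boldsymbol{\phi}_T,\boldsymbol{\mu}_S,\boldsymbol{\psi}_T,\boldsymbol{\sigma}^2_S)\prod_{m\in T}L^{n_0}_m$, and $\psi_i\pi^0(\boldsymbol{\phi}_T,\boldsymbol{\mu}_S,\boldsymbol{\psi}_T,\boldsymbol{\sigma}^2_S)\prod_{m\in T}L^{n_0}_m$. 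The limiting $B$-integral is strictly positive: the point $(\boldsymbol{\mu}_T,\boldsymbol{\mu}_S,\boldsymbol{\sigma}^2_T,\boldsymbol{\sigma}^2_S)$ lies in the interior of $H_w$ (whose definition includes an $\bar\epsilon$-neighborhood of the truth), so there $\pi^0\ge\underline c$ while the $T$-likelihoods are positive and continuous, bounding the integrand below by a positive constant on a neighborhood. Dividing the limits, $\hat\mu^n_i$ and $(\hat\sigma^n_i)^2$ converge to finite limits. (Splitting $\int_{\Rb^k}\int_{\Rb^k_+}$ into $T$- and $S$-parts is Tonelli for $B^n$ and, once finiteness is in hand, Fubini for $A^n_i,C^n_i$.)

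The step I expect to be the main obstacle is this dominated-convergence passage: the ``test function'' $\phi_i\prod_{m\in T}L^{n_0}_m$ is unbounded over the unbounded $T$-coordinates while the measures $\prod_{m\in S}\tilde L^n_m$ themselves vary with $n$, so one must combine (i) the explicit normal / inverse-gamma tail estimates, which yield a single $n$-independent integrable dominating function, (ii) the concentration of the $S$-factors from the previous step, and (iii) the uniform continuity of $\pi^0$, which is needed to control $\pi^0$ as the $S$-coordinates are driven to their true values uniformly over the $T$-coordinates. By comparison, the reduction to constant counts and the strong-law inputs are routine.
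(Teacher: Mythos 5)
Your proposal is correct and follows essentially the same route as the paper's proof: normalize the likelihoods of the infinitely-sampled designs into probability densities, use their concentration together with the boundedness and uniform continuity of $\pi^0$ to replace those coordinates by the true parameter values in the limit, and then observe that the remaining integrals over the frozen design's coordinates are finite because $N_i \ge 6$ (your treatment of dominated convergence and of the positivity of the limiting denominator is in fact more explicit than the paper's, which only writes out the case $k=2$). The one quibble is the parenthetical ``$\bar N_i\ge 4$ for the first moment'': the $\phi_i$-marginal of $L^{n_0}_i$ is a $t$-distribution with $\bar N_i-3$ degrees of freedom (not Gaussian), so a finite first absolute moment requires $\bar N_i\ge 5$, but this is immaterial under the standing hypothesis $N_i\ge 6$.
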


\subsection{Convergence Rate of the Bayesian PCS}\label{sec:pcsrate}

The main result of our paper is that $PCS^n_B$ obeys a large deviations law under unknown mean and variance. We state and discuss this result, then provide a sketch of the proof.

\begin{theorem}
	\label{thm:pcs}
	The Bayesian probability $PCS^n_B$ of correct selection satisfies
	\begin{align}
		&-\lim_{n \to \infty} \frac{1}{n}  \log (1-PCS^n_{B})
		=  \min_{i \ne i^*}   \Vcal_{i}(\alpha_i,\alpha_{i^*}) \label{eq:bayesPCSrate}
	\end{align}
	where
	\begin{align}\label{eq:vdef}
		\Vcal_{i}(\alpha_i,\alpha_{i^*}) \triangleq \min_{\phi_{i} } \left( \frac{\alpha_i}{2} \log \left(1 + \frac{(\mu_i-\phi_i)^2}{\sigma_i^2} \right) + \frac{\alpha_{i^*}}{2} \log \left(1 + \frac{(\mu_{i^*}-\phi_{i})^2}{\sigma_{i^*}^2} \right) \right).
	\end{align}
\end{theorem}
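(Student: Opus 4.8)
The plan is to establish the large deviations law in \eqref{eq:bayesPCSrate} by first reducing the multi-design selection event to pairwise comparisons, and then analyzing each pairwise Bayesian probability via a Laplace-type asymptotic analysis of the integral defining the posterior. For the reduction, I would use the union-bound / inclusion argument that is standard in this literature (cf.\ the discussion after \eqref{eq:freqrate}): the complement event $\cup_{i \ne i^{*,n}} \{\phi_{i^{*,n}} \le \phi_i\}$ satisfies, up to constants, $\max_{i \ne i^*} \Pb^{\pi^n}(\phi_{i^*} \le \phi_i) \le 1 - PCS_B^n \le \sum_{i \ne i^*} \Pb^{\pi^n}(\phi_{i^*} \le \phi_i)$ once $n$ is large enough that $i^{*,n} = i^*$ (which holds eventually by Lemma \ref{lem:bayes_consist}, since $\bar X_i^n \to \mu_i$ and $i^*$ is unique). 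Since $\log$ of a sum of exponentially decaying terms is asymptotically the $\log$ of the largest, the overall rate is $\min_{i \ne i^*}$ of the pairwise rates, so it suffices to prove
\[
-\lim_{n\to\infty}\frac{1}{n}\log \Pb^{\pi^n}\big(\phi_{i^*}\le \phi_i\big) = \Vcal_i(\alpha_i,\alpha_{i^*}).
\]

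For the pairwise rate, write $\Pb^{\pi^n}(\phi_{i^*}\le\phi_i) = \int_{\Xi_i} \pi^n(\boldsymbol\phi,\boldsymbol\psi)\,d\boldsymbol\psi\,d\boldsymbol\phi$, expand $\pi^n \propto \pi^0 \prod_j L^n(\phi_j,\psi_j)$, and observe that (i) the normalizing constant $\int \pi^0 \prod_j L^n$ is, to exponential order, the value of the integrand at the MLE, by Laplace's method, using the prior bounds $\underline c \le \pi^0 \le \bar c$ on $H_w$ from \eqref{ineq:prior_reg} to pin the prior's contribution to subexponential order; and (ii) integrating out the nuisance coordinates $j \notin \{i, i^*\}$ and the variances $\psi_i,\psi_{i^*}$ contributes only subexponentially. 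The exponential rate of the Gaussian likelihood ratio $L^n(\phi_i,\psi_i)/L^n(\bar X_i^n,(\tilde S_i^n)^2)$, after profiling out $\psi_i$ at its conditional MLE $\psi_i = (\tilde S_i^n)^2 + (\bar X_i^n - \phi_i)^2$, is exactly $-\tfrac{N_i}{2}\log\!\big(1 + (\bar X_i^n - \phi_i)^2/(\tilde S_i^n)^2\big)$; using $\bar X_i^n \to \mu_i$, $(\tilde S_i^n)^2 \to \sigma_i^2$ and $N_i/n \to \alpha_i$, this is $-n\alpha_i \tfrac12\log(1+(\mu_i-\phi_i)^2/\sigma_i^2) + o(n)$. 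Combining the $i$ and $i^*$ factors and applying Laplace's method on the region $\{\phi_{i^*}\le\phi_i\}$ — where the exponent is minimized over $\phi_i$ (with $\phi_{i^*}$ pushed to equal $\phi_i$ since that relaxes the constraint, a monotonicity fact that will need checking) — yields precisely $-n\,\Vcal_i(\alpha_i,\alpha_{i^*}) + o(n)$, giving the claimed limit.

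The main obstacle, and the place where the usual large deviations machinery genuinely fails (as the authors emphasize), is making the Laplace asymptotics rigorous on the constrained region $\Xi_i$ when the inner minimizer $\phi_i$ of \eqref{eq:vdef} need not be unique and the argmin correspondence $r \mapsto \phi_i^{\min}(r), \phi_i^{\max}(r)$ is only upper-semicontinuous, not continuous. Concretely: (a) one must show the infimum in $\Pb^{\pi^n}$ over $\{\phi_{i^*}\le\phi_i\}$ is attained in the interior of $H_w$ (or controlled near its boundary) so the prior lower bound $\underline c$ applies — this requires a uniform tail/coercivity estimate on $\log(1+(\mu-\phi)^2/\sigma^2)$ ensuring large $|\phi_i|$ contributes negligibly; and (b) one must handle the upper bound (the $\liminf$ of $-\tfrac1n\log\Pb^{\pi^n}$) carefully, since a naive bound by the integrand's supremum loses a polynomial factor only if the Hessian of the exponent at the minimizer is nondegenerate, which can fail exactly at the $r$-values where the argmin jumps. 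I expect the resolution is to prove matching upper and lower exponential bounds that do \emph{not} rely on second-order (Hessian) information — a lower bound on $\Pb^{\pi^n}$ from integrating over a small ball around any minimizer, and an upper bound from $\Pb^{\pi^n} \le \bar c \cdot (\text{volume}) \cdot \sup_{\Xi_i \cap H_w}(\text{integrand}) + (\text{exponentially smaller tail})$ — so that only the value, not the regularity, of $\Vcal_i$ enters. The technical lemmas on $\tilde S_i^n$ and the consistency statements (Lemmas \ref{lem:bayes_consist}, \ref{lem:posterior_non}) are what let us replace all sample-path quantities by their deterministic limits uniformly for large $n$.
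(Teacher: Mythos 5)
Your proposal follows essentially the same route as the paper's proof: the pairwise reduction after establishing $i^{*,n}=i^*$, the ratio representation of $\Pb_B(\Xi_i)$ normalized by the likelihood at the MLE, profiling out $\psi_j$ at $(\tilde S_j^n)^2+(\bar X_j^n-\phi_j)^2$ to get the $\log(1+(\cdot)^2/(\cdot))$ exponent, forcing $\phi_{i^*}=\phi_i$ at the constrained optimum (the paper's Lemma \ref{lem:mle_eoc_z}), and matching exponential upper/lower bounds that use only the value of the optimum — a small ball around the constrained MLE inside $H_w$ for the lower bound (the paper's $H_i$, $H_{i^*}$ in Lemma \ref{lem:xi_ai}) and a supremum bound combined with $\int\pi^0\le 1$ for the upper bound, with no Hessian or regularity input. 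The technical obstacles you flag (locating the constrained maximizer inside $H_w$, avoiding second-order Laplace asymptotics, and using consistency of $\bar X_i^n,\tilde S_i^n$ plus continuity of the rate in these arguments) are exactly the ones the paper's Lemmas \ref{lem:mle_eoc_z}--\ref{lem:xi_ai} resolve.
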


Recall that, when the sampling variance is known, the convergence rate of $PCS^n_B$ is identical to its frequentist analog in (\ref{eq:freqrate}). However, when the variance is uncertain, the structure of the rate function changes dramatically. The difference between these two settings is best seen through the following comparison. \cite{russo2020simple} showed that, with known sampling variance, (\ref{eq:bayesPCSrate}) can be written as
\begin{align*}
	-\lim_{n \to \infty} \frac{1}{n}  \log (1-PCS^n_{B})  =& \min_{i\neq i^*} \min_{\phi_i,\phi_{i^*}: \ \phi_{i^*} \le \phi_i}\alpha_i D^{KL}\left(\mu_i,\sigma^2_i\mid\phi_i,\sigma^2_i\right) + \alpha_{i^*}D^{KL}\left(\mu_{i^*},\sigma^2_{i^*}\mid\phi_{i^*},\sigma^2_{i^*}\right) \\
	=& \min_{i\neq i^*} \min_{\phi_i}\alpha_i D^{KL}\left(\mu_i,\sigma^2_i\mid\phi_i,\sigma^2_i\right) + \alpha_{i^*}D^{KL}\left(\mu_{i^*},\sigma^2_{i^*}\mid\phi_{i},\sigma^2_{i^*}\right),
\end{align*}
where $D^{KL}\left(\mu,\sigma^2\mid \tilde{\mu},\tilde{\sigma}^2\right)$ denotes the Kullback-Leibler (KL) divergence between two normal distributions with respective parameters $\left(\mu,\sigma^2\right)$ and $\left(\tilde{\mu},\tilde{\sigma}^2\right)$.

When the variance is unknown, it can be shown with some tedious algebra that (\ref{eq:bayesPCSrate})-(\ref{eq:vdef}) can also be written in terms of a weighted average of KL divergences, but this time
\begin{align*}
	\Vcal_{i}(\alpha_i,\alpha_{i^*}) =& \min_{\phi_i,\psi_i,\phi_{i^*},\psi_{i^*}:\  \phi_{i^*} \le \phi_i }\alpha_i D^{KL}\left(\mu_i,\sigma^2_i\mid\phi_i,\psi_i\right) + \alpha_{i^*}D^{KL}\left(\mu_{i^*},\sigma^2_{i^*}\mid\phi_{i^*},\psi_{i^*}\right)\\
	=& \min_{\phi_i}\alpha_i D^{KL}\left(\mu_i,\sigma^2_i\mid\phi_i,\sigma^2_i + \left(\mu_i-\phi_i\right)^2\right) + \alpha_{i^*}D^{KL}\left(\mu_{i^*},\sigma^2_{i^*}\mid\phi_{i},\sigma^2_{i^*}+\left(\mu_{i^*}-\phi_i\right)^2\right).
\end{align*}
Crucially, the KL divergence now depends on the variable $\phi_i$ through both the mean parameter and the variance parameter. While the weighted average, for fixed $\phi_i$, can be written in closed form as in (\ref{eq:vdef}), the minimum over $\phi_i$ is no longer analytically tractable. In fact, unlike the known-variance case, the weighted average is no longer convex in $\phi_i$ (what is more, it is not even unimodal) and must be optimized numerically. The non-convexity issue will be explored further in Section \ref{sec:opt_cond}.

Although the proof of Theorem \ref{thm:pcs} is inspired by \cite{russo2020simple}, that paper focuses on sampling distributions that belong to one-parameter exponential families, and also makes the additional assumption that the prior and posterior densities have compact support. Neither restriction is present in our setting, and significant modifications to the analytical technique are necessary. Our proof, which is given in the EC, does not use the KL divergence characterization; the main ideas are sketched out as follows. For any $i\neq i^*$, define a set
\begin{align*}
	&\Xi_i \triangleq \{ (\boldsymbol{\phi},\boldsymbol{\psi}) \in \Rb^{k} \times \Rb^{k}_+: \phi_{i^*} \le \phi_i \}.
\end{align*}	
Then, $\Pb_{B}(\Xi_i) \triangleq \int  \int_{ \Xi_i }   \pi^n (\boldsymbol{\phi},\boldsymbol{\psi}) d \boldsymbol{\psi} d \boldsymbol{\phi}$ represents our belief about the probability of design $i$ being better than $i^*$. Similarly to the analysis of frequentist PCS, we find
\begin{align*}
	-\lim_{n \to \infty} \frac{1}{n} \log(1-PCS_{B}^n) = \min_{i \ne i^*}\left( -\lim_{n \to \infty} \frac{1}{n} \log\Pb_{B}(\Xi_i)\right),
\end{align*}
that is, the asymptotic behavior of PCS is driven by the slowest convergence rate among the pairwise comparisons between $i^*$ and $i\neq i^*$. We then write
\begin{align}\label{eq:post_probi}
	\Pb_{B}(\Xi_i) =  \frac{ \int  \int_{ \Xi_i } \pi^{0}(\boldsymbol{\phi},\boldsymbol{\psi}) \prod_{j=1}^{k} \left(L^{n} (\phi_{j},\psi_{j}) / L^{n} \left(\bar{X}^n_{j},\left(\tilde{S}^n_{j}\right)^2\right)\right) d \boldsymbol{\psi} d \boldsymbol{\phi} }{  \int_{\Rb^k} \int_{\Rb^k_+}  \pi^{0}(\boldsymbol{\phi}',\boldsymbol{\psi}') \prod_{j=1}^{k} \left(L^{n} (\phi_{j}',\psi_{j}') / L^{n} \left(\bar{X}^n_{j},\left(\tilde{S}^n_{j}\right)^2\right)\right) d\boldsymbol{\psi}' d\boldsymbol{\phi}'}.
\end{align}
The two integrals in \eqref{eq:post_probi} primarily concentrate around the points where the integrands achieve their respective maxima over $\Xi_i$ and $\Rb^k \times \Rb^k_+$, and the impact of the prior density $\pi^{0}(\boldsymbol{\phi},\boldsymbol{\psi})$ vanishes as $n \to \infty$. By analyzing the rate at which the maximum values of the two integrands converge, we establish the claim of Theorem \ref{thm:pcs}.

\subsection{Optimality Conditions}\label{sec:opt_cond}

Analogously to (\ref{eq:glynnocba}), we formulate the optimization problem
\begin{equation}\label{eq:newocba}
	\max_{\boldsymbol{\alpha}} \min_{i\neq i^*} \mathcal{V}_i\left(\alpha_i,\alpha_{i^*}\right) \quad \mbox{s.t. } \sum_{i=1}^k \alpha_i = 1, \alpha_i \geq 0\text{ for }i=1,2,...,k.
\end{equation}
For our analysis, it is sometimes convenient to rewrite $\mathcal{V}_i$ as a function of a single variable $r=\frac{\alpha_i}{\alpha_{i^*}}$. Define
\begin{align}
	&g_i( \phi_i, r ) \triangleq r  \log (1 + (\mu_i-\phi_i)^2/\sigma_i^2 ) +  \log (1 + (\mu_{i^*}-\phi_{i})^2/\sigma_{i^*}^2 ), \nonumber \\
	&\Wcal_i( r ) \triangleq \min_{\phi_{i} } g_i( \phi_i, r ). \label{eq:taudef}
\end{align}
It is easy to see that $\Vcal_i\left(\alpha_i,\alpha_{i^*}\right) = \frac{\alpha_{i^*}}{2} \Wcal_i( r )$, and the same $\phi_i$ optimizes both quantities.

Either way, the minimization problem in (\ref{eq:vdef}) and (\ref{eq:taudef}) is non-convex. Observe that
\begin{align}
	\frac{\partial g_i\left(\phi_i, r\right)}{\partial \phi_i } = \frac{ r (\phi_{i}-\mu_i) ( \sigma_{i^*}^2 + (\phi_{i}-\mu_{i^*})^2 ) + (\phi_{i}-\mu_{i^*}) (\sigma_i^2 + (\phi_{i}-\mu_i)^2) }{ (\sigma_i^2 + (\phi_{i}-\mu_i)^2) (\sigma_{i^*}^2 + (\phi_{i}-\mu_{i^*})^2) }.  \label{eq:v_deriv}
\end{align}
The numerator of (\ref{eq:v_deriv}) is a cubic polynomial of $\phi_i$ and may have up to three real roots. Consequently, the minimization problem in (\ref{eq:taudef}) may have multiple local optimal solutions, which, however, could have the same objective value. The following example provides an illustration.

\begin{example}\label{rem:disc}
	Let $\mu_i=0$, $\mu_{i^*} = 10$, and $\sigma_i=\sigma_{i^*}=1$. In Figure \ref{fig:gplot}, we plot $g_i( \phi_i, r )$ with $r = 0.9$, $1$ and $1.1$ respectively. We can observe that: (i) when $r = 1$, both $\phi_i \approx 0.101$ and $\phi_i \approx 9.899$ are global optimal solutions and have the same objective value; (ii) when $r = 0.9$, the global optimal solution is unique and near $\mu_{i^*} = 10$; and (iii) when $r = 1.1$, the global optimal solution is unique and near $\mu_{i} = 0$.
	
	\begin{figure}[htbp]
		\caption{Plot of $g_i( \phi_i, r )$ for the three cases considered in Example \ref{rem:disc}.}\label{fig:gplot}	\includegraphics[width=0.99\textwidth]{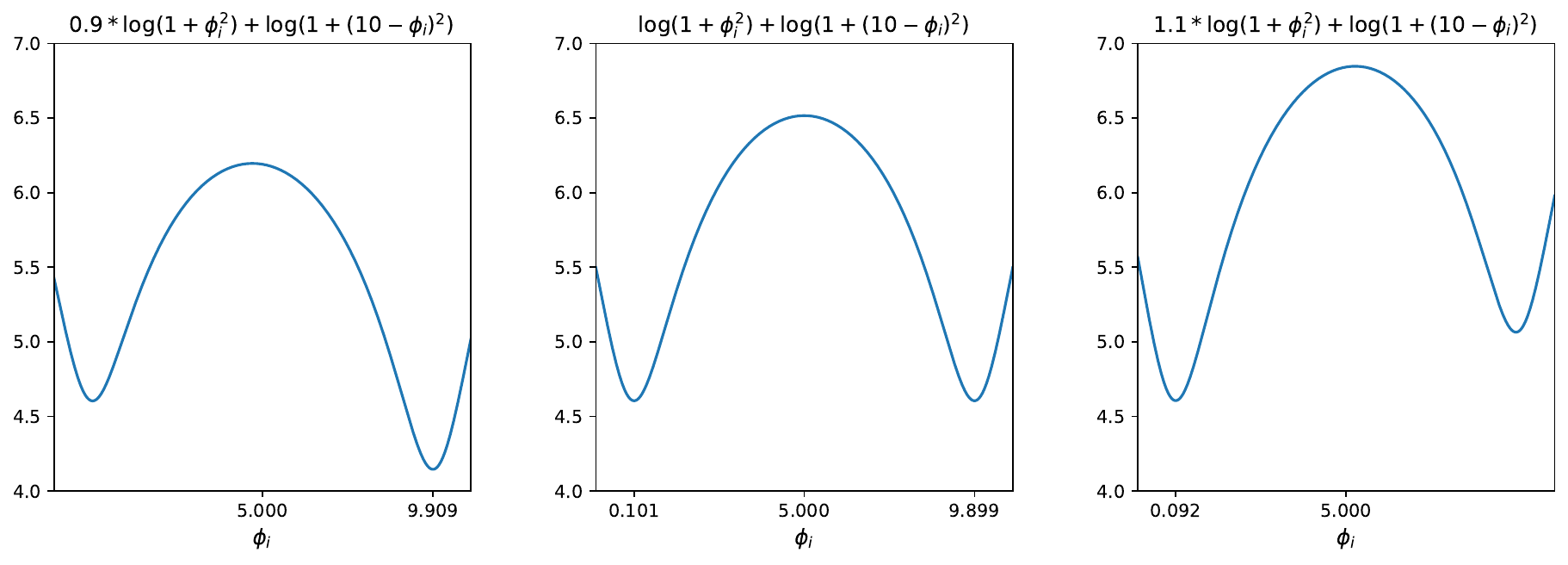}
		
	\end{figure}
	
	Observe that the presence of two globally optimal solutions creates a discontinuity: when $r = 1$, $\arg\min_{\phi_i} g_i\left(\phi_i,r\right)$ is a set containing two values, which are approximately $0.101$ and $9.899$. Then, when $r < 1$, we have $\arg\min_{\phi_i} g_i\left(\phi_i,r\right) > 9.899$, and when $r > 1$, we have $\arg\min_{\phi_i} g_i\left(\phi_i,r\right) < 0.101$.
\end{example}

The following technical result formally establishes the existence of the discontinuity observed in Example \ref{rem:disc}. The jump in $\arg\min_{\phi_i} g_i\left(\phi_i,r\right)$ does not have to occur at $r = 1$, as in Example \ref{rem:disc}.

\begin{lemma}\label{lem:phimono0}
	Let $\phi^{\min}_i\left(r\right)$ and $\phi^{\max}_i\left(r\right)$ be the smallest and largest elements, respectively, of the set $\arg\min_{\phi_i} g_i\left(\phi_i,r\right)$. The following properties hold:
	\begin{itemize}	
		\item[i)] If $r > r'$, then $\phi^{\max}_i(r) < \phi^{\min}_i(r')$.
		\item[ii)] Both $\phi^{\min}_i\left(r\right)$ and $\phi^{\max}_i\left(r\right)$ are in the interval $\left[\mu_i,\mu_{i^*}\right]$, with
		\begin{eqnarray*}
			\phi^{\min}_i\left(0\right) = \phi^{\max}_i\left(0\right) &=& \mu_{i^*},\\
			\phi^{\min}_i\left(\infty\right) = \phi^{\max}_i\left(\infty\right) &=& \mu_{i}.
		\end{eqnarray*}
		\item[iii)] Suppose that $r \leq \frac{1}{b}$ for some $b \geq 1$. Then, there exists $\eta\left(b\right)$ such that $\lim_{b\rightarrow\infty} \eta\left(b\right) = 0$ and $\mu_{i^*} - \phi^{\min}_i\left(r\right) \leq \eta\left(b\right)\left(\mu_{i^*}-\mu_i\right)$. Analogously, if $r \geq b \geq 1$, then $\phi^{\max}_i\left(r\right) - \mu_i \leq \eta\left(b\right)\left(\mu_{i^*}-\mu_i\right)$.
		\item[iv)] Let $\left\{r^l\right\}^{\infty}_{l=0}$ be a sequence. If $r^l \searrow r$, then $\phi^{\max}_i\left(r^l\right) \rightarrow \phi^{\min}_i\left(r\right)$. If $r^l \nearrow r$, then $\phi^{\min}_i\left(r^l\right)\rightarrow\phi^{\max}_i\left(r\right)$.
	\end{itemize}
\end{lemma}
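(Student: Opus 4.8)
The plan is to reduce the whole lemma to elementary one-variable monotonicity of the two summands of $g_i$. Write $g_i(\phi,r)=r\,p(\phi)+q(\phi)$, where $p(\phi)=\log\bigl(1+(\mu_i-\phi)^2/\sigma_i^2\bigr)$ and $q(\phi)=\log\bigl(1+(\mu_{i^*}-\phi)^2/\sigma_{i^*}^2\bigr)$, and recall $\mu_i<\mu_{i^*}$ because $i^*$ is the unique best design, so $p$ is strictly increasing on $[\mu_i,\infty)$ and $q$ is strictly decreasing on $(-\infty,\mu_{i^*}]$. Two preliminary facts carry the argument. First, $g_i(\cdot,r)$ is smooth and coercive, so a minimizer exists, and because the numerator of $\partial g_i/\partial\phi$ in \eqref{eq:v_deriv} is a genuine cubic in $\phi$ (leading coefficient $r+1>0$), $g_i(\cdot,r)$ has at most three critical points; hence $\arg\min_{\phi_i}g_i(\phi_i,r)$ is finite and $\phi^{\min}_i(r),\phi^{\max}_i(r)$ are well defined. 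Second, for $r\in(0,\infty)$ one reads off \eqref{eq:v_deriv} that $\partial g_i/\partial\phi<0$ on $(-\infty,\mu_i]$ (there $p'\le 0$ and $q'<0$) and $\partial g_i/\partial\phi>0$ on $[\mu_{i^*},\infty)$, so every global minimizer lies in the open interval $(\mu_i,\mu_{i^*})$ and in particular is an interior critical point. This gives the interval claim of (ii) immediately; the endpoint values follow because $g_i(\cdot,0)=q$ is uniquely minimized at $\mu_{i^*}$, while for $r\to\infty$ the bound $r\,p(\phi_r)\le g_i(\phi_r,r)\le g_i(\mu_i,r)=q(\mu_i)$ forces $p(\phi_r)\to 0$ and hence $\phi_r\to\mu_i$, since $\phi_r\in[\mu_i,\mu_{i^*}]$ and $p$ vanishes only at $\mu_i$.

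For (i), fix $r>r'>0$ and let $\phi,\phi'$ minimize $g_i(\cdot,r),g_i(\cdot,r')$. Adding the optimality inequalities $r\,p(\phi)+q(\phi)\le r\,p(\phi')+q(\phi')$ and $r'p(\phi')+q(\phi')\le r'p(\phi)+q(\phi)$ gives $(r-r')\bigl(p(\phi)-p(\phi')\bigr)\le 0$, hence $p(\phi)\le p(\phi')$, hence $\phi\le\phi'$ since $p$ is strictly increasing on $[\mu_i,\mu_{i^*}]$. To upgrade this to a strict inequality, suppose $\phi=\phi'$; both are interior critical points, so $r\,p'(\phi)+q'(\phi)=0=r'p'(\phi)+q'(\phi)$, which forces $p'(\phi)=0$, i.e.\ $\phi=\mu_i$, contradicting $\phi\in(\mu_i,\mu_{i^*})$. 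Taking $\phi=\phi^{\max}_i(r)$ and $\phi'=\phi^{\min}_i(r')$ yields $\phi^{\max}_i(r)<\phi^{\min}_i(r')$; the degenerate cases $r'=0$ and $r=\infty$ are immediate from (ii), where one of the two quantities equals $\mu_{i^*}$ or $\mu_i$.

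Part (iii) is the quantitative refinement of the same first-order condition. At an interior minimizer $\phi_r$ we have $r\,p'(\phi_r)=-q'(\phi_r)$; using $p'(\phi)=2(\phi-\mu_i)/\bigl(\sigma_i^2+(\phi-\mu_i)^2\bigr)\le 1/\sigma_i$ together with $-q'(\phi)=2(\mu_{i^*}-\phi)/\bigl(\sigma_{i^*}^2+(\mu_{i^*}-\phi)^2\bigr)\ge 2(\mu_{i^*}-\phi)/\bigl(\sigma_{i^*}^2+(\mu_{i^*}-\mu_i)^2\bigr)$ for $\phi\in[\mu_i,\mu_{i^*}]$, we get $\mu_{i^*}-\phi_r\le r\bigl(\sigma_{i^*}^2+(\mu_{i^*}-\mu_i)^2\bigr)/(2\sigma_i)$, so with $r\le 1/b$ the ratio $(\mu_{i^*}-\phi_r)/(\mu_{i^*}-\mu_i)$ is at most $\eta(b)=\bigl(\sigma_{i^*}^2+(\mu_{i^*}-\mu_i)^2\bigr)/\bigl(2\sigma_i(\mu_{i^*}-\mu_i)b\bigr)\to 0$; the statement for $r\ge b$ is the mirror image obtained by swapping $p\leftrightarrow q$, $\sigma_i\leftrightarrow\sigma_{i^*}$ and $\mu_i\leftrightarrow\mu_{i^*}$ (one may take the maximum of the two resulting bounds as a common $\eta$). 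Finally, for (iv): if $r^l\searrow r$ then, by (i), $l\mapsto\phi^{\max}_i(r^l)$ is nondecreasing and bounded above by $\phi^{\min}_i(r)$, so it converges to some $L\le\phi^{\min}_i(r)$; passing to the limit (using continuity of $g_i$ in $(\phi,r)$) in $g_i(\phi^{\max}_i(r^l),r^l)\le g_i(\phi,r^l)$, valid for all $\phi$, shows $L$ minimizes $g_i(\cdot,r)$, so $L\ge\phi^{\min}_i(r)$ and therefore $L=\phi^{\min}_i(r)$; the case $r^l\nearrow r$ is symmetric with the roles of $\phi^{\min}_i$ and $\phi^{\max}_i$ interchanged. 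I expect the only genuine subtlety to be the strictness in (i): one must first confirm that the minimizers are interior, so that the first-order condition is available, and then exclude the coincidence $\phi^{\max}_i(r)=\phi^{\min}_i(r')$ via $p'=0\Rightarrow\phi=\mu_i$; the rest is bookkeeping, made slightly delicate only because $\arg\min_{\phi_i}g_i(\phi_i,r)$ is genuinely set-valued, so its smallest and largest elements must be tracked separately throughout.
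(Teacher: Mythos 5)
Your proof is correct and follows essentially the same route as the paper's: the same decomposition of $g_i$ into the two log terms, the same use of interiority and the stationarity condition to confine the minimizers to $(\mu_i,\mu_{i^*})$ and to derive (ii)--(iii), and the same monotonicity mechanism for (i) and (iv). The only differences are cosmetic --- your exchange-of-optimality-inequalities argument for the weak inequality in (i) and your direct limit-passage argument in (iv) repackage, respectively, the paper's explicit difference computation and its gap-contradiction argument, and your $\eta(b)$ differs from the paper's only in the constants, which is immaterial since the lemma only asserts existence of some $\eta(b)\to 0$.
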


Since $\phi^{\min}_i\left(r\right) \leq \phi^{\max}_i\left(r\right)$ by definition, Lemma \ref{lem:phimono0}(i) implies that both $\phi^{\min}_i\left(r\right)$ and $\phi^{\max}_i\left(r\right)$ are decreasing in $r$. Thus, in Example \ref{rem:disc}, $\phi^{\max}_i\left(r\right) \ge \phi^{\min}_i\left(r\right) \geq 9.899$ for $r < 1$, and $\phi^{\min}_i\left(r\right) \le \phi^{\max}_i\left(r\right) \leq 0.101$ for $r > 1$. Both functions have a jump from $9.899$ to $0.101$ occurring at $r=1$. Note that, by Lemma \ref{lem:phimono0}(ii)-(iii), both functions always take values in $[\mu_i,\mu_{i^*}]$, approaching $\mu_{i^*}$ when the ratio $r$ is very small and $\mu_i$ when it is large.

The discontinuity of $\phi^{\min}_i$ does not carry over to the individual rate functions $\Vcal_{i}$, which are always continuous in $\left(\alpha_i,\alpha_{i^*}\right)$. We can provide an upper bound on the gap between $\Vcal_i(\alpha_i,\alpha_{i^*})$ and $\Vcal_i(\alpha'_i,\alpha'_{i^*})$ in terms of the difference between $(\alpha_i,\alpha_{i^*})$ and $(\alpha'_i,\alpha'_{i^*})$. This and other useful properties of $\Vcal_i$ and $\Wcal_i$ are stated below.

\begin{lemma}\label{lem:vmono0}
	The following properties hold:
	\begin{itemize}
		\item[i)] $\Vcal_{i} (\alpha_i,\alpha_{i^*})$ is continuous in $(\alpha_i,\alpha_{i^*})$.	
		\item[ii)] Consider two pairs $\left(\alpha_i,\alpha_{i^*}\right)$ and $\left(\alpha'_i,\alpha'_{i^*}\right)$. For $\phi^* \in\left\{\phi^{\min}_i\left(\alpha_i/\alpha_{i^*}\right),\phi^{\max}_i\left(\alpha_i/\alpha_{i^*}\right)\right\}$,
		\begin{align*}
			2\Vcal_i(\alpha'_i,\alpha'_{i^*}) \le 2\Vcal_i (\alpha_i,\alpha_{i^*}) +  (\alpha'_i-\alpha_i ) \log \left(1 + \frac{(\mu_i-\phi^*)^2}{\sigma_i^2} \right) +  (\alpha'_{i^*}-\alpha_{i^*} ) \log \left(1 + \frac{(\mu_{i^*}-\phi^*)^2}{\sigma_{i^*}^2} \right).
		\end{align*}
		Moreover, the inequality is strict when $\phi^* \notin \arg\min_{\phi_i} g_i\left(\phi_i,\alpha_i'/\alpha_{i^*}'\right)$.
		\item[iii)] The function $\Wcal_i\left(r\right)$ is strictly increasing in $r$ with $\Wcal_i\left(0\right) = 0$.
	\end{itemize}
\end{lemma}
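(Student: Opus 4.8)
The plan is to derive all three parts from the envelope structure of $\Vcal_i$ and $\Wcal_i$ as minima of the smooth function $g_i$, together with the monotonicity facts already established in Lemma \ref{lem:phimono0}. For part (ii), which I expect to be the workhorse, I would fix the pair $(\alpha_i,\alpha_{i^*})$ and let $\phi^*$ be either the smallest or largest minimizer of $g_i(\cdot,\alpha_i/\alpha_{i^*})$. Then $2\Vcal_i(\alpha_i,\alpha_{i^*})$ equals the bracketed expression in \eqref{eq:vdef} evaluated at $\phi^*$. Now plug the \emph{same} point $\phi^*$ into the expression defining $\Vcal_i(\alpha_i',\alpha_{i^*}')$: since $\Vcal_i(\alpha_i',\alpha_{i^*}')$ is the minimum over all $\phi_i$, it is bounded above by this particular value, namely $\alpha_i'\log(1+(\mu_i-\phi^*)^2/\sigma_i^2) + \alpha_{i^*}'\log(1+(\mu_{i^*}-\phi^*)^2/\sigma_{i^*}^2)$, all divided by $2$. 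Subtracting the expression for $2\Vcal_i(\alpha_i,\alpha_{i^*})$ evaluated at the same $\phi^*$ gives exactly the claimed inequality, with the difference terms $(\alpha_i'-\alpha_i)$ and $(\alpha_{i^*}'-\alpha_{i^*})$ multiplying the two log-terms. The strictness claim follows because, if $\phi^*$ is not itself a minimizer of $g_i(\cdot,\alpha_i'/\alpha_{i^*}')$, then $\Vcal_i(\alpha_i',\alpha_{i^*}')$ is \emph{strictly} below the value of the bracket at $\phi^*$, so the inequality is strict.

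Part (i) then follows quickly from part (ii): applying the bound in both directions (swapping the roles of the two pairs) sandwiches $|\Vcal_i(\alpha_i',\alpha_{i^*}') - \Vcal_i(\alpha_i,\alpha_{i^*})|$ between quantities that are linear in $|\alpha_i'-\alpha_i|$ and $|\alpha_{i^*}'-\alpha_{i^*}|$, with coefficients given by the log-terms evaluated at the relevant minimizers. By Lemma \ref{lem:phimono0}(ii), all minimizers lie in the compact interval $[\mu_i,\mu_{i^*}]$, so these log-coefficients are uniformly bounded; hence $\Vcal_i$ is (locally Lipschitz, and in particular) continuous. Part (iii): $\Wcal_i(0) = \min_{\phi_i}\log(1+(\mu_{i^*}-\phi_i)^2/\sigma_{i^*}^2) = 0$, attained at $\phi_i = \mu_{i^*}$, consistent with $\phi_i^{\min}(0) = \mu_{i^*}$ from Lemma \ref{lem:phimono0}(ii). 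For strict monotonicity, take $r > r'$; writing $g_i(\phi_i,r) = g_i(\phi_i,r') + (r-r')\log(1+(\mu_i-\phi_i)^2/\sigma_i^2)$ and evaluating at $\phi_i = \phi_i^{\min}(r)$, we get $\Wcal_i(r) = g_i(\phi_i^{\min}(r),r') + (r-r')\log(1+(\mu_i-\phi_i^{\min}(r))^2/\sigma_i^2) \ge \Wcal_i(r') + (r-r')\log(1+(\mu_i-\phi_i^{\min}(r))^2/\sigma_i^2)$, and by Lemma \ref{lem:phimono0}(i)--(ii) we have $\phi_i^{\min}(r) < \mu_{i^*}$ for $r > 0$, so the added term is strictly positive whenever $\phi_i^{\min}(r) \ne \mu_i$; one checks via Lemma \ref{lem:phimono0}(ii) that $\phi_i^{\min}(r) = \mu_i$ only in the limit $r \to \infty$, so strict inequality holds for all finite $r > r' \ge 0$.

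The main obstacle, I anticipate, is purely bookkeeping in part (ii): one must be careful about \emph{which} minimizer $\phi^*$ is used, because the discontinuity in $\arg\min_{\phi_i} g_i$ means that $\phi^{\min}_i$ and $\phi^{\max}_i$ can differ substantially, and the lemma is asserted for either choice. The key observation that makes this painless is that \emph{any} minimizer of $g_i(\cdot, \alpha_i/\alpha_{i^*})$ gives the \emph{same} value $2\Vcal_i(\alpha_i,\alpha_{i^*})$ (that is the definition of a minimizer), so the argument above goes through verbatim with either $\phi^*=\phi_i^{\min}$ or $\phi^*=\phi_i^{\max}$; no case analysis on the location of the jump is needed. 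The only genuinely delicate point is the strictness assertion, where one must confirm that "$\phi^* \notin \arg\min_{\phi_i} g_i(\phi_i,\alpha_i'/\alpha_{i^*}')$" is exactly the condition under which the upper bound $\Vcal_i(\alpha_i',\alpha_{i^*}') \le \tfrac12 g_i(\phi^*,\alpha_i'/\alpha_{i^*}')$ fails to be tight — which is immediate from the definition of $\Wcal_i$ as a minimum.
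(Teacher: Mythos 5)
Your proposal is correct and follows essentially the same route as the paper: part (ii) is proved by exactly the same envelope argument (evaluate the objective at a minimizer for one weight pair and use it as a feasible point for the other), part (i) is an immediate consequence of the minimum-of-a-continuous-family structure (the paper asserts this directly, you derive it from (ii) with bounded log-coefficients — both fine), and part (iii) uses the same decomposition $g_i(\phi,r)=g_i(\phi,r')+(r-r')\log(1+(\mu_i-\phi)^2/\sigma_i^2)$. The one minor divergence is the source of strictness in (iii): the paper gets it from Lemma \ref{lem:phimono0}(i) (the evaluation point $\phi_i^{\max}(r)$ satisfies $\phi_i^{\max}(r)<\phi_i^{\min}(r')$ and hence is not a minimizer at ratio $r'$), whereas you get it from the added term being positive, which requires $\phi_i^{\min}(r)>\mu_i$ for finite $r$ — true, but it follows from combining Lemma \ref{lem:phimono0}(i) and (ii) (pick $r''>r$ and note $\phi_i^{\min}(r)>\phi_i^{\max}(r'')\ge\mu_i$) rather than from (ii) alone as you suggest.
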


While the continuity of $\Vcal_i$ holds out hope for the tractability of (\ref{eq:newocba}), it is difficult to approach this problem using the standard tools of convex optimization, that is, by deriving the first-order conditions of the problem as done in \cite{glynn2004} and many other papers. Since $\Vcal_i$ depends on $\left(\alpha_i,\alpha_{i^*}\right)$ through $\phi^{\min}_i$, we cannot differentiate it. Instead, we proceed in two steps. First, we take an arbitrary constant $0 < \bar{\alpha}_{i^*} < 1$ and solve (\ref{eq:newocba}) under the additional constraint $\alpha_{i^*} = \bar{\alpha}_{i^*}$. The solution is characterized in the following results. The proofs are deferred to the EC.

\begin{lemma}\label{lem:vequal}
	Let $0 < \bar{\alpha}_{i^*} < 1$. The following statements hold:
	
	\begin{itemize}
		\item[i)] There exists a unique allocation $\boldsymbol{\alpha}^f\left(\bar{\alpha}_{i^*}\right)$ satisfying $\alpha_i^f\left(\bar{\alpha}_{i^*}\right) > 0$ for all $i$, $\sum_i \alpha_i^f\left(\bar{\alpha}_{i^*}\right) = 1$, and
		\begin{eqnarray}
			\Vcal_{i}(\alpha_{i}^f\left(\bar{\alpha}_{i^*}\right),\alpha_{i^*}^f\left(\bar{\alpha}_{i^*}\right)) &=& \Vcal_{i'}(\alpha_{i'}^f\left(\bar{\alpha}_{i^*}\right),\alpha_{i^*}^f\left(\bar{\alpha}_{i^*}\right), \  i,i' \ne i^*,\label{eq:balance}\\
			\alpha_{i^*}^f\left(\bar{\alpha}_{i^*}\right) &=& \bar{\alpha}_{i^*}.\label{eq:alphaconstraint}
		\end{eqnarray}
		\item[ii)] Take $\bar{\alpha}'_{i^*}$ such that $0 < \bar{\alpha}_{i^*} - \bar{\alpha}_{i^*}' \le \Delta$. Then $0 < \alpha_i^f\left(\bar{\alpha}'_{i^*}\right) - \alpha_i^f\left(\bar{\alpha}_{i^*}\right) \le \Delta$ for all $i \neq i^*$.
		
		\item[iii)] $\boldsymbol{\alpha}^f\left(\bar{\alpha}_{i^*}\right)$ is the unique optimal solution to the maximization problem
		\begin{align}\label{eq:rateoptimization_toptwo}
			\max_{\balpha} \min_{i \ne i^*} \mathcal{V}_i\left(\alpha_i,\alpha_{i^*}\right)
		\end{align}
		subject to the constraints $\boldsymbol{\alpha} \geq 0$, $\sum_i \alpha_i = 1$, and $\alpha_{i^*} = \bar{\alpha}_{i^*}$.
	\end{itemize}
\end{lemma}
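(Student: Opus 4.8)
The plan is to analyze the reduced problem \eqref{eq:rateoptimization_toptwo} with $\alpha_{i^*}$ fixed at $\bar\alpha_{i^*}$, so that the remaining proportions $\{\alpha_i\}_{i\ne i^*}$ must sum to $1-\bar\alpha_{i^*}$ and the ratios $r_i=\alpha_i/\bar\alpha_{i^*}$ are the natural decision variables. By Lemma \ref{lem:vmono0}(iii), each $\Wcal_i(r_i)$ is strictly increasing with $\Wcal_i(0)=0$; since $\Vcal_i(\alpha_i,\bar\alpha_{i^*})=\tfrac{\bar\alpha_{i^*}}{2}\Wcal_i(r_i)$, the objective $\min_{i\ne i^*}\Vcal_i$ is a minimum of strictly increasing functions of the individual $r_i$'s subject to $\sum_{i\ne i^*} r_i = (1-\bar\alpha_{i^*})/\bar\alpha_{i^*}$. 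First I would establish part (i): existence and uniqueness of the ``balanced'' allocation. For existence, I would define $W(r)=\min_i \Wcal_i$ along the curve where all $\Wcal_i(r_i)$ are equal --- i.e., use the strict monotonicity and continuity of each $\Wcal_i$ (continuity follows from Lemma \ref{lem:vmono0}(i)) to invert and set $r_i = \Wcal_i^{-1}(t)$ for a common level $t$; then $\sum_{i\ne i^*}\Wcal_i^{-1}(t)$ is continuous and strictly increasing in $t$, ranges from $0$ (at $t=0$) to $+\infty$, so there is a unique $t^\star$ hitting the budget $(1-\bar\alpha_{i^*})/\bar\alpha_{i^*}$. That yields $\boldsymbol\alpha^f(\bar\alpha_{i^*})$ satisfying \eqref{eq:balance}--\eqref{eq:alphaconstraint}, with all entries strictly positive. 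Uniqueness of the balanced point is immediate from strict monotonicity: if two allocations both equalize the $\Vcal_i$'s and satisfy the budget, their common level $t$ must coincide, hence so must every $r_i$.

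Next I would prove part (iii), that this balanced allocation is in fact the \emph{maximizer} of $\min_{i\ne i^*}\Vcal_i$ over the reduced feasible set. The argument is a standard exchange/perturbation argument adapted to the non-smooth setting. Suppose $\boldsymbol\alpha$ is feasible but not equal to $\boldsymbol\alpha^f$. Then either (a) some $\Vcal_i(\alpha_i,\bar\alpha_{i^*}) < \Vcal_i(\alpha_i^f,\bar\alpha_{i^*})$, forcing $\min_{i\ne i^*}\Vcal_i(\boldsymbol\alpha) < \min_{i\ne i^*}\Vcal_i(\boldsymbol\alpha^f)$ after noting that $\alpha_i<\alpha_i^f$ for at least one $i$ (since the budgets match and the allocations differ) and using strict monotonicity of $\Wcal_i$ --- this shows no feasible point can strictly beat the balanced level; or (b) I argue directly that if $\boldsymbol\alpha$ is not balanced, there is an index $j$ with $\Vcal_j(\boldsymbol\alpha)=\min_{i\ne i^*}\Vcal_i(\boldsymbol\alpha)$ whose proportion is strictly smaller than $\alpha_j^f$, while some other index has proportion strictly larger than $\alpha^f$; shifting an infinitesimal amount of budget to $j$ raises $\Vcal_j$ and, by continuity, does not drop the other coordinates below the (strictly larger) current minimum, contradicting optimality. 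Either route establishes that the unique maximizer of \eqref{eq:rateoptimization_toptwo} under $\alpha_{i^*}=\bar\alpha_{i^*}$ is precisely $\boldsymbol\alpha^f(\bar\alpha_{i^*})$.

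For part (ii), the Lipschitz-type sensitivity bound, I would fix $\bar\alpha_{i^*}>\bar\alpha_{i^*}'$ with $\bar\alpha_{i^*}-\bar\alpha_{i^*}'\le\Delta$ and compare the two balanced allocations via the monotone curves used in part (i). The lower bound $\alpha_i^f(\bar\alpha_{i^*}')>\alpha_i^f(\bar\alpha_{i^*})$ follows because decreasing $\bar\alpha_{i^*}$ increases the residual budget $1-\bar\alpha_{i^*}$ that must be split among the non-best designs, and each $r_i=\Wcal_i^{-1}(t)$ is increasing in the common level $t$, which in turn increases; monotonicity of all $\Wcal_i^{-1}$ then forces every $\alpha_i^f$ to go up. For the upper bound $\alpha_i^f(\bar\alpha_{i^*}')-\alpha_i^f(\bar\alpha_{i^*})\le\Delta$, I would use the total-budget identity: $\sum_{i\ne i^*}(\alpha_i^f(\bar\alpha_{i^*}')-\alpha_i^f(\bar\alpha_{i^*})) = \bar\alpha_{i^*}-\bar\alpha_{i^*}'\le\Delta$, and since by the first half of (ii) every summand is strictly positive, each individual difference is bounded by the sum $\Delta$. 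This is the cleanest piece and requires essentially no estimates beyond the monotonicity already established.

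The main obstacle, I expect, is part (iii) --- specifically, handling the optimality argument \emph{without} first-order (KKT) conditions, since $\Vcal_i$ is not differentiable in $(\alpha_i,\alpha_{i^*})$ wherever the discontinuity of Lemma \ref{lem:phimono0} is active. The resolution is to avoid differentiating at all and instead lean on Lemma \ref{lem:vmono0}(ii), which furnishes a one-sided linear \emph{majorant} of $\Vcal_i$ valid at either minimizer $\phi^{\min}_i$ or $\phi^{\max}_i$, together with the strict-inequality clause when the perturbed point is not itself a minimizer. Using this majorant in place of a gradient in the exchange argument lets the perturbation proof go through even across the jump: one picks the appropriate one-sided minimizer ($\phi^{\min}$ or $\phi^{\max}$) depending on the direction of the budget shift, so that the linear bound from Lemma \ref{lem:vmono0}(ii) points the right way. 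A secondary but real technical nuisance is verifying that the common-level map $t\mapsto\sum_{i\ne i^*}\Wcal_i^{-1}(t)$ is genuinely continuous and surjective onto $(0,\infty)$ --- this rests on $\Wcal_i$ being continuous (Lemma \ref{lem:vmono0}(i)), strictly increasing (Lemma \ref{lem:vmono0}(iii)), $\Wcal_i(0)=0$, and $\Wcal_i(r)\to\infty$ as $r\to\infty$ (which one reads off from \eqref{eq:taudef}, since $g_i(\phi_i,r)\ge r\log(1+(\mu_i-\phi^{\max}_i(\infty'))^2/\sigma_i^2)$ grows linearly in $r$ once $\phi_i$ is bounded away from $\mu_i$, and Lemma \ref{lem:phimono0}(iii) controls how fast the minimizer approaches $\mu_i$).
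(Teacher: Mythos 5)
Your part (i) takes a genuinely different, and arguably cleaner, route than the paper: you invert each $\Wcal_i$ and match a common level $t$, whereas the paper runs an induction on the number of non-best designs with nested intermediate-value-theorem arguments. The level-matching idea is sound, and your part (iii), route (a), is essentially the paper's own argument --- strict monotonicity of $\Wcal_i$ on the fixed-$\alpha_{i^*}$ slice suffices, and the non-differentiability concern you raise there is a non-issue (the one-sided majorants of Lemma \ref{lem:vmono0}(ii) are only needed later, in the proof of Theorem \ref{th:ocba}, where $\alpha_{i^*}$ is allowed to vary). One repairable error in part (i): your claim that $\Wcal_i(r)\to\infty$ as $r\to\infty$ is false. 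Plugging $\phi_i=\mu_i$ into \eqref{eq:taudef} gives $\Wcal_i(r)\le\log\left(1+(\mu_{i^*}-\mu_i)^2/\sigma_{i^*}^2\right)$ for every $r$, so $\Wcal_i$ is bounded; your lower bound fails because the minimizer is not bounded away from $\mu_i$ --- by Lemma \ref{lem:phimono0}(iii) it approaches $\mu_i$ at rate $O(1/r)$. Surjectivity of $t\mapsto\sum_{i\ne i^*}\Wcal_i^{-1}(t)$ onto $[0,\infty)$ still holds, but for a different reason: $\Wcal_i^{-1}(t)\to\infty$ as $t$ increases to the finite supremum of $\Wcal_i$, so the sum still exhausts $[0,\infty)$ on the interval $[0,\min_i\sup_r\Wcal_i(r))$.

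The genuine gap is in part (ii). You correctly deduce that lowering $\bar{\alpha}_{i^*}$ raises the residual ratio budget $(1-\bar{\alpha}_{i^*})/\bar{\alpha}_{i^*}$, hence the common level $t$, hence every ratio $r_i=\Wcal_i^{-1}(t)$. But the quantity you must control is $\alpha_i^f=r_i\,\bar{\alpha}_{i^*}$, the product of an increasing factor and a decreasing factor, and ``every $r_i$ goes up'' does not imply ``every $\alpha_i^f$ goes up.'' The implication in fact fails at the level of generality you are working at: for abstract continuous, strictly increasing, bounded functions with $\Wcal_i(0)=0$ (e.g., one that saturates quickly at a low level and one that rises nearly linearly to a much higher level), solving the balance and budget equations shows that the proportion assigned to the slowly saturating design can \emph{decrease} as $\bar{\alpha}_{i^*}$ decreases. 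So the strict monotonicity of each $\alpha_i^f$ cannot be extracted from the qualitative properties of $\Wcal_i$ that you invoke; it needs either the specific structure of \eqref{eq:taudef} or an argument that, like the paper's inductive construction, tracks the allocations themselves rather than the ratios. This gap propagates: your upper bound $\alpha_i^f(\bar{\alpha}'_{i^*})-\alpha_i^f(\bar{\alpha}_{i^*})\le\Delta$ is obtained by bounding each increment by the sum $\sum_{i\ne i^*}\left(\alpha_i^f(\bar{\alpha}'_{i^*})-\alpha_i^f(\bar{\alpha}_{i^*})\right)=\bar{\alpha}_{i^*}-\bar{\alpha}'_{i^*}$, which requires every increment to be nonnegative, i.e., exactly the lower bound that is missing.
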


The existence and uniqueness of $\boldsymbol{\alpha}^f\left(\bar{\alpha}_{i^*}\right)$ shown in Lemma \ref{lem:vequal}(i) are obtained by applying the intermediate value theorem to the difference between $\Vcal_i$ and $\Vcal_{i'}$, $i \ne i'$. As a byproduct, we obtain Lemma \ref{lem:vequal}(ii) that shows the continuity and strict monotonicity of $\alpha_i^f\left(\bar{\alpha}_{i^*}\right)$ about $\bar{\alpha}_{i^*}$, $i \ne i^*$. Lemma \ref{lem:vequal}(iii) shows the optimality of $\boldsymbol{\alpha}^f\left(\bar{\alpha}_{i^*}\right)$ by applying the strict monotonicity of $\alpha_i^f\left(\bar{\alpha}_{i^*}\right)$ and $\Wcal_i$, with the implication that the optimal solution to (\ref{eq:newocba}) without the additional constraint $\alpha_{i^*} = \bar{\alpha}_{i^*}$ must satisfy \eqref{eq:balance}.

The second step is to remove the extra constraint. Let $\boldsymbol{\alpha}^*$ denote the optimal solution to (\ref{eq:newocba}). Then, $\boldsymbol{\alpha}^*$ is also the allocation obtained by setting $\bar{\alpha}_{i^*} = \alpha^*_{i^*}$ in Lemma \ref{lem:vequal}(i). It remains, therefore, to characterize the value of $\alpha^*_{i^*}$.

Let $r^f_i\left(\bar{\alpha}_{i^*}\right) \triangleq \alpha^f_i\left(\bar{\alpha}_{i^*}\right)/\bar{\alpha}_{i^*}$ for any $0 < \bar{\alpha}_{i^*} < 1$. Define
\begin{eqnarray}
	\Ucal^{\min}_i (r) &\triangleq & \log \left(1 + (\mu_{i}-\phi^{\min}_{i}(r))^2/\sigma_{i}^2 \right),\label{eq:ucaldef}\\
	\Ucal^{*,\min}_i(r) &\triangleq &  \log \left(1 + (\mu_{i^*}-\phi^{\min}_{i}(r))^2/\sigma_{i^*}^2 \right).\label{eq:ucalmaxdef}
\end{eqnarray}
Analogously, define $\Ucal^{\max}_i$ and $\Ucal^{*,\max}_i$ by replacing $\phi^{\min}_i$ in (\ref{eq:ucaldef})-(\ref{eq:ucalmaxdef}) by $\phi^{\max}_i$.

\begin{lemma}\label{lem:umono}
	For any $\bar{\alpha}_{i^*}$, let $\boldsymbol{\alpha}^f\left(\bar{\alpha}_{i^*}\right)$ be the allocation obtained from Lemma \ref{lem:vequal}(i). The following properties hold:		
	
	\begin{itemize}
		\item[i)] The mapping
		\begin{equation*}
			\bar{\alpha}_{i^*} \mapsto \sum_{i\neq i^*} \frac{\Ucal^{*,\min}_i\left( r^f_i\left(\bar{\alpha}_{i^*}\right) \right)}{\Ucal^{\min}_i\left( r^f_i\left(\bar{\alpha}_{i^*}\right) \right)}
		\end{equation*}
		is strictly decreasing in $\bar{\alpha}_{i^*}$.
		\item[ii)] The maximum
		\begin{align}\label{ocba_t}
			&\alpha_{i^*}^* \triangleq \max \left\{ \bar{\alpha}_{i^*}\,:\, \sum_{i\neq i^*} \frac{\Ucal^{*,\min}_i\left( r^f_i\left(\bar{\alpha}_{i^*}\right) \right)}{\Ucal^{\min}_i\left(r^f_i\left(\bar{\alpha}_{i^*}\right) \right)}\geq 1\right\}
		\end{align}
		exists and satisfies $0< \alpha_{i^*}^* < 1$.
		\item[iii)] The value $\alpha_{i^*}^*$ obtained in (\ref{ocba_t}) satisfies
		\begin{equation*}
			\sum_{i\neq i^*} \frac{\Ucal^{*,\max}_i\left( r^f_i\left(\alpha^*_{i^*}\right) \right)}{\Ucal^{\max}_i\left( r^f_i\left(\alpha^*_{i^*}\right) \right)}\leq 1.
		\end{equation*}
	\end{itemize}
\end{lemma}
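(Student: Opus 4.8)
\textbf{Proof plan for Lemma \ref{lem:umono}.} The guiding observation is that every quantity in the statement depends on $\bar{\alpha}_{i^*}$ only through the ratios $r^f_i(\bar{\alpha}_{i^*})=\alpha^f_i(\bar{\alpha}_{i^*})/\bar{\alpha}_{i^*}$, $i\ne i^*$, and on $r$ only through the single-variable minimizers $\phi^{\min}_i(r)$ or $\phi^{\max}_i(r)$. So I would first record two monotonicity facts and use them throughout. First, by Lemma \ref{lem:vequal}(ii) each $\alpha^f_i(\cdot)$ is $1$-Lipschitz and strictly decreasing, hence $r^f_i(\cdot)$ is continuous and strictly decreasing on $(0,1)$; moreover $\sum_{i\ne i^*}r^f_i(\bar{\alpha}_{i^*})=(1-\bar{\alpha}_{i^*})/\bar{\alpha}_{i^*}$, which tends to $\infty$ as $\bar{\alpha}_{i^*}\downarrow 0$ and to $0$ as $\bar{\alpha}_{i^*}\uparrow 1$. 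Second, by Lemma \ref{lem:phimono0}(i)--(ii), $\phi^{\min}_i(\cdot)$ is strictly decreasing and lies in $(\mu_i,\mu_{i^*})$ for $0<r<\infty$, with $\phi^{\min}_i(0)=\mu_{i^*}$, $\phi^{\min}_i(\infty)=\mu_i$, and Lemma \ref{lem:phimono0}(iii) quantifying the approach to the two endpoints.

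For part (i), note that on $(\mu_i,\mu_{i^*})$ the map $\phi\mapsto(\mu_{i^*}-\phi)^2$ is strictly decreasing while $\phi\mapsto(\mu_i-\phi)^2$ is strictly increasing; combined with the strict monotonicity of $\phi^{\min}_i$ this makes $\Ucal^{*,\min}_i(r)$ strictly increasing, $\Ucal^{\min}_i(r)$ strictly decreasing, and both strictly positive for $0<r<\infty$. Hence $r\mapsto\Ucal^{*,\min}_i(r)/\Ucal^{\min}_i(r)$ is strictly increasing, and composing with the strictly decreasing $r^f_i$ makes each summand of $F(\bar{\alpha}_{i^*}):=\sum_{i\ne i^*}\Ucal^{*,\min}_i(r^f_i(\bar{\alpha}_{i^*}))/\Ucal^{\min}_i(r^f_i(\bar{\alpha}_{i^*}))$ strictly decreasing in $\bar{\alpha}_{i^*}$; a finite sum of strictly decreasing functions is strictly decreasing.

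For part (ii), I would show that $\{\bar{\alpha}_{i^*}\in(0,1):F(\bar{\alpha}_{i^*})\ge 1\}$ is a nonempty down-interval of the form $(0,\alpha^*_{i^*}]$ with $\alpha^*_{i^*}\in(0,1)$. As $\bar{\alpha}_{i^*}\downarrow 0$, at least one $r^f_i\to\infty$ (because the $r^f_i$ sum to $(1-\bar{\alpha}_{i^*})/\bar{\alpha}_{i^*}\to\infty$), so by Lemma \ref{lem:phimono0}(iii) the corresponding $\phi^{\min}_i(r^f_i)\to\mu_i$, forcing $\Ucal^{\min}_i\to 0^+$ with $\Ucal^{*,\min}_i$ bounded away from $0$, hence $F\to\infty$; symmetrically, as $\bar{\alpha}_{i^*}\uparrow 1$ all $r^f_i\to 0$ and every summand $\to 0$, so $F\to 0$. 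Since $F$ is strictly decreasing (part (i)), the superlevel set is a down-interval; it is closed on the right because $F$ is left-continuous, which follows from the continuity of $r^f_i$ together with the right-continuity of $r\mapsto\phi^{\min}_i(r)$ (a consequence of Lemma \ref{lem:phimono0}(i),(iv) and the coercivity and joint continuity of $g_i$, which make $\Wcal_i$ continuous and the $\arg\min$ correspondence closed). This yields existence of $\alpha^*_{i^*}\in(0,1)$ with $F(\alpha^*_{i^*})\ge 1$, i.e.\ $\sum_{i\ne i^*}\Ucal^{*,\min}_i(r^f_i(\alpha^*_{i^*}))/\Ucal^{\min}_i(r^f_i(\alpha^*_{i^*}))\ge 1$. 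For part (iii), I would compute the right limit of $F$ at $\alpha^*_{i^*}$: as $\bar{\alpha}_{i^*}\downarrow\alpha^*_{i^*}$ we have $r^f_i(\bar{\alpha}_{i^*})\uparrow r^f_i(\alpha^*_{i^*})$, and Lemma \ref{lem:phimono0}(iv) gives $\phi^{\min}_i(r^f_i(\bar{\alpha}_{i^*}))\to\phi^{\max}_i(r^f_i(\alpha^*_{i^*}))$; since $\phi^{\max}_i(r^f_i(\alpha^*_{i^*}))\in(\mu_i,\mu_{i^*})$ keeps the denominators positive, substituting into the logarithms gives $F(\bar{\alpha}_{i^*})\to\sum_{i\ne i^*}\Ucal^{*,\max}_i(r^f_i(\alpha^*_{i^*}))/\Ucal^{\max}_i(r^f_i(\alpha^*_{i^*}))$. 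By maximality of $\alpha^*_{i^*}$ we have $F(\bar{\alpha}_{i^*})<1$ for all $\bar{\alpha}_{i^*}>\alpha^*_{i^*}$, so this limit is $\le 1$, which is exactly the claim; together with part (ii) it sandwiches the ``KKT-type'' quantity between its $\phi^{\max}$- and $\phi^{\min}$-versions.

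The hard part is the bookkeeping forced by the discontinuity of $\phi^{\min}_i$: one must pin down precisely which one-sided limits of $F$ coincide with the $\phi^{\min}$-version (needed so that the maximum in (ii) is attained, via left-continuity) and which coincide with the $\phi^{\max}$-version (needed for the $\Ucal^{\max}$ inequality in (iii)). This is where Lemma \ref{lem:phimono0}(i),(iv), the Lipschitz continuity of $\alpha^f_i$ from Lemma \ref{lem:vequal}(ii), and the coercivity of $g_i$ all have to be combined carefully; everything else is monotone-function manipulation.
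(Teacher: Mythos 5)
Your proposal is correct and follows essentially the same route as the paper: part (i) via the strict monotonicity of $\phi^{\min}_i$ (Lemma \ref{lem:phimono0}(i)) composed with the strict monotonicity of $r^f_i$ (Lemma \ref{lem:vequal}(ii)), part (ii) via the boundary limits from Lemma \ref{lem:phimono0}(ii)--(iii), and part (iii) via the one-sided limit $\phi^{\min}_i(r^l)\to\phi^{\max}_i(r)$ from Lemma \ref{lem:phimono0}(iv). The only cosmetic difference is that the paper establishes attainment of the maximum by showing $\sup\Acal^{\min}=\inf\Acal^{\max}$ and passing to the limit in the $\Ucal^{\max}$-version of the inequality, whereas you use left-continuity of the $\Ucal^{\min}$-version sum, which requires the right-continuity of $\phi^{\min}_i(\cdot)$ — a slight strengthening of Lemma \ref{lem:phimono0}(iv) that you correctly justify via closedness of the argmin correspondence together with Lemma \ref{lem:phimono0}(i).
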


The quantity $\alpha^*_{i^*}$ obtained from Lemma \ref{lem:umono}(ii) is precisely the optimal allocation to design $i^*$. We formally state the result, with the proof deferred to the EC. Essentially, we use Lemmas \ref{lem:phimono0}-\ref{lem:umono} to show that, for any $\bar{\alpha}_{i^*} \neq \alpha^*_{i^*}$,
\begin{equation*}
	\Vcal_i\left(\alpha^f_i\left(\bar{\alpha}_{i^*}\right),\bar{\alpha}_{i^*}\right) < \Vcal_i\left(\alpha^f_i\left(\alpha^*_{i^*}\right),\alpha^*_{i^*}\right), \qquad i\neq i^*,
\end{equation*}
which means that $\boldsymbol{\alpha}^f\left(\bar{\alpha}_{i^*}\right)$ achieves a lower objective value in problem (\ref{eq:newocba}) than $\boldsymbol{\alpha}^f\left(\alpha^*_{i^*}\right)$.

\begin{theorem}\label{th:ocba}
	The optimal solution $\boldsymbol{\alpha}^*$ to (\ref{eq:newocba}) is unique and satisfies
	\begin{eqnarray}
		\alpha_{i^*}^* &=& \max \left\{\bar{\alpha}_{i^*} \,:\, \sum_{i\neq i^*} \frac{\Ucal^{*,\min}_i\left( r^f_i\left(\bar{\alpha}_{i^*}\right) \right)}{\Ucal^{\min}_i\left( r^f_i\left(\bar{\alpha}_{i^*}\right) \right)}\geq 1\right\}, \label{eq:totalbalance}\\
		\Vcal_{i}( \alpha^*_i,\alpha^*_{i^*} ) &=& \Vcal_{j}( \alpha^*_{j},\alpha^*_{i^*} ), \qquad \forall i ,j \ne i^*.\label{eq:individualbalance}
	\end{eqnarray}
\end{theorem}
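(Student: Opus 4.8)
The plan is to reduce the max--min problem \eqref{eq:newocba} to a one-dimensional optimization over the coordinate $\alpha_{i^*}$, and then pin down its maximizer using the monotonicity facts in Lemmas~\ref{lem:vequal} and~\ref{lem:umono}. Throughout I abbreviate $\varphi(\bar\alpha_{i^*}):=\Vcal_i(\alpha^f_i(\bar\alpha_{i^*}),\bar\alpha_{i^*})$, which by \eqref{eq:balance}--\eqref{eq:alphaconstraint} does not depend on the choice of $i\ne i^*$, and $r^*_i:=r^f_i(\alpha^*_{i^*})$.

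\emph{Step 1 (reduction).} An optimal $\balpha^*$ exists because $\min_{i\ne i^*}\Vcal_i(\alpha_i,\alpha_{i^*})$ is continuous on the compact probability simplex by Lemma~\ref{lem:vmono0}(i). It must have all coordinates strictly positive: if $\alpha_{i^*}=0$, or $\alpha_i=0$ for some $i\ne i^*$, then $\Vcal_i=\tfrac{\alpha_{i^*}}{2}\Wcal_i(\alpha_i/\alpha_{i^*})=0$ for that $i$ by Lemma~\ref{lem:vmono0}(iii), so the objective is $0$, which is strictly beaten by, e.g., the uniform allocation (for which every $\Vcal_i>0$). Hence $\alpha^*_{i^*}\in(0,1)$, and by Lemma~\ref{lem:vequal}(iii) applied with $\bar\alpha_{i^*}=\alpha^*_{i^*}$ we get $\balpha^*=\balpha^f(\alpha^*_{i^*})$; in particular \eqref{eq:individualbalance} holds, being exactly \eqref{eq:balance}. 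Since each $\balpha^f(\bar\alpha_{i^*})$, $\bar\alpha_{i^*}\in(0,1)$, is feasible for \eqref{eq:newocba} with objective value $\varphi(\bar\alpha_{i^*})$, and Lemma~\ref{lem:vequal}(iii) says no other feasible allocation with $i^*$-coordinate $\bar\alpha_{i^*}$ does better, the optimal value of \eqref{eq:newocba} equals $\max_{\bar\alpha_{i^*}\in(0,1)}\varphi(\bar\alpha_{i^*})$, attained precisely at the allocations $\balpha^f(\bar\alpha_{i^*})$ with $\bar\alpha_{i^*}$ a maximizer of $\varphi$. It therefore remains to show that $\varphi$ is maximized over $(0,1)$ only at the value $\alpha^*_{i^*}$ of \eqref{ocba_t}: uniqueness of $\balpha^*$ then follows from the uniqueness in Lemma~\ref{lem:vequal}(i), and \eqref{eq:totalbalance} is immediate.

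\emph{Step 2 (comparison inequality).} Fix any $\bar\alpha'_{i^*}\in(0,1)$ with $\bar\alpha'_{i^*}\ne\alpha^*_{i^*}$ and set $\delta:=|\bar\alpha'_{i^*}-\alpha^*_{i^*}|>0$. Apply Lemma~\ref{lem:vmono0}(ii) to the pairs $(\alpha^f_i(\alpha^*_{i^*}),\alpha^*_{i^*})$ and $(\alpha^f_i(\bar\alpha'_{i^*}),\bar\alpha'_{i^*})$, taking $\phi^*=\phi^{\max}_i(r^*_i)$ when $\bar\alpha'_{i^*}>\alpha^*_{i^*}$ and $\phi^*=\phi^{\min}_i(r^*_i)$ when $\bar\alpha'_{i^*}<\alpha^*_{i^*}$. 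Since $\alpha^f_i$ is strictly decreasing in its argument for $i\ne i^*$ (Lemma~\ref{lem:vequal}(ii)), the numbers $w_i:=|\alpha^f_i(\alpha^*_{i^*})-\alpha^f_i(\bar\alpha'_{i^*})|/\delta$ are strictly positive, and because $\sum_{i\ne i^*}\alpha^f_i(\bar\alpha_{i^*})=1-\bar\alpha_{i^*}$ for every $\bar\alpha_{i^*}$, they satisfy $\sum_{i\ne i^*}w_i=1$. Dividing the inequality of Lemma~\ref{lem:vmono0}(ii) by the appropriate sign of $\bar\alpha'_{i^*}-\alpha^*_{i^*}$ and using the definitions \eqref{eq:ucaldef}--\eqref{eq:ucalmaxdef}, one obtains, for every $i\ne i^*$: in the increasing case $2\varphi(\bar\alpha'_{i^*})\le 2\varphi(\alpha^*_{i^*})+\delta(\Ucal^{*,\max}_i(r^*_i)-w_i\Ucal^{\max}_i(r^*_i))$, and in the decreasing case $2\varphi(\bar\alpha'_{i^*})\le 2\varphi(\alpha^*_{i^*})+\delta(w_i\Ucal^{\min}_i(r^*_i)-\Ucal^{*,\min}_i(r^*_i))$.

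\emph{Step 3 (sign, strictness, and conclusion).} By Lemma~\ref{lem:umono}(ii) the value $\alpha^*_{i^*}$ belongs to the set over which the maximum in \eqref{ocba_t} is taken, so $\sum_{i\ne i^*}\Ucal^{*,\min}_i(r^*_i)/\Ucal^{\min}_i(r^*_i)\ge 1$, while Lemma~\ref{lem:umono}(iii) gives $\sum_{i\ne i^*}\Ucal^{*,\max}_i(r^*_i)/\Ucal^{\max}_i(r^*_i)\le 1$ (by Lemma~\ref{lem:phimono0} we have $\phi^{\min}_i(r^*_i),\phi^{\max}_i(r^*_i)\in(\mu_i,\mu_{i^*})$ since $r^*_i\in(0,\infty)$, so all the $\Ucal$'s are positive and the ratios make sense). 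Comparing with $\sum_{i\ne i^*}w_i=1$, in the increasing case there is an index $i_0$ with $\Ucal^{*,\max}_{i_0}(r^*_{i_0})\le w_{i_0}\Ucal^{\max}_{i_0}(r^*_{i_0})$, and in the decreasing case one with $w_{i_0}\Ucal^{\min}_{i_0}(r^*_{i_0})\le\Ucal^{*,\min}_{i_0}(r^*_{i_0})$; for this $i_0$ the bracketed correction in Step~2 is $\le 0$, so $\varphi(\bar\alpha'_{i^*})\le\varphi(\alpha^*_{i^*})$. To upgrade to strict inequality I would invoke the last clause of Lemma~\ref{lem:vmono0}(ii): $r^f_i(\bar\alpha_{i^*})=\alpha^f_i(\bar\alpha_{i^*})/\bar\alpha_{i^*}$ is strictly decreasing in $\bar\alpha_{i^*}$ for $i\ne i^*$, so $r^f_{i_0}(\bar\alpha'_{i^*})\ne r^*_{i_0}$, and by Lemma~\ref{lem:phimono0}(i) the chosen endpoint $\phi^*$ (namely $\phi^{\max}_{i_0}(r^*_{i_0})$ or $\phi^{\min}_{i_0}(r^*_{i_0})$) lies strictly outside $\arg\min_{\phi}g_{i_0}(\phi,r^f_{i_0}(\bar\alpha'_{i^*}))$, which makes the inequality strict. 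Hence $\varphi(\bar\alpha'_{i^*})<\varphi(\alpha^*_{i^*})$ for every $\bar\alpha'_{i^*}\ne\alpha^*_{i^*}$, completing Step~1 and the proof.

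\emph{Main obstacle.} The crux is reconciling the two asymmetric one-sided conditions of Lemma~\ref{lem:umono}(ii)--(iii) with the discontinuity of $\phi^{\min}_i,\phi^{\max}_i$: one must anchor the comparison in Lemma~\ref{lem:vmono0}(ii) at the \emph{correct} extreme minimizer---$\phi^{\max}_i$ when $\bar\alpha_{i^*}$ increases, $\phi^{\min}_i$ when it decreases---so that $\sum\Ucal^{*,\max}_i/\Ucal^{\max}_i\le 1$ controls one direction and $\sum\Ucal^{*,\min}_i/\Ucal^{\min}_i\ge 1$ the other, and then squeeze strictness out of the ordering of minimizer sets in Lemma~\ref{lem:phimono0}(i). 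A convenient side effect is that the weights $w_i$ sum to one purely because of the linear constraint $\sum_i\alpha^f_i=1$, so the comparison is global in $\bar\alpha'_{i^*}$ and no separate local-to-global argument is required.
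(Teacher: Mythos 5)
Your proof is correct and follows essentially the same route as the paper's: both reduce to a one-dimensional comparison via Lemma \ref{lem:vequal}(iii), anchor Lemma \ref{lem:vmono0}(ii) at $\phi^{\min}_i(r^*_i)$ or $\phi^{\max}_i(r^*_i)$ according to the direction of the perturbation, play the resulting increment bounds against $\sum_{i\ne i^*}\Ucal^{*,\min}_i/\Ucal^{\min}_i\ge 1$ and $\sum_{i\ne i^*}\Ucal^{*,\max}_i/\Ucal^{\max}_i\le 1$ together with the simplex constraint, and extract strictness from the ordering of minimizer sets in Lemma \ref{lem:phimono0}(i). The only difference is packaging: you argue directly by pigeonhole on the weights $w_i$ to locate a single index $i_0$ with a nonpositive correction, whereas the paper assumes all pairwise rates weakly improve and sums the strict increment bounds to contradict $\sum_i\alpha_i=1$ --- the same inequality read contrapositively.
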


The optimality conditions (\ref{eq:totalbalance})-(\ref{eq:individualbalance}) are the analog of (\ref{glynn1})-(\ref{glynn2}) for the case of unknown variance. It is important to note that, unlike (\ref{glynn1}), condition (\ref{eq:totalbalance}) involves an additional maximization. This behavior arises in situations where $\phi^{\min}_j\big( r^f_j\big(\alpha_{i^*}^*\big) \big) < \phi^{\max}_j\big( r^f_j\big(\alpha_{i^*}^*\big) \big)$ for some $j \ne i^*$. If this occurs, we may have
\begin{equation*}
	\sum_{i\neq i^*} \frac{\Ucal^{*,\min}_i\Big( r^f_i\big(\alpha_{i^*}^*\big) \Big)}{\Ucal^{\min}_i\Big( r^f_i\big(\alpha_{i^*}^*\big) \Big)}> 1, \qquad \sum_{i\neq i^*} \frac{\Ucal^{*,\max}_i\Big( r^f_i\big(\alpha_{i^*}^*\big) \Big)}{\Ucal^{\max}_i\Big( r^f_i\big(\alpha_{i^*}^*\big) \Big)}< 1.
\end{equation*}
However, if $\phi^{\min}_i\left(\alpha^*_i/\alpha^*_{i^*}\right) = \phi^{\max}_i\left(\alpha^*_i/\alpha^*_{i^*}\right)$ for all $i \ne i^*$, then (\ref{eq:totalbalance}) will simplify to
\begin{equation}\label{eq:simplerbalance}
	\sum_{i\neq i^*} \frac{\log\left(1 + \left(\mu_{i^*} - \phi^{\min}_i\left(\alpha^*_i/\alpha^*_{i^*}\right)\right)^2 / \sigma^2_{i^*}\right)}{\log\left(1 + \left(\mu_{i} - \phi^{\min}_i\left(\alpha^*_i/\alpha^*_{i^*}\right)\right)^2 / \sigma^2_{i}\right)} =1,
\end{equation}
which has more structural resemblance to (\ref{glynn1}).

We may now summarize the similarities and differences between the known- and unknown-variance settings. In both settings, we maximize the slowest-converging probability of a false comparison made between $i^*$ and some $i\neq i^*$. In both settings, the optimal allocation balances the convergence rates of these probabilities so that they are all equal. In the known-variance setting, the balancing condition is (\ref{glynn2}), whereas in the unknown-variance setting, the condition is (\ref{eq:individualbalance}). While the structure of the two conditions is similar, the rate function has a very different form in the unknown-variance setting. Next, the known-variance setting has an additional condition (\ref{glynn1}) relating the proportion of the budget set aside for design $i^*$ to all the other proportions simultaneously. In some cases, we may have a similar condition (\ref{eq:simplerbalance}) in the unknown-variance setting, but in general, it may not be possible to achieve (\ref{eq:simplerbalance}) exactly because the optimization problem characterizing the rate exponent $\mathcal{V}_i$ is non-convex, though it would be convex in the case of known variance. In such a situation, (\ref{eq:totalbalance}) describes the optimal proportion $\alpha^*_{i^*}$.

\subsection{Insights Into Previous Work}\label{sec:insights}

Our results shed additional light on some observations made in the literature. First, let us consider a situation where $\alpha^*_{i^*} \gg \alpha^*_i$, analogous to the classic OCBA method for the frequentist setting. Then, the ratio $\alpha^*_i/\alpha^*_{i^*} \approx 0$ and, by Lemma \ref{lem:phimono0}(iii), we have $\phi^{\min}_i\left(\alpha^*_i/\alpha^*_{i^*}\right) \approx \mu_{i^*}$. In this special case, condition (\ref{eq:individualbalance}) becomes
\begin{equation*}
	\frac{\alpha^*_i}{\alpha^*_j} \approx \frac{\log\left(1 + \left(\mu_j - \mu_{i^*}\right)^2 / \sigma^2_j\right)}{\log\left(1 + \left(\mu_i - \mu_{i^*}\right)^2 / \sigma^2_i\right)}.
\end{equation*}
These are precisely the asymptotic sampling ratios derived in Theorem 2 of \cite{ryzhov2016convergence} for a variant of the EI method specialized to the setting of unknown variance. The same paper showed that, under known variance, the allocation achieved by EI behaves like the optimal allocation in the regime where $i^*$ receives much more samples than the other designs. It was conjectured that the sampling ratios under unknown variance behave similarly, but since the optimal allocation for that setting had not been established at the time, it was not possible to make a direct comparison. With our work filling that gap, we see that the conjecture indeed holds.

Second, we point out a connection between our results in Theorem \ref{th:ocba} and those of \cite{jourdan2023dealing}. This recent paper considers a frequentist setting where the design $i$ is assigned a proportion $\alpha_i$ of the budget, but the total number of samples is not fixed to be $n$, but rather determined according to a dynamic stopping rule that depends on $\boldsymbol{\alpha}$. Information collection terminates at some random time $\mathcal{T}_\delta\left(\boldsymbol{\alpha}\right)$, and the PCS at termination satisfies $PCS^{\mathcal{T}_{\delta}\left(\boldsymbol{\alpha}\right)}_F \geq 1-\delta$. The optimization problem is to choose $\boldsymbol{\alpha}$ to minimize $\liminf_{\delta\rightarrow 0} \frac{\mathbb{E}_F\left[\mathcal{T}_{\delta}\left(\boldsymbol{\alpha}\right)\right]}{\log\left(1/\delta\right)}$. By comparing the results in \cite{jourdan2023dealing} with ours, it can be seen that their objective $\liminf_{\delta\rightarrow 0} \frac{\mathbb{E}_F\left[\mathcal{T}_{\delta}\left(\boldsymbol{\alpha}\right)\right]}{\log\left(1/\delta\right)}$ is equivalent to $\left( \min_{i\neq i^*} \mathcal{V}_i\left(\alpha_i,\alpha_{i^*}\right)\right)^{-1}$, where $\mathcal{V}_i$ is as in (\ref{eq:vdef}). Consequently, the allocation obtained from Theorem \ref{th:ocba} is asymptotically optimal for both fixed-budget and fixed-precision settings.

It is also important to note that \cite{jourdan2023dealing} implicitly assumed the differentiability of $\phi^{\min}_i$, arriving at (\ref{eq:simplerbalance}) and (\ref{eq:individualbalance}) as the optimality conditions. Unfortunately, since $\phi^{\min}_i$ can be discontinuous, these equations do not always have a solution. In contrast, the allocation described by Theorem \ref{th:ocba} always exists.

\section{A Sequential OCBA Procedure for Learning the Optimal Allocation}\label{sec:algorithms}

The optimality conditions (\ref{eq:totalbalance})-(\ref{eq:individualbalance}) cannot be solved directly because they depend on the unknown problem parameters. In this section, we develop a sequential algorithm that learns the optimal allocation sequentially and can be guaranteed to converge to $\boldsymbol{\alpha}^*$ as $n\rightarrow\infty$. The algorithmic literature explicitly modeling uncertain variance is very limited. \cite{chick2010} and \cite{ryzhov2016convergence} study variants of EI that handle this type of uncertainty, but neither method converges to the optimal allocation. To our knowledge, our work is the first to offer this guarantee.

Suppose that we have collected $m$ samples, of which $N^m_i$ had been assigned to design $i$. As before, we denote by $\pi^m$ the density over $\left(\boldsymbol{\mu},\boldsymbol{\sigma}^2\right)$ that represents the decision-maker's posterior belief.\footnote[3]{Again, it is not quite accurate to say that $\pi^m$ is the posterior distribution of $(\boldsymbol{\mu},\boldsymbol{\sigma}^2)$ since our analysis views these quantities as nonrandom.} We plug the posterior estimates $\hat{\mu}^m_i$ and $\left(\hat{\sigma}^m_i\right)^2$ and the sampling proportions $\hat{\alpha}_i^m \triangleq N_i^m / m$ into the definitions of $\Vcal_i$, $\Ucal^{*,\min}_i$ and $\Ucal^{\min}_i$ instead of the unknown true values. Thus, we let $i^{*,m} \triangleq \arg\max_i \hat{\mu}^m_i$ be the design believed to be the best\footnote[4]{If $\arg\max_i \hat{\mu}^m_i$ is not unique, $i^{*,m}$ is taken to be the design in $\arg\max_i \hat{\mu}^m_i$ that has received the smallest number of samples up to time $m$.} based on $m$ samples and
\begin{equation}\label{eq:vcalm}
	\hat{\Vcal}^m_{i}  \triangleq \min_{\phi_{i} } \left( \frac{\hat{\alpha}_i^m }{2} \log \left(1 + \frac{(\hat{\mu}^m_i-\phi_{i})^2}{\left(\hat{\sigma}^m_i\right)^2} \right) + \frac{\hat{\alpha}_{i^{*,m}}^m}{2} \log \left(1 + \frac{(\hat{\mu}^m_{j}-\phi_{i})^2}{\big(\hat{\sigma}^m_{j}\big)^2} \right) \right),
\end{equation}
with $\hat{\phi}^{m}_{i}$ being the value of $\phi_{i}$ that achieves the argmin (or the smallest element of the argmin) in (\ref{eq:vcalm}). Since the algorithm requires only the smallest element of the argmin, we drop the notation ``$\min$'' from the superscript to reduce clutter. We then define $\hat{\Ucal}^{m}_i$, $\hat{\Ucal}^{*,m}_i$ by (\ref{eq:ucaldef})-(\ref{eq:ucalmaxdef}) with $\mu_i$, $\sigma_i$, and $\phi^{\min}_i$ replaced by $\hat{\mu}^m_i$, $\hat{\sigma}^m_i$ and $\hat{\phi}^m_{i}$, respectively. In our implementation, we compute $\hat{\phi}^m_{i}$ by numerically finding all roots of the relevant first-order equation.

\begin{algorithm}[t]
	\caption{$\mathcal{OCBA}^\mathcal{U}$ Algorithm.}
	\label{alg:ocba}
	\begin{algorithmic}
		\STATE {\bfseries Input:} Prior distribution $\pi$, initial sample size $n_0$, total budget $n$.
		\STATE {\bfseries Initialization:} Iteration counter $m\leftarrow0$. Perform $n_0$ replications for each design $i$ and compute $\pi^0$ from $\pi$ and the initial sample. Set $N^m_{i}=n_0$, $N^{m}= \sum_{i=1}^k N^m_{i}$ and $\hat{\alpha}^m_{i}=N^m_{i}/N^{m}$.
		\REPEAT
		\STATE $m\leftarrow m+1$.
		
		\STATE \textbf{Step 1.}  If $\arg\max_{i=1,\dots,k} \hat{\mu}^{m-1}_{i}$ is not unique, let $i^m = i^{*,m-1}$. Go to Step 4.
		
		\STATE \textbf{Step 2.} Compute $j^{m-1}\in\arg\min_{i\neq i^{*,m-1}}\hat{\Vcal}^{m-1}_{i}$, with ties broken arbitrarily.
		\STATE \textbf{Step 3.} If
		\begin{equation}\label{eq:estratio}
			\sum_{i \ne i^{*,m-1}}\hat{\Ucal}^{*,m-1}_{i}/\hat{\Ucal}^{m-1}_{i} > 1,
		\end{equation}
		set $i^m=i^{*,m-1}$; otherwise, set $i^m=j^{m-1}$.
		
		\STATE \textbf{Step 4.} Provide one more replication to design $i^m$ and update the posterior density $\pi^m$.
		\UNTIL{$N^{m}=n$.}
		
		\STATE {\bfseries Output:} Estimated best design $i^{*,m}$.
	\end{algorithmic}
\end{algorithm}

Our OCBA algorithm proceeds sequentially according to the rate-balancing principle put forth by \cite{gao2017} and made rigorous by \cite{chen2023balancing}. We approximate the optimality conditions (\ref{eq:totalbalance})-(\ref{eq:individualbalance}) by replacing $\Vcal_i$, $\Ucal^{\min}_i$ and $\Ucal^{*,\min}_i$ by their plug-in estimates. Instead of solving the equations for $\boldsymbol{\alpha}$, however, we evaluate them at $\hat{\boldsymbol{\alpha}}^m$, with each $\hat{\alpha}^m_i$ being the empirical proportion that design $i$ received out of the total budget spent thus far. Thus, $i^*$ in the optimality conditions is replaced by $i^{*,m}$, $\alpha^*_{i^*}$ is replaced by $\hat{\alpha}^m_{i^{*,m}}$, and all other $\alpha^*_i$ are replaced by $\hat{\alpha}^m_i$. We then look for imbalances in the equations and use these to determine the design that should receive the next simulation. The formal statement is given in Algorithm \ref{alg:ocba}.

Informally, the intuition behind the structure of the algorithm is that the individual rates $\hat{\Vcal}^{m-1}_{i}$ have a tendency to increase when $i$ is sampled (though with possible fluctuations due to changes in the estimated parameters as well as the index $i^{*,m-1}$), while the ratio on the left-hand side of (\ref{eq:estratio}) has a tendency to decrease when $i^{*,m-1}$ is sampled (once $m$ is large enough that $i^{*,m-1} = i^*$). Thus, we should sample $i^{*,m-1}$ when the left-hand side of (\ref{eq:estratio}) exceeds $1$, as this will tend to bring that quantity closer to the target value in (\ref{eq:totalbalance}). When we sample $i \neq i^{*,m-1}$, we should choose the smallest of the individual rate functions so that they increase at the same rate, attaining (\ref{eq:individualbalance}) in the limit.

We make this intuition more formal in the following proof outline, with the full technical details deferred to the EC. The proof has a four-part structure, analogously to \cite{chen2023balancing}, though the analytical technique significantly departs from that work, particularly in the later steps. We begin by showing the consistency of Algorithm \ref{alg:ocba}, i.e., that it samples each design infinitely often as the budget becomes large. If one wishes to use a general prior $\pi^0$, this analysis requires the assumptions of Section \ref{sec:prior}, including the uniform continuity needed for Lemma \ref{lem:posterior_non}.

\begin{lemma}\label{lem:const}
	For all designs $i=1,\dots,k$, $N^m_i \to \infty$ as $m \to \infty$.
\end{lemma}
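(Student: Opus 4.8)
The plan is to argue by contradiction. Suppose some design is sampled only finitely often. Partition the designs into the set $\mathcal{S}$ of designs sampled infinitely often and its complement $\mathcal{S}^c \neq \emptyset$. For $i \in \mathcal{S}^c$, $N^m_i$ is eventually constant, so by Lemma \ref{lem:posterior_non} (invoking uniform continuity of $\pi^0$) the estimates $\hat\mu^m_i$ and $(\hat\sigma^m_i)^2$ converge to finite limits; for $i \in \mathcal{S}$, $N^m_i \to \infty$, so $\hat\alpha^m_i \to \hat\alpha_i > 0$ exists along the relevant subsequence only if $\sum_{i\in\mathcal S}\hat\alpha_i = 1$, which it does since $\hat\alpha^m_i \to 0$ for $i \in \mathcal S^c$. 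The first thing I would establish is that $i^* \in \mathcal{S}$: if $i^*$ were sampled finitely often, then for every $i \in \mathcal{S}$ the sample mean $\bar X^m_i \to \mu_i$ and (by Lemma \ref{lem:bayes_consist}) $\hat\mu^m_i \to \mu_i$, while $\hat\mu^m_{i^*}$ is stuck at some finite limit; since $\mu_{i^*} > \mu_i$ for all $i$, for large $m$ the index $i^{*,m}$ would land in $\mathcal{S}$ unless every $i \in \mathcal S$ has limiting posterior mean below that of $i^*$'s frozen estimate — but one can choose among the finitely-often-sampled designs and note that $i^*$'s true mean dominates, so eventually $i^{*,m}\ne i^*$ would still force sampling of designs in $\mathcal S$. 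The cleaner route: show $i^* \in \mathcal S$ directly, because Step 3's trigger (\ref{eq:estratio}) together with Step 2 guarantees that every increment goes to either $i^{*,m-1}$ or to $\arg\min_{i\ne i^{*,m-1}}\hat\Vcal^{m-1}_i$; I will return to this.

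Next I would derive a contradiction from the structure of Steps 2--3. Consider any $i \in \mathcal S^c$, $i \ne i^*$. Because $i$ is sampled only finitely often, for all large $m$ we have $i^m \ne i$, so either $i^m = i^{*,m-1}$ or $i^m = j^{m-1}$ with $j^{m-1} \ne i$; in particular $i \notin \arg\min_{i'\ne i^{*,m-1}}\hat\Vcal^{m-1}_{i'}$ whenever Step 3 selects $j^{m-1}$. Now I distinguish the two cases for what happens infinitely often. Case A: $i^m = i^{*,m-1}$ only finitely often. Then eventually every sample goes to some $j^{m-1} \in \mathcal S \setminus\{i^*\}$, forcing $i^* \in \mathcal S^c$ — but then, since $i^*$ is frozen and every $i\in\mathcal S$ has $\hat\mu^m_i\to\mu_i$ with one of them (the true argmax among $\mathcal S$) eventually exceeding $i^*$'s frozen estimate is not guaranteed, so instead I use the trigger: $\hat\Vcal^{m-1}_{i^{*,m-1}}$ is not in the running (Step 2 excludes $i^{*,m-1}$), and the designs in $\mathcal S\setminus\{i^*\}$ being sampled repeatedly drives their $\hat\Vcal$ up while $\hat\Vcal$ of the frozen designs in $\mathcal S^c$ stays bounded, so eventually $j^{m-1}$ would be a frozen design — contradiction with those being in $\mathcal S^c$. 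Case B: $i^m = i^{*,m-1}$ infinitely often, so $i^* \in \mathcal S$ and hence $i^{*,m}=i^*$ for all large $m$. Then the trigger (\ref{eq:estratio}) fires infinitely often, which (by monotonicity of the ratio $\sum_{i\ne i^*}\hat\Ucal^{*,m}_i/\hat\Ucal^m_i$ as $\hat\alpha^m_{i^*}$ grows, cf.\ Lemma \ref{lem:umono}(i), via the plug-in versions) forces $\hat\alpha^m_{i^*}$ to stay bounded away from $1$; simultaneously, infinitely often Step 3 selects $j^{m-1}$, and since $j^{m-1}\in\arg\min_{i'\ne i^*}\hat\Vcal^{m-1}_{i'}$, repeated selection would push up the smallest $\hat\Vcal$, but the frozen designs $i\in\mathcal S^c$ keep $\hat\Vcal^{m-1}_i$ at a bounded limit, so for large $m$ some frozen $i$ attains the min and must be sampled — contradiction.

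The main obstacle I anticipate is \textbf{pinning down the limiting behavior of $\hat\Vcal^{m-1}_i$ for the frozen designs and showing it is genuinely the minimizer infinitely often}, because $\hat\Vcal^m_i$ depends on $\hat\alpha^m_i \to 0$ and on $\hat\alpha^m_{i^{*,m}}$, and as $\hat\alpha^m_i\to 0$ the term $\tfrac{\hat\alpha^m_i}{2}\log(1+\cdots)$ vanishes so $\hat\Vcal^m_i \to 0$ as well — meaning frozen designs have $\hat\Vcal$ tending to $0$, which actually makes them the minimizers automatically (good), but then I must check this does not also happen to designs in $\mathcal S$ (it does not, since their $\hat\alpha^m_i$ stays positive and $\Wcal_i>0$ strictly by Lemma \ref{lem:vmono0}(iii)). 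The delicate bookkeeping is that $i^{*,m}$ may fluctuate before settling, so all the ``eventually'' statements must be made uniform over the finitely many designs and carefully sequenced; I would handle this by first establishing $i^* \in \mathcal S$ and $i^{*,m}=i^*$ for large $m$ (Case A/B dichotomy above, the harder of which is Case A), and only then running the $\hat\Vcal \to 0$ argument for frozen non-optimal designs to force them to be sampled, contradicting $\mathcal S^c \neq \emptyset$. Throughout I rely on Lemmas \ref{lem:bayes_consist}, \ref{lem:posterior_non} for estimate convergence and on Lemma \ref{lem:vmono0}(iii) plus the plug-in analog of Lemma \ref{lem:umono}(i) for the monotonicity facts.
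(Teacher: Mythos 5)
Your overall strategy --- contradiction, partition into the set $\Ecal$ of infinitely-sampled designs and its complement, and the observation that a frozen design's empirical rate $\hat{\Vcal}^m_i$ collapses (since $\hat{\alpha}^m_i\to 0$) so that it should win the $\arg\min$ in Step 2 of the algorithm --- is the same mechanism the paper uses in the second half of its proof. But two of your steps do not go through as written. First, the claim that for $i\in\Ecal$ the proportion $\hat{\alpha}^m_i$ ``stays positive'' (bounded away from zero), which you need so that $\hat{\Vcal}^m_i$ does \emph{not} vanish for infinitely-sampled designs, is precisely the content of Lemma \ref{lem:one}, which is proved \emph{after} consistency; invoking it here is circular, since $N^m_i\to\infty$ alone gives no lower bound on $N^m_i/m$. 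The paper avoids this by comparing $2N^m\hat{\Vcal}^m_i$ rather than $\hat{\Vcal}^m_i$: one gets $2N^m\hat{\Vcal}^m_i\gtrsim \min\{N^m_i,N^m_{i^{*,m}}\}$ for $i\in\Ecal$ versus $2N^m\hat{\Vcal}^m_j\lesssim N^m_j$ (a frozen constant) for $j\notin\Ecal$. Second --- and this is the more serious gap --- this comparison itself breaks down unless you have already shown that the estimated best design $i^{*,m}$ eventually lies in $\Ecal$ and that $\Ecal$ contains at least two designs. If $i^{*,m}$ is frozen, then $\min\{N^m_i,N^m_{i^{*,m}}\}=N^m_{i^{*,m}}$ is itself a frozen constant for every $i$, so all the empirical rates are of the same order and nothing about the $\arg\min$ follows; and if $\Ecal$ is a singleton there is no second infinitely-sampled design against which to run the comparison at all.

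The paper closes exactly these holes with an argument you do not supply: the plug-in analogue of Lemma \ref{lem:phimono0}(iii) (Lemma \ref{lem:phibd_new} in the EC), which says that when $\hat{\alpha}^m_i/\hat{\alpha}^m_{i^{*,m}}$ is very large the minimizer $\hat{\phi}^m_i$ is pinned near $\hat{\mu}^m_i$, so $\hat{\Ucal}^{*,m}_i/\hat{\Ucal}^m_i>1$ and the test in \eqref{eq:estratio} forces the algorithm to sample $i^{*,m}$; and symmetrically, when all ratios $\hat{\alpha}^m_j/\hat{\alpha}^m_{i^{*,m}}$ are very small, every $\hat{\phi}^m_j$ is pinned near $\hat{\mu}^m_{i^{*,m}}$, the sum drops below $1$, and the algorithm must leave $i^{*,m}$. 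These two facts are what rule out the singleton-$\Ecal$ case and the frozen-$i^{*,m}$ case, and what justify your unproven assertion in Case B that the $j^{m-1}$ branch is taken infinitely often. You cite Lemma \ref{lem:umono}(i) for the needed monotonicity, but that lemma concerns the ratio sum along the balanced allocations $\boldsymbol{\alpha}^f(\bar{\alpha}_{i^*})$, not along arbitrary empirical allocations, so it is the wrong tool here. With Lemma \ref{lem:phibd_new} in hand your Case A/B dichotomy could be repaired, but as written the proof does not establish the preconditions under which its central comparison is valid.
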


The next step shows that the designs are not only sampled infinitely often, the sampling rates are of the same order. This step is distinct from consistency since Lemma \ref{lem:const} is first needed to largely eliminate the estimation error from the decisions made by the algorithm.	

\begin{lemma}\label{lem:one}
	For sufficiently large $m$, $\arg\max_{i} \hat{\mu}^m_{i}$ is unique. Furthermore, there exist numbers $b_{\alpha L},b_{\alpha U} > 0$ and $b_{\alpha L 2},b_{\alpha U 2} > 0$, possibly dependent on the sample path, such that
	\begin{align*}
		b_{\alpha L} \le \hat{\alpha}^m_{i}/\hat{\alpha}^m_{j} \le b_{\alpha U}, \qquad b_{\alpha L 2} \le \hat{\alpha}^m_{i} \le b_{\alpha U 2}
	\end{align*}
	for all $i,j = 1,2,\dots,k$ and all sufficiently large $m$.
\end{lemma}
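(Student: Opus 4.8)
The plan is to build on Lemma \ref{lem:const}, which guarantees $N^m_i \to \infty$ for all $i$, and combine it with Lemmas \ref{lem:bayes_consist}, \ref{lem:phimono0}, and \ref{lem:vmono0} to control the estimation errors that enter the algorithm's decisions. First, since every design is sampled infinitely often, Lemma \ref{lem:bayes_consist} applies: for all large $m$ the posterior estimates $\hat\mu^m_i$ and $(\hat\sigma^m_i)^2$ are uniformly close to the sample mean and (a scalar multiple of) the MLE variance, which in turn converge almost surely to $\mu_i$ and $\sigma^2_i$ by the strong law of large numbers. In particular $\hat\mu^m_{i^*} > \hat\mu^m_i$ for all $i \ne i^*$ eventually (using that $i^*$ is the unique true best), so $\arg\max_i \hat\mu^m_i = \{i^*\}$ for all sufficiently large $m$, and thereafter the algorithm never executes the degenerate branch in Step 1. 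From this point on, $i^{*,m-1} = i^*$ and the decisions in Steps 2--3 are governed by the plug-in quantities $\hat\Vcal^{m-1}_i$, $\hat\Ucal^{*,m-1}_i$, $\hat\Ucal^{m-1}_i$ evaluated at the (converging) parameter estimates and the empirical proportions.

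Next I would establish the lower bounds $\hat\alpha^m_i \ge b_{\alpha L 2}$. The argument is by contradiction / monotone tracking: suppose along a subsequence $\hat\alpha^m_i \to 0$ for some $i \ne i^*$. Then $r^{m} := \hat\alpha^m_i/\hat\alpha^m_{i^*} \to 0$, so by the continuity/limit properties of $\phi^{\min}_i$ in Lemma \ref{lem:phimono0}(ii)--(iii), $\hat\phi^m_i \to \mu_{i^*}$, hence $\hat\Vcal^m_i \to 0$ (the first log term is bounded and multiplied by $\hat\alpha^m_i \to 0$; the second log term vanishes as $\hat\phi^m_i \to \mu_{i^*}$). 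But the rates $\hat\Vcal^m_j$ for the other non-best designs $j$ stay bounded away from $0$ along that subsequence as long as their proportions do not also collapse, so $i \in \arg\min_{j \ne i^*}\hat\Vcal^{m-1}_j$ for all large $m$ in the subsequence, and whenever the algorithm chooses to sample a non-best design it samples $i$. A standard counting argument (as in \cite{chen2023balancing}) then shows $\hat\alpha^m_i$ cannot stay near $0$: either $i$ is sampled a positive fraction of the non-best allocations, contradicting $\hat\alpha^m_i \to 0$, or essentially all samples go to $i^*$, which makes the left-hand side of (\ref{eq:estratio}) decay below $1$ (since $\hat\Ucal^{*,m}_i/\hat\Ucal^m_i \to 0$ as $\hat\phi^m_i \to \mu_{i^*}$), forcing the algorithm to sample a non-best design again. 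Hence $\liminf_m \hat\alpha^m_i > 0$ for every $i$, giving $b_{\alpha L 2}$; since $\sum_i \hat\alpha^m_i = 1$, the upper bound $b_{\alpha U 2} = 1 - (k-1)b_{\alpha L 2} < 1$ follows, and $\hat\alpha^m_{i^*}$ is likewise bounded away from $0$ and $1$. The ratio bounds $b_{\alpha L} \le \hat\alpha^m_i/\hat\alpha^m_j \le b_{\alpha U}$ are then immediate from $b_{\alpha L 2} \le \hat\alpha^m_i \le b_{\alpha U 2}$.

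The main obstacle, I expect, is the non-convexity and potential discontinuity of $\phi^{\min}_i$: the plug-in minimizer $\hat\phi^m_i$ is the \emph{smallest} root of a cubic whose coefficients depend on the noisy estimates $(\hat\mu^m_i,\hat\sigma^m_i,\hat\mu^m_{i^*},\hat\sigma^m_{i^*})$ and on $r^m = \hat\alpha^m_i/\hat\alpha^m_{i^*}$, so $\hat\phi^m_i$ need not converge even when the estimates converge, if $r^m$ hovers near a jump point of $\phi^{\min}_i$. To handle this I would avoid claiming convergence of $\hat\phi^m_i$ and instead work only with one-sided bounds: Lemma \ref{lem:phimono0}(iii) gives $\mu_{i^*} - \phi^{\min}_i(r) \le \eta(b)(\mu_{i^*} - \mu_i)$ whenever $r \le 1/b$, and this uniform estimate transfers to the perturbed problem because a small perturbation of the cubic's coefficients moves its smallest root continuously on any region bounded away from the double-root configuration — and near the jump, both candidate roots are still trapped in $[\mu_i,\mu_{i^*}]$ by Lemma \ref{lem:phimono0}(ii), so $\hat\Vcal^m_i$ and the ratio $\hat\Ucal^{*,m}_i/\hat\Ucal^m_i$ remain controlled regardless of which root is selected. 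Thus the contradiction arguments above only need the qualitative facts that $\hat\Vcal^m_i \to 0$ iff $r^m \to 0$ and that the ratio in (\ref{eq:estratio}) is bounded and tends to $0$ when $r^m \to 0$, both of which survive the discontinuity. The remaining work is the bookkeeping of the counting argument, which parallels \cite{chen2023balancing} and I would not reproduce in detail here.
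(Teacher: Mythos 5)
Your proposal follows essentially the same route as the paper's proof: consistency of the posterior estimates gives uniqueness of the argmax and $i^{*,m}=i^*$; a plug-in analog of Lemma \ref{lem:phimono0}(iii) (the paper's Lemma \ref{lem:phibd_new}) shows that when $\hat\alpha_i^m/\hat\alpha_{i^*}^m$ is very small the ratio $\hat{\Ucal}_i^{*,m}/\hat{\Ucal}_i^{m}$ drops below $1/(k-1)$, and when it is very large the ratio exceeds $1$, so Step 3 of the algorithm self-corrects imbalances between $i^*$ and the non-best designs; the $\hat{\Vcal}$ comparison in Step 2 self-corrects imbalances among the non-best designs. Your handling of the discontinuity of $\phi^{\min}_i$ (use only the one-sided bounds of Lemma \ref{lem:phimono0}(ii)--(iii), which hold for whichever root is selected) is also exactly what the paper does.

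One step as written has a logical ordering problem. In your contradiction argument for the lower bound you pass from ``$\hat\alpha_i^m\to 0$ along a subsequence'' to ``$r^m=\hat\alpha_i^m/\hat\alpha_{i^*}^m\to 0$,'' but this requires $\hat\alpha_{i^*}^m$ to be bounded away from zero, which is itself one of the conclusions being established. If $\hat\alpha_{i^*}^m$ also collapses, $\hat\phi_i^m$ need not approach $\mu_{i^*}$, and all the rates $\hat{\Vcal}_j^m$ can vanish simultaneously, so the argmin comparison no longer singles out $i$. The paper avoids this by ordering the steps: it first bounds $N_{i^*}^m/\max_{i\ne i^*}N_i^m$ from above and then $\max_{i\ne i^*}N_i^m/N_{i^*}^m$ from above using the dichotomy around (\ref{eq:estratio}) --- comparisons that only involve the ratio of $N_{i^*}^m$ to the \emph{largest} non-best count, not to an individual $N_i^m$ --- and only afterwards bounds $\max_{i\ne i^*}N_i^m/\min_{i\ne i^*}N_i^m$ via the $\hat{\Vcal}$ comparison; the proportion bounds then follow arithmetically. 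You have all the needed mechanisms but must run them in that order rather than attacking each $\hat\alpha_i^m$ directly. Relatedly, the ``counting argument'' you defer is where the paper does real work: it shows that once a ratio crosses its threshold the algorithm stops sampling the offending design, the ratio can overshoot by at most $(\min_j N_j^m)^{-1}$ per step, and hence stays below twice the threshold on a full tail of iterations. A subsequence along which $\hat\alpha_i^m$ is small does not by itself control the algorithm's choices at the in-between times, so this tracking step cannot be omitted.
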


The next step shows that the empirical estimates of the individual rate functions $\Vcal_i$ become balanced as $m$ becomes large. This is not quite the same as verifying (\ref{eq:individualbalance}), but is an important intermediate step toward this goal. Again, we find it more convenient to work with the empirical estimate $\hat{\Wcal}^m_{i} = \frac{2}{\hat{\alpha}^m_{i^{*,m}}}\hat{\Vcal}^{m}_{i}$ of $\Wcal_i$ rather than $\Vcal_i$. This result is one place where our analysis differs significantly from \cite{chen2023balancing}, which approached the individual rate functions as the last step. In particular, the discontinuity of $\hat{\phi}^m_{i}$ makes it impossible to rely on differentiability assumptions and necessitates the use of a different technique.

\begin{proposition}\label{proposv}
	Let $\bar\varepsilon$ be a small positive constant. For the budget allocation $\hat{\alpha}^m_{i}$ generated by the $\ocbau$ algorithm and any $\varepsilon$ with $0<\varepsilon<\bar{\varepsilon}$, there exists a random time $M(\varepsilon) < \infty$ such that $\max_{i,j\neq i^*} \left| \hat{\Wcal}^m_{i} - \hat{\Wcal}^m_{j}\right| \le \varepsilon$ for any $m \ge M(\varepsilon)$ almost surely.
\end{proposition}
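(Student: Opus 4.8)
The plan is to run the familiar rate-balancing argument (as in \cite{chen2023balancing}), but, because the argmin $\hat\phi^m_i$ in \eqref{eq:vcalm} can jump (Example \ref{rem:disc}, Lemma \ref{lem:phimono0}), I would carry it out using only continuity, strict monotonicity and the one-sided envelope estimate for $\Wcal_i$, never differentiating. The first step is to replace the plug-in quantities by the true rate functions. By Lemma \ref{lem:one}, $\arg\max_i\hat\mu^m_i$ is eventually a singleton, and with Lemmas \ref{lem:bayes_consist} and \ref{lem:const} one gets $i^{*,m}=i^*$ for all large $m$. For such $m$, factoring $\hat\alpha^m_{i^{*}}$ out of \eqref{eq:vcalm} gives $\hat\Wcal^m_i=\min_{\phi_i}g_i(\phi_i,\hat r^m_i)$ evaluated at $\hat r^m_i=N^m_i/N^m_{i^*}$, but with $(\boldsymbol{\mu},\boldsymbol{\sigma}^2)$ replaced by the posterior estimates. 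Since $\hat{\boldsymbol{\mu}}^m\to\boldsymbol{\mu}$ and $(\hat{\boldsymbol{\sigma}}^m)^2\to\boldsymbol{\sigma}^2$ (Lemmas \ref{lem:bayes_consist}, \ref{lem:const}), $\hat r^m_i$ stays in a fixed compact interval $[\underline b,\overline b]\subset(0,\infty)$ (Lemma \ref{lem:one}), and the minimizing $\phi_i$ lies in $[\mu_i,\mu_{i^*}]$ there (Lemma \ref{lem:phimono0}(ii)), the parametric minimum is jointly continuous on the relevant compact set, so $|\hat\Wcal^m_i-W^m_i|\to 0$ where $W^m_i:=\Wcal_i(\hat r^m_i)$ is formed with the true parameters. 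It then suffices to show $\max_{i,j\ne i^*}|W^m_i-W^m_j|\to 0$.

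\textbf{Structural facts on the compact range.} Next I would record three properties valid for large $m$. (a) Since $\hat r^m_i\ge\underline b$ and $\Wcal_i$ is strictly increasing with $\Wcal_i(0)=0$ (Lemma \ref{lem:vmono0}(iii)), the $W^m_i$ are bounded and bounded away from $0$. (b) Combining the envelope inequality of Lemma \ref{lem:vmono0}(ii) with the fact that $\phi^{\min}_i(r),\phi^{\max}_i(r)$ stay bounded away from $\mu_i$ on $[\underline b,\overline b]$ (the numerator of \eqref{eq:v_deriv} is strictly negative at $\phi_i=\mu_i$, and $\phi^{\min}_i$ is monotone by Lemma \ref{lem:phimono0}(i)) yields constants $0<\underline\kappa\le\overline\kappa<\infty$ with $\underline\kappa(r'-r)\le\Wcal_i(r')-\Wcal_i(r)\le\overline\kappa(r'-r)$ for $\underline b\le r<r'\le\overline b$. (c) The one-step dynamics of $\hat r^m_i=N^m_i/N^m_{i^*}$: sampling $i$ multiplies $\hat r_i$ by $1+1/N^{m-1}_i$; sampling $i^*$ multiplies every $\hat r_i$, $i\ne i^*$, by $N^{m-1}_{i^*}/(N^{m-1}_{i^*}+1)$; sampling any other design leaves $\hat r_i$ fixed. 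Together with (b) and the bounds of Lemma \ref{lem:one}, this gives: when $i$ is sampled, $W^m_i-W^{m-1}_i\in[c_1/m,c_2/m]$ with $0<c_1\le c_2$ and $W^m_j=W^{m-1}_j$ for $j\ne i,i^*$; when $i^*$ is sampled, $0\ge W^m_i-W^{m-1}_i\ge -c_3/m$ for all $i\ne i^*$. Since $\arg\min_{i\ne i^*}\hat\Vcal^{m-1}_i=\arg\min_{i\ne i^*}\hat\Wcal^{m-1}_i$, the non-$i^*$ design the algorithm samples is a current minimizer of $W^{m-1}_{\cdot}$, so $\min_{i\ne i^*}W^m_i$ is non-decreasing except at $i^*$-steps, where it falls by at most $c_3/m$.

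\textbf{Balancing.} I would first show the maxima equalize: fix $i,i'\ne i^*$; between consecutive samplings of $i$, $W^m_i$ is non-increasing (its count is frozen while $N^m_{i^*}$ grows), and at each sampling of $i$ — which occurs only when $i\in\arg\min_{j\ne i^*}\hat\Wcal^{m-1}_j$, hence $W^{m-1}_i\le W^{m-1}_{i'}+o(1)$ — it rises by $O(1/m)$; since $i$ is sampled infinitely often (Lemma \ref{lem:const}), passing to a subsequence realizing $\limsup_m W^m_i$ and stepping back to the preceding sampling of $i$ gives $\limsup_m W^m_i\le\limsup_m W^m_{i'}$, so all these $\limsup$'s equal a common $\bar v$, and $\max_{i\ne i^*}W^m_i\le\bar v+o(1)$. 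It remains to prove $\liminf_m\min_{i\ne i^*}W^m_i=\bar v$, as this together with the previous sentence forces $W^m_i\to\bar v$ for all $i\ne i^*$ and hence the claim. Suppose not: $\liminf_m\min_{i\ne i^*}W^m_i\le\bar v-4\delta$ for some $\delta>0$. By Lemma \ref{lem:one}, $\hat\alpha^m_{i^*}$ is eventually at most $1-(k-1)b_{\alpha L2}<1$, so over a window $[m,Tm]$ with $T$ a sufficiently large fixed constant the number of non-$i^*$ (``corrective'') samples is at least a fixed fraction of the window length, while the number of $i^*$-samples is at most a fixed multiple of $m$. Each corrective sample lifts the current minimum over $i\ne i^*$ (after cycling through at most $k-1$ nearly-tied bottom designs) by $\Theta(1/m)$ by Step-2(b)--(c), whereas the $\le c_3/m$ decrements at $i^*$-steps are, in aggregate over the window, too small relative to the accumulated corrections to prevent $\min_{i\ne i^*}W^m_i$ from climbing within $\delta$ of $\bar v$; as $\max_{i\ne i^*}W^m_i\le\bar v+\delta$ simultaneously, the spread falls below $2\delta$ and, by Step 2(c), cannot later rise by more than the $O(1/m)$ jitter from $i^*$-steps, contradicting the assumed $\liminf$. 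Folding back the $o(1)$ from the reduction yields $\max_{i,j\ne i^*}|\hat\Wcal^m_i-\hat\Wcal^m_j|\to 0$, and $M(\varepsilon)$ is any time past which this quantity is below $\varepsilon$.

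\textbf{Main obstacle.} The delicate part is the last step. Unlike \cite{chen2023balancing}, the discontinuity of $\hat\phi^m_i$ (Example \ref{rem:disc}, Lemma \ref{lem:phimono0}) rules out differentiating $\Wcal_i$ or using implicit-function-type regularity, so the balancing must be driven entirely by strict monotonicity (Lemma \ref{lem:vmono0}(iii)) and the one-sided envelope estimate (Lemma \ref{lem:vmono0}(ii)). Compounding this, $i^*$ is sampled at a rate bounded away from $0$ (Lemma \ref{lem:one}), so $\min_{i\ne i^*}\hat\Wcal^m_i$ is genuinely non-monotone and one cannot simply invoke monotone convergence; its convergence has to be wrung out of the windowed accounting that plays the $\Theta(1/m)$ corrective lifts applied to the current minimizer against the $O(1/m)$ downward drift induced by sampling $i^*$. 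I expect making that counting fully rigorous — in particular handling near-ties among the lowest designs and choosing the window length $T$ correctly relative to the (unknown) bounds of Lemma \ref{lem:one} — to be the technical heart of the argument.
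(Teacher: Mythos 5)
Your reduction to the true-parameter quantities $W^m_i=\Wcal_i(\hat r^m_i)$, the two-sided Lipschitz bounds on $\Wcal_i$ over the compact range of ratios, and the first half of the balancing step (equalizing the $\limsup$'s by anchoring at the last sampling of $i$, where $\hat{\Wcal}^{m-1}_i\le\hat{\Wcal}^{m-1}_{i'}$) are all sound and broadly parallel to the machinery the paper sets up (its Lemma \ref{lem:phibd} and inequality \eqref{ineq:log_ub} play the role of your continuity reduction). The gap is in the $\liminf$ step, and it is not merely a matter of "making the counting rigorous." Over a window $[m,Tm]$, each $i^*$-step at time $s$ pushes every $W^s_j$ down by $\Theta(\hat r_j/N^s_{i^*})=\Theta(1/s)$, and each corrective step lifts one $W^s_j$ by $\Theta(1/s)$; summing $1/s$ over the window, \emph{both} aggregates are $\Theta(\log T)$. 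So the assertion that the decrements are "in aggregate too small relative to the accumulated corrections" is false as a general statement, and likewise the claim that once the spread drops below $2\delta$ it "cannot later rise by more than the $O(1/m)$ jitter from $i^*$-steps" fails: the cumulative effect of $i^*$-steps on the differences $W^m_i-W^m_j$ (which shrink the ratios by \emph{different} multiplicative amounts weighted by $\hat r_j\hat{\Ucal}^m_j$) is of the same order as the corrections, not a vanishing jitter.

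What is missing is any use of the rule by which the algorithm decides to sample $i^*$, namely the criterion $\sum_{i\ne i^{*,m-1}}\hat{\Ucal}^{*,m-1}_i/\hat{\Ucal}^{m-1}_i>1$. Lemma \ref{lem:one} only confines the ratios to a fixed compact set; within that set the algorithm could, for all you have shown, sample $i^*$ in a long burst that multiplies $N_{i^*}$ by a constant factor, driving all $W^m_j$ down by $\Theta(1)$ and the pairwise differences by $\Theta(1)$ — exactly the scenario your contradiction hypothesis describes, with no contradiction forthcoming. The case where \emph{all} non-best designs are simultaneously low is the sharpest instance: there the up- and down-rates on $\sum_{j\ne i^*}\hat r_j$ balance to first order, so the windowed accounting alone cannot lift the minimum; what saves the day is that small ratios force $\hat\phi^m_j$ toward $\hat\mu^m_{i^*}$ (Lemma \ref{lem:phimono0}), hence $\hat{\Ucal}^{*,m}_j/\hat{\Ucal}^m_j$ small, hence $i^*$ is \emph{not} sampled and the burst cannot occur. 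This is precisely the content of the paper's Lemmas \ref{lem:upart} and \ref{lem:interv_opt}, which feed into Lemma \ref{lem:interv}: between consecutive samplings of any fixed design $i$, no other design's count can grow by more than a factor $1+O(\varepsilon^{1/2})$. With that bound, the perturbation inequality of Lemma \ref{lem:vdiff} turns the anchor $\hat{\Wcal}^{m_l(i)}_i\le\hat{\Wcal}^{m_l(i)}_{i'}$ into a uniform finite-time bound $\hat{\Wcal}^m_i-\hat{\Wcal}^m_{i'}\le b_{\Wcal}\varepsilon^{1/2}$, with no windowed limit argument needed. To repair your proof you would need to import (or reprove) this quantitative control on inter-sampling gaps, which requires engaging with Step 3 of $\ocbau$ rather than only with the consistency and boundedness facts of Lemmas \ref{lem:const}--\ref{lem:one}.
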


In the last step, we complete the proof of the main convergence result showing that $\ocbau$ asymptotically learns the allocation $\boldsymbol{\alpha}^*$ that solves (\ref{eq:totalbalance})-(\ref{eq:individualbalance}).

\begin{theorem}\label{th:algconv}
	Let $\bar\varepsilon$ be a small positive constant. For the budget allocation $\hat{\alpha}^m_{i}$ generated by the $\ocbau$ algorithm and any $\varepsilon$ with $0<\varepsilon<\bar{\varepsilon}$, there exists a random time $M'(\varepsilon)$ with $M\left(\varepsilon\right) \leq M'(\varepsilon) < \infty$ such that $\max_{i =1,\dots,k} | \hat{\alpha}^m_{i} - \alpha_{i}^* | \le \varepsilon$ for any $m \ge M'(\varepsilon)$ almost surely.
\end{theorem}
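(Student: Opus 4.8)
The plan is to build on Proposition \ref{proposv}, which already guarantees that the estimated individual rates $\hat{\Wcal}^m_{i}$ (equivalently $\hat{\Vcal}^m_i$) become balanced across all $i \neq i^*$ for large $m$. The remaining task is to show that the empirical proportion $\hat{\alpha}^m_{i^{*,m}}$ assigned to the best design converges to $\alpha^*_{i^*}$ as characterized by \eqref{eq:totalbalance}; once that is in hand, the balance condition \eqref{eq:individualbalance} together with the continuity and strict monotonicity results of Lemma \ref{lem:vequal}(ii) and Lemma \ref{lem:umono} will pin down all the other $\hat{\alpha}^m_i$. So the structure is: (1) note that by Lemma \ref{lem:one}, $\hat{\alpha}^m_{i^{*}}$ stays in a compact subinterval of $(0,1)$ bounded away from the endpoints, and by Lemma \ref{lem:bayes_consist} the plug-in estimates $\hat\mu^m_i, (\hat\sigma^m_i)^2$ converge to the truth, so $\hat{\Vcal}^m_i$, $\hat{\Ucal}^{m}_i$, $\hat{\Ucal}^{*,m}_i$ are uniformly close to their true-parameter counterparts evaluated at $\hat{\boldsymbol{\alpha}}^m$; (2) show $\hat{\alpha}^m_{i^*} \to \alpha^*_{i^*}$; (3) deduce $\hat{\alpha}^m_i \to \alpha^*_i$ for $i \neq i^*$.

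For step (2), I would argue by contradiction using the drift behavior of the ratio on the left-hand side of \eqref{eq:estratio}. Suppose along some subsequence $\hat{\alpha}^m_{i^*}$ stays above $\alpha^*_{i^*} + \varepsilon$. Because the $\hat{\Vcal}^m_i$ are balanced (Proposition \ref{proposv}), and because Lemma \ref{lem:vequal}(i) says the balanced allocation with $\alpha_{i^*}$ fixed is unique, the vector $\hat{\boldsymbol{\alpha}}^m$ must be close to $\boldsymbol{\alpha}^f(\hat{\alpha}^m_{i^*})$, so the estimated ratios $\hat{\alpha}^m_i/\hat{\alpha}^m_{i^*}$ are close to $r^f_i(\hat{\alpha}^m_{i^*})$. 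By Lemma \ref{lem:umono}(i) the true mapping $\bar{\alpha}_{i^*} \mapsto \sum_{i\neq i^*} \Ucal^{*,\min}_i(r^f_i(\bar{\alpha}_{i^*}))/\Ucal^{\min}_i(r^f_i(\bar{\alpha}_{i^*}))$ is strictly decreasing, and at $\bar{\alpha}_{i^*} = \alpha^*_{i^*}$ its value is $\geq 1$ while it drops below $1$ just above $\alpha^*_{i^*}$ (this is exactly the ``max'' in \eqref{eq:totalbalance} combined with Lemma \ref{lem:umono}(iii)). Hence for $\hat{\alpha}^m_{i^*}$ a fixed amount above $\alpha^*_{i^*}$, the quantity $\sum_{i\neq i^{*,m-1}}\hat{\Ucal}^{*,m-1}_{i}/\hat{\Ucal}^{m-1}_{i}$ in \eqref{eq:estratio} is eventually $< 1$ by some fixed margin, so Step 3 of the algorithm stops sampling $i^{*,m-1}$ and samples some $j^{m-1} \neq i^{*}$ instead, driving $\hat{\alpha}^m_{i^*}$ downward. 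This forces $\hat{\alpha}^m_{i^*}$ back toward $\alpha^*_{i^*}$, contradicting the assumed excursion; the symmetric argument handles excursions below $\alpha^*_{i^*} - \varepsilon$, using that the ratio is eventually $> 1$ there so the algorithm keeps sampling $i^{*,m-1}$ and pushes $\hat{\alpha}^m_{i^*}$ up. Care is needed at the boundary behavior described after Theorem \ref{th:ocba}, where the ``min'' and ``max'' versions of the ratio straddle $1$; there one uses that the \emph{plug-in} $\hat{\phi}^m_i$ is the smallest minimizer, so the estimated ratio tracks the $\Ucal^{\min}$-version, and \eqref{eq:totalbalance} is precisely the threshold for that version.

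For step (3), once $\hat{\alpha}^m_{i^*} \to \alpha^*_{i^*}$, combine this with Proposition \ref{proposv} (balanced $\hat{\Vcal}^m_i$) and the uniform closeness of $\hat{\Vcal}^m_i$ to $\Vcal_i$ from step (1): any limit point of $\hat{\boldsymbol{\alpha}}^m$ has $\alpha_{i^*} = \alpha^*_{i^*}$ and satisfies $\Vcal_i(\alpha_i, \alpha^*_{i^*}) = \Vcal_j(\alpha_j, \alpha^*_{i^*})$ for all $i,j\neq i^*$ together with $\sum_i \alpha_i = 1$; by the uniqueness in Lemma \ref{lem:vequal}(i) this limit point must equal $\boldsymbol{\alpha}^f(\alpha^*_{i^*}) = \boldsymbol{\alpha}^*$. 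Since $\hat{\boldsymbol{\alpha}}^m$ lives in a compact set and its only limit point is $\boldsymbol{\alpha}^*$, it converges, and the almost-sure finite time $M'(\varepsilon) \geq M(\varepsilon)$ is obtained by making the various ``eventually'' statements quantitative in $\varepsilon$.

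The main obstacle is step (2), specifically controlling the drift of $\hat{\alpha}^m_{i^*}$ when the optimal allocation lies in the ``discontinuous'' regime where $\phi^{\min}_j(r^f_j(\alpha^*_{i^*})) < \phi^{\max}_j(r^f_j(\alpha^*_{i^*}))$ for some $j$. There, the true ratio mapping jumps across $1$ at $\alpha^*_{i^*}$, so the indicator in Step 3 is sensitive to tiny perturbations of $\hat{\alpha}^m_{i^*}$ and of the plug-in estimates, and one cannot simply invoke continuity. The resolution is to exploit that the algorithm always uses the \emph{smallest} root $\hat{\phi}^m_i$, which by Lemma \ref{lem:phimono0}(iv) is the left-continuous selection; this makes the estimated ratio match the $\Ucal^{\min}$-based mapping whose sup-level set $\{\bar\alpha_{i^*} : \text{ratio} \geq 1\}$ has $\alpha^*_{i^*}$ as its right endpoint (by \eqref{eq:totalbalance}), so the sign of $\eqref{eq:estratio}-1$ is eventually correct on each side of $\alpha^*_{i^*}$ and the drift argument goes through despite the discontinuity. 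Establishing this rigorously — that the estimated ratio cannot linger on the ``wrong'' side of $1$ near the jump — is the delicate part, and is presumably where the excerpt's remark that ``the analytical technique significantly departs from \cite{chen2023balancing}'' is most relevant.
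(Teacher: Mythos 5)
Your architecture inverts the paper's: you try to prove $\hat{\alpha}^m_{i^*}\to\alpha^*_{i^*}$ first by a drift/upcrossing argument and then recover the remaining components, whereas the paper first proves that the whole vector $\hat{\boldsymbol{\alpha}}^m$ converges to \emph{some} limit (via the excursion bounds in Lemmas \ref{lem:optnonub}--\ref{lem:optnonlb} and the Cauchy-type Propositions \ref{prop:optnonconv}--\ref{prop:nonconv}) and only afterwards identifies the limit, using a contradiction with consistency (Lemma \ref{lem:const}): if $\alpha^\circ_{i^*}\neq\alpha^*_{i^*}$, then eventually \eqref{eq:estratio} holds always (or never), so some design is sampled only finitely often. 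Your route is more direct in spirit, but it has a genuine gap at its load-bearing step.

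The gap is in your step (2), in the passage from ``the $\hat{\Vcal}^m_i$ are approximately balanced'' to ``$\hat{\alpha}^m_i/\hat{\alpha}^m_{i^*}$ is close to $r^f_i(\hat{\alpha}^m_{i^*})$.'' Proposition \ref{proposv} gives $|\hat{\Wcal}^m_i-\hat{\Wcal}^m_j|\le\varepsilon$, but to conclude that the empirical ratios are within some $\delta(\varepsilon)$ of the balanced ratios you need an \emph{inverse} stability estimate for the map $\boldsymbol{\alpha}\mapsto(\Vcal_i(\alpha_i,\alpha_{i^*}))_{i\neq i^*}$; Lemma \ref{lem:vequal}(ii) only gives forward (Lipschitz-in-$\bar{\alpha}_{i^*}$) continuity of $\boldsymbol{\alpha}^f$, and neither it nor Lemma \ref{lem:vmono0}(iii) supplies a quantitative modulus for the inverse. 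Without that, the sign of $\eqref{eq:estratio}-1$ is not controlled when $\hat{\alpha}^m_{i^*}$ is a fixed distance from $\alpha^*_{i^*}$, and the upcrossing argument cannot start. The second, related, omission is overshoot/oscillation control: even granting the sign claim outside an $\varepsilon$-band around $\alpha^*_{i^*}$, you must rule out that $\hat{\alpha}^m_{i^*}$ oscillates with non-vanishing amplitude while the plug-in estimates and the smallest minimizer $\hat{\phi}^m_i$ jump between branches near the discontinuity; this is precisely what the paper's Lemmas \ref{lem:optnonub}--\ref{lem:optnonlb} (bounding how much $\sum_{i\neq i^*}\hat{\alpha}^m_i$ can move after each type of sampling event, via the one-sided comparison Lemmas \ref{lem:phihatmon2} and \ref{lem:upart} for the plug-in $\hat{\phi}^m_i$ rather than the population $\phi^{\min}_i$) are built to do. You correctly identify the discontinuity as the delicate point and correctly observe that the left-continuous selection and the ``max'' in \eqref{eq:totalbalance} make the population-level signs come out right on each side of $\alpha^*_{i^*}$, but the transfer of that statement to the estimated quantities along a non-convergent trajectory is exactly the missing content; asserting that the estimated ratio ``cannot linger on the wrong side of $1$'' is the conclusion of the hard part, not an argument for it.
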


In addition to this theoretical guarantee, we emphasize two appealing qualities of the algorithm. First, it is computationally efficient, as it does not require us to solve systems of nonlinear equations, which would be especially cumbersome with the complicated form of (\ref{eq:totalbalance}). Second, the algorithm learns the optimal allocation without resorting to tunable parameters or forced exploration. These elements are often used as workarounds for technical difficulties and continue to be present in many papers on optimal allocations; for example, the top-two method of \cite{russo2020simple} involves a tunable parameter controlling how often to sample $i^{*,m}$, while \cite{garivier2016optimal} uses forced exploration.

\section{Numerical Experiments}

We conduct three sets of experiments to evaluate the empirical performance of the $\mathcal{OCBA}^\mathcal{U}$ algorithm and also to demonstrate the importance of modeling uncertainty around the sampling variance. The first two sets of experiments are based on the following four synthetic instances:
\begin{itemize}
	\item Instance 1: $\mu_i = -1.5(i-1)$ and $\sigma_i^2 = 2^2$, $i=1,\dots,k$.	
	\item Instance 2: $\mu_i = -1.5(i-1)$ and $\sigma_i^2 = 5^2$, $i=1,\dots,k$.
	\item Instance 3: $\mu_i = -0.5(i-1)$ and $\sigma_i^2 = 2^2$, $i=1,\dots,k$.
	\item Instance 4: $\mu_i = -0.5(i-1)$ and $\sigma_i^2 = 5^2$, $i=1,\dots,k$.
\end{itemize}
We set $k=10$ for all instances, with design 1 as the best design. The means of the designs in Instances 1 and 2 are identical and evenly spaced, with higher sampling variances in Instance 2. Instances 3 and 4 are similarly designed, but the means of the designs are closer together, making it more difficult to distinguish between them.

In the first set of experiments, we do not conduct any sampling or learning. We assume that the true problem parameters are known, and compare the optimal allocation solving (\ref{eq:totalbalance})-(\ref{eq:individualbalance}) with its counterpart (\ref{glynn1})-(\ref{glynn2}). We are not looking at empirical performance at the moment; we simply wish to view the difference between the two allocations. Figure \ref{fig:opt_alloc} shows that, in general, the top two designs (i.e., designs $1$ and $2$) would receive larger shares of the budget when the decision-maker knows the sampling variances. On the other hand, when the variance is unknown, designs $3$-$10$ should be sampled more often. We see this behavior in all four instances, though the differences between the two allocations are less pronounced when the sampling variances are larger (Instance 2 vs. 1, or Instance 4 vs. 3). In these situations, we need to sample the top two designs more often to distinguish between them, leaving us with less wiggle room to allocate more samples to the other designs.

\begin{figure}
	\centering
	\caption{Optimal allocations for known and unknown sampling variances in the four instances.}
	\includegraphics[width=0.95\textwidth]{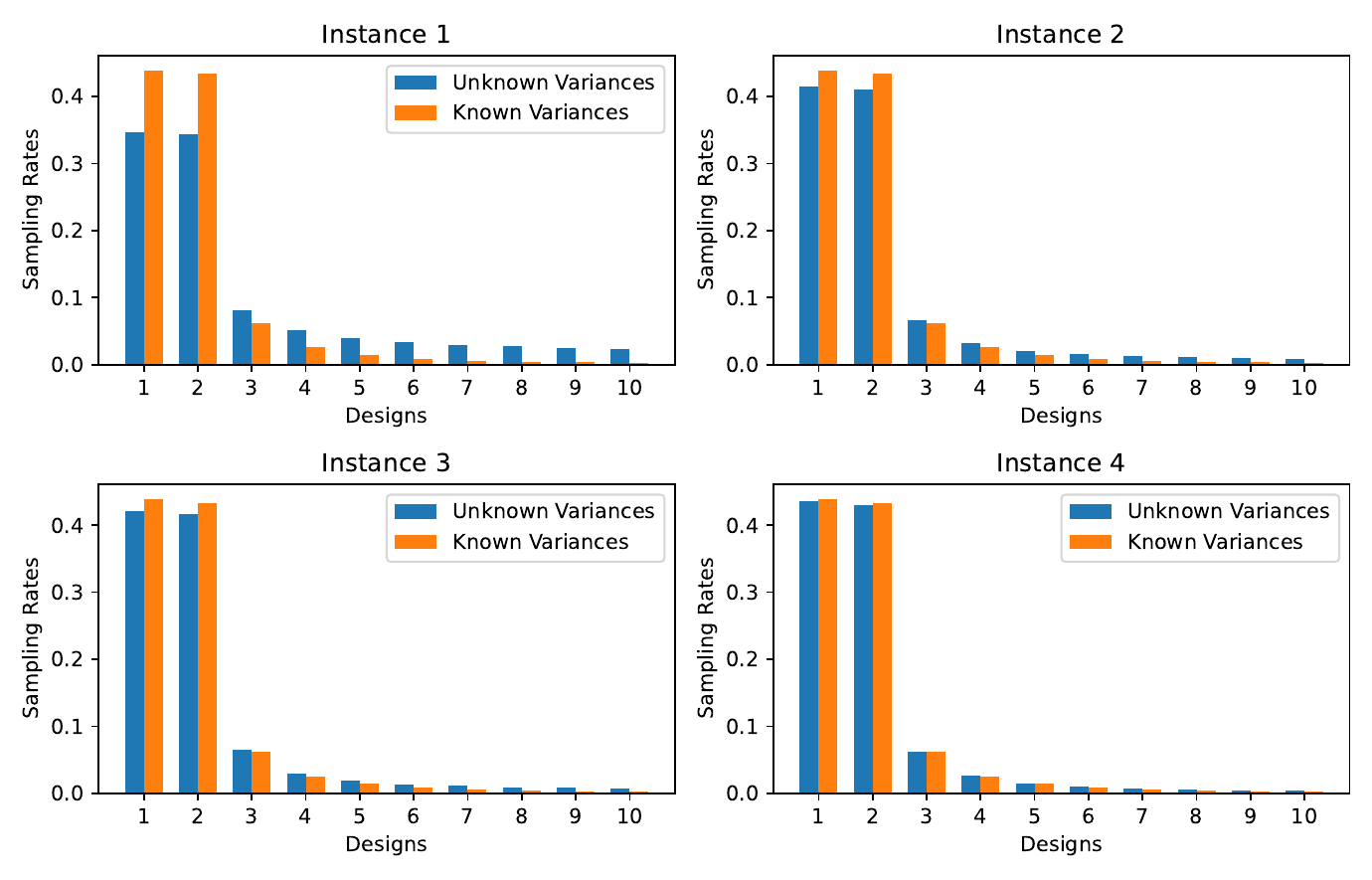}
	\label{fig:opt_alloc}
\end{figure}

The similarity of the allocations in Instance 4 can also be explained from a theoretical perspective. By Lemma \ref{lem:phimono0}, the value that achieves the minimum in \eqref{eq:vdef} lies in the interval $[\mu_i, \mu_{i^*}]$. When $\mu_{i^*} - \mu_i$ is small relative to $\sigma_i^2$, the function $\log(1 + (\mu_i - \phi_i)^2 / \sigma_i^2)$ can be approximated by $(\mu_i - \phi_i)^2 / \sigma_i^2$ for all $\phi_i \in [\mu_i, \mu_{i^*}]$. Substituting this approximation into (\ref{eq:vdef}) and \eqref{eq:bayesPCSrate} allows the minimization to be solved in closed form, yielding precisely the rate function (\ref{eq:freqrate}) that arises in the case of known variances. Thus, the two allocations should be very similar in this situation. However, as $(\mu_{i^*} - \mu_i)^2 / \sigma_i^2$ increases, the approximation becomes less accurate, and $\Vcal_{i}(\alpha_i,\alpha_{i^*})$ deviates more from its known-variance counterpart.

Our second set of experiments focuses on empirical performance in the four synthetic instances. We implement $\ocbau$ as well as the following benchmark methods:
\begin{itemize}
	\item \textit{Oracle allocation with unknown variance (Oracle-U)}. This method is given knowledge of $\mu_i$ and $\sigma^2_i$ and solves (\ref{eq:totalbalance})-(\ref{eq:individualbalance}) via brute force. It serves as a benchmark to evaluate any loss incurred by $\ocbau$ in finite sample.
	\item $\mathcal{OCBA}^\mathcal{K}$. We use a sequential method by \cite{li2023convergence} that converges to the optimal allocation under known variance, i.e., the solution of (\ref{glynn1})-(\ref{glynn2}). The method does not know the true variances, but replaces them with plug-in estimators.
	\item \textit{Expected improvement (EI)}. This is a classical EI method described in Sec. 5.1 of \cite{ryzhov2016convergence}. Unlike $\mathcal{OCBA}^\mathcal{K}$, EI explicitly models uncertainty around the variance, but does not converge to the optimal allocation. We have discussed its limiting behavior in Section \ref{sec:insights}.
	\item \textit{Equal allocation}. This method divides the budget equally across all designs and is a common benchmark in the R\&S literature.
\end{itemize}
After all samples have been collected, we select the design with the largest posterior mean. To make a fair comparison across methods, we use the same statistical model to estimate $\mu_i$. Namely, for each $\left(\mu_i,\sigma^2_i\right)$, we create an independent normal-inverse-gamma prior
\begin{align*}
	\pi(\phi_i,\psi_i) \propto  \frac{1}{\psi_i^{1/2}} \exp \left( -\frac{\lambda_{i}^0}{2 \psi_i} (\phi_i - \hat{\mu}_{i}^0)^2 \right) \frac{1}{\psi_i^{a_{i}^0+1}} \exp \left( - \frac{2b_{i}^0}{2 \psi_i} \right)
\end{align*}
with the parameters of the marginal inverse-gamma distribution set to $a^0_i = \frac{1}{2}$ and $b^0_i = 0$, the location parameter set to $\hat{\mu}^0_i = 0$, and the fourth parameter set to $\lambda^0_i = 0$. We use $n_0 = 3$ as the initial sample size. Under this prior, the posterior means and variances are equivalent to the sample means and variances. The performance measure $PFS^n_B \triangleq 1 - PCS^n_B$ can be estimated using Monte Carlo simulation, by generating samples from the posterior distribution of $\mu_i$ for each $i$ and computing the proportion of these simulations where the estimated best design has the largest actual value. We compute $2.5 \times 10^5$ such samples in each simulation run. Additionally, we report the frequentist performance measure $PFS^n_F \triangleq 1 - PCS^n_F$, which simply checks whether the design with the largest posterior mean coincides with $i^*$. The values of both performance metrics are averaged over $1,000$ macroreplications.

\begin{figure}
	\centering
	\caption{Performance comparison for $\ocbau$ and the benchmark methods in Instances 1-4.}
	\begin{minipage}{\textwidth}
		\includegraphics[width=1.0\textwidth]{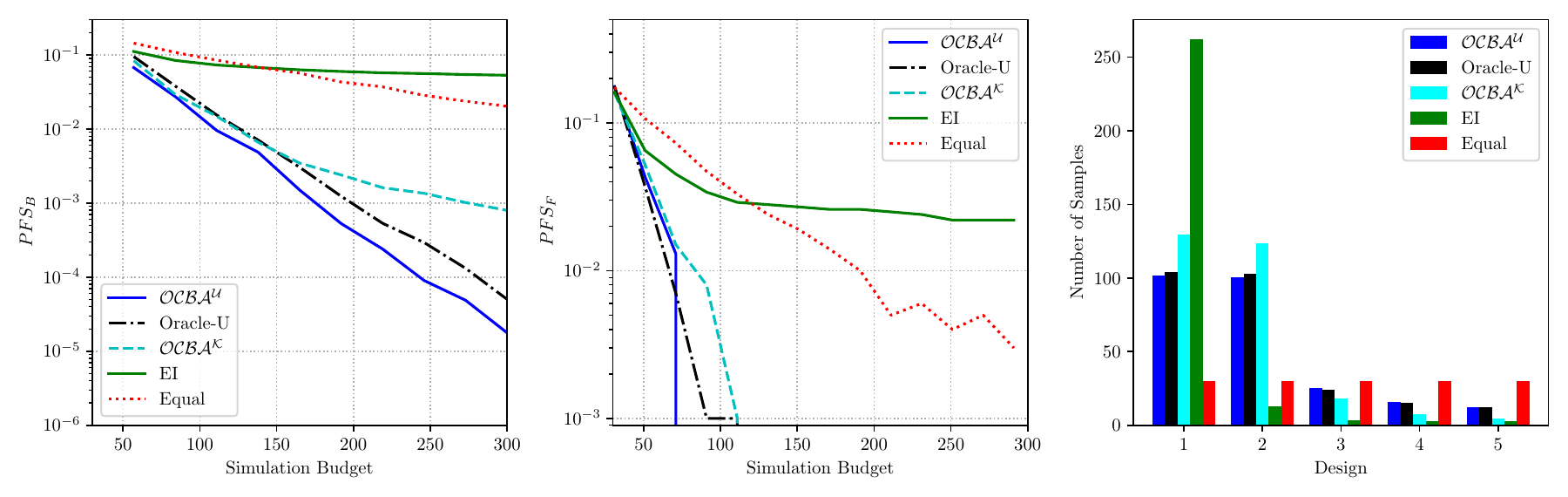}
	\end{minipage}
	
	\begin{minipage}{\textwidth}
		\includegraphics[width=1.0\textwidth]{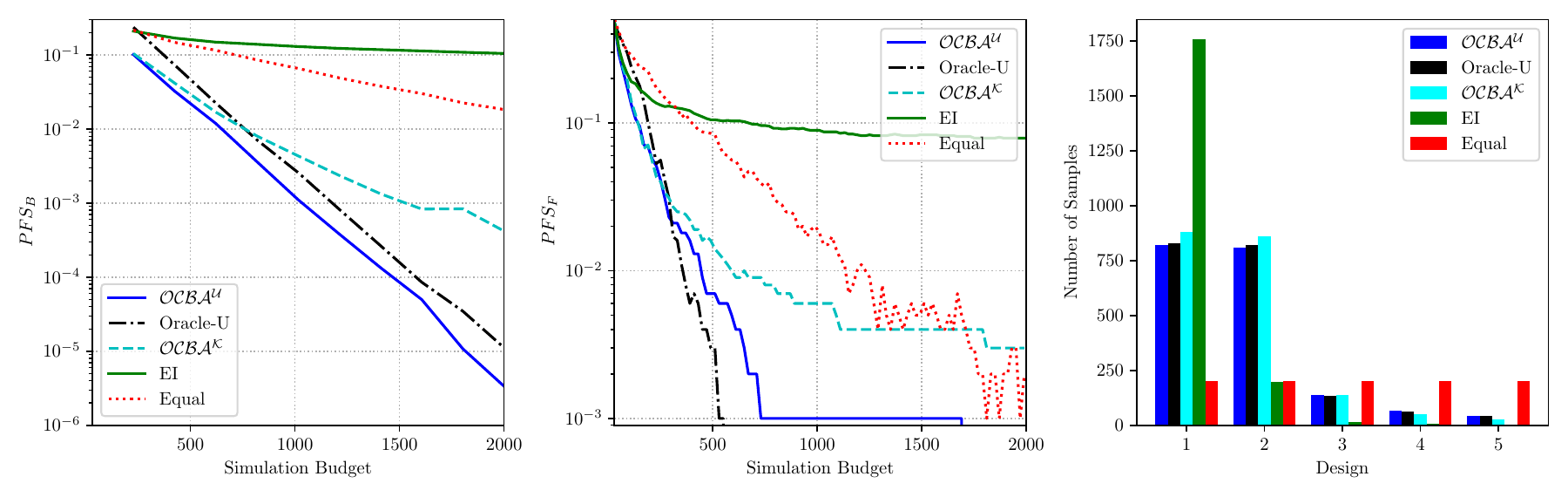}
	\end{minipage}
	
	\begin{minipage}{\textwidth}
		\includegraphics[width=1.0\textwidth]{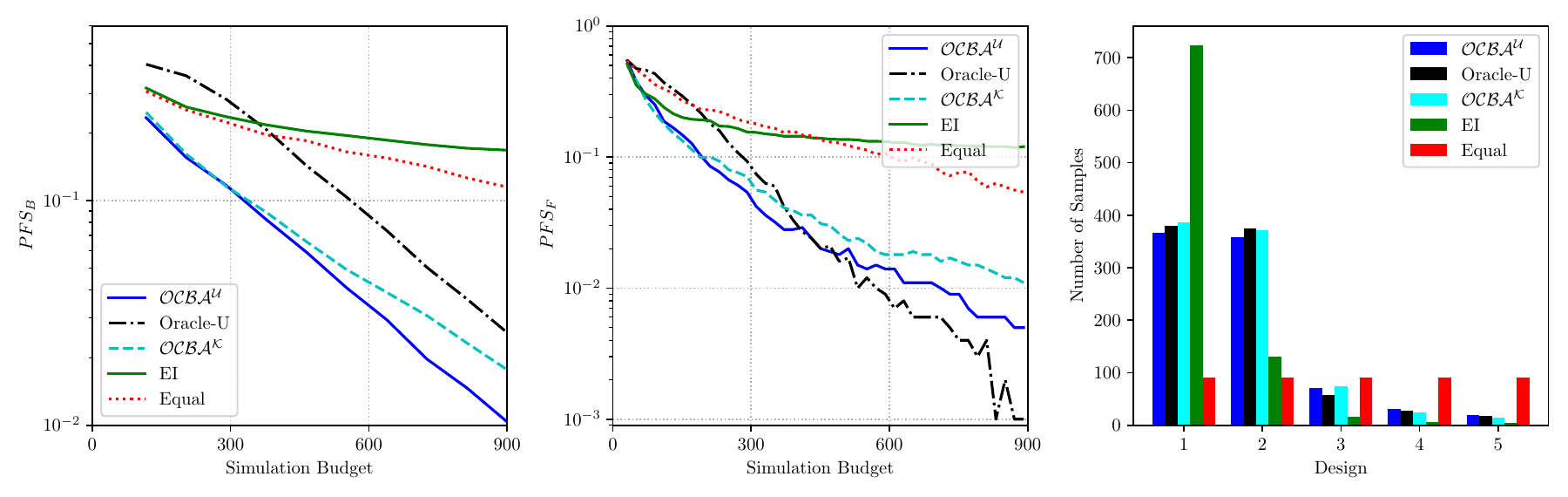}
	\end{minipage}
	
	\begin{minipage}{\textwidth}
		\includegraphics[width=1.0\textwidth]{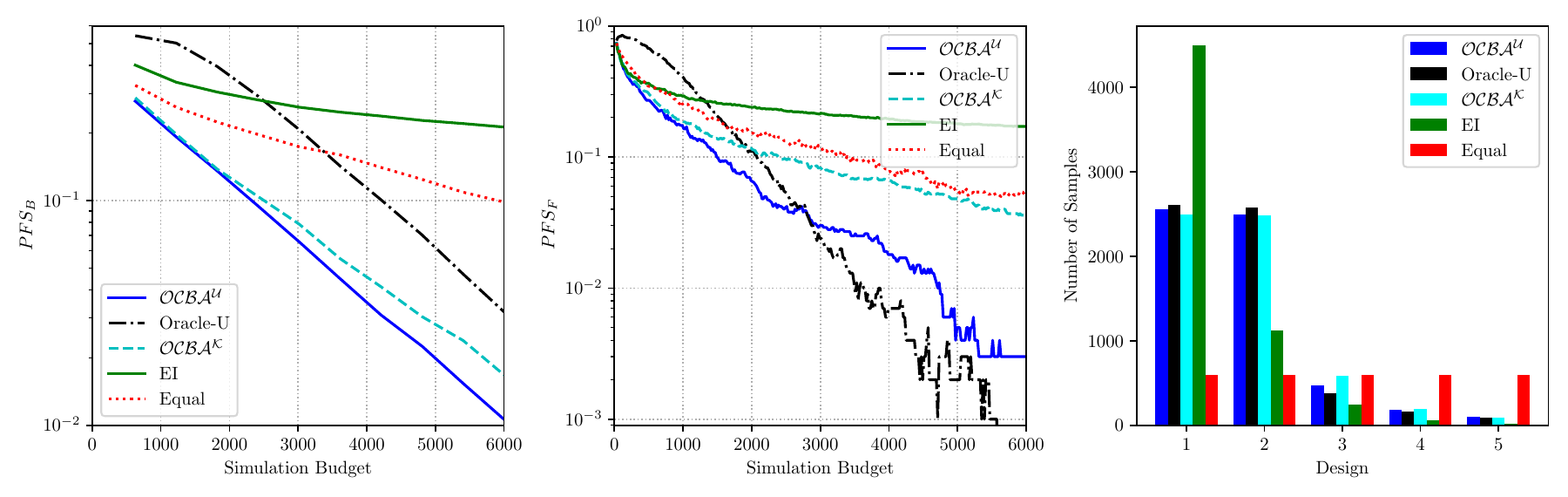}
	\end{minipage}
	\flushleft
	\vspace{0.2cm}
	{\small \emph{Notes. Rows 1-4 correspond to Instances 1-4, respectively.}}
	\label{fig:AlgTest}
\end{figure}

Figure \ref{fig:AlgTest} shows the trajectory for both types of PCS over time, as well as the final empirical allocation of the budget for designs 1-5. The total simulation budget $n$ is different for each of Instances $1$-$4$ due to the differences in difficulty between these problems. The most important observation is that $\ocbau$ significantly outperforms both $\mathcal{OCBA}^\mathcal{K}$ (which learns an optimal allocation, but for known variances) and EI (which learns a suboptimal allocation, but for unknown variances). Between these two, $\mathcal{OCBA}^\mathcal{K}$ performs much better, as EI tends to oversample the best design, consistently with the theory established in \cite{ryzhov2016convergence}. It is not surprising that $\ocbau$ outperforms $\mathcal{OCBA}^\mathcal{K}$ with respect to the Bayesian PCS, as that is the metric that it seeks to optimize, but it is interesting that $\ocbau$ mostly performs better even with respect to the \textit{frequentist} PCS, which is optimized by (\ref{glynn1})-(\ref{glynn2}). This happens because $\mathcal{OCBA}^\mathcal{K}$ does not actually know the sampling variances, it simply replaces them by plug-in estimators. This introduces additional statistical error that impedes the performance of the algorithm in finite sample, even though asymptotically it learns an allocation that is better for the frequentist PCS than the one produced by $\ocbau$. This clearly shows the value of including variance uncertainty in the allocation, even when the desired performance metric is not Bayesian.

Lastly, $\ocbau$ is generally comparable to the oracle allocation and even outperforms it sometimes, despite the fact that the oracle already knows the allocation that $\ocbau$ seeks to learn. This behavior has been observed before \citep{chen2006efficient} for OCBA methods, and arises when some designs have small values of $\alpha^*_i$. As a result, they receive very few samples under the oracle and their estimated values are unstable. Sequential procedures such as $\ocbau$ are somewhat more robust to this issue as they adapt to the observed values at every stage of sampling.

The third and final set of experiments is motivated by a realistic application setting, namely, the dose-finding problem \citep{yang2025stochastically,zhou2024sequential}. In this problem, the ``designs'' represent different dosage levels, and the effectiveness of each one can be assessed by conducting a clinical trial with an uncertain outcome. The mean and variance of the outcome are both unknown. The objective is to efficiently identify the best dose using a limited experimental budget.

\begin{figure}[htb]
	\centering
	\caption{Performance comparison for $\ocbau$ and the benchmark methods in Instances 5-6.}
	\begin{minipage}{\textwidth}
		\includegraphics[width=1.0\textwidth]{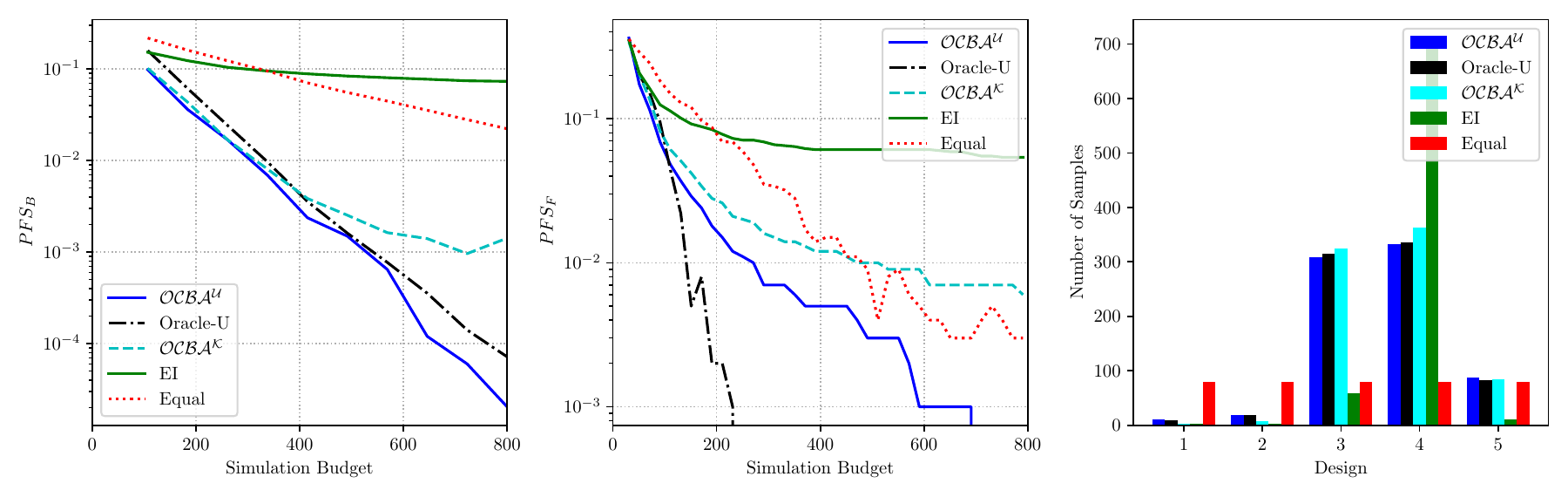}
	\end{minipage}
	
	\begin{minipage}{\textwidth}
		\includegraphics[width=1.0\textwidth]{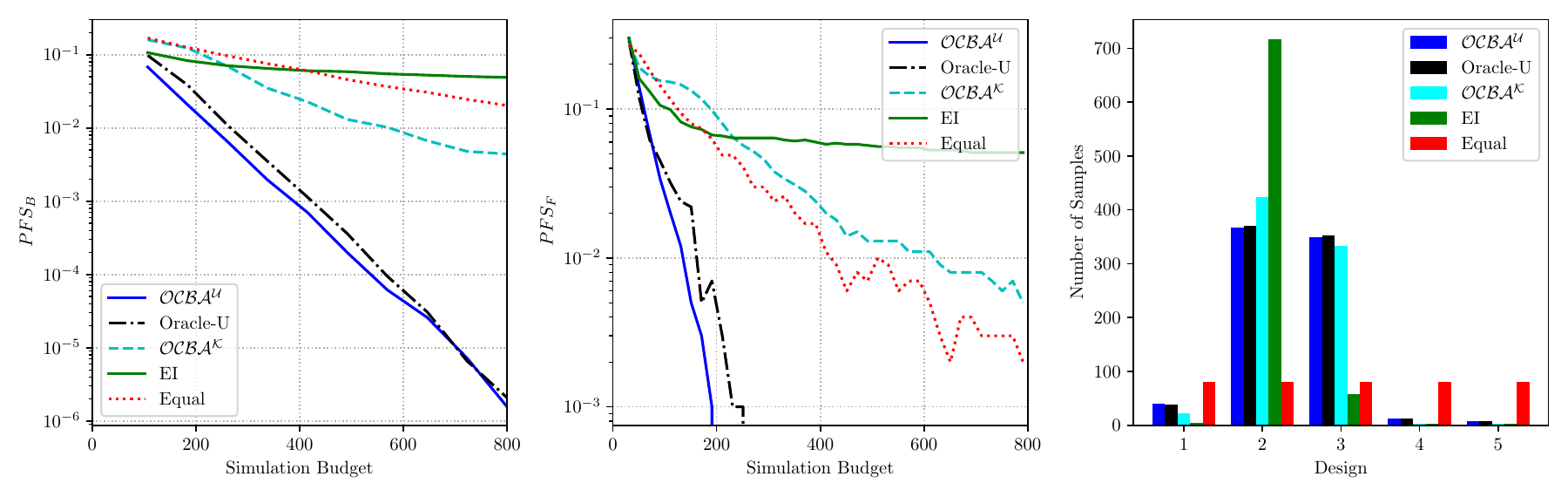}
	\end{minipage}
	\flushleft
	\vspace{0.2cm}
	{\small \emph{Notes. The first and second rows correspond to Instances 5 and 6, respectively.}}
	\label{fig:dose}
\end{figure}

We suppose that there are $10$ dosage levels to choose from. The effectiveness of dose $i$, $i=1,2,\dots,10$, is assumed to follow the Brain-Cousens model \citep{ritz2015dose} with parameter vector $\boldsymbol{c}=(c_1,c_2,\dots,c_5)$, so that
\begin{align}\label{eq:dose_eff}
	\mu_i = c_1+\frac{c_2-c_1+100c_3i}{1+\exp(c_5(\log(100i)-\log(c_4)))}.
\end{align}
The standard deviation for dose $i$ is set as $\sigma_i = 0.1 \mu_i$. We design two instances representing different patient groups:
\begin{itemize}
	\item Instance 5: $\boldsymbol{c}=(2,80,0.3,600,4)$, $i^* = 4$.
	\item Instance 6: $\boldsymbol{c}=(2,100,0.2,400,5)$, $i^*=2$.
\end{itemize}
As in the previous experiments, we apply $\ocbau$ and all benchmark methods to Instances 5 and 6, and present the numerical results in Figure~\ref{fig:dose}.

The observations from Figure \ref{fig:dose} are consistent with those in the second set of experiments: $\ocbau$ outperforms $\mathcal{OCBA}^\mathcal{K}$, EI and equal allocation under both Bayesian and frequentist PFS. The difference in performance is greater in Instance 6 as compared to Instance 5, and corresponds to a greater difference in the allocations learned by $\ocbau$ and $\mathcal{OCBA}^{\mathcal K}$. In fact, $\mathcal{OCBA}^\mathcal{K}$ does not do appreciably better than equal allocation in either instance (and even does worse in Instance 6), suggesting that the improvement achieved by $\ocbau$ comes not only from the differences in the limiting allocations but also in the sequence of sampling decisions made by each procedure. In other words, when the variance is unknown, we do better by adapting to imbalances in (\ref{eq:totalbalance})-(\ref{eq:individualbalance}) rather than in (\ref{glynn1})-(\ref{glynn2}).

% Figure~\ref{fig:dose} also shows that modeling variance uncertainty brings steady gains. $\ocbau$ puts more samples on doses near the current best where noise is higher, which facilitate the hardest pairwise comparisons. This set of experiments confirms that explicitly modeling variance uncertainty is useful beyond synthetic settings.

\section{Conclusions}

We have characterized the optimal budget allocation for R\&S problems where both the means and variances of the simulation output distributions are unknown. Past work has used the asymptotic convergence rate of the probability of correct selection as a performance measure to guide the allocation. However, the frequentist formulation of PCS in past work renders it incapable of distinguishing between known and unknown variance. We rectified this issue by considering a Bayesian formulation of PCS as the performance measure. We derived the rate function of this quantity, as well as the conditions describing the optimal allocation, and developed an efficient selection algorithm that provably learns this allocation without the need for tuning or forced exploration.

In our analysis, we identified several complications stemming from the presence of multiple uncertain parameters. The lack of differentiability properties that are taken for granted in the single-parameter case distinguishes our analysis from other work in this area, and is also likely to pose a challenge for future work on similar problems. We believe that the techniques developed in our paper will be of value in addressing other simulation optimization problems studied in the literature, such as subset selection \citep{chen2008,gao2016}, constrained selection \citep{lee2012}, contextual selection \citep{du2024contextual}, multi-objective selection \citep{lee2010}, and others. All of the prior work on these problems has treated the unknown sampling variance as known when developing optimal allocations.

\bibliographystyle{poms}
\bibliography{reference}

\newpage
\appendix
{\noindent \LARGE \textbf{Electronic Companion}}

\section{Proofs for Section \ref{sec:prior}}

Below, we prove Lemmas \ref{lem:bayes_consist} and \ref{lem:posterior_non}.

\subsection{Proof of Lemma \ref{lem:bayes_consist}}

The following facts will be used in the proof.

\begin{lemma}
	The following statements hold:
	\begin{itemize}
		\item If $N_i \geq 6$,
		\begin{equation}
			\frac{ \int_{\Rb_+} \int_{\Rb} (\phi_i - \bar{X}^n_i)^2  L^{n} (\phi_{i},\psi_{i}) d\phi_i d\psi_i }{  \int_{\Rb_+} \int_{\Rb}  L^{n} (\phi_{i}',\psi_{i}') d\phi_i' d\psi_i' } = \frac{\left(\tilde{S}^n_i\right)^2}{N_i-5}.\label{ineq:unif_prior_m}
		\end{equation}
		\item If $N_i \geq 8$,
		\begin{equation}
			\frac{ \int_{\Rb_+} \int_{\Rb} \left(\psi_i - \frac{N_i \left(\tilde{S}^n_i\right)^2}{N_i-5}\right)^2  L^{n} (\phi_{i},\psi_{i}) d\phi_i d\psi_i }{  \int_{\Rb_+} \int_{\Rb}  L^{n} (\phi_{i}',\psi_{i}') d\phi_i' d\psi_i' } =\frac{2 N_i^2 \left(\tilde{S}^n_i\right)^4 }{(N_i-5)^2(N_i-7)}.\label{ineq:unif_prior_v}
		\end{equation}
		\item Let $A_{\varepsilon,i} \triangleq \{ (\phi_i,\psi_i): |\phi_i-\bar{X}_i^n| < \varepsilon, |\psi_i - N_i(\tilde{S}_i^n)^2/(N_i-5)| < \varepsilon \}$. If $N_i \ge 8$,
		\begin{equation}
			\frac{ \int \int_{(\phi_i,\psi_i) \in A_{\varepsilon,i}} L^{n} (\phi_{i},\psi_{i}) d \phi_{i} d \psi_{i}  }{ \int_{\Rb_+} \int_{\Rb}  L^{n} (\phi_{i},\psi_{i}) d \phi_{i} d \psi_{i}   } \ge 1 - \frac{\left(\tilde{S}^n_i\right)^2}{\varepsilon^2 (N_i - 5)} - \frac{ 2 N_i^2 \left(\tilde{S}^n_i\right)^4 }{\varepsilon^2 (N_i-5)^2(N_i-7)}.\label{ineq:unif_prior_1}
		\end{equation}
	\end{itemize}
\end{lemma}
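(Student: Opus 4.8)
The three displays are the moment computations underlying the ``flat-prior'' posterior for a normal sample with unknown mean and variance, so the plan is to make that normal--inverse-gamma structure explicit and then read off the claims from standard inverse-gamma moments. First I would write $L^n(\phi_i,\psi_i) = (2\pi\psi_i)^{-N_i/2}\exp\!\big(-\tfrac{1}{2\psi_i}\sum_{l=1}^{N_i}(X_{i,l}-\phi_i)^2\big)$ and apply the decomposition $\sum_{l=1}^{N_i}(X_{i,l}-\phi_i)^2 = N_i(\tilde S^n_i)^2 + N_i(\bar X^n_i-\phi_i)^2$, with $\bar X^n_i$ and $(\tilde S^n_i)^2$ held fixed. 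Integrating out $\phi_i$ via the Gaussian identities $\int_{\Rb}\exp(-\tfrac{N_i}{2\psi_i}(\phi_i-\bar X^n_i)^2)\,d\phi_i = \sqrt{2\pi\psi_i/N_i}$ and $\int_{\Rb}(\phi_i-\bar X^n_i)^2\exp(-\tfrac{N_i}{2\psi_i}(\phi_i-\bar X^n_i)^2)\,d\phi_i = (\psi_i/N_i)\sqrt{2\pi\psi_i/N_i}$ shows that, after normalization, the $\psi_i$-marginal is proportional to $\psi_i^{-(N_i-1)/2}\exp(-N_i(\tilde S^n_i)^2/(2\psi_i))$ --- an inverse-gamma law with shape $(N_i-3)/2$ and scale $N_i(\tilde S^n_i)^2/2$ --- while $\phi_i$ given $\psi_i$ is $\mathcal N(\bar X^n_i,\psi_i/N_i)$.

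With this in hand, the left-hand side of \eqref{ineq:unif_prior_m} equals $\tfrac1{N_i}\Eb[\psi_i]$ (the factor $1/N_i$ coming from $\mathrm{Var}(\phi_i\mid\psi_i)=\psi_i/N_i$), and the left-hand side of \eqref{ineq:unif_prior_v} equals $\mathrm{Var}(\psi_i)$, since $N_i(\tilde S^n_i)^2/(N_i-5)$ is exactly $\Eb[\psi_i]$. Using the standard inverse-gamma moments --- mean $\beta/(\alpha-1)$ for $\alpha>1$ and variance $\beta^2/((\alpha-1)^2(\alpha-2))$ for $\alpha>2$ --- with $\alpha=(N_i-3)/2$, $\beta=N_i(\tilde S^n_i)^2/2$ gives $\Eb[\psi_i]=N_i(\tilde S^n_i)^2/(N_i-5)$ (requiring $\alpha>1$, i.e.\ $N_i\ge 6$) and $\mathrm{Var}(\psi_i)=2N_i^2(\tilde S^n_i)^4/((N_i-5)^2(N_i-7))$ (requiring $\alpha>2$, i.e.\ $N_i\ge 8$). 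Dividing the former by $N_i$ and substituting the latter recovers \eqref{ineq:unif_prior_m} and \eqref{ineq:unif_prior_v}; the sample-size thresholds are precisely those needed for the corresponding inverse-gamma moment to be finite.

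For \eqref{ineq:unif_prior_1}, I would view $L^n(\phi_i,\psi_i)$, normalized to integrate to one, as the density of a pair $(\phi_i,\psi_i)$, and note that the set $\Rb\times\Rb_+\setminus A_{\varepsilon,i}$ is contained in $\{|\phi_i-\bar X^n_i|\ge\varepsilon\}\cup\{|\psi_i-\Eb[\psi_i]|\ge\varepsilon\}$ (recalling $\Eb[\psi_i]=N_i(\tilde S^n_i)^2/(N_i-5)$). A union bound together with Chebyshev's inequality then gives $1-\Pb(A_{\varepsilon,i})\le \Eb[(\phi_i-\bar X^n_i)^2]/\varepsilon^2 + \mathrm{Var}(\psi_i)/\varepsilon^2$, and plugging in the two quantities just computed in \eqref{ineq:unif_prior_m} and \eqref{ineq:unif_prior_v} yields the stated bound.

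I do not anticipate a substantive obstacle: the argument is a direct computation. The only points needing care are carrying out the mean/variance split correctly so that the effective degrees of freedom and the $1/N_i$ scaling appear, and tracking the inverse-gamma shape parameter so that the moment-existence conditions translate into the exact thresholds $N_i\ge 6$ and $N_i\ge 8$ stated in the lemma.
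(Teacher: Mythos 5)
Your proposal is correct and follows essentially the same route as the paper: both identify the normalized likelihood as the normal--inverse-gamma posterior $NIG(\bar X^n_i, N_i, (N_i-3)/2, N_i(\tilde S^n_i)^2/2)$, read off the required moments (with the thresholds $N_i\ge 6$ and $N_i\ge 8$ coming from existence of the first and second inverse-gamma moments), and finish with a union bound plus Chebyshev. The only cosmetic difference is that you compute $\Eb[(\phi_i-\bar X^n_i)^2]$ by conditioning on $\psi_i$ and applying the tower property with the inverse-gamma mean, whereas the paper reads the same quantity off the Student-$t$ marginal with $N_i-3$ degrees of freedom; both yield $(\tilde S^n_i)^2/(N_i-5)$.
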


\begin{proof}{Proof:}
	Under a non-informative prior, the posterior for design $i$, given $N_i \ge 4$ samples, has the density $\frac{  L^{n} (\phi_{i},\psi_{i}) }{  \int_{\Rb_+} \int_{\Rb}  L^{n} (\phi_{i}',\psi_{i}') d\phi_i' d\psi_i' }$, which corresponds to the normal-inverse-gamma distribution
	$NIG(\bar{X}_i^n,N_i,$ $(N_i - 3)/2,N_i (\tilde{S}_i^n)^2/2 )$.
	Suppose the random vector $(\Phi_i,\Psi_i)$ follows this posterior distribution.
	The first fact is shown by noticing that $\frac{\sqrt{ N_i-3 }}{  \tilde{S}_i^n}   (\Phi_i - \bar{X}_i^n)$ follows Student's $t$-distribution with $N_i-3$ degrees of freedom. This $t$-distribution has mean $0$ and variance $\frac{N_i-3}{N_i-5}$, from which (\ref{ineq:unif_prior_m}) can be derived. The second fact is shown by noticing that $\Psi_i$ follows an inverse-gamma distribution with shape parameter $\frac{N_i-3}{2}$ and scale parameter $\frac{N_i}{2}(\tilde{S}^n_i)^2$. Such a $\Psi_i$ has mean $\frac{N_i (\tilde{S}^n_i)^2}{N_i-5}$ and variance $\frac{2 N_i^2 (\tilde{S}^n_i)^4 }{(N_i-5)^2(N_i-7)}$, which leads to (\ref{ineq:unif_prior_v}). Finally, (\ref{ineq:unif_prior_1}) is obtained by applying Chebyshev's inequality together with the first two facts. $\square$
\end{proof}

Consider a fixed sample path. Let $\Scal$ denote the set of designs such that $N_j$ does not diverge to infinity as $n \to \infty$. 		
For any given $0 < \varepsilon < \min\{1/(2k),\bar{\epsilon}/4\}$ where $\bar{\epsilon}$ has been introduced in \eqref{ineq:prior_reg} of the main text, with probability one, there exists $n_1$ large enough such that for all $j \notin \Scal$ and $n \ge n_1$,
\begin{enumerate}
	\item $|\mu_{j} - \bar{X}_{j}^n| \le \varepsilon$, $ |\sigma_{j}^2 - N_{j} (\tilde{S}_{j}^n)^2/ (N_{j}-5)| \le \varepsilon$ and $ (\tilde{S}_{j}^n)^2 \le 2 \sigma_{j}^2 $;
	
	\item $N_{j} \ge 15$ (which implies $N_{j} - 7 \ge N_{j}/2$), $N_{j} \ge 8 \sigma_{\max}^2/\varepsilon^3$  (which implies $(\tilde{S}_{j}^n)^2/(\varepsilon^2 (N_{j} - 5)) \le 4\sigma_{j}^2/(\varepsilon^2 N_{j}) \le \varepsilon/2$) and $N_{j} \ge 128  \sigma_{\max}^4  / \varepsilon^3$ (which implies $ 2 N_{j}^2 (\tilde{S}_{j}^n)^4 /(\varepsilon^2 (N_{j}-5)^2(N_{j}-7)) \le  64  \sigma_{j}^4  / (\varepsilon^2 N_{j}) \le \varepsilon/2$).
\end{enumerate}
Then by \eqref{ineq:unif_prior_1},
\begin{align}\label{ineq:unif_prior_t}
	\frac{\int \int_{(\phi_{j},\psi_{j}) \in A_{\varepsilon,j}} L^{n} (\phi_{j},\psi_{j}) d \psi_{j} d \phi_{j} }{\int_{\Rb_+} \int_{\Rb} L^{n} (\phi_{j},\psi_{j}) d \phi_{j} d \psi_{j} } \ge 1 - \frac{(\tilde{S}_{j}^n)^2}{\varepsilon^2 (N_{j} - 5)} - \frac{ 2 N_{j}^2 (\tilde{S}_{j}^n)^4 }{\varepsilon^2 (N_{j}-5)^2(N_{j}-7)} \ge 1-\varepsilon.
\end{align}
Let $n_2$ be $n_1$ if $\Scal = \emptyset$ and $n_2$ be large enough such that $N_{j}$ remains unchanged for all $j \in \Scal$ if $\Scal \ne \emptyset$. Consider any $n \ge n_2$. Let
$A_{\varepsilon,0} \triangleq [\mu_{\min}, \mu_{\max}] \times [ \sigma_{\min}^2, \sigma_{\max}^2 ]$,
\begin{align*}
	&\tilde{A}_{\varepsilon,j} \triangleq \left\{ \begin{array}{ll}
		A_{\varepsilon,j}, & \text{if } j \notin \Scal \\
		A_{\varepsilon,0},  & \text{if } j \in \Scal,
	\end{array} \right.  \
	c_B \triangleq \left\{ \begin{array}{ll}
		1/2, & \text{if } \Scal = \emptyset \\
		1/2 \prod_{j \in \Scal} \frac{ \int \int_{(\phi_{j},\psi_{j}) \in \tilde{A}_{\varepsilon,j}} L^{n_2} (\phi_{j},\psi_{j}) d \psi_{j} d \phi_{j} }{\int_{\Rb_+} \int_{\Rb} L^{n_2} (\phi_{j},\psi_{j}) d \phi_{j} d \psi_{j}}, & \text{if } \Scal \ne \emptyset.
	\end{array} \right.
\end{align*}
Let $A_{\varepsilon} = \{(\boldsymbol{\phi},\boldsymbol{\psi}): (\phi_j,\psi_j) \in \tilde A_{\varepsilon,j},j=1,\dots,k\}$. Notice that $ \varepsilon \le 1/(2k) $ such that $(1-\varepsilon)^k \ge 1-k\varepsilon \ge 1/2$.
If $\Scal = \emptyset$, we have
\begin{align}
	\frac{ \int \int_{(\boldsymbol{\phi},\boldsymbol{\psi}) \in A_{\varepsilon} }  \prod_{j=1}^{k} L^{n} (\phi_{j},\psi_{j}) d\boldsymbol{\phi} d\boldsymbol{\psi} }{ \int_{\Rb^k} \int_{\Rb^k_+}  \prod_{j=1}^{k} L^{n} (\phi_{j}',\psi_{j}') d\boldsymbol{\phi}' d\boldsymbol{\psi}' }  = \prod_{j=1}^k \frac{\int \int_{(\phi_j,\psi_j) \in A_{\varepsilon,j}} L^{n} (\phi_{j},\psi_{j}) d \psi_j d \phi_j }{\int_{\Rb_+} \int_{\Rb} L^{n} (\phi_{j},\psi_{j}) d \phi_{j} d \psi_{j} }
	\ge& (1-\varepsilon)^k\label{eq:techpostprob1}\\
	\ge& c_B,\label{ineq:post_prob_1}
\end{align}
where (\ref{eq:techpostprob1}) holds by \eqref{ineq:unif_prior_t}.
If $\Scal \ne \emptyset$, we have
\begin{align}
	\frac{ \int \int_{(\boldsymbol{\phi},\boldsymbol{\psi}) \in A_{\varepsilon} }  \prod_{j=1}^{k} L^{n} (\phi_{j},\psi_{j}) d\boldsymbol{\phi} d\boldsymbol{\psi} }{ \int_{\Rb^k} \int_{\Rb^k_+}  \prod_{j=1}^{k} L^{n} (\phi_{j}',\psi_{j}') d\boldsymbol{\phi}' d\boldsymbol{\psi}' }
	\ge (1-\varepsilon)^{k} \prod_{j \in \Scal} \frac{\int \int_{(\phi_j,\psi_j) \in \tilde{A}_{\varepsilon,j}} L^{n} (\phi_{j},\psi_{j}) d \psi_j d \phi_j }{\int_{\Rb_+} \int_{\Rb} L^{n} (\phi_{j},\psi_{j}) d \phi_{j} d \psi_{j} } \ge c_B. \label{ineq:post_prob_11}
\end{align}
Let $n_3$ be large enough such that for $j \notin \Scal$ and $n \ge n_3$, we have $N_j \ge  64\bar{c}\sigma_{\max}^2/(\underline{c}c_B\varepsilon^2)$ implying
\begin{align}
	&\bar{c} (\tilde{S}_j^n)^2/(\underline{c}c_B(N_j-5)) \le 4\bar{c} \sigma_{\max}^2/(\underline{c}c_BN_j) \le \varepsilon^2,  \label{ineq:post_n3_1} \\
	&2 \bar{c} N_j^2 (\tilde{S}_j^n)^4 /(\underline{c} c_B (N_j-5)^2(N_j-7)) \le 64 \bar{c}  \sigma_{\max}^4 / (\underline{c}c_B N_j) \le \varepsilon^2. \label{ineq:post_n3_2}
\end{align}

Consider any $n \ge n_3$.
For $j \notin \Scal$ and $(\phi_j, \psi_j) \in A_{\varepsilon,j}$, since $|\mu_j - \bar{X}_j^n| \le \varepsilon$, $ |\sigma_j^2 - N_j (\tilde{S}_j^n)^2/ (N_j-5)| \le \varepsilon$ and $\varepsilon \le \bar{\epsilon}/4$, we have  $(\phi_j, \psi_j) \in [\mu_{\min} - \bar{\epsilon}, \mu_{\max} + \bar{\epsilon}] \times [ \sigma_{\min}^2 - \bar{\epsilon}, \sigma_{\max}^2 + \bar{\epsilon} ]$. Meanwhile, for $j \in \Scal$ and $(\phi_j, \psi_j) \in A_{\varepsilon,0}$, it is straightforward that $(\phi_j, \psi_j) \in [\mu_{\min} - \bar{\epsilon}, \mu_{\max} + \bar{\epsilon}] \times [ \sigma_{\min}^2 - \bar{\epsilon}, \sigma_{\max}^2 + \bar{\epsilon} ]$ by definition. Then $A_{\varepsilon} \subset H_w$ where $H_w$ is defined in (11), which leads to $\pi^{0}(\boldsymbol{\phi},\boldsymbol{\psi}) \ge \underline{c}$ for $(\boldsymbol{\phi},\boldsymbol{\psi}) \in A_{\varepsilon}$. Then, for any $i \notin \Scal$,
\begin{align}
	&\Eb_B[ (\mu_i - \bar{X}_i^n)^2  ] = \frac{ \int_{\Rb_+^k} \int_{\Rb^k} (\phi_i-\bar{X}_i^n)^2 \pi^{0}(\boldsymbol{\phi},\boldsymbol{\psi})  \prod_{j=1}^{k} L^{n} (\phi_{j},\psi_{j}) d\boldsymbol{\phi} d\boldsymbol{\psi} }{ \int_{\Rb_+^k} \int_{\Rb^k} \pi^{0}(\boldsymbol{\phi}',\boldsymbol{\psi}') \prod_{j=1}^{k} L^{n} (\phi_{j}',\psi_{j}') d\boldsymbol{\phi}' d\boldsymbol{\psi}' } \nonumber\\
	\le& \bar{c}/\underline{c}  \frac{ \int_{\Rb_+^k} \int_{\Rb^k} (\phi_i-\bar{X}_i^n)^2  \prod_{j=1}^{k} L^{n} (\phi_{j},\psi_{j}) d\boldsymbol{\phi} d\boldsymbol{\psi} }{ \int \int_{(\boldsymbol{\phi},\boldsymbol{\psi}) \in A_{\varepsilon}}  \prod_{j=1}^{k} L^{n} (\phi_{j}',\psi_{j}') d\boldsymbol{\phi}' d\boldsymbol{\psi}' } \nonumber\\
	\le& \frac{\bar{c}(\tilde{S}_i^n)^2}{\underline{c}(N_i-5)} \frac{1}{c_B} \label{eq:techlong1}\\
	\le& \varepsilon^2,\label{eq:techlong2}
\end{align}
where (\ref{eq:techlong1}) holds by \eqref{ineq:unif_prior_m}, \eqref{ineq:post_prob_1} and \eqref{ineq:post_prob_11}, and \eqref{eq:techlong2} holds by \eqref{ineq:post_n3_1}.
By H\"older inequality,
$\Eb_B[ |\mu_i - \bar{X}_i^n|  ] \le \varepsilon
$,
which leads to $|\hat\mu_i^n - \bar{X}_i^n| = |\Eb_B (\mu_i) - \bar{X}_i^n| \le \varepsilon$.
Similarly,
\begin{eqnarray}
	\Eb_B[ (\sigma_i^2 - \frac{N_i}{N_i-5} (\tilde{S}_i^n)^2)^2  ]
	&\le& \bar{c}/\underline{c}  \frac{ \int_{\Rb_+^k} \int_{\Rb^k}  (\psi_i-\frac{N_i}{N_i-5} (\tilde{S}_i^n)^2)^2  \prod_{j=1}^{k} L^{n} (\phi_{j},\psi_{j}) d\boldsymbol{\phi} d\boldsymbol{\psi} }{ \int \int_{(\boldsymbol{\phi},\boldsymbol{\psi}) \in A_{\varepsilon}}  \prod_{j=1}^{k} L^{n} (\phi_{j}',\psi_{j}') d\boldsymbol{\phi}' d\boldsymbol{\psi}' }  \nonumber\\
	&\le& \frac{2 \bar{c} N_i^2 (\tilde{S}_i^n)^4 }{\underline{c}(N_i-5)^2(N_i-7)} \frac{1}{c_B} \label{eq:techlong3}\\
	&\le& \varepsilon^2,\label{eq:techlong4}
\end{eqnarray}
where (\ref{eq:techlong3}) holds by \eqref{ineq:unif_prior_v}, \eqref{ineq:post_prob_1} and \eqref{ineq:post_prob_11}, and \eqref{eq:techlong4} holds holds by \eqref{ineq:post_n3_2}. By the H\"older inequality,
$\Eb_B[ |\sigma_i^2 - \frac{N_i}{N_i-5} (\tilde{S}_i^n)^2|  ] \le \varepsilon$,
which leads to $|(\hat\sigma_i^n)^2 - \frac{N_i}{N_i-5} (\tilde{S}_i^n)^2|  \le \varepsilon$. $\square$

\subsection{Proof of Lemma \ref{lem:posterior_non}}

To simplify the notation, we assume $k=2$ in this proof such that $\boldsymbol{\phi}=(\phi_1,\phi_2)$ and $\boldsymbol{\psi}=(\psi_1,\psi_2)$. Suppose $N_1 \ge 6$ and $N_1$ does not increase to infinity as $n \to \infty$.   Meanwhile, suppose $N_2 \to \infty$ as $n \to \infty$. Let $c_d \triangleq 1/\int_{\Rb_+} \int_{\Rb} L^{n} (\phi_{2},\psi_{2}) d \phi_{2} d \psi_{2} $. We have by \eqref{ineq:unif_prior_t} that, when $n$ is large enough, $c_d \int \int_{(\phi_{2},\psi_{2}) \notin A_{\varepsilon,2}} L^{n} (\phi_{2},\psi_{2}) d \phi_{2} d \psi_{2}  \le \varepsilon$.
For any fixed value of $(\phi_1,\psi_1)$, we have
\begin{align*}
	&\left| c_d \int \int_{(\phi_{2},\psi_{2}) \notin A_{\varepsilon,2}} (\pi^0(\phi_1,\phi_2,\psi_1,\psi_2)-\pi^0(\phi_1,\mu_2,\psi_1,\sigma_2^2)) L^{n} (\phi_{2},\psi_{2}) d \phi_{2} d \psi_{2}  \right| \\
	\le& \bar{c} c_d \int \int_{(\phi_{2},\psi_{2}) \notin A_{\varepsilon,2}}  L^{n} (\phi_{2},\psi_{2}) d \phi_{2} d \psi_{2}  \le \bar{c} \varepsilon.
\end{align*}
On the other hand, by the uniform continuity of $\pi^0(\boldsymbol{\phi},\boldsymbol{\psi})$, we have when $(\phi_{2},\psi_{2}) \in A_{\varepsilon,2}$ that $|\pi^{0}(\phi_1,\phi_2,\psi_1,\psi_2) - \pi^{0}(\phi_1,\mu_{2},\psi_1,\sigma_2^2)| \le b_{\pi} \varepsilon$,
where $b_{\pi} $ is a constant, which leads to
\begin{align*}
	& \left| c_d \int \int_{(\phi_{2},\psi_{2}) \in A_{\varepsilon,2}} (\pi^0(\phi_1,\phi_2,\psi_1,\psi_2)-\pi^0(\phi_1,\mu_2,\psi_1,\sigma_2^2)) L^{n} (\phi_{2},\psi_{2}) d \phi_{2} d \psi_{2}  \right|   \le b_{\pi} \varepsilon.
\end{align*}
Combining the above two inequalities, $c_d \int_{\Rb_+} \int_{\Rb}  \pi^0(\phi_1,\phi_2,\psi_1,\psi_2) L^{n} (\phi_{2},\psi_{2}) d \phi_{2} d \psi_{2}$ can be bounded by $\pi^0(\phi_1,\mu_2,\psi_1,\sigma_2^2) \pm (b_{\pi}+\bar{c}) \varepsilon$.
Thus
\begin{align*}
	&\Big| c_d \int_{\Rb_+^2} \int_{\Rb^2} \phi_1 \pi^0(\boldsymbol{\phi},\boldsymbol{\psi}) \prod_{j=1}^k L^{n} (\phi_{j},\psi_{j}) d\boldsymbol{\phi} d\boldsymbol{\psi} - \int_{\Rb_+} \int_{\Rb}  \phi_1 \pi^0(\phi_1,\mu_2,\psi_1,\sigma_2^2) L^{n} (\phi_{1},\psi_{1})   d \phi_{1} d \psi_{1} \Big|  \\
	\le& (b_{\pi}+\bar{c}) \varepsilon \int_{\Rb_+} \int_{\Rb} \phi_1 L^{n} (\phi_{1},\psi_{1}) d \phi_{1} d \psi_{1}.
\end{align*}
Similarly,
\begin{align*}
	&\Big| c_d \int_{\Rb_+^2} \int_{\Rb^2}  \pi^0(\boldsymbol{\phi},\boldsymbol{\psi}) \prod_{j=1}^k L^{n} (\phi_{j},\psi_{j}) d\boldsymbol{\phi} d\boldsymbol{\psi} - \int_{\Rb_+} \int_{\Rb}   \pi^0(\phi_1,\mu_2,\psi_1,\sigma_2^2) L^{n} (\phi_{1},\psi_{1})   d \phi_{1} d \psi_{1} \Big|  \\
	\le& (b_{\pi}+\bar{c}) \varepsilon \int_{\Rb_+} \int_{\Rb}  L^{n} (\phi_{1},\psi_{1}) d \phi_{1} d \psi_{1}.
\end{align*}
Since $N_1 \ge 6$, both $\int_{\Rb_+} \int_{\Rb} \phi_1 L^{n} (\phi_{1},\psi_{1}) d \phi_{1} d \psi_{1}$ and $\int_{\Rb_+} \int_{\Rb}  L^{n} (\phi_{1},\psi_{1}) d \phi_{1} d \psi_{1}$ are finite because
$L^{n} (\phi_{1},\psi_{1}) / \int_{\Rb_+} \int_{\Rb}  L^{n} (\phi_{1},\psi_{1}) d \phi_{1} d \psi_{1}$
is the density function of  $ NIG(\bar{X}_i^n,N_i,(N_i - 3)/2,N_i (\tilde{S}_i^n)^2/2 )$ whose expectations are finite. Then both $(b_{\pi}+\bar{c}) \varepsilon \int_{\Rb_+} \int_{\Rb} \phi_1 L^{n} (\phi_{1},\psi_{1}) d \phi_{1} d \psi_{1}$ and $(b_{\pi}+\bar{c}) \varepsilon \int_{\Rb_+} \int_{\Rb}$ $  L^{n} (\phi_{1},\psi_{1}) d \phi_{1} d \psi_{1}$ can be very small for $\varepsilon$ small enough and $n$ large enough. Thus, when $n \to \infty$, the posterior mean of design 1 satisfies
\begin{align*}
	&\frac{ \int_{\Rb_+^k} \int_{\Rb^k} \phi_1 \pi^{0}(\boldsymbol{\phi},\boldsymbol{\psi})  \prod_{j=1}^{k} L^{n} (\phi_{j},\psi_{j}) d\boldsymbol{\phi} d\boldsymbol{\psi} }{ \int_{\Rb_+^k} \int_{\Rb^k} \pi^{0}(\boldsymbol{\phi}',\boldsymbol{\psi}') \prod_{j=1}^{k} L^{n} (\phi_{j}',\psi_{j}') d\boldsymbol{\phi}' d\boldsymbol{\psi}' } \to  \frac{\int_{\Rb_+} \int_{\Rb} \phi_1 \pi^0(\phi_1,\mu_2,\psi_1,\sigma_2^2) L^{n} (\phi_{1},\psi_{1}) d \phi_{1} d \psi_{1}}{ \int_{\Rb_+} \int_{\Rb}  \pi^0(\phi_1,\mu_2,\psi_1,\sigma_2^2) L^{n} (\phi_{1},\psi_{1}) d \phi_{1} d \psi_{1} }.
\end{align*}
Similarly, when $n \to \infty$, the posterior variance of design 1 satisfies
\begin{align*}
	&\frac{ \int_{\Rb_+^k} \int_{\Rb^k} \psi_1 \pi^{0}(\boldsymbol{\phi},\boldsymbol{\psi})  \prod_{j=1}^{k} L^{n} (\phi_{j},\psi_{j}) d\boldsymbol{\phi} d\boldsymbol{\psi} }{ \int_{\Rb_+^k} \int_{\Rb^k} \pi^{0}(\boldsymbol{\phi}',\boldsymbol{\psi}') \prod_{j=1}^{k} L^{n} (\phi_{j}',\psi_{j}') d\boldsymbol{\phi}' d\boldsymbol{\psi}' } \to  \frac{\int_{\Rb_+} \int_{\Rb} \psi_1 \pi^0(\phi_1,\mu_2,\psi_1,\sigma_2^2) L^{n} (\phi_{1},\psi_{1}) d \phi_{1} d \psi_{1}}{ \int_{\Rb_+} \int_{\Rb}  \pi^0(\phi_1,\mu_2,\psi_1,\sigma_2^2) L^{n} (\phi_{1},\psi_{1}) d \phi_{1} d \psi_{1} }. \ \square
\end{align*}

\section{Proof of Theorem \ref{thm:pcs}}\label{sec:appendixsec1}

We first present several technical lemmas that will be used in the proof. Below, we state these results; the proofs are given in Section \ref{sec:proof_appendixsec1}. For notational simplicity, let $\Delta \triangleq \min \{\min_{j \ne i^*} (\mu_{i^*} - \mu_{j}) / 8 ,  \sigma_{\min}^2/4 \}$.

\begin{lemma}\label{lem:mle_eoc_z}
	Let $\varepsilon$ denote any small positive constant satisfying $0< \varepsilon < \Delta$. For $i \ne i^*$, $\bar{\psi}_i \in [\sigma_{i}^2 - \varepsilon, \sigma_{i}^2+\varepsilon]$, $\bar{\psi}_{i^*} \in [\sigma_{i^*}^2 - \varepsilon, \sigma_{i^*}^2+\varepsilon]$, $\bar{\phi}_i \in [\mu_i-\varepsilon,\mu_i+\varepsilon]$, $\bar{\phi}_{i^*} \in [\mu_{i^*}-\varepsilon,\mu_{i^*}+\varepsilon]$, $\phi_i \in \Rb$ and $\phi_{i^*} \in \Rb$, define functions
	\begin{align}
		&w(\bar{\psi}_{i},\bar{\psi}_{i^*},\bar{\phi}_{i},\bar{\phi}_{i^*},\phi_{i},\phi_{i^*}) \triangleq  \frac{\alpha_{i}}{2} \log (1 + (\bar{\phi}_{i}-\phi_{i})^2/\bar{\psi}_{i}) + \frac{\alpha_{i^*}}{2} \log (1 + (\bar{\phi}_{i^*}-\phi_{i^*})^2/\bar{\psi}_{i^*}),  \nonumber \\
		&a(\bar{\psi}_{i},\bar{\psi}_{i^*},\bar{\phi}_{i},\bar{\phi}_{i^*}) \triangleq  -\min_{\phi_{i}-\phi_{i^*} \ge 0} w(\bar{\psi}_{i},\bar{\psi}_{i^*},\bar{\phi}_{i},\bar{\phi}_{i^*},\phi_{i},\phi_{i^*}).  \label{ineq:eoc_num_z}
	\end{align}
	Let $(\phi_{i}^{\min,i}(\bar{\psi}_{i},\bar{\psi}_{i^*},\bar{\phi}_{i},\bar{\phi}_{i^*}), \phi_{i^*}^{\min,i}(\bar{\psi}_{i},\bar{\psi}_{i^*},\bar{\phi}_{i},\bar{\phi}_{i^*}))$ denote an optimal solution to
	\begin{align*}
		\min_{\phi_{i},\phi_{i^*}: \  \phi_{i} \ge \phi_{i^*}} w(\bar{\psi}_{i},\bar{\psi}_{i^*},\bar{\phi}_{i},\bar{\phi}_{i^*},\phi_{i},\phi_{i^*}).
	\end{align*}
	The optimal solution satisfies
	\begin{align}
		&\phi_{i}^{\min,i}(\bar{\psi}_{i},\bar{\psi}_{i^*},\bar{\phi}_{i},\bar{\phi}_{i^*}) = \phi_{i^*}^{\min,i}(\bar{\psi}_{i},\bar{\psi}_{i^*},\bar{\phi}_{i},\bar{\phi}_{i^*}), \label{ineq:phiiistar}  \\
		&\phi_{i^*}^{\min,i}(\bar{\psi}_{i},\bar{\psi}_{i^*},\bar{\phi}_{i},\bar{\phi}_{i^*}),\phi_{i}^{\min,i}(\bar{\psi}_{i},\bar{\psi}_{i^*},\bar{\phi}_{i},\bar{\phi}_{i^*}) \in [ \mu_i - \varepsilon, \mu_{i^*}+\varepsilon ] . \label{ineq:eoc_phi_ep}
	\end{align}
	We can find $b_a > 0$ such that
	\begin{align}
		&\left| a(\bar{\psi}_{i},\bar{\psi}_{i^*},\bar{\phi}_{i},\bar{\phi}_{i^*}) - a(\sigma_{i}^2,\sigma_{i^*}^2,\mu_{i},\mu_{i^*}) \right| \le b_{a} \varepsilon.   \label{ineq:z_est}
	\end{align}
\end{lemma}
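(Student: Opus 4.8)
The key structural fact is that, viewed as a function of $(\phi_i,\phi_{i^*})$, the objective is \emph{separable}: $w=f(\phi_i)+g(\phi_{i^*})$ with $f(\phi_i)=\tfrac{\alpha_i}{2}\log(1+(\bar\phi_i-\phi_i)^2/\bar\psi_i)$ and $g(\phi_{i^*})=\tfrac{\alpha_{i^*}}{2}\log(1+(\bar\phi_{i^*}-\phi_{i^*})^2/\bar\psi_{i^*})$. Each summand is nonnegative and coercive, with derivative $f'(\phi_i)=\alpha_i(\phi_i-\bar\phi_i)/(\bar\psi_i+(\phi_i-\bar\phi_i)^2)$ having the sign of $\phi_i-\bar\phi_i$ (and similarly for $g$), so $f,g$ have unique minimizers at $\bar\phi_i,\bar\phi_{i^*}$. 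Since $0<\varepsilon<\Delta\le(\mu_{i^*}-\mu_i)/8$ and $\varepsilon<\sigma_{\min}^2/4$, we get the orderings $\bar\phi_i\le\mu_i+\varepsilon<\mu_{i^*}-\varepsilon\le\bar\phi_{i^*}$ and $\bar\psi_i,\bar\psi_{i^*}\ge\sigma_{\min}^2-\varepsilon\ge\tfrac34\sigma_{\min}^2>0$. Now, for \eqref{ineq:phiiistar}: a minimizer of $w$ over the closed feasible set $\{\phi_i\ge\phi_{i^*}\}$ exists by coercivity; if it had $\phi_i>\phi_{i^*}$ strictly it would lie in the open set $\{\phi_i>\phi_{i^*}\}$ and hence be an unconstrained stationary point, forcing $\phi_i=\bar\phi_i$ and $\phi_{i^*}=\bar\phi_{i^*}$, which contradicts $\bar\phi_i<\bar\phi_{i^*}$. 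Hence $\phi_i^{\min,i}=\phi_{i^*}^{\min,i}$, and $a(\bar\psi_i,\bar\psi_{i^*},\bar\phi_i,\bar\phi_{i^*})=-\min_{\phi\in\Rb}h(\phi)$ where $h:=f+g$ on the diagonal.

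For \eqref{ineq:eoc_phi_ep}: with the common value written as $\phi$, we have $h'(\phi)=f'(\phi)+g'(\phi)<0$ for $\phi<\bar\phi_i$ (both terms negative) and $h'(\phi)>0$ for $\phi>\bar\phi_{i^*}$ (both terms positive), so every minimizer of $h$ lies in $[\bar\phi_i,\bar\phi_{i^*}]\subseteq[\mu_i-\varepsilon,\mu_{i^*}+\varepsilon]$.

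For \eqref{ineq:z_est}: let $h_0$ denote $h$ with $(\bar\psi_i,\bar\psi_{i^*},\bar\phi_i,\bar\phi_{i^*})$ replaced by $(\sigma_i^2,\sigma_{i^*}^2,\mu_i,\mu_{i^*})$, so $a(\sigma_i^2,\sigma_{i^*}^2,\mu_i,\mu_{i^*})=-\min_\phi h_0(\phi)=-\Vcal_i(\alpha_i,\alpha_{i^*})$, and set $I:=[\mu_i-\varepsilon,\mu_{i^*}+\varepsilon]$. By the previous paragraph every minimizer of $h$ lies in $I$, and by Lemma \ref{lem:phimono0}(ii) every minimizer of $h_0$ lies in $[\mu_i,\mu_{i^*}]\subseteq I$. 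Evaluating the minimizer of one objective in the other (and vice versa) gives $|\min_\phi h-\min_\phi h_0|\le\sup_{\phi\in I}|h(\phi)-h_0(\phi)|$, hence it suffices to bound the last supremum by $b_a\varepsilon$. Since every logarithm has both arguments $\ge1$, $|\log u-\log v|\le|u-v|$, so $|h(\phi)-h_0(\phi)|$ is at most $\tfrac12$ times
\[
\Big|\tfrac{(\mu_i-\phi)^2}{\sigma_i^2}-\tfrac{(\bar\phi_i-\phi)^2}{\bar\psi_i}\Big|+\Big|\tfrac{(\mu_{i^*}-\phi)^2}{\sigma_{i^*}^2}-\tfrac{(\bar\phi_{i^*}-\phi)^2}{\bar\psi_{i^*}}\Big|.
\]
Splitting each difference into a numerator part (controlled using $|\mu-\bar\phi|\le\varepsilon$ and $|\phi-\mu|,|\phi-\bar\phi|\le\mu_{i^*}-\mu_i+2\varepsilon$ on $I$) and a denominator part (controlled using $|\sigma^2-\bar\psi|\le\varepsilon$ and $\sigma^2\bar\psi\ge\tfrac34\sigma_{\min}^4$), each is $O(\varepsilon)$ with a constant depending only on $\mu_{i^*}-\mu_i$, $\sigma_{\min}^2$, and $\Delta$; this produces $b_a$.

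The main obstacle is \eqref{ineq:z_est}. The estimate itself is elementary, but it cannot be run over all of $\Rb$, where $h-h_0$ need not be uniformly small; the crucial ingredient is that \eqref{ineq:phiiistar}--\eqref{ineq:eoc_phi_ep} together with Lemma \ref{lem:phimono0}(ii) confine \emph{both} minimizers to the fixed bounded interval $I$, after which the comparison is routine calculus.
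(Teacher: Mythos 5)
Your proof is correct and follows essentially the same route as the paper: reduce the constrained problem to the diagonal $\phi_i=\phi_{i^*}$, confine all minimizers to the fixed interval $[\mu_i-\varepsilon,\mu_{i^*}+\varepsilon]$, and compare the two minima via the uniform closeness of the objectives on that interval. The only cosmetic differences are that you rule out $\phi_i>\phi_{i^*}$ by an interior-stationarity argument rather than the paper's explicit perturbation (decrease $\phi_i$ or increase $\phi_{i^*}$), and you obtain the $O(\varepsilon)$ comparison from $|\log u-\log v|\le|u-v|$ for $u,v\ge 1$ plus elementary algebra rather than from the paper's bounds on the partial derivatives of $w$ over a compact box --- both are equally valid.
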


The next technical lemma uses the set
$
\Xi_{i} \triangleq \{ (\boldsymbol{\phi},\boldsymbol{\psi}) \in \Rb^{k} \times \Rb^{k}_+: \phi_i \ge \phi_{i^*} \}
$, $i \ne i^*$,
that has been defined in Section 3.2 of the main text.

\begin{lemma}\label{lem:mle}
	The following facts about the maximum likelihood estimation (MLE) hold:
	\begin{itemize}
		\item The optimal solution (denoted by $(\boldsymbol{\phi}^{*,i^*},\boldsymbol{\psi}^{*,i^*})$) to $\max_{(\boldsymbol{\phi}, \boldsymbol{\psi}) \in \Rb^{k} \times \Rb^{k}_+} \sum_{i=1}^k \sum_{l=1}^{N_i} \log f (X_{il} | \phi_{i},\psi_{i})$
		is $\boldsymbol{\phi}^{*,i^*} = \bar{\boldsymbol{X}}^n$ and $\boldsymbol{\psi}^{*,i^*} = (\tilde{\boldsymbol{S}}^n)^2$. Moreover,
		\begin{align*}
			&\max_{(\boldsymbol{\phi}, \boldsymbol{\psi}) \in \Rb^{k} \times \Rb^{k}_+} \sum_{i=1}^k \sum_{l=1}^{N_i} \log f (X_{il} | \phi_{i},\psi_{i})
			= - \frac{n}{2} \left( \log (2\pi) + 1 \right) - \sum_{i=1}^k \frac{N_i}{2} \log (\tilde{S}_i^n)^2.
		\end{align*}
		
		\item Suppose $n$ is large enough such that  $| \bar{X}_j^n - \mu_j | \le \varepsilon$ and $| (\tilde{S}_j^n)^2 - \sigma_j^2 | \le \varepsilon$, $j=1,\dots,k$, where $\varepsilon$ is any small positive constant satisfying $\varepsilon < \Delta$. For $i \ne i^*$ and $0 \le \delta < \Delta$, the constrained maximum likelihood estimation problem can be simplified with
		\begin{align*}
			\max_{(\boldsymbol{\phi}, \boldsymbol{\psi}) \in \Xi_{i}} \sum_{j=1}^k \sum_{l=1}^{N_j} \log f &(X_{jl} |  \phi_{j},\psi_{j})
			= - \frac{n}{2} \left( \log (2\pi) + 1 \right) - \sum_{j\ne i^*, i}  \left( \frac{N_j}{2}\log  (\tilde{S}_j^n)^2   \right) \\
			& - \min_{ \phi_{i} } \left(   \frac{N_{i}}{2}\log \left( (\tilde{S}_{i}^n)^2 +  (\bar{X}_{i}^n-\phi_{i})^2 \right)    + \frac{N_{i^*}}{2}\log \left( (\tilde{S}_{i^*}^n)^2 + (\bar{X}_{i^*}^n-\phi_{i})^2 \right)   \right)
		\end{align*}
		and the optimal solution (denoted by $(\boldsymbol{\phi}^{*,i},\boldsymbol{\psi}^{*,i})$) to this constrained MLE satisfies
		\begin{itemize}			
			\item $\phi_{i}^{*,i} = \phi_{i^*}^{*,i}$ and $\phi_{i}^{*,i}, \phi_{i^*}^{*,i} \in  [\mu_{i} - \varepsilon, \mu_{i^*} + \varepsilon]$,
			
			\item $\psi_{i}^{*,i}  = (\tilde{S}_{i}^n)^2 + (\bar{X}_{i}^n - \phi_{i}^{*,i})^2$,
			
			\item $\psi_{i^*}^{*,i} = (\tilde{S}_{i^*}^n)^2 + (\bar{X}_{i^*}^n - \phi_{i^*}^{*,i})^2$,
			
			\item $\phi_j^{*,i} = \bar{X}_j^n$ and $\psi_j^{*,i} = (\tilde{S}_j^n)^2$ for $j \ne i^*,i$.
		\end{itemize}
	\end{itemize}
\end{lemma}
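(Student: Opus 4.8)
The first claim is a textbook maximum-likelihood computation which I would carry out by separating across designs: since $\sum_{i=1}^k\sum_{l=1}^{N_i}\log f(X_{il}\mid\phi_i,\psi_i)=\sum_{i=1}^k\left(-\tfrac{N_i}{2}\log(2\pi\psi_i)-\tfrac{1}{2\psi_i}\sum_{l=1}^{N_i}(X_{il}-\phi_i)^2\right)$, I optimize one design at a time. For fixed $\psi_i$, the quadratic in $\phi_i$ is minimized at $\phi_i=\bar X_i^n$; substituting and using $\sum_{l}(X_{il}-\bar X_i^n)^2=N_i(\tilde S_i^n)^2$, the $i$-th term becomes $-\tfrac{N_i}{2}\log(2\pi\psi_i)-\tfrac{N_i(\tilde S_i^n)^2}{2\psi_i}$, which is maximized at $\psi_i=(\tilde S_i^n)^2$ with value $-\tfrac{N_i}{2}(\log(2\pi)+1)-\tfrac{N_i}{2}\log(\tilde S_i^n)^2$. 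Summing over $i$ and recalling $n=\sum_i N_i$ gives the stated maximizer $(\bar{\boldsymbol X}^n,(\tilde{\boldsymbol S}^n)^2)$ and value formula.

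For the constrained problem over $\Xi_i$, the constraint $\phi_i\ge\phi_{i^*}$ couples only designs $i$ and $i^*$, so for every $j\ne i,i^*$ the optimization is unconstrained and the first part gives $\phi_j^{*,i}=\bar X_j^n$, $\psi_j^{*,i}=(\tilde S_j^n)^2$, contributing $-\tfrac{N_j}{2}(\log(2\pi)+1)-\tfrac{N_j}{2}\log(\tilde S_j^n)^2$. For the pair $(i,i^*)$, I first maximize out $\psi_i,\psi_{i^*}$ for fixed $(\phi_i,\phi_{i^*})$ exactly as above: the optimal values are $\psi_i=(\tilde S_i^n)^2+(\bar X_i^n-\phi_i)^2$ and $\psi_{i^*}=(\tilde S_{i^*}^n)^2+(\bar X_{i^*}^n-\phi_{i^*})^2$ (using $\sum_l(X_{il}-\phi_i)^2=N_i(\tilde S_i^n)^2+N_i(\bar X_i^n-\phi_i)^2$). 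This leaves the two-variable problem of minimizing $\tfrac{N_i}{2}\log\!\left((\tilde S_i^n)^2+(\bar X_i^n-\phi_i)^2\right)+\tfrac{N_{i^*}}{2}\log\!\left((\tilde S_{i^*}^n)^2+(\bar X_{i^*}^n-\phi_{i^*})^2\right)$ subject to $\phi_i\ge\phi_{i^*}$, and the value formula then follows by assembling the three pieces, provided this two-variable minimum equals the displayed one-variable minimum over $\phi_i$.

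The crux is therefore to show that the constraint is active, i.e. $\phi_i^{*,i}=\phi_{i^*}^{*,i}$. This cannot go through convexity, since $\phi\mapsto\log(c+(a-\phi)^2)$ is not convex; instead I use that it is strictly decreasing for $\phi<a$ and strictly increasing for $\phi>a$. Because $n$ is large enough that $|\bar X_j^n-\mu_j|\le\varepsilon<\Delta\le\min_{j\ne i^*}(\mu_{i^*}-\mu_j)/8$, one gets $\bar X_i^n<\bar X_{i^*}^n$ with both in $[\mu_i-\varepsilon,\mu_{i^*}+\varepsilon]$. Suppose a minimizer has $\phi_i^{*,i}>\phi_{i^*}^{*,i}$. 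If moreover $\phi_i^{*,i}>\bar X_i^n$, then decreasing $\phi_i$ slightly preserves feasibility and strictly lowers the first term, a contradiction; hence $\phi_i^{*,i}\le\bar X_i^n$. Symmetrically, if $\phi_{i^*}^{*,i}<\bar X_{i^*}^n$, increasing $\phi_{i^*}$ slightly preserves feasibility and strictly lowers the second term; hence $\phi_{i^*}^{*,i}\ge\bar X_{i^*}^n$. But then $\phi_i^{*,i}\le\bar X_i^n<\bar X_{i^*}^n\le\phi_{i^*}^{*,i}$, contradicting $\phi_i^{*,i}>\phi_{i^*}^{*,i}$. So the constraint binds; writing $\phi:=\phi_i=\phi_{i^*}$ reduces the problem to the one-variable minimization in the statement, coercivity of that objective as $|\phi|\to\infty$ gives existence of a minimizer, and the same monotonicity argument forces it into $[\bar X_i^n,\bar X_{i^*}^n]\subseteq[\mu_i-\varepsilon,\mu_{i^*}+\varepsilon]$; the stated forms of $\psi_i^{*,i}$ and $\psi_{i^*}^{*,i}$ are just the earlier optimal values at this $\phi$. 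The argument is unchanged if the half-space is shifted by any $\delta<\Delta$, since the relevant mean ordering still persists for large $n$. The main obstacle is precisely this activeness step: the rest is routine calculus, but the non-convexity of the log terms means one must reason via unimodality together with the ordering $\bar X_i^n<\bar X_{i^*}^n$ rather than through first-order/KKT conditions.
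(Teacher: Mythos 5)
Your proposal is correct and follows essentially the same route as the paper: profile out each $\psi_j$ to get $\psi_j^*(\phi_j)=(\tilde S_j^n)^2+(\bar X_j^n-\phi_j)^2$, note the constraint only couples designs $i$ and $i^*$, and then show the constraint binds via unimodality of the log terms together with the ordering $\bar X_i^n<\bar X_{i^*}^n$ guaranteed by $\varepsilon<\Delta$. The only cosmetic difference is that the paper outsources the binding/interval step to its Lemma \ref{lem:mle_eoc_z} (proved by an exchange argument), whereas you establish it inline by a contradiction argument of the same monotonicity flavor.
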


The following technical lemma identifies a subset of $\Xi_{i}$ in which the log-likelihood is close to its optimal value.

\begin{lemma}\label{lem:xi_ai}
	Let $b_{\iota}$ denote a large constant. Suppose $n$ is large enough such that  $| \bar{X}_j^n - \mu_j | \le \varepsilon$ and $|(\tilde{S}_j^n)^2 - \sigma_j^2| \le \varepsilon$, $j=1,\dots,k$, where $\varepsilon$ is any small positive constant satisfying $\varepsilon < \min \{\bar{\epsilon}/(2(b_{\iota}+1)), \Delta  \}$.
	\begin{itemize}
		\item If $( \boldsymbol{\phi}, \boldsymbol{\psi}) \in H_{i^*}$ with
		$H_{i^*} \triangleq \{ ( \boldsymbol{\phi}, \boldsymbol{\psi}) \in \Rb^k \times \Rb_+^k: \| ( \boldsymbol{\phi}, \boldsymbol{\psi}) -  ( \boldsymbol{\phi}^{*,i^*}, \boldsymbol{\psi}^{*,i^*})\|_{\infty} \le b_{\iota} \varepsilon \}$,
		then $( \boldsymbol{\phi}, \boldsymbol{\psi}) \in H_w$, the volume of $H_{i^*}$ is $(2b_{\iota}\varepsilon )^{2k}$ and
		\begin{align}\label{ineq:his}
			\frac{1}{n}\sum_{j=1}^k \sum_{l=1}^{N_j} \log f (X_{jl} | \phi_{j}^{*,i^*},\psi_{j}^{*,i^*}) - \frac{1}{n}\sum_{j=1}^k \sum_{l=1}^{N_j} \log f (X_{jl} | \phi_{j},\psi_{j}) \le \varepsilon.
		\end{align}
		
		\item For $i \ne i^*$, if $( \boldsymbol{\phi}, \boldsymbol{\psi}) \in H_{i}$ with
		$H_{i} \triangleq \{ ( \boldsymbol{\phi}, \boldsymbol{\psi}) \in \Xi_{i}: \| ( \boldsymbol{\phi}, \boldsymbol{\psi}) -  ( \boldsymbol{\phi}^{*,i}, \boldsymbol{\psi}^{*,i})\|_{\infty} \le b_{\iota} \varepsilon \}$,
		then $( \boldsymbol{\phi}, \boldsymbol{\psi}) \in H_w$, the volume of $H_{i}$ is $(2b_{\iota}\varepsilon )^{2k}/2$ and
		\begin{align}\label{ineq:hi}
			\frac{1}{n}\sum_{j=1}^k \sum_{l=1}^{N_j} \log f (X_{jl} | \phi_{j}^{*,i},\psi_{j}^{*,i}) - \frac{1}{n}\sum_{j=1}^k \sum_{l=1}^{N_j} \log f (X_{jl} | \phi_{j},\psi_{j}) \le \varepsilon.
		\end{align}
	\end{itemize}
\end{lemma}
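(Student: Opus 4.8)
The plan is to verify, separately for $H_{i^*}$ and for $H_i$, the three assertions — membership in $H_w$, the stated volume, and near-maximality of the log-likelihood — after first rewriting the objective conveniently. Using $\sum_{l=1}^{N_j}(X_{jl}-\phi_j)^2 = N_j\big((\tilde S_j^n)^2+(\bar X_j^n-\phi_j)^2\big)$, the normalized log-likelihood is $\tfrac1n\sum_{j}\sum_{l}\log f(X_{jl}\mid\phi_j,\psi_j)=\sum_{j=1}^k\alpha_j\,\bar\ell_j(\phi_j,\psi_j)$, where $\bar\ell_j(\phi_j,\psi_j) \triangleq -\tfrac12\log(2\pi\psi_j)-\tfrac{(\tilde S_j^n)^2+(\bar X_j^n-\phi_j)^2}{2\psi_j}$, $\alpha_j=N_j/n$ and $\sum_j\alpha_j=1$. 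Each $\bar\ell_j$ depends on the data only through $(\bar X_j^n,(\tilde S_j^n)^2)$; by Lemma \ref{lem:mle} it is maximized at $(\bar X_j^n,(\tilde S_j^n)^2)$ for every $j$ in the $H_{i^*}$ case and for $j\ne i,i^*$ in the $H_i$ case, while $\alpha_i\bar\ell_i+\alpha_{i^*}\bar\ell_{i^*}$ is maximized over $\{\phi_i\ge\phi_{i^*}\}$ at $(\boldsymbol\phi^{*,i},\boldsymbol\psi^{*,i})$, a point with $\phi_i^{*,i}=\phi_{i^*}^{*,i}$. These facts make the two cases parallel except on the pair of coordinates $(i,i^*)$ in the constrained case.

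\emph{Containment and volume.} A point of $H_{i^*}$ has each coordinate within $b_\iota\varepsilon$ of $(\bar X_j^n,(\tilde S_j^n)^2)$, itself within $\varepsilon$ of $(\mu_j,\sigma_j^2)$, so $|\phi_j-\mu_j|,|\psi_j-\sigma_j^2|\le(b_\iota+1)\varepsilon<\bar\epsilon/2$, which by \eqref{ineq:prior_reg} places it inside $H_w$. For $H_i$ the coordinates $j\ne i,i^*$ are identical; for $j\in\{i,i^*\}$ one invokes the explicit forms of Lemma \ref{lem:mle}: $\phi_j^{*,i}\in[\mu_i-\varepsilon,\mu_{i^*}+\varepsilon]$ keeps $\phi_j$ in $[\mu_{\min}-\bar\epsilon,\mu_{\max}+\bar\epsilon]$, and $\psi_j^{*,i}=(\tilde S_j^n)^2+(\bar X_j^n-\phi_j^{*,i})^2$ lies between $\sigma_{\min}^2-\bar\epsilon$ (using $(\tilde S_j^n)^2\ge\sigma_{\min}^2-\varepsilon$ and $(b_\iota+1)\varepsilon<\sigma_{\min}^2/4$) and $\sigma_{\max}^2+\bar\epsilon+(\mu_{\max}-\mu_{\min}+2\bar\epsilon)^2$, so a $b_\iota\varepsilon$ perturbation still leaves $\psi_j$ in the $\psi$-interval of \eqref{ineq:prior_reg}. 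The volumes are then immediate: $H_{i^*}$ is the cube of side $2b_\iota\varepsilon$ in $\Rb^{2k}$; $H_i$ is the intersection of such a cube, centered at a point with $\phi_i^{*,i}=\phi_{i^*}^{*,i}$, with the half-space $\{\phi_i\ge\phi_{i^*}\}$ whose boundary hyperplane passes through the center and is a reflection symmetry of the cube, so exactly half survives.

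\emph{Near-maximality.} For \eqref{ineq:his}: since $(\boldsymbol\phi^{*,i^*},\boldsymbol\psi^{*,i^*})$ is an unconstrained maximizer, $\nabla\bar\ell_j$ vanishes there for each $j$; on $H_w$ every $\bar\ell_j$ is $C^2$ with Hessian operator norm bounded by some $L$ depending only on $\sigma_{\min}^2,\sigma_{\max}^2$ and the range of the means (because $\psi_j\ge\sigma_{\min}^2-\bar\epsilon>0$ there), so a second-order Taylor expansion gives $\bar\ell_j(\phi_j^{*,i^*},\psi_j^{*,i^*})-\bar\ell_j(\phi_j,\psi_j)\le L(b_\iota\varepsilon)^2$ on $H_{i^*}$, and summing against the weights $\alpha_j$ yields $\le L b_\iota^2\varepsilon^2\le\varepsilon$ once $\varepsilon$ is small. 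For \eqref{ineq:hi} the coordinates $j\ne i,i^*$ contribute the same $O((b_\iota\varepsilon)^2)$, but the pair $(i,i^*)$ needs more care: the constrained maximizer sits on $\{\phi_i=\phi_{i^*}\}$, so it is a KKT point with multiplier $\mu\ge0$, whence the gradient of $\alpha_i\bar\ell_i+\alpha_{i^*}\bar\ell_{i^*}$ there is parallel to $\nabla(\phi_{i^*}-\phi_i)$; its directional derivative into $\Xi_i$ is therefore $\le0$, and the only first-order contribution to the deficit at a point of $H_i$ is $\mu\cdot(\phi_i-\phi_{i^*})$ with $0\le\phi_i-\phi_{i^*}\le 2b_\iota\varepsilon$. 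A uniform bound $\mu\le(\mu_{\max}-\mu_{\min}+2\bar\epsilon)/(\sigma_{\min}^2-\bar\epsilon)$, read off from $\mu=\alpha_{i^*}(\bar X_{i^*}^n-\phi_{i^*}^{*,i})/\psi_{i^*}^{*,i}$, together with the $O((b_\iota\varepsilon)^2)$ second-order remainder, gives a deficit of order $b_\iota\varepsilon$, again $\le\varepsilon$ for $\varepsilon$ small.

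The main obstacle is exactly this last estimate in the $H_i$ case: because the constrained MLE lies on $\partial\Xi_i$ rather than being an interior critical point, one cannot simply invoke a second-order expansion, and must instead use that moving into $\Xi_i$ can only decrease $\alpha_i\bar\ell_i+\alpha_{i^*}\bar\ell_{i^*}$ while controlling the Lagrange multiplier uniformly over the sample path — the uniformity resting on $\psi$ being bounded away from $0$ on $H_w$ and on the crude control $|\bar X_j^n-\mu_j|,|(\tilde S_j^n)^2-\sigma_j^2|\le\varepsilon<\Delta$ assumed in the hypothesis. Everything else is bookkeeping against \eqref{ineq:prior_reg}, the definition of $\Xi_i$, and the MLE formulas of Lemma \ref{lem:mle}.
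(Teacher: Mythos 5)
Your proposal reaches the right conclusions, and the containment and volume parts coincide with the paper's argument (including the key symmetry observation that the half-space boundary $\{\phi_i=\phi_{i^*}\}$ passes through the cube's center because $\phi_i^{*,i}=\phi_{i^*}^{*,i}$, so exactly half the cube survives). Where you genuinely diverge is in the near-maximality bounds \eqref{ineq:his}--\eqref{ineq:hi}: you expand to second order around the (constrained) maximizer, which forces you to distinguish the interior critical point from the KKT point on $\partial\Xi_i$ and to control a Lagrange multiplier uniformly. The paper avoids all of this. It writes the normalized log-likelihood as $\boldsymbol{\iota}(\bar{\boldsymbol{X}}^n,(\tilde{\boldsymbol{S}}^n)^2,\boldsymbol{\phi},\boldsymbol{\psi})=-\sum_j\frac{\alpha_j}{2}\left(\log(2\pi\psi_j)+((\tilde S_j^n)^2+(\bar X_j^n-\phi_j)^2)/\psi_j\right)$ and bounds its first partial derivatives uniformly on $H_w$ (using $\psi_j\ge\sigma_{\min}^2-\bar\epsilon$ and the boundedness of $\phi_j$ there), so that $|\boldsymbol{\iota}(\cdot,\boldsymbol{\phi},\boldsymbol{\psi})-\boldsymbol{\iota}(\cdot,\boldsymbol{\phi}',\boldsymbol{\psi}')|\le\varepsilon$ whenever $\|(\boldsymbol{\phi},\boldsymbol{\psi})-(\boldsymbol{\phi}',\boldsymbol{\psi}')\|_\infty\le b_\iota\varepsilon$. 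Applied with $(\boldsymbol{\phi}',\boldsymbol{\psi}')=(\boldsymbol{\phi}^{*,i},\boldsymbol{\psi}^{*,i})$, this yields \eqref{ineq:hi} directly; no stationarity of the center point is invoked at all, so the ``main obstacle'' you identify (that the constrained MLE sits on the boundary of $\Xi_i$) simply never arises. Your route buys a sharper $O(\varepsilon^2)$ deficit in the unconstrained case, but that sharpness is not used anywhere downstream, so the extra machinery is pure overhead.

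One slip to repair: in the $H_i$ case your leading term is $\mu\cdot(\phi_i-\phi_{i^*})\le 2\mu b_\iota\varepsilon$ with $\mu$ bounded by a fixed constant, and you assert this is ``$\le\varepsilon$ for $\varepsilon$ small.'' A quantity proportional to $b_\iota\varepsilon$ does not become $\le\varepsilon$ by shrinking $\varepsilon$; you need $2\mu b_\iota\le 1$, i.e., the bound constrains the admissible size of $b_\iota$ — exactly as the paper's Lipschitz constant does (despite the phrase ``large constant'' in the lemma statement, $b_\iota$ must be chosen small enough relative to the gradient bound of $\boldsymbol{\iota}$ on $H_w$, which is how the paper arranges $|\Delta\boldsymbol{\iota}|\le\varepsilon$). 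The same remark does not apply to your second-order term $Lb_\iota^2\varepsilon^2$, where shrinking $\varepsilon$ genuinely suffices. Once $b_\iota$ is fixed accordingly, your argument closes.
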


Note that, by Lemma \ref{lem:xi_ai}, both $H_{i}$ and $H_{i^*}$ are subsets of $H_w$, so the lower bound $\underline{c}$ on the prior is valid. Moreover, the volumes of these subsets are independent of $n$. Now we can complete the proof of Theorem \ref{thm:pcs} as follows.

For $\varepsilon \le \min\{\bar{\epsilon}/(2(b_{\iota}+1)), \Delta,1/(2k), \bar{\epsilon}/4\}$, suppose $n$ is large enough such that $| \bar{X}^n_j - \mu_j | \le \varepsilon$ and $|(\tilde{S}^n_j)^2 - \sigma_j^2| \le \varepsilon$ for each design $j$ and the results from Lemma \ref{lem:bayes_consist} can be applied. Then  $|\hat{\mu}^n_{j} - \bar{X}^n_j| \le \varepsilon$, which together with $| \bar{X}^n_j - \mu_j | \le \varepsilon$ leads to $|\hat{\mu}^n_{j} - \mu_j| \le 2 \varepsilon$ for each design $j$. Since $\Delta\le \min_{i \ne i^*} (\mu_{i^*} - \mu_{i}) / 8 $, we have $\hat{\mu}^n_j \le \mu_j + (\mu_{i^*} - \mu_{j}) / 4 < \mu_{i^*} - (\mu_{i^*} - \mu_{j}) / 4 \le \hat{\mu}^n_{i^*}$. Then $i^{*,n} = i^*$.

Since $| \bar{X}^n_j - \mu_j | \le \varepsilon$ and $|(\tilde{S}_j^n)^2 - \sigma_j^2| \le \varepsilon$, $j=1,\dots,k$, the results from Lemmas \ref{lem:mle} and \ref{lem:xi_ai} can be applied.
Notice that $\1\{ \cap_{j \ne i^{*}} \{\phi_{i^{*}} > \phi_j\} \} = 0$ if and only if $\boldsymbol{\phi} \in \cup_{i \ne i^*} \Xi_{i}$. Thus,
\begin{eqnarray*}
	1-PCS^n_{B}
	&=& \int  \int_{\Rb^{k} \times \Rb^{k}_+}   \pi^n (\boldsymbol{\phi},\boldsymbol{\psi}) d \boldsymbol{\psi} d \boldsymbol{\phi}-\int  \int_{\Rb^{k} \times \Rb^{k}_+} \1\{ \cap_{i \ne i^{*}} \{\phi_{i^{*}} > \phi_i\} \}  \pi^n (\boldsymbol{\phi},\boldsymbol{\psi}) d \boldsymbol{\psi} d \boldsymbol{\phi}    \\
	&\le& \int  \int_{ \cup_{i \ne i^*} \Xi_{i} }   \pi^n (\boldsymbol{\phi},\boldsymbol{\psi}) d \boldsymbol{\psi} d \boldsymbol{\phi}\\
	&\le& (k-1) \max_{i\ne i^*} \int \int_{ \Xi_{i} } \pi^n (\boldsymbol{\phi},\boldsymbol{\psi}) d \boldsymbol{\psi} d \boldsymbol{\phi}.
\end{eqnarray*}
Meanwhile, $1-PCS_{B}^n \ge \max_{i\ne i^*} \int \int_{ \Xi_{i} } \pi^n (\boldsymbol{\phi},\boldsymbol{\psi}) d \boldsymbol{\psi} d \boldsymbol{\phi}$.
Then, we have
\begin{align*}
	\lim_{n \to \infty} \frac{1}{n} \log (1-PCS_{B}^n) \le& \lim_{n \to \infty} \frac{1}{n} \log (k-1) + \lim_{n \to \infty} \frac{1}{n} \log \left( \max_{i\ne i^*} \int \int_{ \Xi_{i} } \pi^n (\boldsymbol{\phi},\boldsymbol{\psi}) d \boldsymbol{\psi} d \boldsymbol{\phi} \right)  \\
	=& \max_{i\ne i^*} \lim_{n \to \infty} \frac{1}{n} \log \left(  \int \int_{ \Xi_{i} } \pi^n (\boldsymbol{\phi},\boldsymbol{\psi}) d \boldsymbol{\psi} d \boldsymbol{\phi} \right),
\end{align*}
as well as
\begin{align*}
	\lim_{n \to \infty} \frac{1}{n} \log (1-PCS_{B}^n) \ge&  \max_{i\ne i^*} \lim_{n \to \infty} \frac{1}{n} \log \left(  \int \int_{ \Xi_{i} } \pi^n (\boldsymbol{\phi},\boldsymbol{\psi}) d \boldsymbol{\psi} d \boldsymbol{\phi} \right)
\end{align*}
which jointly imply $\lim_{n \to \infty} \frac{1}{n} \log (1-PCS_{B}^n) =  \max_{i\ne i^*} \lim_{n \to \infty} \frac{1}{n} \log \left(  \int \int_{ \Xi_{i} } \pi^n (\boldsymbol{\phi},\boldsymbol{\psi}) d \boldsymbol{\psi} d \boldsymbol{\phi} \right)$.
In the following, we analyze $\lim_{n \to \infty} \frac{1}{n} \log \left(  \int \int_{ \Xi_{i} } \pi^n (\boldsymbol{\phi},\boldsymbol{\psi}) d \boldsymbol{\psi} d \boldsymbol{\phi} \right)$, $i \ne i^*$. Notice that for $i\ne i^*$,
\begin{align}
	\int \int_{ \Xi_{i} } \pi^n (\boldsymbol{\phi},\boldsymbol{\psi}) d \boldsymbol{\psi} d \boldsymbol{\phi}
	=&  \frac{ \int \int_{ \Xi_{i} } \pi^{0}(\boldsymbol{\phi},\boldsymbol{\psi}) \prod_{j=1}^{k} (L^{n} (\phi_{j},\psi_{j}) / L^{n} (\bar{X}^n_{j},(\tilde{S}^n_{j})^2)) d \boldsymbol{\psi} d \boldsymbol{\phi} }{ \int_{\Rb^k} \int_{\Rb^k_+} \pi^{0}(\boldsymbol{\phi}',\boldsymbol{\psi}') \prod_{j=1}^{k} (L^{n} (\phi_{j}',\psi_{j}') / L^{n} (\bar{X}^n_{j},(\tilde{S}^n_{j})^2)) d\boldsymbol{\phi}' d\boldsymbol{\psi}' }, \label{eq:themainfraction}
\end{align}
and
\begin{eqnarray}
	&\,& \log \max_{(\boldsymbol{\phi}, \boldsymbol{\psi}) \in \Xi_{i}} \prod_{j=1}^{k} (L^{n} (\phi_{j},\psi_{j})/ L^{n} (\bar{X}^n_{j},(\tilde{S}^n_{j})^2))\nonumber\\
	&=& - n \Big( \min_{ \phi_{i} }   \frac{\alpha_{i}}{2}\log \big( 1 + (\bar{X}^n_{i}-\phi_{i})^2/(\tilde{S}^n_{i})^2 \big)  + \frac{\alpha_{i^*}}{2}\log \big( 1 + (\bar{X}^n_{i^*}-\phi_{i})^2/(\tilde{S}^n_{i^*})^2 \big)     \Big)\label{eq:maxlikeratio1}\\
	&=& n  a((\tilde{S}^n_{i})^2,(\tilde{S}^n_{i^*})^2,\bar{X}^n_{i},\bar{X}^n_{i^*}), \label{ineq:max_like_ratio}
\end{eqnarray}
where (\ref{eq:maxlikeratio1}) holds by Lemma \ref{lem:mle} and (\ref{ineq:max_like_ratio}) holds by the definition of $a((\tilde{S}^n_{i})^2,(\tilde{S}^n_{i^*})^2,\bar{X}^n_{i},\bar{X}^n_{i^*},0)$ in Lemma \ref{lem:mle_eoc_z}. By \eqref{ineq:z_est} of Lemma \ref{lem:mle_eoc_z}, we have
\begin{equation*}
	|a(\sigma_{i}^2,\sigma_{i^*}^2,\mu_{i},\mu_{i^*}) - a((\tilde{S}^n_{i})^2,(\tilde{S}^n_{i^*})^2,\bar{X}^n_{i},\bar{X}^n_{i^*})| \le b_a \varepsilon,
\end{equation*}
which leads to
\begin{align}\label{ineq:a_est}
	\Big|  a(\sigma_{i}^2,\sigma_{i^*}^2,\mu_{i},\mu_{i^*}) - \frac{1}{n} \log \max_{(\boldsymbol{\phi}, \boldsymbol{\psi}) \in \Xi_{i}} \prod_{j=1}^{k} (L^{n} (\phi_{j},\psi_{j})/ L^{n} (\bar{X}^n_{j},(\tilde{S}^n_{j})^2)) \Big| \le   b_a \varepsilon .
\end{align}
For the numerator of (\ref{eq:themainfraction}), we have
\begin{align*}
	& \int \int_{ \Xi_{i} } \pi^{0}(\boldsymbol{\phi},\boldsymbol{\psi}) \prod_{j=1}^{k} (L^{n} (\phi_{j},\psi_{j})/ L^{n} (\bar{X}^n_{j},(\tilde{S}^n_{j})^2)) d \boldsymbol{\psi} d \boldsymbol{\phi} \\
	\le& \max_{(\boldsymbol{\phi}, \boldsymbol{\psi}) \in \Xi_{i}} \prod_{j=1}^{k} (L^{n} (\phi_{j},\psi_{j})/ L^{n} (\bar{X}^n_{j},(\tilde{S}^n_{j})^2)) \int \int_{ \Xi_{i} } \pi^{0}(\boldsymbol{\phi},\boldsymbol{\psi})  d \boldsymbol{\psi} d \boldsymbol{\phi}  \\
	\le& \exp \Big(  n \Big( a(\sigma_i^2,\sigma_{i^*}^2,\mu_i,\mu_{i^*}) + b_a \varepsilon \Big)   \Big)  \int \int_{ \Xi_{i} } \pi^{0}(\boldsymbol{\phi},\boldsymbol{\psi})  d \boldsymbol{\psi} d \boldsymbol{\phi},
\end{align*}
where the last inequality holds by \eqref{ineq:a_est}. For the denominator of (\ref{eq:themainfraction}), we have
\begin{align}
	& \int_{\Rb^k} \int_{\Rb^k_+} \pi^{0}(\boldsymbol{\phi}',\boldsymbol{\psi}') \prod_{j=1}^{k} (L^{n} (\phi_{j}',\psi_{j}') / L^{n} (\bar{X}^n_{j},(\tilde{S}^n_{j})^2)) d\boldsymbol{\phi}' d\boldsymbol{\psi}'  \nonumber \\
	\ge& \underline{c} \text{Volume}(H_{i^*}) \min_{(\boldsymbol{\phi}, \boldsymbol{\psi}) \in H_{i^*}} \Big(\prod_{j=1}^{k} (L^{n} (\phi_{j}',\psi_{j}') / L^{n} (\bar{X}^n_{j},(\tilde{S}^n_{j})^2))\Big)  \label{ineq:pcs_proof_dlb0} \\
	=& \underline{c} \text{Volume}(H_{i^*}) \min_{(\boldsymbol{\phi}, \boldsymbol{\psi}) \in H_{i^*}} \exp \left( \sum_{j=1}^k \sum_{l=1}^{N_j} \log f (X_{jl} | \phi_{j},\psi_{j}) - \sum_{i=1}^k \sum_{l=1}^{N_i} \log f (X_{il} | \bar{X}^n_{i},(\tilde{S}^n_{i})^2) \right)  \nonumber \\
	\ge& \underline{c} \text{Volume}(H_{i^*}) \exp \left( - n\varepsilon  \right), \label{ineq:pcs_proof_dlb}
\end{align}
where \eqref{ineq:pcs_proof_dlb0} holds because $H_{i^*} \subset H_w$ by Lemma \ref{lem:xi_ai} such that $\pi^{0}(\boldsymbol{\phi}',\boldsymbol{\psi}') \ge \underline{c}$ for any $(\boldsymbol{\phi}',\boldsymbol{\psi}') \in H_{i^*}$ and \eqref{ineq:pcs_proof_dlb} holds by \eqref{ineq:his}. Thus,
\begin{align}
	&\lim_{n \to \infty} \frac{1}{n} \log \int \int_{ \Xi_{i} } \pi^n (\boldsymbol{\phi},\boldsymbol{\psi}) d \boldsymbol{\psi} d \boldsymbol{\phi} \nonumber \\
	\le& \lim_{n \to \infty} \frac{1}{n} \log \frac{ \int \int_{ \Xi_{i} } \pi^{0}(\boldsymbol{\phi},\boldsymbol{\psi})  d \boldsymbol{\psi} d \boldsymbol{\phi} }{\underline{c} \text{Volume}(H_{i^*}) }    + \lim_{n \to \infty} \frac{1}{n} \log \frac{ \exp   (n ( a(\sigma_i^2,\sigma_{i^*}^2,\mu_i,\mu_{i^*}) + b_a \varepsilon ) ) }{ \exp \left(  - n\varepsilon  \right)}  \nonumber \\
	=& a(\sigma_i^2,\sigma_{i^*}^2,\mu_i,\mu_{i^*}) + (b_a+1) \varepsilon \nonumber \\
	=& (b_a+1) \varepsilon -  \min_{\phi_{i} } \left( \frac{\alpha_i}{2} \log \left(1 + (\mu_i-\phi_i)^2/\sigma_i^2 \right) + \frac{\alpha_{i^*}}{2} \log \left(1 + (\mu_{i^*}-\phi_{i})^2/\sigma_{i^*}^2 \right) \right). \label{ineq:uprate}
\end{align}

It remains to derive the lower bound on the convergence rate of $\int \int_{ \Xi_{i} } \pi^n (\boldsymbol{\phi},\boldsymbol{\psi}) d \boldsymbol{\psi} d \boldsymbol{\phi}$. Since
\begin{align*}
	&\int_{\Rb^k} \int_{\Rb^k_+} \pi^{0}(\boldsymbol{\phi}',\boldsymbol{\psi}') \prod_{j=1}^{k} (L^{n} (\phi_{j}',\psi_{j}') / L^{n} (\bar{X}^n_{j},(\tilde{S}^n_{j})^2)) d\boldsymbol{\phi}' d\boldsymbol{\psi}'  \nonumber \\
	\le& \max_{\boldsymbol{\phi}, \boldsymbol{\psi}} \prod_{j=1}^{k} (L^{n} (\phi_{j},\psi_{j})/ L^{n} (\bar{X}^n_{j},(\tilde{S}^n_{j})^2)) \int_{\Rb^k} \int_{\Rb^k_+} \pi^{0}(\boldsymbol{\phi}',\boldsymbol{\psi}')  d\boldsymbol{\phi}' d\boldsymbol{\psi}'\nonumber \\
	=& \int_{\Rb^k} \int_{\Rb^k_+} \pi^{0}(\boldsymbol{\phi}',\boldsymbol{\psi}')  d\boldsymbol{\phi}' d\boldsymbol{\psi}'=1
\end{align*}
and
\begin{align*}
	& \int \int_{ \Xi_{i} } \pi^{0}(\boldsymbol{\phi},\boldsymbol{\psi}) \prod_{j=1}^{k} (L^{n} (\phi_{j},\psi_{j}) / L^{n} (\bar{X}^n_{j},(\tilde{S}^n_{j})^2)) d \boldsymbol{\psi} d \boldsymbol{\phi}  \\
	\ge & \underline{c} \text{Volume}(H_{i}) \min_{(\boldsymbol{\phi}, \boldsymbol{\psi}) \in H_{i}} \exp \Big( \sum_{j=1}^k \sum_{l=1}^{N_j} \log f (X_{jl} | \phi_{j},\psi_{j}) - \sum_{i=1}^k \sum_{l=1}^{N_i} \log f (X_{il} | \bar{X}^n_{i},(\tilde{S}^n_{i})^2) \Big) \\
	\ge &\underline{c} \text{Volume}(H_{i})  \exp \Big( \max_{(\boldsymbol{\phi}, \boldsymbol{\psi}) \in \Xi_{i}}  \sum_{j=1}^k \sum_{l=1}^{N_j} \log f (X_{jl} | \phi_{j},\psi_{j}) - \sum_{i=1}^k \sum_{l=1}^{N_i} \log f (X_{il} | \bar{X}^n_{i},(\tilde{S}^n_{i})^2) - n \varepsilon \Big)  \\
	\ge &\underline{c} \text{Volume}(H_{i}) \exp \Big(  n \Big( a(\sigma_i^2,\sigma_{i^*}^2,\mu_i,\mu_{i^*}) - b_a \varepsilon - \varepsilon \Big)  \Big),
\end{align*}
where the first inequality holds because $H_{i} \subset H_w$ by Lemma \ref{lem:xi_ai} such that $\pi^{0}(\boldsymbol{\phi},\boldsymbol{\psi}) \ge \underline{c}$ for any $(\boldsymbol{\phi},\boldsymbol{\psi}) \in H_{i}$, the second inequality holds by \eqref{ineq:hi} and the last inequality holds by \eqref{ineq:a_est}.
Thus,
\begin{align}
	&\lim_{n \to \infty} \frac{1}{n} \log \int \int_{ \Xi_{i} } \pi^n (\boldsymbol{\phi},\boldsymbol{\psi}) d \boldsymbol{\psi} d \boldsymbol{\phi} \nonumber \\
	\ge& \lim_{n \to \infty} \frac{1}{n} \log \underline{c} \text{Volume}(H_{i})  + \lim_{n \to \infty} \frac{1}{n} \log \exp \Big(  n \Big( a(\sigma_i^2,\sigma_{i^*}^2,\mu_i,\mu_{i^*}) - b_a \varepsilon - \varepsilon \Big)  \Big)   \nonumber \\
	=& - (b_a+1) \varepsilon  -  \min_{\phi_{i} } \left( \frac{\alpha_i}{2} \log \left(1 + (\mu_i-\phi_i)^2/\sigma_i^2 \right) + \frac{\alpha_{i^*}}{2} \log \left(1 + (\mu_{i^*}-\phi_{i})^2/\sigma_{i^*}^2 \right) \right). \label{ineq:lowrate}
\end{align}
The result then follows from \eqref{ineq:uprate} and \eqref{ineq:lowrate} when we take $\varepsilon \to 0$. $\square$

\section{Proofs for Section \ref{sec:opt_cond}}

In the following, we prove Lemmas \ref{lem:phimono0}-\ref{lem:umono} and Theorem \ref{th:ocba}.

\subsection{Proof of Lemma \ref{lem:phimono0}}

For notational simplicity, let
\begin{align*}
	&\Ical( \phi, \mu, \sigma^2 ) \triangleq \frac{d \log (1 + (\mu-\phi_1)^2/\sigma^2 ) }{ d \phi_1 } \Big|_{\phi_1 = \phi}  = \frac{ 2(\phi-\mu) }{ \sigma^2 + (\phi-\mu)^2 }, \\
	&\Hcal( \phi, \mu, \sigma^2 ) \triangleq \frac{d^2 \log (1 + (\mu-\phi_1)^2/\sigma^2 ) }{ d \phi_1^2 } \Big|_{ \phi_1 = \phi } = \frac{ 2\sigma^2  - 2(\phi-\mu)^2 }{ (\sigma^2 + (\phi-\mu)^2)^2 }.
\end{align*}
The stationarity property of $\phi^{\min}_{i}(r)$ requires
\begin{align*}
	&\frac{d  g_{i}  (\phi_i, r ) }{d \phi_i } \Big|_{ \phi_i =  \phi^{\min}_{i}(r)}
	= r \Ical( \phi^{\min}_{i}(r), \mu_i, \sigma_i^2 ) + \Ical( \phi^{\min}_{i}(r), \mu_{i^*}, \sigma_{i^*}^2 )
	= 0.
\end{align*}
The stationarity property is the only place we use the derivative.

% This is item 3 which I commented out
%
%\item If $1/b_0 \le  \alpha_{i}/\alpha_{i^*} \le b_0$ for any $b_0 > 1$, then $ \phi^{\min}_{i}(r),\phi^{\max}_{i}(\alpha_i/\alpha_{i^*}) \in [ \mu_{i} + \kappa(b_0) , \mu_{i^*} - \kappa(b_0)]$ where $\kappa(b_0) \triangleq   \sigma_{\min}^2 \min_{j \ne i^*} (\mu_{i^*} - \mu_{j})  / (2 b_0 (\sigma_{\max}^2 + \max_{j} (\mu_{i^*}-\mu_{j})^2)) $.

% we need to prove item (ii) first for subsequent proof

Below, we first establish (ii), as this result will be used in the subsequent proof of (i).

\textbf{Proof of (ii).} Since both $\log (1 + (\mu_i-\phi_i)^2/\sigma_i^2 )$ and $\log (1 + (\mu_{i^*}-\phi_{i})^2/\sigma_{i^*}^2 )$ decrease with $\phi_i$ when $\phi_i < \mu_i$ and increase with $\phi_i$ when $\phi_i > \mu_{i^*}$, we have $\mu_{i} \le \phi^{\min}_{i}(r) \le \phi^{\max}_{i}(r) \le \mu_{i^*}$.
If $r = 0$, then $r  \log (1 + (\mu_i-\phi_i)^2/\sigma_i^2 ) +  \log (1 + (\mu_{i^*}-\phi_{i})^2/\sigma_{i^*}^2 ) = \log (1 + (\mu_{i^*}-\phi_{i})^2/\sigma_{i^*}^2 )$
and $\phi_i = \mu_{i^*}$ is the unique optimal solution. Notice that $\phi^{\min}_{i}(r)$ and $\phi^{\max}_{i}(r)$ are also the optimal solutions to $\min_{\phi_{i} } (  \log (1 + (\mu_i-\phi_i)^2/\sigma_i^2 ) + (\alpha_{i^*}/\alpha_i) \log (1 + (\mu_{i^*}-\phi_{i})^2/\sigma_{i^*}^2 ) )$.
If $r = \infty$ such that $\alpha_{i^*}/\alpha_i = 1/r = 0$, then $\phi_i = \mu_{i}$ is the unique optimal solution.

\textbf{Proof of (i).}
Let $r^{(1)} \triangleq \alpha_i^{(1)}/\alpha_{i^*}^{(1)} $ and $r^{(2)} \triangleq \alpha_i^{(2)}/\alpha_{i^*}^{(2)}$. Suppose $r^{(1)} > r^{(2)}$.  For any $\phi_i > \phi_{i}^{\min}(r^{(2)})$,
\begin{align*}
	& g_{i} \big(\phi_i, r^{(1)}\big)  - g_{i} \big(\phi_{i}^{\min}(r^{(2)}), r^{(1)}\big) - (g_{i} \big(\phi_i, r^{(2)}\big)  - g_{i} \big(\phi_{i}^{\min}(r^{(2)}), r^{(2)}\big))  \\
	=& \big( r^{(1)} - r^{(2)} \big) \big( \log \big(1 + (\mu_i-\phi_i)^2/\sigma_i^2 \big) - \log \big(1 + (\mu_i-\phi_{i}^{\min}(r^{(2)}))^2/\sigma_i^2 \big) \big)   > 0,
\end{align*}
where the last inequality holds by $\phi_i > \phi_{i}^{\min}(r^{(2)}) \ge \mu_i$.
Meanwhile,
\begin{align*}
	g_{i} \big(\phi_i, r^{(2)}\big)  - g_{i} \big(\phi_{i}^{\min}(r^{(2)}), r^{(2)}\big) \ge 0
\end{align*}
due to the optimality of $\phi_{i}^{\min}(r^{(2)})$. Thus, $g_{i} \big(\phi_i, r^{(1)}\big)  - g_{i} \big(\phi_{i}^{\min}(r^{(2)}), r^{(1)}\big) > 0$.
This shows that any $\phi_i > \phi_{i}^{\min}(r^{(2)})$ cannot be the optimal solution $\phi_i^{\max}(r^{(1)})$. Thus, $\phi_i^{\max}(r^{(1)}) \le \phi_{i}^{\min}(r^{(2)})$. The reason of $\phi_i^{\max}(r^{(1)}) < \phi_{i}^{\min}(r^{(2)})$ is as follows. The derivative of $g_{i} \big(\phi_i, r^{(1)}\big)$
at $\phi_i = \phi_{i}^{\min}(r^{(2)})$ is
\begin{align*}
	&r^{(1)} \Ical( \phi_{i}^{\min}(r^{(2)}),\mu_{i},\sigma_i^2 ) + \Ical( \phi_{i}^{\min}(r^{(2)}),\mu_{i^*},\sigma_{i^*}^2 )  \\
	=& (r^{(1)}-r^{(2)}) \Ical( \phi_{i}^{\min}(r^{(2)}),\mu_{i},\sigma_i^2 ) + r^{(2)} \Ical( \phi_{i}^{\min}(r^{(2)}),\mu_{i},\sigma_i^2 ) + \Ical( \phi_{i}^{\min}(r^{(2)}),\mu_{i^*},\sigma_{i^*}^2 )  .
\end{align*}
Since $\phi_{i}^{\min}(r^{(2)}) > \mu_i$,
\begin{align*}
	(r^{(1)}-r^{(2)}) \Ical( \phi_{i}^{\min}(r^{(2)}),\mu_{i},\sigma_i^2 ) = (r^{(1)}-r^{(2)}) \frac{ 2(\phi_{i}^{\min}(r^{(2)})-\mu_{i}) }{ \sigma_{i}^2 + (\phi_{i}^{\min}(r^{(2)})-\mu_{i})^2 } > 0.
\end{align*}
Meanwhile, $\phi_{i}^{\min}(r^{(2)})$ is an optimal solution and satisfies the stationary condition.
Then, the derivative at $\phi_i = \phi_{i}^{\min}(r^{(2)})$ is strictly positive.
This yields that the function $g_{i} \big(\phi_i, r^{(1)}\big)$
can be further reduced by letting $\phi_i < \phi_{i}^{\min}(r^{(2)})$. Thus, $\phi_{i}^{\max}(r^{(1)}) < \phi_{i}^{\min}(r^{(2)})$.

\textbf{Proof of (iii).}  We show the result for $\eta(b) =  (\sigma_{\max}^2 + (\mu_{i^*}-\mu_{i})^2)/  (b  \sigma_{\min}^2) $. Solutions $\phi_{i}^{\min}(r)$ and $\phi_{i}^{\max}(r)$ should satisfy the stationary condition
\begin{align}\label{eq:station_diff0}
	-\frac{\Ical ( \phi_{i}^{\min}(r), \mu_{i^*}, \sigma_{i^*}^2 )}{\Ical( \phi_{i}^{\min}(r), \mu_{i}, \sigma_{i}^2 )} = -\frac{\Ical ( \phi_{i}^{\max}(r), \mu_{i^*}, \sigma_{i^*}^2 )}{\Ical( \phi_{i}^{\max}(r), \mu_{i}, \sigma_{i}^2 )} = r.
\end{align}
Suppose $r \le 1/b$ and
$\mu_{i^*} - \phi_{i}^{\min}(r) > \eta(b) (\mu_{i^*}-\mu_{i})$. Then
\begin{align*}
	-\Ical ( \phi_{i}^{\min}(r), \mu_{i^*}, \sigma_{i^*}^2 ) =& \frac{ 2(\mu_{i^*}-\phi_{i}^{\min}(r)) }{ \sigma_{i^*}^2 + (\phi_{i}^{\min}(r)-\mu_{i^*})^2 } >  \frac{ 2 \eta(b) (\mu_{i^*}-\mu_{i}) }{ \sigma_{i^*}^2 + (\mu_{i^*}-\mu_{i})^2}, \\
	\Ical( \phi_{i}^{\min}(r), \mu_{i}, \sigma_{i}^2 ) =& \frac{ 2(\phi_{i}^{\min}(r)-\mu_{i}) }{ \sigma_{i}^2 + (\phi_{i}^{\min}(r)-\mu_{i})^2 } < \frac{ 2 (\mu_{i^*}-\mu_{i}) }{ \sigma_{i}^2  },
\end{align*}
which, together with $\eta(b) = (\sigma_{\max}^2 + (\mu_{i^*}-\mu_{i})^2)/  (b  \sigma_{\min}^2) \ge  (\sigma_{i^*}^2 + (\mu_{i^*}-\mu_{i})^2) /  b  \sigma_{i}^2 $, yields
\begin{align*}
	-\frac{\Ical ( \phi_{i}^{\min}(r), \mu_{i^*}, \sigma_{i^*}^2 )}{\Ical( \phi_{i}^{\min}(r), \mu_{i}, \sigma_{i}^2 )} > \frac{ 2 \eta(b) (\mu_{i^*}-\mu_{i}) }{ \sigma_{i^*}^2 + (\mu_{i^*}-\mu_{i})^2} \frac{ \sigma_{i}^2  }{ 2 (\mu_{i^*}-\mu_{i}) } = \frac{  \eta(b)  \sigma_{i}^2 }{ \sigma_{i^*}^2 + (\mu_{i^*}-\mu_{i})^2} \ge \frac{1}{b}  \ge r,
\end{align*}
contradicting \eqref{eq:station_diff0}. Thus, $\mu_{i^*} - \phi_{i}^{\min}(r) \le \eta(b) (\mu_{i^*}-\mu_{i})$. Similarly, we can show $ \phi_{i}^{\max}(r) - \mu_i \le \eta(b) (\mu_{i^*}-\mu_{i})$ when $r \ge b$.

\textbf{Proof of (iv).} 		
Let $\{r^{(l)},l=1,\dots,\infty\}$ denote a monotonically decreasing sequence that converges to $r$. We will focus only on this case, as the result for the increasing sequence is proved similarly.

By (i), $\phi_{i}^{\max}(r^{(l)})$ is strictly increasing. Since $\phi_{i}^{\max}(r^{(l)}) \le \mu_{i^*}$, by monotone convergence theorem, there exists $\bar{\phi}_i$ such that $\lim_{l \to \infty} \phi_{i}^{\max}(r^{(l)}) = \bar{\phi}_i$. Then $\bar{\phi}_i = \phi_{i}^{\min}(r)$ must hold.

Otherwise, if $\bar{\phi}_i \ne \phi_{i}^{\min}(r)$, then $\bar{\phi}_i < \phi_{i}^{\min}(r)$ because $\phi_{i}^{\max}(r^{(l)}) < \phi_{i}^{\min}(r)$ by (i) such that $\bar{\phi}_i \le \phi_{i}^{\min}(r)$.
Let $\Delta_i = \min_{\phi_i \le \bar{\phi}_i} g_{i} (\phi_i, r) - g_{i} (\phi_{i}^{\min}(r), r)$. Since $\phi_{i}^{\min}(r)$ is the smallest optimal solution for $\min_{\phi_i} g_{i} (\phi_i, r)$, we have
\begin{align}\label{ineq:phi_con_delta}
	\min_{\phi_i \le \bar{\phi}_i} g_{i} (\phi_i, r) - \min_{\phi_i} g_{i} (\phi_i, r) = \Delta_i > 0.
\end{align}
Notice that for any $\phi_i$, $g_{i} (\phi_i, r^{(l)}) = g_{i} (\phi_i, r ) + \log (1 + (\mu_i-\phi_i)^2/\sigma_i^2)  (r^{(l)} - r )$.
Since $r^{(l)} \ge r$, we know
$g_{i} (\phi_{i}^{\max}(r^{(l)}), r) \le g_{i} (\phi_{i}^{\max}(r^{(l)}), r^{(l)} )$. Due to the optimality of $\phi_{i}^{\max}(r^{(l)})$, we have $g_{i} (\phi_{i}^{\max}(r^{(l)}), r^{(l)} ) \le g_{i} (\phi_{i}^{\min}(r), r^{(l)} )$.
Then
\begin{align*}
	&g_{i} (\phi_{i}^{\max}(r^{(l)}), r)
	\le g_{i} (\phi_{i}^{\min}(r), r^{(l)} )
	\le g_{i} (\phi_{i}^{\min}(r), r ) + \log (1 + (\mu_i-\mu_{i^*})^2/\sigma_i^2)  (r^{(l)} - r ).
\end{align*}
Since $r^{(l)} \to r$ as $l \to \infty$, there exists $l_0$ large enough such that $\log (1 + (\mu_i-\mu_{i^*})^2/\sigma_i^2)  (r^{(l)} - r ) \le \Delta_i/2$.
Then $g_{i} (\phi_{i}^{\max}(r^{(l_0)}), r) \le g_{i} (\phi_{i}^{\min}(r), r ) + \Delta_i/2$ which contradicts \eqref{ineq:phi_con_delta} because $\phi_{i}^{\max}(r^{(l_0)}) \le \bar{\phi}_i$. Thus $\lim_{l \to \infty} \phi_{i}^{\max}(r^{(l)}) = \phi_{i}^{\min}(r)$.

Similarly, if $\{r^{(l)},l=1,\dots,\infty\}$ is a monotonically increasing sequence that converges to $r$, we can show $\lim_{l \to \infty} \phi_{i}^{\min}(r^{(l)}) = \phi_{i}^{\max}(r)$. $\square$

\subsection{Proof of Lemma \ref{lem:vmono0}}

\textbf{Proof of (i).} Since $\Vcal_{i} (\alpha_i,\alpha_{i^*})$ is the minimum of a continuous function of $(\alpha_i,\alpha_{i^*})$, $\Vcal_{i} (\alpha_i,\alpha_{i^*})$ is continuous in $(\alpha_i,\alpha_{i^*})$.

\textbf{Proof of (ii).} By the definition of $\Vcal_{i}$,
\begin{align*}
	&2\Vcal_{i}(\alpha_i^{(1)},\alpha_{i^*}^{(1)})
	=  \alpha_i^{(1)} \log (1 + (\mu_i-\phi^*)^2/\sigma_i^2 ) + \alpha_{i^*}^{(1)} \log (1 + (\mu_{i^*}-\phi^*)^2/\sigma_{i^*}^2 ) \\
	=& \alpha_i^{(3)} \log (1 + (\mu_i-\phi^*)^2/\sigma_i^2 ) + \alpha_{i^*}^{(3)} \log (1 + (\mu_{i^*}-\phi^*)^2/\sigma_{i^*}^2 )  + (\alpha_i^{(1)} - \alpha_i^{(3)}) \log (1 + (\mu_i-\phi^*)^2/\sigma_i^2 )  \\
	& + (\alpha_{i^*}^{(1)}-\alpha_{i^*}^{(3)}) \log (1 + (\mu_{i^*}-\phi^*)^2/\sigma_{i^*}^2 ) \\
	\ge&  2\Vcal_{i}(\alpha_i^{(3)},\alpha_{i^*}^{(3)}) + (\alpha_i^{(1)} - \alpha_i^{(3)}) \log (1 + (\mu_i-\phi^*)^2/\sigma_i^2 ) + (\alpha_{i^*}^{(1)}-\alpha_{i^*}^{(3)}) \log (1 + (\mu_{i^*}-\phi^*)^2/\sigma_{i^*}^2 )
\end{align*}
where the inequality holds because $\Vcal_{i}(\alpha_i^{(3)},\alpha_{i^*}^{(3)})$ is the minimum value. In addition, if $\phi^* \notin \arg\min_{\phi_i} g_i(\phi_i,\alpha_i^{(3)}/\alpha_{i^*}^{(3)}) $, then
\begin{align*}
	\alpha_i^{(3)} \log (1 + (\mu_i-\phi^*)^2/\sigma_i^2 ) + \alpha_{i^*}^{(3)} \log (1 + (\mu_{i^*}-\phi^*)^2/\sigma_{i^*}^2 ) > 2\Vcal_{i}(\alpha_i^{(3)},\alpha_{i^*}^{(3)}),
\end{align*}
implying that the inequality is strict. This concludes the proof.

\textbf{Proof of (iii).} It is straightforward to see $\Wcal_i( 0 )  = 0$.
Suppose $r^{(1)} > r^{(2)}$.
\begin{align}
	\Wcal_{i}(r^{(1)})
	=& r^{(2)} \log (1 + (\mu_i-\phi^{\max}_{i}(r^{(1)}))^2/\sigma_i^2 ) + \log (1 + (\mu_{i^*}-\phi^{\max}_{i}(r^{(1)}))^2/\sigma_{i^*}^2 ) \nonumber \\
	& + (r^{(1)} - r^{(2)}) \log (1 + (\mu_i-\phi^{\max}_{i}(r^{(1)}))^2/\sigma_i^2 )  \nonumber \\
	>& \Wcal_{i}(r^{(2)}) + (r^{(1)} - r^{(2)}) \log (1 + (\mu_i-\phi^{\max}_{i}(r^{(1)}))^2/\sigma_i^2 ) \label{ineq:wlb}
\end{align}
where \eqref{ineq:wlb} holds because $\phi^{\max}_{i}(r^{(1)}) < \phi^{\min}_{i}(r^{(2)})$. Thus $\Wcal_{i}(r^{(1)}) > \Wcal_{i}(r^{(2)})$. $\square$

\subsection{Proof of Lemma \ref{lem:vequal}}

We prove Lemma \ref{lem:vequal}(i) and (ii) first. Without loss of generality, let $i^* = 1$ for notational simplicity. The following statement can be shown by induction: given $h \in \{1,\dots,k-2\}$ and for any $0 < \alpha_1 < 1$ and $0 < c \le 1-\alpha_1$, among all $(\alpha_{k-h}, \alpha_{k-h+1}, \dots,\alpha_{k}) $ with $\sum_{i = k-h}^k \alpha_{i} = c$, there exists a unique $(\tilde{\alpha}_{k-h} (c,h), \tilde{\alpha}_{k-h+1} (c,h), \dots,\tilde{\alpha}_{k} (c,h)) $  satisfying
\begin{align*}
	\Vcal_{i}(\tilde{\alpha}_{i} (c,h),\alpha_1) = \Vcal_{k}(\tilde{\alpha}_{k} (c,h),\alpha_1), \  i = k-h,k-h+1,\dots,k-1.
\end{align*}
Moreover, $\tilde{\alpha}_{i} (c,h) > 0$ and is continuous in $c$ such that $0< \tilde{\alpha}_i(c',h) - \tilde{\alpha}_i(c,h) \le \Delta$ if $0 < c' - c \le \Delta$, $i = k-h,k-h+1,\dots,k$. The lemma is immediate by setting $h=k-2$ and $c = 1-\alpha_{1}$.

To begin with, we consider $h=1$. The proof is similar to the following proof for $h > 1$ and thus omitted.

Let $h = 2,\dots,k-2$. Suppose that the statement is true for $h-1$, and we show the claim is also true for $h$. Consider function $d_h(\delta) = \Vcal_{k-h}(\delta,\alpha_1) - \Vcal_{k}(\tilde{\alpha}_{k} (c-\delta,h-1),\alpha_1)$. By assumption, $\tilde{\alpha}_{k}(c-\delta,h-1)$ is continuous in $c-\delta$ and thus in $\delta$, which, together with the continuity of $\Vcal_k(\alpha_k,\alpha_1)$ in $\alpha_k$, implies that $\Vcal_{k}(\tilde{\alpha}_{k} (c-\delta,h-1),\alpha_1)$ is continuous in $\delta$. Meanwhile,  $\Vcal_{k-h}(\delta,\alpha_1)$ is also continuous in $\delta$. Moreover, by assumption, $\tilde{\alpha}_{k}(c-\delta,h-1)$ is strictly increasing with $c-\delta$ and thus strictly decreasing with $\delta$ given $c$. Then, $\Vcal_{k}(\tilde{\alpha}_{k} (c-\delta,h-1),\alpha_1)$ is strictly decreasing with $\delta$, and $\Vcal_{k-h}(\delta,\alpha_1)$ is strictly increasing with $\delta$. Thus, $d_h(\delta)$ is continuous and strictly increasing with $\delta$.

If $\delta=\delta_{1}$ where $\delta_{1} \le \varepsilon$ for $\varepsilon$ small enough, then $\Vcal_{k-h}(\delta_1,\alpha_1)$ is small enough. Meanwhile, there exists $j \in \{ k-h+1,\dots,k \}$ such that $\tilde{\alpha}_{j} (c-\delta_1,h-1) \ge \sum_{i=k-h+1}^k \tilde{\alpha}_{i} (c-\delta_1,h-1)/h = (c-\delta_1)/h $. Then $\Vcal_{k}(\tilde{\alpha}_{k}(c-\delta_1,h-1),\alpha_1) = \Vcal_{j}(\tilde{\alpha}_{j}(c-\delta_1,h-1),\alpha_1) \ge \Vcal_{j}((c-\delta_1)/h,\alpha_1)$, which implies $d_h(\delta_1) = \Vcal_{k-h}(\delta_1,\alpha_1) - \Vcal_{k}(\tilde{\alpha}_{k}(c-\delta_1,h-1),\alpha_1) < 0$ for $\varepsilon$ small enough. Similarly, if $\delta=\delta_{2}$ where $c - \varepsilon \le \delta_{2} \le c$ for $\varepsilon$ small enough, we have $d_h(\delta_2) > 0$.
Then there exists a unique $\tilde\alpha_{k-h}(c,h) \in (0,c)$ with
$d_h( \tilde\alpha_{k-h}(c,h) ) = 0$.
Let
\begin{align}\label{eq:h_alpha_def}
	\tilde{\alpha}_{i} (c,h) = \tilde{\alpha}_{i}(c-\tilde\alpha_{k-h}(c,h),h-1),
\end{align}
$i=k-h+1,\dots,k$, and we have shown its existence and uniqueness. Since $\tilde\alpha_{k-h}(c,h) \in (0,c)$,  $\tilde{\alpha}_{i} (c,h) = \tilde{\alpha}_{i}(c-\tilde\alpha_{k-h}(c,h),h-1) > 0$.

The continuity of $\tilde{\alpha}_{i} (c,h)$ in $c$ is shown as follows. Suppose $c$ and $c'$ satisfy $c' - c = \Delta$ for any feasible $\Delta > 0$. Notice that $\tilde{\alpha}_{k}(c,h) = \tilde{\alpha}_{k}(c-\tilde{\alpha}_{k-h}(c,h),h-1)$ by \eqref{eq:h_alpha_def}. Consider the function $\tilde{d}_h(\delta) = \Vcal_{k-h}(\tilde{\alpha}_{k-h}(c,h)+\Delta-\delta,\alpha_1) - \Vcal_{k}(\tilde{\alpha}_{k}(c-\tilde{\alpha}_{k-h}(c,h)+\delta,h-1),\alpha_1)$. Since $\Vcal_{k-h}(\tilde{\alpha}_{k-h}(c,h),\alpha_1) = \Vcal_{k}(\tilde{\alpha}_{k}(c,h),\alpha_1)$, we have
\begin{align*}
	\tilde{d}_h(\Delta) =&\Vcal_{k-h}(\tilde{\alpha}_{k-h}(c,h),\alpha_1) - \Vcal_{k}(\tilde{\alpha}_{k}(c-\tilde{\alpha}_{k-h}(c,h)+\Delta,h-1),\alpha_1) \\
	<& \Vcal_{k-h}(\tilde{\alpha}_{k-h}(c,h),\alpha_1) - \Vcal_{k}(\tilde{\alpha}_{k}(c-\tilde{\alpha}_{k-h}(c,h),h-1),\alpha_1)\\
	=& 0,
\end{align*}
where the inequality holds by the assumption that $\tilde{\alpha}_{k}(c,h-1) < \tilde{\alpha}_{k}(c',h-1)$ if $c < c'$. Similarly $\tilde{d}_h(0) > 0$. There must exist $\delta_h \in (0,\Delta)$ such that $\tilde{d}_h(\delta_h) = 0$. Then, $\tilde{\alpha}_{k-h}(c',h) = \tilde{\alpha}_{k-h}(c,h)+\Delta-\delta_h$ and $\tilde{\alpha}_{k}(c',h) = \tilde{\alpha}_{k}(c-\tilde{\alpha}_{k-h}(c,h)+\delta_h,h-1)$. By the inductive hypothesis, we know $0< \tilde{\alpha}_{k}(c-\tilde{\alpha}_{k-h}(c,h)+\delta_h,h-1) - \tilde{\alpha}_{k}(c-\tilde{\alpha}_{k-h}(c,h),h-1) \le \delta_h$. Thus, $0 < \tilde{\alpha}_{k}(c',h) - \tilde{\alpha}_{k}(c,h) \le \Delta$. Similarly, $0 < \tilde{\alpha}_{i}(c',h) - \tilde{\alpha}_{i}(c,h) \le \Delta$, $i=k-h+1,\dots,k-1$. This completes the proof of Lemma \ref{lem:vequal}(i)-(ii).

Now we show Lemma \ref{lem:vequal}(iii). Consider any $\balpha$ with $\balpha \ne \balpha^f(\bar{\alpha}_{i^*})$ and $\alpha_{i^*} = \bar{\alpha}_{i^*}$. Then there must exist $i_1,i_2 \ne i^*$ with $\alpha_{i_1} < \alpha_{i_1}^f(\bar{\alpha}_{i^*})$ and $\alpha_{i_2} > \alpha_{i_2}^f(\bar{\alpha}_{i^*})$ due to the constraint $\sum_{i=1}^k \alpha_{i} = 1$. By Lemma \ref{lem:vmono0}(iii), $\Vcal_{i_1}( \alpha_{i_1}, \bar{\alpha}_{i^*}) = \frac{\bar{\alpha}_{i^*}}{2} \Wcal_{i_1}( \alpha_{i_1} / \bar{\alpha}_{i^*}) < \frac{\bar{\alpha}_{i^*}}{2} \Wcal_{i_1}( \alpha_{i_1}^f(\bar{\alpha}_{i^*}) / \bar{\alpha}_{i^*}) =  \Vcal_{i_1}( \alpha_{i_1}^f(\bar{\alpha}_{i^*}), \bar{\alpha}_{i^*})$,
which leads to
\begin{align*}
	\min_{i \ne i^*} \Vcal_i( \alpha_{i}, \alpha_{i^*} ) \le \Vcal_{i_1}( \alpha_{i_1}, \bar{\alpha}_{i^*}) < \Vcal_{i_1}( \alpha_{i_1}^f(\bar{\alpha}_{i^*}), \bar{\alpha}_{i^*}) = \min_{i \ne i^*} \Vcal_{i}( \alpha_{i}^f(\bar{\alpha}_{i^*}), \bar{\alpha}_{i^*} ).
\end{align*}
Thus, $\balpha^f(\bar{\alpha}_{i^*})$ is the unique optimal solution to problem \eqref{eq:rateoptimization_toptwo} of the main text. $\square$		 

\subsection{Proof of Lemma \ref{lem:umono}}

Let $r_i^f(\bar{\alpha}_{i^*}) \triangleq \alpha_i^f( \bar{\alpha}_{i^*} ) / \bar{\alpha}_{i^*}$ for any $i \ne i^*$ and $0<\bar{\alpha}_{i^*}<1$. Consider two possible values $\alpha_{i^*}',\alpha_{i^*}''$ of $\bar{\alpha}_{i^*}$ with $\alpha_{i^*}' < \alpha_{i^*}''$. By Lemma \ref{lem:vequal}(ii), we have $\alpha_i^f( \alpha_{i^*}' ) > \alpha_i^f( \alpha_{i^*}'' )$ for any $i \ne i^*$, which yields
\begin{align}\label{ineq:alpha_non_mono}
	r_{i}^f( \alpha_{i^*}' ) > r_{i}^f( \alpha_{i^*}'' ) .
\end{align}
By the monotonicity of $\phi_{i}^{\min}(r)$ and $\phi_{i}^{\max}(r)$ shown in Lemma \ref{lem:phimono0}(i), we have $\mu_{i} \le \phi_{i}^{\min}(r_{i}^f( \alpha_{i^*}' )) \le \phi_{i}^{\max}(r_{i}^f( \alpha_{i^*}' )) < \phi_{i}^{\min}(r_{i}^f( \alpha_{i^*}'' )) \le \mu_{i^*}$,
which yields
\begin{align*}
	\sum_{i \ne i^*} \frac{\Ucal^{*,\min}_{i}( r_{i}^f(\alpha_{i^*}') )}{\Ucal^{\min}_i( r_{i}^f(\alpha_{i^*}') )} = \sum_{i \ne i^*} \frac{ \log (1 + (\mu_{i^*}-\phi^{\min}_{i}(r_{i}^f(\alpha_{i^*}')))^2/\sigma_{i^*}^2 ) }{ \log (1 + (\mu_i-\phi^{\min}_{i}(r_{i}^f(\alpha_{i^*}')))^2/\sigma_i^2 ) } > \sum_{i \ne i^*} \frac{\Ucal^{*,\min}_{i}( r_{i}^f(\alpha_{i^*}'') )}{\Ucal^{\min}_i( r_{i}^f(\alpha_{i^*}'') )} .
\end{align*}
Similarly, we also have
\begin{align*}
	\sum_{i \ne i^*} \frac{\Ucal^{*,\max}_{i}( r_{i}^f(\alpha_{i^*}') )}{\Ucal^{\max}_i( r_{i}^f(\alpha_{i^*}') )} > \sum_{i \ne i^*} \frac{\Ucal^{*,\max}_{i}( r_{i}^f(\alpha_{i^*}'') )}{\Ucal^{\max}_i( r_{i}^f(\alpha_{i^*}'') )} .
\end{align*}

Let $\Acal^{\min}$ and $\Acal^{\max}$ denote the set of possible values of $\bar{\alpha}_{i^*}$ as
\begin{align*}
	\Acal^{\min} \triangleq \left\{\bar{\alpha}_{i^*}: \sum_{i \ne i^*}  \frac{\Ucal^{*,\min}_{i}( r_{i}^f(\bar{\alpha}_{i^*}) ) }{ \Ucal^{\min}_{i}( r_{i}^f(\bar{\alpha}_{i^*}) ) }  \ge 1 \right\}, \ \Acal^{\max} \triangleq \left\{\bar{\alpha}_{i^*}: \sum_{i \ne i^*}  \frac{\Ucal^{*,\max}_{i}( r_{i}^f(\bar{\alpha}_{i^*}) ) }{ \Ucal^{\max}_{i}( r_{i}^f(\bar{\alpha}_{i^*}) ) }  \le 1 \right\}.
\end{align*}
Let $\alpha_{i^*}^{\min}$ be the supremum of $\bar{\alpha}_{i^*}$ in $\Acal^{\min}$. When $\bar{\alpha}_{i^*} \to 0$, there must exist $i^\circ \ne i^*$ with $r_{i^\circ}^f(\bar{\alpha}_{i^*})  \to \infty$ such that $\phi_{i^\circ}^{\min}( r_{i^\circ}^f(\bar{\alpha}_{i^*}) ) \to \mu_{i^\circ}$. Then
$\sum_{i \ne i^*} \Ucal^{*,\min}_{i}( r_{i}^f(\bar{\alpha}_{i^*}) )/\Ucal^{\min}_{i}( r_{i}^f(\bar{\alpha}_{i^*}) )  \to \infty$. Thus $\Acal^{\min}$ is non-empty and $\alpha_{i^*}^{\min} > 0$.
Let $\alpha_{i^*}^{\max}$ denote the infimum of $\bar{\alpha}_{i^*}$ in $\Acal^{\max}$. By a similar argument to that of $\alpha_{i^*}^{\min}$, $\Acal^{\max}$ is non-empty and $\alpha_{i^*}^{\max} < 1$.

If $\alpha_{i^*}^{\min} < \alpha_{i^*}^{\max}$, then for any $\bar{\alpha}_{i^*}$ with $\alpha_{i^*}^{\min} < \bar{\alpha}_{i^*} < \alpha_{i^*}^{\max}$, since $\bar{\alpha}_{i^*} < \alpha_{i^*}^{\max}$, we have
\begin{align*}
	\sum_{i \ne i^*}  (\Ucal^{*,\max}_{i}( r_{i}^f(\bar{\alpha}_{i^*}) ) / \Ucal^{\max}_{i}( r_{i}^f(\bar{\alpha}_{i^*}) )) > 1,
\end{align*}
such that
\begin{align*}
	\sum_{i \ne i^*}  (\Ucal^{*,\min}_{i}( r_{i}^f(\bar{\alpha}_{i^*}) ) / \Ucal^{\min}_{i}( r_{i}^f(\bar{\alpha}_{i^*}) )) \ge \sum_{i \ne i^*}  (\Ucal^{*,\max}_{i}( r_{i}^f(\bar{\alpha}_{i^*}) ) / \Ucal^{\max}_{i}( r_{i}^f(\bar{\alpha}_{i^*}) )) > 1,
\end{align*}
implying $\bar{\alpha}_{i^*} \le \alpha_{i^*}^{\min}$. This contradicts the assumption that $\alpha_{i^*}^{\min} < \bar{\alpha}_{i^*} < \alpha_{i^*}^{\max}$. Meanwhile, if $\alpha_{i^*}^{\min} > \alpha_{i^*}^{\max}$, then we will also find a contradiction by similar arguments.
Thus, $\alpha_{i^*}^{\min} = \alpha_{i^*}^{\max}$.

In the following, we show
\begin{align}\label{ineq:alpha1_u}
	\sum_{i \ne i^*} (\Ucal^{*,\min}_{i}( r_{i}^f(\alpha_{i^*}^{\min}) ) / \Ucal^{\min}_{i}( r_{i}^f(\alpha_{i^*}^{\min}) )) \ge 1.
\end{align}
Let $\{\alpha_{i^*}^{(l)}, l =1,2,\dots\}$ denote a monotonically increasing sequence with $\lim_{l \to \infty} \alpha_{i^*}^{(l)} = \alpha_{i^*}^{\min}$. By \eqref{ineq:alpha_non_mono}, we know $\{r_i^f(\alpha_{i^*}^{(l)}), l =1,2,\dots\}$ is a decreasing sequence. Since $\alpha_i^f(\alpha_{i^*})$ is continuous in $\alpha_{i^*}$ by Lemma \ref{lem:vequal}(ii), we have $\lim_{l \to \infty} r_i^f(\alpha_{i^*}^{(l)}) = \lim_{l \to \infty} \alpha_i^f(\alpha_{i^*}^{(l)})/\alpha_{i^*}^{(l)} = \alpha_i^f(\alpha_{i^*}^{\min})/\alpha_{i^*}^{\min} = r_i^f(\alpha_{i^*}^{\min})$, which, by Lemma \ref{lem:phimono0}(iv), leads to $\lim_{l \to \infty} \phi^{\max}_{i}(r_{i}^f(\alpha_{i^*}^{(l)})) = \phi^{\min}_{i}(r_i^f(\alpha_{i^*}^{\min}))$. Meanwhile, $\alpha_{i^*}^{\min} = \alpha_{i^*}^{\max}$ such that $\alpha_{i^*}^{\min}$ is the infimum of $\Acal^{\max}$. Since $\alpha_{i^*}^{(l)} < \alpha_{i^*}^{\min}$ for any $l$ such that $\alpha_{i^*}^{(l)} \notin \Acal^{\max}$, which implies
\begin{align*}
	\sum_{i \ne i^*} ( \Ucal^{*,\max}_{i}( r_{i}^f(\alpha_{i^*}^{(l)}) ) / \Ucal^{\max}_{i}( r_{i}^f(\alpha_{i^*}^{(l)}) ) ) > 1,
\end{align*}
which, together with $\lim_{l \to \infty} \phi^{\max}_{i}(r_{i}^f(\alpha_{i^*}^{(l)})) = \phi^{\min}_{i}(r_i^f(\alpha_{i^*}^{\min}))$, leads to \eqref{ineq:alpha1_u}.
Similarly, we can show
\begin{align*}
	\sum_{i \ne i^*} (\Ucal^{*,\max}_{i}( r_{i}^f(\alpha_{i^*}^{\min}) ) / \Ucal^{\max}_{i}( r_{i}^f(\alpha_{i^*}^{\min}) )) \le 1.
\end{align*}

Since \eqref{ineq:alpha1_u} holds, we have $\alpha_{i^*}^{\min} \in \Acal^{\min}$. Then the supremum $\alpha_{i^*}^{\min}$ is also the maximum value of $\bar{\alpha}_{i^*}$ in $\Acal^{\min}$. Thus, $\alpha_{i^*}^{\min}$ is the desired $\alpha_{i^*}^*$ in \eqref{ocba_t} of the main text. Since $\alpha_{i^*}^{\min} > 0$ and $\alpha_{i^*}^{\min} = \alpha_{i^*}^{\max} < 1$, we have $0 < \alpha_{i^*}^* < 1$. $\square$

\subsection{Proof of Theorem \ref{th:ocba}} \label{sec:proof_ocba}

It is sufficient to show that $\boldsymbol{\alpha}^*$, where $\alpha_i^* = \alpha_{i}^{f}( \alpha_{i^*}^* )$, $i=1,\dots,k$, exists and is the unique optimal solution to \eqref{eq:newocba} of the main text. To elaborate, we know by Lemma \ref{lem:umono} that $\alpha_{i^*}^*$ exists and $0 < \alpha_{i^*}^* < 1$. Then, by Lemma \ref{lem:vequal}, $\alpha_{i}^{f}( \alpha_{i^*}^* )$ exists, $i=1,\dots,k$.

Now we show the optimality of $\balpha^*$. If $\alpha_{i^*} = 0$, then $\Vcal_i( \alpha_i, \alpha_{i^*} ) = 0$ for any feasible value of $\alpha_i$, $i\ne i^*$; similarly, if $\alpha_{i^*} = 1$, then $\alpha_i = 0$ must hold such that $\Vcal_i( \alpha_i, \alpha_{i^*} ) = 0$, $i \ne i^*$. Meanwhile, if $0<\alpha_i<1$ for all $i=1,\dots,k$, then $\Vcal_i( \alpha_i, \alpha_{i^*} ) > 0$ by Lemma \ref{lem:vmono0}(iii). Thus, to obtain the optimal value of $\min_{i \ne i^*} \Vcal_i( \alpha_i, \alpha_{i^*} ) $, we should have $0 < \alpha_{i^*} < 1$.

By Lemma \ref{lem:umono}, the allocation $\balpha^{*} $ satisfies
\begin{align}\label{ineq:urelation1}
	&\sum_{i \ne i^*} \frac{\Ucal^{*,\min}_{i}( r_{i}^f(\alpha_{i^*}^*) ) }{ \Ucal^{\min}_{i}( r_{i}^f(\alpha_{i^*}^*) ) } = \sum_{i \ne i^*} \frac{\log( 1+(\mu_{i^*}-\phi_{i}^{\min}(r_{i}^{f}(\alpha_{i^*}^*)))^2/\sigma_{i^*}^2 )}{ \log( 1+(\mu_{i}-\phi_{i}^{\min}(r_{i}^{f}(\alpha_{i^*}^*)))^2/\sigma_{i}^2 ) } \ge 1.
\end{align}
Let $\bar{\alpha}_{i^*,1} = \alpha_{i^*}^* - \Delta$ with $0< \Delta \le \alpha_{i^*}^*$. We show $\Vcal_{i} ( \alpha_{i}^{f}(\bar{\alpha}_{i^*,1}), \bar{\alpha}_{i^*,1} ) < \Vcal_{i} ( \alpha_{i}^{f}(\alpha_{i^*}^*), \alpha_{i^*}^* )$ for some $i \ne i^*$ by contradiction. Suppose $\Vcal_{i} ( \alpha_{i}^{f}(\bar{\alpha}_{i^*,1}), \bar{\alpha}_{i^*,1} ) \ge \Vcal_{i} ( \alpha_{i}^{f}(\alpha_{i^*}^*), \alpha_{i^*}^* )$ for any $i \ne i^*$. Since $r_{i}^{f}(\bar{\alpha}_{i^*,1}) = \alpha_{i}^{f}(\bar{\alpha}_{i^*,1}) / \bar{\alpha}_{i^*,1} > \alpha_{i}^{f}(\alpha_{i^*}^*) / \alpha_{i^*}^* = r_{i}^{f}(\alpha_{i^*}^*)$ by Lemma \ref{lem:vequal}(ii), we have by Lemma \ref{lem:phimono0} that $\phi_{i}^{\max}(r_{i}^{f}(\bar{\alpha}_{i^*,1})) < \phi_{i}^{\min}(r_{i}^{f}(\alpha_{i^*}^*))$, which implies that $\phi_{i}^{\min}(r_{i}^{f}(\alpha_{i^*}^*))$ is not in $\arg\min_{\phi_i} g_i(\phi_i,r_{i}^{f}(\bar{\alpha}_{i^*,1}) )$. Then by Lemma \ref{lem:vmono0}(ii),
\begin{align*}
	2\Vcal_{i} ( \alpha_{i}^{f}(\bar{\alpha}_{i^*,1}), \bar{\alpha}_{i^*,1} )
	<& 2\Vcal_{i} ( \alpha_{i}^{f}(\alpha_{i^*}^*), \alpha_{i^*}^* ) - \Delta \log( 1+(\mu_{i^*}-\phi_{i}^{\min}(r_{i}^{f}(\alpha_{i^*}^*)))^2/\sigma_{i^*}^2 ) \nonumber\\
	& + (\alpha_{i}^{f}(\bar{\alpha}_{i^*,1}) - \alpha_{i}^{f}(\alpha_{i^*}^*)) \log( 1+(\mu_{i}-\phi_{i}^{\min}(r_{i}^{f}(\alpha_{i^*}^*) ))^2/\sigma_{i}^2 ).
\end{align*}
Based on the above inequality, if $\Vcal_{i} ( \alpha_{i}^{f}(\bar{\alpha}_{i^*,1}), \bar{\alpha}_{i^*,1} ) \ge \Vcal_{i} ( \alpha_{i}^{f}(\alpha_{i^*}^*), \alpha_{i^*}^* )$ is true, then
\begin{align}\label{ineq:alpha2inc}
	\alpha_{i}^{f}(\bar{\alpha}_{i^*,1}) - \alpha_{i}^{f}(\alpha_{i^*}^*) >  \Delta \frac{\log( 1+(\mu_{i^*}-\phi_{i}^{\min}(r_{i}^{f}(\alpha_{i^*}^*)))^2/\sigma_{i^*}^2 )}{ \log( 1+(\mu_{i}-\phi_{i}^{\min}(r_{i}^{f}(\alpha_{i^*}^*) ))^2/\sigma_{i}^2 ) } = \Delta \frac{\Ucal^{*,\min}_{i}( r_{i}^f(\alpha_{i^*}^*) ) }{ \Ucal^{\min}_{i}( r_{i}^f(\alpha_{i^*}^*) ) }.
\end{align}
However, if \eqref{ineq:alpha2inc} holds for any $i \ne i^*$, then
\begin{align*}
	\bar{\alpha}_{i^*,1} + \sum_{i \ne i^*} \alpha_{i}^{f}(\bar{\alpha}_{i^*,1}) >& \alpha_{i^*}^* - \Delta + \sum_{i \ne i^*} \alpha_{i}^{f}(\alpha_{i^*}^*)  + \Delta \sum_{i \ne i^*} \frac{\Ucal^{*,\min}_{i}( r_{i}^f(\alpha_{i^*}^*) ) }{ \Ucal^{\min}_{i}( r_{i}^f(\alpha_{i^*}^*) ) }
	\ge \alpha_{i^*}^* + \sum_{i \ne i^*} \alpha_{i}^{f}(\alpha_{i^*}^*) = 1,
\end{align*}
where the last inequality holds by \eqref{ineq:urelation1}.
This result is contradictory to the constraint $\bar{\alpha}_{i^*,1} + \sum_{i \ne i^*} \alpha_{i}^{f}(\bar{\alpha}_{i^*,1}) = 1$. Thus, when $\bar{\alpha}_{i^*,1} < \alpha_{i^*}^*$,
\begin{align*}
	\min_{i \ne i^*} \Vcal_{i} ( \alpha_{i}^{f}(\bar{\alpha}_{i^*,1}), \bar{\alpha}_{i^*,1} ) < \min_{i \ne i^*} \Vcal_{i} ( \alpha_{i}^{f}(\alpha_{i^*}^*), \alpha_{i^*}^* ).
\end{align*}

By Lemma \ref{lem:umono}, we have
\begin{align}\label{ineq:urelation3}
	&\sum_{i \ne i^*} \frac{\Ucal^{*,\max}_{i}( r_{i}^f(\alpha_{i^*}^*) ) }{ \Ucal^{\max}_{i}( r_{i}^f(\alpha_{i^*}^*) ) } = \sum_{i \ne i^*} \frac{\log( 1+(\mu_{i^*}-\phi_{i}^{\max}(r_{i}^{f}(\alpha_{i^*}^*)))^2/\sigma_{i^*}^2 )}{ \log( 1+(\mu_{i}-\phi_{i}^{\max}(r_{i}^{f}(\alpha_{i^*}^*) ))^2/\sigma_{i}^2 ) } \le 1.
\end{align}
Let $\bar{\alpha}_{i^*,2} = \alpha_{i^*}^* + \Delta$ with $0< \Delta < 1-\alpha_{i^*}^*$. We show $\Vcal_{i} ( \alpha_{i}^{f}(\bar{\alpha}_{i^*,2}), \bar{\alpha}_{i^*,2} ) < \Vcal_{i} ( \alpha_{i}^{f}(\alpha_{i^*}^*), \alpha_{i^*}^* )$ for some $i \ne i^*$ by contradiction.
Suppose $\Vcal_{i} ( \alpha_{i}^{f}(\bar{\alpha}_{i^*,2}), \bar{\alpha}_{i^*,2} ) \ge \Vcal_{i} ( \alpha_{i}^{f}(\alpha_{i^*}^*), \alpha_{i^*}^* )$ for any $i \ne i^*$. Making similar arguments to those used to obtain \eqref{ineq:alpha2inc}, if $\Vcal_{i} ( \alpha_{i}^{f}(\bar{\alpha}_{i^*,2}), \bar{\alpha}_{i^*,2} ) \ge \Vcal_{i} ( \alpha_{i}^{f}(\alpha_{i^*}^*), \alpha_{i^*}^* )$ is true, then
\begin{align}\label{ineq:ul1}
	\alpha_{i}^{f}(\bar{\alpha}_{i^*,2}) - \alpha_{i}^{f}(\alpha_{i^*}^*) >  -\Delta \frac{\log( 1+(\mu_{i^*}-\phi_{i}^{\max}(r_{i}^{f}(\alpha_{i^*}^*)))^2/\sigma_{i^*}^2 )}{ \log( 1+(\mu_{i}-\phi_{i}^{\max}(\alpha_{i}^{f}(r_{i^*}^*) ))^2/\sigma_{i}^2 ) } = -\Delta \frac{\Ucal^{*,\max}_{i}( r_{i}^f(\alpha_{i^*}^*) ) }{ \Ucal^{\max}_{i}( r_{i}^f(\alpha_{i^*}^*) ) }.
\end{align}
However, if \eqref{ineq:ul1} holds for any $i \ne i^*$, then
\begin{align*}
	\bar{\alpha}_{i^*,2} + \sum_{i \ne i^*} \alpha_{i}^{f}(\bar{\alpha}_{i^*,2}) >& \alpha_{i^*}^* + \Delta + \sum_{i \ne i^*} \alpha_{i}^{f}(\alpha_{i^*}^*)  - \Delta \sum_{i \ne i^*} \frac{\Ucal^{*,\max}_{i}( r_{i}^f(\alpha_{i^*}^*) ) }{ \Ucal^{\max}_{i}( r_{i}^f(\alpha_{i^*}^*) ) }
	\ge \alpha_{i^*}^* + \sum_{i \ne i^*} \alpha_{i}^{f}(\alpha_{i^*}^*)
	= 1,
\end{align*}
where the last inequality holds by \eqref{ineq:urelation3}.
This result is contradictory to the constraint $\bar{\alpha}_{i^*,2} + \sum_{i \ne i^*} \alpha_{i}^{f}(\bar{\alpha}_{i^*,2}) = 1$. Thus, when $\bar{\alpha}_{i^*,2} > \alpha_{i^*}^*$,
\begin{align*}
	\min_{i \ne i^*} \Vcal_{i} ( \alpha_{i}^{f}(\bar{\alpha}_{i^*,2}), \bar{\alpha}_{i^*,2} ) < \min_{i \ne i^*} \Vcal_{i} ( \alpha_{i}^{f}(\alpha_{i^*}^*), \alpha_{i^*}^* ),
\end{align*}
which completes the proof. $\square$

\section{Proofs for Section \ref{sec:algorithms}}\label{sec:proofalgs}

In the following, we prove Lemmas \ref{lem:const}-\ref{lem:one}, Proposition \ref{proposv}, and Theorem \ref{th:algconv}.

\subsection{Proof of Lemma \ref{lem:const}}\label{sec:alg_const}

By Lemmas \ref{lem:bayes_consist}-\ref{lem:posterior_non}, there exist $b_{eU}$, $b_{eL}$, $b_{vU}$ and $b_{vL}$ such that $b_{eL} < \hat{\mu}^m_{i} < b_{eU}$ and $0 < b_{vL} < \left(\hat{\sigma}^m_{i}\right)^2 < b_{vU}$ for all $i$ and all sufficiently large $n$.

We use the following technical result. It is stated without proof because the arguments are very similar to those used to show Lemma \ref{lem:phimono0}(iii).

\begin{lemma}\label{lem:phibd_new}
	If $\hat{\alpha}_{i}^m/\hat{\alpha}_{i^*}^m \le 1/b_0 $ for $b_0 \ge 1$, then $\hat{\mu}_{i^*}^m - \hat{\phi}_{i}^{m} \le \bar{\eta}(b_0) (\hat{\mu}_{i^*}^m-\hat{\mu}_{i}^m)$. On the other hand, if $\hat{\alpha}_{i}^m/\hat{\alpha}_{i^*}^m  \ge b_0$, then $ \hat{\phi}_{i}^{m} - \hat{\mu}_{i}^m \le \bar{\eta}(b_0) (\hat{\mu}_{i^*}^m-\hat{\mu}_{i}^m)$ where $\bar{\eta}(b_0) =  (b_{vU} + (b_{eU}-b_{eL})^2)/  (b_0  b_{vL}) $.
\end{lemma}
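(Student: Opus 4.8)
The plan is to transcribe the proof of Lemma~\ref{lem:phimono0}(iii) essentially verbatim, replacing the fixed parameters $(\mu_i,\mu_{i^*},\sigma_i^2,\sigma_{i^*}^2)$ by the posterior estimates $(\hat{\mu}_i^m,\hat{\mu}_{i^*}^m,(\hat{\sigma}_i^m)^2,(\hat{\sigma}_{i^*}^m)^2)$ and the global constants $(\mu_{\max},\mu_{\min},\sigma_{\max}^2,\sigma_{\min}^2,b)$ appearing in $\eta(b)$ by $(b_{eU},b_{eL},b_{vU},b_{vL},b_0)$, which is precisely how $\bar{\eta}(b_0)$ is defined. Recall that $\hat{\phi}_i^m$ is a minimizer of the bracketed expression in (\ref{eq:vcalm}), whose second term is built from the current best index. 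Writing $\Ical(\phi,\mu,\sigma^2)=2(\phi-\mu)/(\sigma^2+(\phi-\mu)^2)$ as in the proof of Lemma~\ref{lem:phimono0}, I would first record two facts, both obtained exactly as in Lemma~\ref{lem:phimono0}(ii): (a) since each logarithmic term in (\ref{eq:vcalm}) is decreasing on $(-\infty,\hat{\mu}_i^m)$ and increasing on $(\hat{\mu}_{i^*}^m,\infty)$, any minimizer satisfies $\hat{\phi}_i^m\in[\hat{\mu}_i^m,\hat{\mu}_{i^*}^m]$ (we may assume $\hat{\mu}_i^m<\hat{\mu}_{i^*}^m$, the case of equality being degenerate, with $\hat{\phi}_i^m$ equal to both and the claimed bounds holding trivially); and (b) because the objective in (\ref{eq:vcalm}) is smooth and coercive, $\hat{\phi}_i^m$ is an interior stationary point, so
\begin{equation*}
\hat{\alpha}_i^m\,\Ical\!\left(\hat{\phi}_i^m,\hat{\mu}_i^m,(\hat{\sigma}_i^m)^2\right)+\hat{\alpha}_{i^*}^m\,\Ical\!\left(\hat{\phi}_i^m,\hat{\mu}_{i^*}^m,(\hat{\sigma}_{i^*}^m)^2\right)=0,
\end{equation*}
equivalently $-\Ical(\hat{\phi}_i^m,\hat{\mu}_{i^*}^m,(\hat{\sigma}_{i^*}^m)^2)/\Ical(\hat{\phi}_i^m,\hat{\mu}_i^m,(\hat{\sigma}_i^m)^2)=\hat{\alpha}_i^m/\hat{\alpha}_{i^*}^m$ whenever $\hat{\phi}_i^m\neq\hat{\mu}_i^m$. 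This stationarity ratio is the only place the derivative is used.

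I would then prove the first bound by contradiction: assume $\hat{\alpha}_i^m/\hat{\alpha}_{i^*}^m\le 1/b_0$ but $\hat{\mu}_{i^*}^m-\hat{\phi}_i^m>\bar{\eta}(b_0)(\hat{\mu}_{i^*}^m-\hat{\mu}_i^m)$. Since $0\le\hat{\mu}_{i^*}^m-\hat{\phi}_i^m\le\hat{\mu}_{i^*}^m-\hat{\mu}_i^m\le b_{eU}-b_{eL}$ and $(\hat{\sigma}_{i^*}^m)^2\le b_{vU}$, one gets a lower bound on $-\Ical(\hat{\phi}_i^m,\hat{\mu}_{i^*}^m,(\hat{\sigma}_{i^*}^m)^2)$; dropping the squared term in the denominator and using $(\hat{\sigma}_i^m)^2\ge b_{vL}$ gives an upper bound on $\Ical(\hat{\phi}_i^m,\hat{\mu}_i^m,(\hat{\sigma}_i^m)^2)$; dividing and substituting $\bar{\eta}(b_0)=(b_{vU}+(b_{eU}-b_{eL})^2)/(b_0 b_{vL})$ then yields $\hat{\alpha}_i^m/\hat{\alpha}_{i^*}^m>1/b_0$, contradicting the hypothesis, just as in Lemma~\ref{lem:phimono0}(iii). (The borderline case $\hat{\phi}_i^m=\hat{\mu}_i^m$ forces $\hat{\mu}_{i^*}^m-\hat{\phi}_i^m=\hat{\mu}_{i^*}^m-\hat{\mu}_i^m$, incompatible with $\bar{\eta}(b_0)<1$ for $b_0$ large, so division is legitimate.) The second bound, for $\hat{\alpha}_i^m/\hat{\alpha}_{i^*}^m\ge b_0$, is the mirror-image estimate: bound $\Ical(\hat{\phi}_i^m,\hat{\mu}_i^m,(\hat{\sigma}_i^m)^2)$ from below and $-\Ical(\hat{\phi}_i^m,\hat{\mu}_{i^*}^m,(\hat{\sigma}_{i^*}^m)^2)$ from above under the assumption $\hat{\phi}_i^m-\hat{\mu}_i^m>\bar{\eta}(b_0)(\hat{\mu}_{i^*}^m-\hat{\mu}_i^m)$, and derive a contradiction with $\hat{\alpha}_i^m/\hat{\alpha}_{i^*}^m\ge b_0$.

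There is no real obstacle. The only points that need a word of justification are that $\hat{\phi}_i^m$ actually satisfies the first-order condition --- which it does, since the expression in (\ref{eq:vcalm}) is differentiable and coercive, so its minimum is interior --- and that the bounds $b_{eL}<\hat{\mu}_i^m<b_{eU}$, $b_{vL}<(\hat{\sigma}_i^m)^2<b_{vU}$ supplied by Lemmas~\ref{lem:bayes_consist}--\ref{lem:posterior_non} hold for all designs simultaneously once $m$ is large, so that $\bar{\eta}(b_0)$ is a fixed (possibly path-dependent) quantity not depending on $m$. Everything else is identical to the argument for Lemma~\ref{lem:phimono0}(iii), which notably avoids the cubic in (\ref{eq:v_deriv}) by working directly with the stationarity ratio rather than with the roots of the numerator there.
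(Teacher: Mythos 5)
Your proposal is correct and is exactly the route the paper intends: Lemma \ref{lem:phibd_new} is stated in the paper without proof precisely because it is the argument of Lemma \ref{lem:phimono0}(iii) transcribed with $(\hat{\mu}_i^m,\hat{\mu}_{i^{*,m}}^m,(\hat{\sigma}_i^m)^2,(\hat{\sigma}_{i^{*,m}}^m)^2)$ in place of the true parameters and $(b_{eU},b_{eL},b_{vU},b_{vL},b_0)$ in place of $(\mu_{\max},\mu_{\min},\sigma_{\max}^2,\sigma_{\min}^2,b)$, using the same stationarity-ratio contradiction. The only cosmetic quibble is your parenthetical on the borderline case: stationarity itself already forces $\hat{\phi}_i^m\in(\hat{\mu}_i^m,\hat{\mu}_{i^{*,m}}^m)$ strictly whenever $\hat{\mu}_i^m<\hat{\mu}_{i^{*,m}}^m$, so no appeal to $\bar{\eta}(b_0)<1$ is needed to justify the division.
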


Let $\Ecal$ denote the set of designs such that $i \in \Ecal$ if and only if $N_{i}^m \to \infty$ as $m \to \infty$. Let $\bar{M}_1$ denote the random time such that for all $m \ge \bar{M}_1$, 1) $|\hat{\mu}_{i}^m - \mu_{i}| \le \varepsilon$ and $|(\hat{\sigma}_{i}^m)^2 - \sigma_{i}^2| \le \varepsilon$ where $\varepsilon \le \Lambda \triangleq \min \{\min_{j \ne j'} |\mu_{j} - \mu_{j'}| / 4 ,  \sigma_{\min}^2/4 \}$ for $i \in \Ecal$ and 2) any design in $\Ecal^c$ is not sampled. Then if $i,i' \in \Ecal$ and $\mu_{i} > \mu_{i'}$, we have $ \hat{\mu}_{i}^m - \hat{\mu}_{i'}^m \ge \mu_{i} - \mu_{i'} - 2 \varepsilon \ge 2\Lambda$ for $m \ge \bar{M}_1$. The remainder of the proof has two parts. In the first part, we show $\Ecal$ must contain at least two designs; in the second part, we show $\Ecal$ must contain all designs.

\textbf{Step 1: $\Ecal$ must contain at least two designs.} We proceed by contradiction. Suppose $\Ecal$ has only one design. Denote the design in $\Ecal$ by $i$. Let $b_{\alpha 1} \triangleq (4 (k-1) ( b_{vU}+(b_{eU} - b_{eL})^2 )^3 / b_{vL}^3 )^{1/2} + 1$. Let $\tilde{M}_1 \ge \bar{M}_1$ large enough such that $N_{i}^m/N_{j}^m \ge b_{\alpha 1}$ for all $j \ne i$ and all $m \ge \tilde{M}_1$. Notice that any design in $\Ecal^c$ is not sampled for $m \ge \tilde{M}_1 \ge \bar{M}_1$. Consider any $m \ge \tilde{M}_1$.
\begin{enumerate}
	\item If $\arg\max_{i'=1,\dots,k} \hat{\mu}_{i'}^m$ is not unique and $\hat{\mu}_{i}^m = \hat{\mu}_{i^{*,m}}^m$, then $i^{*,m} \ne i$ because $N_{i}^m \ge b_{\alpha 1} N_{j}^m > N_{j}^m$ for all $j \ne i$ and design $i^{*,m}$ has the smallest number of samples among all designs in $\arg\max_{i'=1,\dots,k} \hat{\mu}_{i'}^m$. By Step 1 of $\ocbau$, the design sampled at iteration $m+1$ is $i^{m+1} = i^{*,m}$, which is not $i$. This contradicts the definition of $\tilde{M}_1$ that only $i$ can be sampled for all $m+1 \ge \tilde{M}_1$.
	
	\item If $\arg\max_{i'=1,\dots,k} \hat{\mu}_{i'}^m$ is not unique and $\hat{\mu}_{i}^m \ne \hat{\mu}_{i^{*,m}}^m$, then it is obvious that $i^{*,m} \ne i$. By Step 1 of $\ocbau$, the design sampled at iteration $m+1$ is $i^{m+1} = i^{*,m}$, which is not $i$. This contradicts the definition of $\tilde{M}_1$ again.
	
	\item If $\arg\max_{i'=1,\dots,k} \hat{\mu}_{i'}^m$ is unique and $i^{*,m} = i$, then by Lemma \ref{lem:phibd_new}, we have for any $j \ne i$ that $\hat{\mu}_{i^{*,m}}^m - \hat{\phi}_{j}^m \le \bar{\eta}(b_{\alpha 1}) ( \hat{\mu}_{i^{*,m}}^m - \hat{\mu}_{j}^m )$. By the definition of $\bar{\eta}(b_{\alpha 1})$ and $b_{\alpha 1}$, we have
	\begin{align*}
		\bar{\eta}(b_{\alpha 1}) = \frac{b_{vU} + (b_{eU}-b_{eL})^2}{b_{\alpha 1}  b_{vL}} < \Big(\frac{b_{vL}^3}{4 (k-1) ( b_{vU}+(b_{eU} - b_{eL})^2 )^3}\Big)^{\frac{1}{2}}  \frac{b_{vU} + (b_{eU}-b_{eL})^2}{  b_{vL}} \le \frac{1}{2}
	\end{align*}
	such that $\hat{\phi}_{j}^m - \hat{\mu}_{j}^m = ( \hat{\mu}_{i^{*,m}}^m - \hat{\mu}_{j}^m ) - (\hat{\mu}_{i^{*,m}}^m - \hat{\phi}_{j}^m) \ge ( \hat{\mu}_{i^{*,m}}^m - \hat{\mu}_{j}^m )/2$.
	Then
	\begin{align*}
		&\hat{\Ucal}_{j}^m = \log \big(1 + (\hat{\mu}_{j}^m-\hat{\phi}_{j}^m)^2/(\hat{\sigma}_{j}^{m})^2 \big) \ge \frac{ (\hat{\mu}_{j}^m-\hat{\phi}_{j}^m)^2/(\hat{\sigma}_{j}^{m})^2 }{ 1 + (\hat{\mu}_{j}^m-\hat{\phi}_{j}^m)^2/(\hat{\sigma}_{j}^{m})^2  } \ge \frac{(\hat{\mu}_{j}^m-\hat{\mu}_{i^{*,m}}^m)^2}{4 ( b_{vU}+(b_{eU} - b_{eL})^2 ) }.
	\end{align*}
	Meanwhile,
	\begin{align*}
		&\hat{\Ucal}_{j}^{*,m} = \log \Big(1 + \frac{\big(\hat{\mu}_{i^{*,m}}^m-\hat{\phi}_{j}^m\big)^2}{(\hat{\sigma}_{i^{*,m}}^m)^2} \Big) \le \frac{ \bar{\eta}(b_{\alpha 1})^2 ( \hat{\mu}_{i^{*,m}}^m - \hat{\mu}_{j}^m )^2}{(\hat{\sigma}_{i^{*,m}}^m)^2} \le \frac{ \bar{\eta}(b_{\alpha 1})^2 ( \hat{\mu}_{i^{*,m}}^m - \hat{\mu}_{j}^m )^2}{b_{vL}},
	\end{align*}
	which, together with the lower bound of $\hat{\Ucal}_{j}^m$, leads to
	\begin{align}\label{ineq:const_ub1}
		\frac{\hat{\Ucal}_{j}^{*,m}}{\hat{\Ucal}_{j}^m} \le \bar{\eta}(b_{\alpha 1})^2  \frac{4 ( b_{vU}+(b_{eU} - b_{eL})^2 ) }{ b_{vL} } = \frac{4(b_{vU} + (b_{eU}-b_{eL})^2)^3}{b_{\alpha 1}^2  b_{vL}^3} < \frac{1}{k-1},
	\end{align}
	where the last inequality holds by $b_{\alpha 1}$'s definition.
	Then, $\sum_{j \ne i^{*,m}} \hat{\Ucal}_{j}^{*,m}/\hat{\Ucal}_{j}^m < 1$, which means $i^{m+1} \ne i^{*,m}$ and thus $i^{m+1} \ne i$. This contradicts the definition of $\tilde{M}_1$ again.
	
	\item If $\arg\max_{i'=1,\dots,k} \hat{\mu}_{i'}^m$ is unique and $i^{*,m} \ne i$, then by Lemma \ref{lem:phibd_new}, we have that $ \hat{\phi}_{i}^m - \hat{\mu}_{i}^m \le \bar{\eta}(b_{\alpha 1}) ( \hat{\mu}_{i^{*,m}}^m - \hat{\mu}_{i}^m )$. Since $\bar{\eta}(b_{\alpha 1}) \le 1/2$, we have  $\hat{\mu}_{i^{*,m}}^m-\hat{\phi}_{i}^m \ge ( \hat{\mu}_{i^{*,m}}^m - \hat{\mu}_{i}^m )/2$. Then
	\begin{align*}
		&\hat{\Ucal}_{i}^{*,m} = \log \big(1 + \big(\hat{\mu}_{i^{*,m}}^m-\hat{\phi}_{i}^m\big)^2/(\hat{\sigma}_{i^{*,m}}^m)^2 \big) \ge \frac{ \big(\hat{\mu}_{i^{*,m}}^m-\hat{\phi}_{i}^m\big)^2 }{ (\hat{\sigma}_{i^{*,m}}^m)^2 + \big(\hat{\mu}_{i^{*,m}}^m-\hat{\phi}_{i}^m\big)^2  } \ge \frac{(\hat{\mu}_{i}^m-\hat{\mu}_{i^{*,m}}^m)^2}{4 ( b_{vU}+(b_{eU} - b_{eL})^2 ) }, \\
		&\hat{\Ucal}_{i}^{m} = \log \big(1 + (\hat{\mu}_{i}^m-\hat{\phi}_{i}^m)^2/(\hat{\sigma}_{i}^m)^2 \big) \le \bar{\eta}(b_{\alpha 1})^2 ( \hat{\mu}_{i^{*,m}}^m - \hat{\mu}_{i}^m )^2/b_{vL},
	\end{align*}
	which leads to
	\begin{align}\label{ineq:const_ub2}
		\frac{\hat{\Ucal}_{i}^{*,m}}{\hat{\Ucal}_{i}^{m}} \ge \frac{b_{vL}}{4 \bar{\eta}(b_{\alpha 1})^2 ( b_{vU}+(b_{eU} - b_{eL})^2 ) } = \frac{b_{\alpha 1}^2  b_{vL}^3}{4(b_{vU} + (b_{eU}-b_{eL})^2)^3}  > 1.
	\end{align}
	Then $\sum_{i' \ne i^{*,m}} \hat{\Ucal}_{i'}^{*,m}/\hat{\Ucal}_{i'}^{m} \ge \hat{\Ucal}_{i}^{*,m}/\hat{\Ucal}_{i}^{m} > 1$ such that $i^{m+1} = i^{*,m}$ and thus $i^{m+1} \ne i$. This contradicts the definition of $\tilde{M}_1$.
\end{enumerate}
In summary, set $\Ecal$ must have at least two designs.

\textbf{Step 2: $\Ecal$ must contain all designs.} We show this part by contradiction. Suppose $\Ecal^c (\triangleq \{1,\dots,k\} \setminus \Ecal)$ is non-empty. Let $b_{\alpha 2} = \max\{ b_{\alpha 1}, \log (1 + (b_{eU}-b_{eL})^2/b_{v L}) / \log \big(1 + \Lambda^2/b_{vU} \big)+1 \}$. Let $\ddot{M}_1 \ge \bar{M}_1$ large enough such that $N_{j}^m$ remains unchanged for all $j \in \Ecal^c$ and $N_{i}^m/N_{j}^m \ge b_{\alpha 2}$ for all $i \in \Ecal$, all $j \in \Ecal^c$ and $m \ge \ddot{M}_1$.

Notice that $\arg\max_{i'=1,\dots,k} \hat{\mu}_{i'}^m$ must be unique for all $m \ge \ddot{M}_1$. On the one hand, the posterior means of two designs in $\Ecal$ are unequal because  $|\hat{\mu}_{i}^m - \mu_{i}| \le \varepsilon \le \min_{j \ne j'} |\mu_{j} - \mu_{j'}| / 4$ for any $i \in \Ecal$ and $m \ge \bar{M}_1$. On the other hand, if there exist design $j \in \Ecal^c$ and design $i \ne j$ such that $\hat{\mu}_{j}^m = \hat{\mu}_{i}^m = \max_{i'=1,\dots,k} \hat{\mu}_{i'}^m$, then $i^{*,m}$ must be design $j$ or another design in $\Ecal^c$ whose posterior mean happens to be equal $\hat{\mu}_{j}^m$. In this case, $i^{*,m} \in \Ecal^c$, which will be sampled at iteration $m+1$ according to Step 1 of $\ocbau$. This contradicts the definition of $\ddot{M}_1$.

Moreover, $i^{*,m}$ must be in $\Ecal$ for all $m \ge \ddot{M}_1$. Otherwise, suppose $i^{*,m} \in \Ecal^c$. For any $i \in \Ecal$, we have by Lemma \ref{lem:phibd_new} that $ \hat{\phi}_{i}^{m} - \hat{\mu}_{i}^m \le \bar{\eta}(b_{\alpha 2}) ( \hat{\mu}_{i^{*,m}}^m - \hat{\mu}_{i}^m )$. Since $\bar{\eta}(b_{\alpha 2}) \le \bar{\eta}(b_{\alpha 1}) \le 1/2$, we have  $\hat{\mu}_{i^{*,m}}^m-\hat{\phi}_{i}^{m} \ge ( \hat{\mu}_{i^{*,m}}^m - \hat{\mu}_{i}^m )/2$. Then, similar to the first part of proof, we have
\begin{align*}
	&\hat{\Ucal}_{i}^{*,m} = \log \big(1 + \big(\hat{\mu}_{i^{*,m}}^m-\hat{\phi}_{i}^{m}\big)^2/(\hat{\sigma}_{i^{*,m}}^m)^2 \big) \ge \frac{(\hat{\mu}_{i}^m-\hat{\mu}_{i^{*,m}}^m)^2}{4 ( b_{vU}+(b_{eU} - b_{eL})^2 ) }, \\
	&\hat{\Ucal}_{i}^{m} = \log \big(1 + (\hat{\mu}_{i}^m-\hat{\phi}_{i}^{m})^2/(\hat{\sigma}_{i}^m)^2 \big) \le \bar{\eta}(b_{\alpha 2})^2 ( \hat{\mu}_{i^{*,m}}^m - \hat{\mu}_{i}^m )^2/b_{vL},
\end{align*}
which leads to
$\hat{\Ucal}_{i}^{*,m}/\hat{\Ucal}_{i}^{m}  > 1 $.
Then $\sum_{i' \ne i^{*,m}} \hat{\Ucal}_{i'}^{*,m}/\hat{\Ucal}_{i'}^{m} > 1$ such that $i^{m+1} = i^{*,m} \in\Ecal^c$. This contradicts the definition of $\ddot{M}_1$ that $N_{j}^m$ remains unchanged for all $j \in \Ecal^c$ and $m \ge \ddot{M}_1$.

Notice that $\hat{\mu}_{i}^m < \hat{\mu}_{i'}^m$ if $\mu_i < \mu_{i'}$, $i,i' \in \Ecal$ and $m \ge \bar{M}_1$. Since $\arg\max_{i'=1,\dots,k} \hat{\mu}_{i'}^m$ is unique and $i^{*,m} \in \Ecal$ for all $m \ge \ddot{M}_1 \ge \bar{M}_1$, there exists $\bar{i} \in \Ecal$ such that $i^{*,m} = \bar{i}$ for all $m \ge \ddot{M}_1$. Consider the iteration $m \ge \ddot{M}_1$ where a design $i \ne \bar{i}$ with $ i \in \Ecal$ is sampled at $m+1$. Then
\begin{align}
	2N^{m} \hat{\Vcal}_{i}^m =&  N_{i}^m \log \left(1 + (\hat{\mu}_{i}^m-\phi_{i}^{m})^2/(\hat{\sigma}_{i}^m)^2 \right) + N_{\bar{i}}^m \log \left(1 + (\hat{\mu}_{\bar{i}}^m-\phi_{i}^{m})^2/(\hat{\sigma}_{\bar{i}}^m)^2 \right)  \nonumber \\
	\ge& \min\{ N_{i}^m, N_{\bar{i}}^m \} \log \big(1 + \Lambda^2/b_{vU} \big) \label{ineq:const_ub3} \\
	>& N_{j}^m \log (1 + (b_{eU}-b_{eL})^2/b_{v L} ),  \label{ineq:const_ub4}
\end{align}
where \eqref{ineq:const_ub3} holds because $\max\{ (\hat{\mu}_{i}^m-\phi_{i}^{m})^2, (\hat{\mu}_{\bar{i}}^m-\phi_{i}^{m})^2 \} \ge (\hat{\mu}_{\bar{i}}^m - \hat{\mu}_{i}^m)^2/4 \ge \Lambda^2$ and \eqref{ineq:const_ub4} holds because
\begin{align*}
	\min\{ N_{i}^m, N_{\bar{i}}^m \}/N_{j}^m \ge b_{\alpha 2} > \log (1 + (b_{eU}-b_{eL})^2/b_{v L}) / \log \big(1 + \Lambda^2/b_{vU} \big).
\end{align*}
Meanwhile, for design $j \in \Ecal^c$, we have $2N^{m} \hat{\Vcal}_{j}^m  \le  N_{j}^m \log (1 + (b_{eU}-b_{eL})^2/b_{v L} )$
by the definition of $b_{m U}$, $b_{m L}$ and $b_{v L}$ at the beginning of this subsection. Then $\hat{\Vcal}_{j}^m < \hat{\Vcal}_{i}^m$, which means $i^{m+1} \ne i$. This contradicts the definition of iteration $r$. Thus, set $\Ecal$ must contain all designs. $\square$	

\subsection{Proof of Lemma \ref{lem:one}}

Let $b_{con} \triangleq  (4 (k-1) ( b_{vU}+(b_{eU} - b_{eL})^2 )^3 / b_{vL}^3 )^{1/2} + 1$,
$b_{con2} = 2b_{con}  \log (1 + b_{\mu U}^2/b_{v L} ) / \log (1 + b_{\mu L}^2/(4b_{vU})) $. We prove the result for $b_{\alpha U} \triangleq 4 b_{con} b_{con2} $, $b_{\alpha L} \triangleq 1/b_{\alpha U}$,  $b_{\alpha U 2} \triangleq (1+(k-1)b_{\alpha L})^{-1}$ and $b_{\alpha L 2} \triangleq (1+(k-1)b_{\alpha U})^{-1}$.

First, we summarize the conclusions obtained from Lemma \ref{lem:const} together with Lemma \ref{lem:bayes_consist}. Let $\varepsilon_1 \triangleq \min\{\sigma_{\min}^2/8 , \min_{j \ne j'}(\mu_{j}-\mu_{j'})/8, 1/(2k), \bar{\epsilon}/4\}$. For each design $i$ and any $\varepsilon \le \varepsilon_1$, there exists $M_1(\varepsilon)$ large enough such that $|\hat{\mu}_{i}^m - \mu_i| \le \varepsilon$ and $|(\hat{\sigma}_{i}^m)^2 - \sigma_i^2| \le \varepsilon$ for all $m \ge M_1(\varepsilon)$. Consider any $m \ge M_1(\varepsilon_1)$, we have $ i^{*,m} = i^* $ and there exist $b_{\mu U}$ and $b_{\mu L}$ such that $0 < b_{\mu L} < \hat{\mu}^m_{i^*} - \hat{\mu}^m_{i} < b_{\mu U}$ for $i \ne i^*$.
Moreover, $\arg\max_{i'=1,\dots,k} \hat{\mu}_{i'}^m$ is unique. Then for $m \ge M_1(\varepsilon_1)$, the design sampled at iteration $m+1$ can be the best design $i^*$ or $j^{m}$ with the minimum value of $\hat{\Vcal}_{i}^m$ only.

\textbf{Step 1: Upper bound of $N_{i^*}^m/\max_{i \ne i^*} N_{i}^m$.} Suppose $ N_{i^*}^{m}/\max_{i \ne i^*} N_{i}^{m} >  b_{con}$ at $m \ge M_1(\varepsilon_1)$. By Lemma \ref{lem:phibd_new}, we have $\hat{\mu}_{i^*}^{m} - \hat{\phi}_{i}^{m}  \le \bar{\eta}(b_{con}) (\hat{\mu}_{i^*}^{m} - \hat{\mu}_{i}^{m})$ for any $i \ne i^*$. Similar to \eqref{ineq:const_ub1} in the proof of Lemma \ref{lem:const}, we have
\begin{align*}
	& \hat{\Ucal}_{i}^{*,m} \le  \bar{\eta}(b_{con})^2 ( \hat{\mu}_{i^*}^{m} - \hat{\mu}_{i}^{m} )^2/b_{vL}, \quad
	\hat{\Ucal}_{i}^{m} \ge (\hat{\mu}_{i^*}^{m}-\hat{\mu}_{i}^{m})^2 / (4 ( b_{vU}+(b_{eU} - b_{eL})^2 ))
\end{align*}
such that $\hat{\Ucal}_{i}^{*,m}/\hat{\Ucal}_{i}^{m} < 1/(k-1)$. Then $\sum_{i \ne i^*} \hat{\Ucal}_{i}^{*,m} /\hat{\Ucal}_{i}^{m} < 1$, which means $i^{m+1} \ne i^*$ by $\ocbau$. Then, when $N_{i^*}^{m}/\max_{i \ne i^*} N_{i}^{m} > b_{con}$ for $m \ge M_1(\varepsilon_1)$, $\ocbau$ will not sample the design $i^*$ at iteration $m+1$ and the ratio $N_{i^*}^m/\max_{i \ne i^*} N_{i}^m$ will not increase from iteration $m$ to $m+1$.

Let $\bar{M}_{2} = k(M_1(\varepsilon_1)+n_0)$. There must exist $M_1(\varepsilon_1)\le m \le \bar{M}_{2}$ such that $N_{i^*}^{m}/\max_{i \ne i^*} N_{i}^{m} \le b_{con}$.  Otherwise, if $N_{i^*}^m / \max_{i \ne i^*}N_{i}^m > b_{con}$ for all $M_1(\varepsilon_1) \le m \le \bar{M}_{2}$, then $i^{m+1} \ne i^*$ for any iteration $M_1(\varepsilon_1) \le m \le \bar{M}_{2}$, which implies $N_{i^*}^{\bar{M}_{2}} =  N_{i^*}^{M_1(\varepsilon_1)} \le M_1(\varepsilon_1)+n_0$. Meanwhile, there must exist $i \ne i^*$ satisfying $N_{i}^{\bar{M}_{2}} \ge  (N^{\bar{M}_2} - N^{M_1(\varepsilon_1)}) / (k-1) \ge  M_1(\varepsilon_1)+n_0$. This leads to that $N_{i^*}^{\bar{M}_{2}}/\max_{i \ne i^*} N_{i}^{\bar{M}_{2}} \le 1 \le b_{con}$, contradictory to the assumption $N_{i^*}^{\bar{M}_{2}}/\max_{i \ne i^*}N_{i}^{\bar{M}_{2}} > b_{con}$.

Note that $N_{i}^m \ge n_0$ such that $1/N_{i}^m \le 1 $ for any $i$ and $m \ge M_1(\varepsilon_1)$. Starting from the iteration $m$ where $M_1(\varepsilon_1) \le m \le \bar{M}_{2}$ and $N_{i^*}^m/\max_{i \ne i^*} N_{i}^m \le b_{con}$, even if there exists an iteration $m' \ge m $ such that $N_{i^*}^{m'} / \max_{i \ne i^*} N_{i}^{m'} $ increases from a value less than $b_{con}$ to a value greater than $b_{con}$, we have $N_{i^*}^{m'+1} / \max_{i \ne i^*} N_{i}^{m'+1} \le N_{i^*}^{m'} / \max_{i \ne i^*} N_{i}^{m'} + (\max_{i \ne i^*} N_{i}^{m'})^{-1} \le  b_{con} + 1 \le 2b_{con}$ and $N_{i^*}^m / \max_{i \ne i^*} N_{i}^m$ decreases with $m \ge m'+1$ until the ratio is smaller than $b_{con}$ again. Thus, we have $ N_{i^*}^m / \max_{i \ne i^*} N_{i}^m \le 2b_{con}$ for $m \ge \bar{M}_2$.

\textbf{Step 2: Upper bound of $\max_{i \ne i^*} N_{i}^m/N_{i^*}^m$.} Suppose $\max_{i \ne i^*} N_{i}^m/N_{i^*}^m > b_{con}$ when $m \ge M_1(\varepsilon_1)$.
Without the loss of generality, suppose $i_1 = \arg\max_{i \ne i^*} N_{i}^m$. Since $N_{i_1}^m/N_{i^*}^m > b_{con} $, we have $\hat{\phi}_{i_1}^m - \hat{\mu}_{i_1}^m < \bar{\eta}(b_{con}) (\hat{\mu}_{i^*}^m - \hat{\mu}_{i_1}^m)$ by Lemma \ref{lem:phibd_new}. Similar to  \eqref{ineq:const_ub2} in the proof of Lemma \ref{lem:const}, we have $\bar{\eta}(b_{con}) < \frac{1}{2}$
such that $\hat{\mu}_{i^*}^m-\hat{\phi}_{i_1}^m = ( \hat{\mu}_{i^*}^m - \hat{\mu}_{i_1}^m ) - (\hat{\phi}_{i_1}^m - \hat{\mu}_{i_1}^m) \ge ( \hat{\mu}_{i^*}^m - \hat{\mu}_{i_1}^m )/2$. Then
\begin{align*}
	&\hat{\Ucal}_{i_1}^{*,m} \ge (\hat{\mu}_{i^*}^m-\hat{\mu}_{i_1}^m)^2 / (4 ( b_{vU}+(b_{eU} - b_{eL})^2 )) , \quad
	\hat{\Ucal}_{i_1}^m \le \bar{\eta}(b_{con})^2 ( \hat{\mu}_{i^*}^m - \hat{\mu}_{i_1}^m )^2/b_{vL},
\end{align*}
which leads to $\hat{\Ucal}_{i_1}^{*,m}/\hat{\Ucal}_{i_1}^{m} > 1$.
By $\ocbau$, $i^{m+1} = i^*$ at iteration $m+1$.
By similar arguments to the first part of this proof, we can show that there exists $m$ with $M_1(\varepsilon_1)\le m \le \bar{M}_{2}$ such that $\max_{i \ne i^*} N_{i}^{m}/N_{i^*}^{m} \le b_{con}$ and thus, $ \max_{i \ne i^*} N_{i}^m/N_{i^*}^m \le 2b_{con}$ for $m \ge \bar{M}_2$.

\textbf{Step 3: Upper bound of $\max_{i \ne i^*} N_{i}^m/\min_{i \ne i^*} N_{i}^m$.}
Suppose $ \max_{i \ne i^* } N_{i}^m/ \min_{i \ne i^* } N_{i}^{m} > b_{con2} $ at $m \ge \bar{M}_2$ where $b_{con2} \triangleq 2b_{con}  \log (1 + b_{\mu U}^2/b_{v L} ) / \log (1 + b_{\mu L}^2/(4b_{vU}))$. Without loss of generality, suppose $i_1 = \arg\max_{i \ne i^*} N_{i}^{m}$ and $i_2 = \arg\min_{i \ne i^*} N_{i}^{m}$.  Then $ N_{i_1}^{m}/  N_{i_2}^{m} > b_{con2}  $ such that
\begin{align*}
	2N^{m} \hat{\Vcal}_{i_2}^{m}
	\le N_{i_2}^{m} \log \left(1 + (\hat{\mu}_{i_2}^{m}-\hat{\mu}_{i^*}^{m})^2/(\hat{\sigma}_{i_2}^{m})^2 \right)  < N_{i_1}^{m} \log (1 + b_{\mu L}^2/(4b_{vU})) / (2b_{con}),
\end{align*}
where the first inequality holds by the definition of $\hat{\Vcal}_{i_2}^{m}$ and the second one holds because $ N_{i_1}^{m}/  N_{i_2}^{m} > b_{con2}  $. Meanwhile,
\begin{align*}
	2N^{m} \hat{\Vcal}_{i_1}^{m}
	\ge& \min\{N_{i_1}^{m}, N_{i^*}^{m} \} \log (1 + b_{\mu L}^2/(4b_{vU}))  \ge  N_{i_1}^{m} \log (1 + b_{\mu L}^2/(4b_{vU})) / (2b_{con}),
\end{align*}
where the first inequality holds by a similar reason to \eqref{ineq:const_ub3} and the second inequality holds because $N_{i^*}^{m} \ge  N_{i_1}^{m}/(2b_{con})$ by Step 2 of this proof.
By $\ocbau$, $i^{m+1} = i^*$ or $i^{m+1} = i_2$ at iteration $m+1$, neither of which is $i_1$. It means that when $ \max_{i \ne i^* } N_{i}^{m}/ \min_{i \ne i^* } N_{i}^{m} >  b_{con2}  $ for $m \ge \bar{M}_2$, $\ocbau$ will not sample the design $\arg\max_{i \ne i^*} N_{i}^{m}$ at iteration $m+1$ and the ratio $\max_{i \ne i^*} N_{i}^m/\min_{i \ne i^*} N_{i}^m$ will not increase from iteration $m$ to $m+1$.

Let $\tilde{M}_{2} = \lceil (k-1+2b_{con})(\bar{M}_2+n_0)  + (k-2) \rceil > \bar{M}_2$. By similar arguments to the first part of this proof, we can show that $ \max_{i \ne i^*} N_{i}^m/\min_{i \ne i^*} N_{i}^m \le 2 b_{con2}$ for $m \ge \tilde{M}_{2} $.

\textbf{Step 4: Upper and lower bound of $N_{i}^m/N_{j}^m$, $i \ne j$.} Based on the conclusions from Steps 1-3,
if $N_{i^*}^m/\max_{i \ne i^*} N_{i}^m > 1$, then
\begin{align*}
	\frac{ \max_{i=1,\dots,k} N_{i}^m }{ \min_{i=1,\dots,k} N_{i}^m }  = \frac{ N_{i^*}^m }{ \max_{i \ne i^*} N_{i}^m } \frac{ \max_{i \ne i^*} N_{i}^m }{ \min_{i \ne i^*} N_{i}^m } \le 4 b_{con} b_{con2};
\end{align*}
if $N_{i^*}^m/\max_{i \ne i^*} N_{i}^m \le 1$, then
\begin{align*}
	\frac{ \max_{i=1,\dots,k} N_{i}^m }{ \min_{i=1,\dots,k} N_{i}^m }  = \frac{ \max_{i \ne i^*} N_{i}^m }{ \min \{ N_{i^*}^m, \min_{i \ne i^*} N_{i}^m \} } \le \max \{ 2b_{con}, 2 b_{con2} \} \le 4 b_{con} b_{con2}.
\end{align*}
Thus, letting $b_{\alpha U} \triangleq 4 b_{con} b_{con2}  $ and $b_{\alpha L} \triangleq 1/b_{\alpha U}$, we have for $m \ge M_2(\varepsilon_1) \triangleq \tilde{M}_2$ and any $i \ne j$ that $b_{\alpha L} \le N_{i}^m/N_{j}^m \le b_{\alpha U}$.
Moreover, we have
$(1+(k-1)b_{\alpha U})^{-1} \le \hat{\alpha}_{i}^m \le (1+(k-1)b_{\alpha L})^{-1}$, $i=1,\dots,k$,
because $b_{\alpha L} \hat{\alpha}_{i}^m \le \hat{\alpha}_{i'}^m \le b_{\alpha U} \hat{\alpha}_{i}^m$ such that $\hat{\alpha}_{i}^m+(k-1) b_{\alpha U} \hat{\alpha}_{i}^m \ge \sum_{i'=1}^k \hat{\alpha}_{i'}^m = 1$ and $\hat{\alpha}_{i}^m+(k-1) b_{\alpha L} \hat{\alpha}_{i}^m \le \sum_{i'=1}^k \hat{\alpha}_{i'}^m = 1$. $\square$

\subsection{Proof of Proposition \ref{proposv}}\label{sec:proofosv}

As in the proof of Lemma \ref{lem:const}, there exist $b_{eU}$, $b_{eL}$, $b_{vU}$ and $b_{vL}$ such that $b_{eL} < \hat{\mu}^m_{i} < b_{eU}$ and $0 < b_{vL} < \left(\hat{\sigma}^m_{i}\right)^2 < b_{vU}$ for all $i$ and $m \ge M_1(\varepsilon)$. Moreover, as in the proof of Lemma \ref{lem:one}, we have $b_{\alpha L}$, $b_{\alpha U}$,
$b_{\alpha L 2}$, and $b_{\alpha U 2}$ such that $b_{\alpha L} \le \hat{\alpha}^m_{i}/\hat{\alpha}^m_{j} \le b_{\alpha U}$, $b_{\alpha L 2} \le \hat{\alpha}^m_{i} \le b_{\alpha U 2}$, for all $i,j=1,2,\dots,k$ and $m \ge M_2(\varepsilon_1)$. Let $M_3(\varepsilon) \triangleq \max \{ M_1(\varepsilon), M_2(\varepsilon_1) \}$ for any $\varepsilon \le \varepsilon_1$.

By the uniform continuity of $\log (1 + (x-\phi)^2/v )$ for $x \in [b_{eL},b_{eU}]$, $v \in [b_{vL}, b_{vU}]$ and $\phi \in [b_{eL},b_{eU}]$, there exists $b_{lU} > 0$ such that for any $\varepsilon < \varepsilon_1$ and $m,m' \ge M_3(\varepsilon)$, we have
\begin{align}
	& \left| \log \left(1 + (\hat{\mu}_{i}^m-\phi_i)^2/(\hat{\sigma}_{i}^m)^2 \right) - \log \left(1 + (\hat{\mu}_{i}^{m'}-\phi_i)^2/(\hat{\sigma}_{i}^{m'})^2 \right) \right| \le b_{lU} \varepsilon. \label{ineq:log_ub}
\end{align}

To describe the behavior of $\hat{\phi}_{i}^m$, we need several technical lemmas. The first is stated without proof. It can be easily proved by repeating similar arguments from the proof of Lemma \ref{lem:phimono0}(iii) and applying the result of Lemma \ref{lem:one}.

\begin{lemma}\label{lem:phibd}
	For any $m \ge M_2(\varepsilon_1)$, we have $ \hat{\mu}_{i}^m + b_{\phi} \le \hat{\phi}_i^m \le \hat{\mu}_{i^*}^m - b_{\phi}$ where $b_{\phi} \triangleq  \frac{ b_{vL} b_{\mu L}}{ 2b_{\alpha U}(b_{v U} + b_{\mu U}^2)} $.
\end{lemma}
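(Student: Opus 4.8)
\textbf{Proof proposal for Lemma~\ref{lem:phibd}.} The plan is to transcribe the argument of Lemma~\ref{lem:phimono0}(iii) to the empirical objective, replacing the true parameters by their posterior estimates, the ratio $r$ by $\hat{\alpha}^m_i/\hat{\alpha}^m_{i^*}$, and the slack $\eta(b)$ by the explicit constant $b_\phi$. Fix $i\ne i^*$ and $m\ge M_2(\varepsilon_1)$, so that $i^{*,m}=i^*$ and all the bounds from Lemmas~\ref{lem:const} and~\ref{lem:one} are in force: $b_{vL}<(\hat{\sigma}^m_j)^2<b_{vU}$ for every $j$, $b_{\mu L}<\hat{\mu}^m_{i^*}-\hat{\mu}^m_i<b_{\mu U}$, and $b_{\alpha L}\le\hat{\alpha}^m_i/\hat{\alpha}^m_{i^*}\le b_{\alpha U}$, where necessarily $b_{\alpha U}\ge1$. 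The objective $\phi_i\mapsto\hat{\alpha}^m_i\log\!\big(1+(\hat{\mu}^m_i-\phi_i)^2/(\hat{\sigma}^m_i)^2\big)+\hat{\alpha}^m_{i^*}\log\!\big(1+(\hat{\mu}^m_{i^*}-\phi_i)^2/(\hat{\sigma}^m_{i^*})^2\big)$ is smooth and coercive, hence attains its minimum at a critical point; since each summand strictly decreases to the left of its center and strictly increases to the right, any minimizer lies in $[\hat{\mu}^m_i,\hat{\mu}^m_{i^*}]$, and because the derivative at $\phi_i=\hat{\mu}^m_i$ equals $\hat{\alpha}^m_{i^*}\Ical(\hat{\mu}^m_i,\hat{\mu}^m_{i^*},(\hat{\sigma}^m_{i^*})^2)<0$ while at $\phi_i=\hat{\mu}^m_{i^*}$ it equals $\hat{\alpha}^m_i\Ical(\hat{\mu}^m_{i^*},\hat{\mu}^m_i,(\hat{\sigma}^m_i)^2)>0$, neither endpoint is critical. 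Thus $\hat{\mu}^m_i<\hat{\phi}^m_i<\hat{\mu}^m_{i^*}$ and the stationarity condition rearranges to
\[
\frac{\hat{\alpha}^m_i}{\hat{\alpha}^m_{i^*}}=\frac{-\,\Ical\!\big(\hat{\phi}^m_i,\hat{\mu}^m_{i^*},(\hat{\sigma}^m_{i^*})^2\big)}{\Ical\!\big(\hat{\phi}^m_i,\hat{\mu}^m_i,(\hat{\sigma}^m_i)^2\big)},
\]
with both numerator and denominator strictly positive and $\Ical(\phi,\mu,\sigma^2)=2(\phi-\mu)/(\sigma^2+(\phi-\mu)^2)$ as in the proof of Lemma~\ref{lem:phimono0}.

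For the lower bound I would argue by contradiction: suppose $\hat{\phi}^m_i-\hat{\mu}^m_i<b_\phi$. Then $(\hat{\sigma}^m_i)^2>b_{vL}$ gives a denominator at most $2(\hat{\phi}^m_i-\hat{\mu}^m_i)/b_{vL}<2b_\phi/b_{vL}$, while $\hat{\mu}^m_{i^*}-\hat{\phi}^m_i=(\hat{\mu}^m_{i^*}-\hat{\mu}^m_i)-(\hat{\phi}^m_i-\hat{\mu}^m_i)>b_{\mu L}-b_\phi$ together with $(\hat{\sigma}^m_{i^*})^2+(\hat{\mu}^m_{i^*}-\hat{\phi}^m_i)^2<b_{vU}+b_{\mu U}^2$ gives a numerator at least $2(b_{\mu L}-b_\phi)/(b_{vU}+b_{\mu U}^2)$. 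Since $b_{vL}<b_{vU}+b_{\mu U}^2$ and $b_{\alpha U}\ge1$, the defining formula for $b_\phi$ yields $b_\phi\le b_{\mu L}/(2b_{\alpha U})\le b_{\mu L}/2$, hence $b_{\mu L}-b_\phi\ge b_{\mu L}/2$, and therefore
\[
\frac{\hat{\alpha}^m_i}{\hat{\alpha}^m_{i^*}}>\frac{b_{vL}(b_{\mu L}-b_\phi)}{(b_{vU}+b_{\mu U}^2)\,b_\phi}\ge\frac{b_{vL}\,b_{\mu L}}{2(b_{vU}+b_{\mu U}^2)\,b_\phi}=b_{\alpha U},
\]
contradicting $\hat{\alpha}^m_i/\hat{\alpha}^m_{i^*}\le b_{\alpha U}$ from Lemma~\ref{lem:one}; hence $\hat{\phi}^m_i\ge\hat{\mu}^m_i+b_\phi$. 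The bound $\hat{\phi}^m_i\le\hat{\mu}^m_{i^*}-b_\phi$ follows by the symmetric argument with the roles of $i$ and $i^*$ interchanged: if $\hat{\mu}^m_{i^*}-\hat{\phi}^m_i<b_\phi$, then $-\Ical(\hat{\phi}^m_i,\hat{\mu}^m_{i^*},(\hat{\sigma}^m_{i^*})^2)<2b_\phi/b_{vL}$ and $\Ical(\hat{\phi}^m_i,\hat{\mu}^m_i,(\hat{\sigma}^m_i)^2)>2(b_{\mu L}-b_\phi)/(b_{vU}+b_{\mu U}^2)$, so $\hat{\alpha}^m_{i^*}/\hat{\alpha}^m_i>b_{\alpha U}$, again a contradiction.

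I do not anticipate a genuine obstacle. The two points that require care are (i) showing that $\hat{\phi}^m_i$ lies strictly inside $(\hat{\mu}^m_i,\hat{\mu}^m_{i^*})$ so that the first-order condition is available — handled by the endpoint sign check above, which is the same monotonicity reasoning used for Lemma~\ref{lem:phimono0}(ii); and (ii) verifying that the particular constant $b_\phi=b_{vL}b_{\mu L}/\big(2b_{\alpha U}(b_{vU}+b_{\mu U}^2)\big)$ is small enough for the displayed chain to close with a strict inequality, which it is because $b_{\alpha U}\ge1$ forces $b_\phi\le b_{\mu L}/2$, exactly the slack consumed in the estimate. Everything else is bounded-parameter bookkeeping already carried out in the proofs of Lemmas~\ref{lem:const} and~\ref{lem:one}.
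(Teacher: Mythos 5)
Your proof is correct and is exactly the argument the paper has in mind: it states Lemma~\ref{lem:phibd} without proof, remarking that it follows by repeating the stationarity-plus-contradiction argument of Lemma~\ref{lem:phimono0}(iii) with the posterior quantities and the ratio bound $\hat{\alpha}^m_i/\hat{\alpha}^m_{i^*}\le b_{\alpha U}$ (and its reciprocal) from Lemma~\ref{lem:one}, which is precisely what you do. Your bookkeeping also correctly verifies that the specific constant $b_\phi$ closes the chain, using $b_{\alpha U}\ge 1$ so that $b_\phi\le b_{\mu L}/2$, so no gap remains.
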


By Lemma \ref{lem:phibd}, when $m \ge M_3(\varepsilon)$,
\begin{align}\label{ineq:kappa_r2}
	\hat{\mu}_{i}^m + b_{\phi} \le \hat{\phi}_{i}^{m} \le \hat{\mu}_{i^*}^m - b_{\phi}.
\end{align}
Let $ \varepsilon_2 \triangleq \min \{ \varepsilon_1, b_{\phi}/4 \}$. Combining \eqref{ineq:kappa_r2} with the fact that $| \hat{\mu}_{i}^m - \mu_{i} | \le \varepsilon \le \varepsilon_2 \le b_{\phi}/4$ for all $m \ge M_3(\varepsilon)$, we know for any $m,m' \ge M_3(\varepsilon)$ that
\begin{align}
	&\mu_{i} + b_{\phi}/2 \le \hat{\phi}_{i}^{m} \le \mu_{i^*} - b_{\phi}/2, \quad \hat{\mu}_{i}^{m'} + b_{\phi}/2 \le \hat{\phi}_{i}^{m} \le \hat{\mu}_{i^*}^{m'} - b_{\phi}/2, \label{ineq:kappa_r31}
\end{align}
which, together with constants $b_{\mu U}$, $b_{vL}$ and $b_{vU}$, yields
\begin{align}
	&\max\Big\{ \log \Big(1 + \frac{(\hat{\mu}_{i^*}^{m'}-\hat{\phi}_{i}^{m})^2}{(\hat{\sigma}_{i^*}^{m'})^2} \Big), \log \Big(1 + \frac{(\hat{\mu}_{i}^{m'}-\hat{\phi}_{i}^{m})^2}{(\hat{\sigma}_{i}^{m'})^2}\Big) \Big\} \le \log ( 1+b_{\mu U}^2/b_{v L} ) \triangleq b_{log1}, \label{ineq:log_ub1} \\
	&\max\Big\{ \log \Big(1 + \frac{(\hat{\mu}_{i^*}^{m'}-\hat{\phi}_{i}^{m})^2}{(\hat{\sigma}_{i^*}^{m'})^2} \Big), \log \Big(1 + \frac{(\hat{\mu}_{i}^{m'}-\hat{\phi}_{i}^{m})^2}{(\hat{\sigma}_{i}^{m'})^2}\Big) \Big\}\ge \log ( 1+b_{\phi}^2/(4b_{vU}) ) \triangleq  b_{log2} . \label{ineq:log_ub2}
\end{align}
As in the proof of Lemma \ref{lem:phimono0}, we define the first and second order derivative of $\phi$ as $\Ical( \phi, \mu, \sigma )$ and $\Hcal( \phi, \mu, \sigma )$.
Let $b_{\Ical} \triangleq  b_{\phi} / (b_{vU} + b_{\mu U}^2) $. For $m \ge  M_3(\varepsilon)$ and $\phi \in [\hat{\mu}_{i}^m + b_{\phi}/2 , \hat{\mu}_{i^*}^m - b_{\phi}/2]$,
\begin{align}
	&\Ical( \phi, \hat{\mu}_{i}^m, (\hat{\sigma}_{i}^m)^2 ) =   \frac{ 2(\phi-\hat{\mu}_{i}^m) }{ (\hat{\sigma}_{i}^m)^2 + (\phi-\hat{\mu}_{i}^m)^2 }   \ge b_{\Ical}, \quad \Ical ( \phi, \hat{\mu}_{i^*}^m, (\hat{\sigma}_{i^*}^m)^2 ) \le   -b_{\Ical}. \label{ineq:derv_lb}
\end{align}
By the uniform continuity of $\Ical( \phi, x, v )$ for $\phi \in [b_{eL},b_{eU}]$, $x \in [b_{eL},b_{eU}]$, $v \in [b_{vL}, b_{vU}]$, there exists $b_{dU} > 0$ such that
\begin{align}
	\left| \Ical ( \phi, \hat{\mu}_{i^*}^m, (\hat{\sigma}_{i^*}^m)^2 )/\Ical( \phi, \hat{\mu}_{i}^m, (\hat{\sigma}_{i}^m)^2 ) - \Ical ( \phi, \mu_{i^*}, \sigma_{i^*}^2 )/\Ical( \phi, \mu_{i}, \sigma_{i}^2 ) \right| \le b_{dU} \varepsilon. \label{ineq:diff_ub}
\end{align}
For all $\phi \in [\mu_i + b_{\phi}/2, \mu_{i^*} - b_{\phi}/2]$, the derivative of $\frac{\Ical ( \phi, \mu_{i^*}, \sigma_{i^*}^2 )}{\Ical( \phi, \mu_{i}, \sigma_{i}^2 )}$ with respect to $\phi$ satisfies
\begin{align}\label{ineq:Ider2_up}
	\left| \frac{\Hcal ( \phi, \mu_{i^*}, \sigma_{i^*}^2 ) \Ical( \phi, \mu_{i}, \sigma_{i}^2 ) - \Ical ( \phi, \mu_{i^*}, \sigma_{i^*}^2 ) \Hcal( \phi, \mu_{i}, \sigma_{i}^2 )}{\Ical( \phi, \mu_{i}, \sigma_{i}^2 )^2} \right| \le b_{\Ical \Hcal},
\end{align}
where $b_{\Ical \Hcal}$ is a constant.

We now state five intermediate technical lemmas, whose proofs are relegated to Section \ref{sec:propW_proof}. Lemmas \ref{lem:phihatmon2}-\ref{lem:upart} describe the behavior of $\hat{\phi}^m_{i}$. Lemma \ref{lem:vdiff} concerns the differences $\hat{\Wcal}^m_i - \hat{\Wcal}^m_j$. Lemma \ref{lem:interv_opt} addresses the relative rates of the empirical proportions $\hat{\alpha}^m_i$ and uses the result of Lemma \ref{lem:upart}.  Finally, Lemma \ref{lem:interv} examines the relative sampling rates and uses the results of Lemmas \ref{lem:vdiff}-\ref{lem:interv_opt}. After that, we use the results of Lemmas \ref{lem:vdiff}-\ref{lem:interv} to complete the proof of Proposition \ref{proposv}.

\begin{lemma}\label{lem:phihatmon2}
	Let $b_{\alpha \phi 2}$ denote a constant large enough. When $\varepsilon \le \varepsilon_2$ and $m,m' \ge  M_3(\varepsilon)$, solutions $\hat{\phi}_{i}^{m}$ and $\hat{\phi}_{i}^{m'}$
	satisfy
	\begin{itemize}
		\item $\hat{\phi}_{i}^{m'} - \hat{\phi}_{i}^{m} \ge  2\varepsilon^{1/2}$ if $\hat{\alpha}_{i}^{m'}/\hat{\alpha}_{i^*}^{m'} - \hat{\alpha}_{i}^m/\hat{\alpha}_{i^*}^m \le - b_{\alpha \phi 2} \varepsilon^{1/2}$
		
		\item $\hat{\phi}_{i}^{m'} - \hat{\phi}_{i}^{m} \le  -2\varepsilon^{1/2}$ if $\hat{\alpha}_{i}^{m'}/\hat{\alpha}_{i^*}^{m'} - \hat{\alpha}_{i}^m/\hat{\alpha}_{i^*}^m \ge b_{\alpha \phi 2} \varepsilon^{1/2}$.
	\end{itemize}
\end{lemma}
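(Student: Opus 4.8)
The plan is to combine the stationarity characterisation of $\hat{\phi}_i^m$ as the \emph{smallest} global minimiser of the rescaled objective with a ``tilting'' estimate that controls how a change in the empirical proportion ratio $\hat{r}_i^m\triangleq\hat{\alpha}_i^m/\hat{\alpha}_{i^{*,m}}^m$ reshapes that objective. Write $\hat{g}_i^m(\phi)$ for $\tfrac{2}{\hat{\alpha}_{i^{*,m}}^m}$ times the quantity minimised in \eqref{eq:vcalm}, so that $\hat{\phi}_i^m$ is (by definition) the smallest element of $\arg\min_{\phi}\hat{g}_i^m(\phi)$, and set $\ell^{m'}(\phi)\triangleq\log\!\bigl(1+(\hat{\mu}_i^{m'}-\phi)^2/(\hat{\sigma}_i^{m'})^2\bigr)$. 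The preliminary reductions are standard: for $m,m'\ge M_3(\varepsilon)$ one invokes Lemma~\ref{lem:one} to get $i^{*,m}=i^{*,m'}=i^*$ and $\hat{r}_i^m,\hat{r}_i^{m'}\in[b_{\alpha L},b_{\alpha U}]$, and Lemma~\ref{lem:phibd} together with \eqref{ineq:kappa_r31} to confine $\hat{\phi}_i^m,\hat{\phi}_i^{m'}$ (and, by the analogue of Lemma~\ref{lem:phimono0}(ii), all global minimisers of $\hat{g}_i^{m'}$) to a fixed compact interval on which $\ell^{m'}$ is $C^2$, strictly increasing with derivative $\Ical(\cdot)\ge b_{\Ical}>0$ (by \eqref{ineq:derv_lb}), and on which the second derivatives $\Hcal(\cdot)$ are bounded by some $b_{\Hcal}$; I would also take $\varepsilon$ small enough that $2\varepsilon^{1/2}$ is far below the margin $b_{\phi}$. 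The identity I would rely on is
\begin{equation*}
\hat{g}_i^{m'}(\phi)=\hat{g}_i^{m}(\phi)+\bigl(\hat{r}_i^{m'}-\hat{r}_i^{m}\bigr)\,\ell^{m'}(\phi)+E(\phi),
\end{equation*}
where, using the uniform-continuity bounds \eqref{ineq:log_ub} and \eqref{ineq:diff_ub} (the latter also for $\Ical$) together with $\hat{r}_i^m\le b_{\alpha U}$, the remainder satisfies $|E|\le b_E\varepsilon$ and $|E'|\le b_E'\varepsilon$ on that interval.

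Granting this, consider the first bullet and put $\phi_R\triangleq\hat{\phi}_i^m+2\varepsilon^{1/2}$. Under the hypothesis $\hat{r}_i^{m'}-\hat{r}_i^m\le-b_{\alpha\phi2}\varepsilon^{1/2}$, differentiating the identity gives $\tfrac{d}{d\phi}\hat{g}_i^{m'}(\phi)\le\tfrac{d}{d\phi}\hat{g}_i^{m}(\phi)-b_{\alpha\phi2}b_{\Ical}\varepsilon^{1/2}+b_E'\varepsilon$. Because $\hat{\phi}_i^m$ is an interior minimiser, $\tfrac{d}{d\phi}\hat{g}_i^m(\hat{\phi}_i^m)=0$, and the bound $|(\hat{g}_i^m)''|\le b_{\Hcal}$ (a bounded combination of the $\Hcal$ terms) gives, by a one-term Taylor expansion, $\tfrac{d}{d\phi}\hat{g}_i^m(\phi)\le 2b_{\Hcal}\varepsilon^{1/2}$ for $\phi\in[\hat{\phi}_i^m,\phi_R]$. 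Hence, once $b_{\alpha\phi2}$ is fixed so large that $b_{\alpha\phi2}b_{\Ical}-2b_{\Hcal}\ge b_E+2$, for all sufficiently small $\varepsilon$ one has $\tfrac{d}{d\phi}\hat{g}_i^{m'}<0$ throughout $[\hat{\phi}_i^m,\phi_R]$, so that $\hat{g}_i^{m'}(\phi_R)\le\hat{g}_i^{m'}(\hat{\phi}_i^m)-2(b_E+1)\varepsilon<\hat{g}_i^{m'}(\hat{\phi}_i^m)-2b_E\varepsilon$, and also $\hat{g}_i^{m'}(\phi)>\hat{g}_i^{m'}(\phi_R)$ for every $\phi\in(\hat{\phi}_i^m,\phi_R)$.

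It remains to exclude a global minimiser of $\hat{g}_i^{m'}$ in $[\hat{\mu}_i^{m'},\hat{\phi}_i^m]$, and this is the step I expect to be the main obstacle, since the non-convexity of $g_i$ (Example~\ref{rem:disc}, Lemma~\ref{lem:phimono0}) makes $\hat{g}_i^m$ potentially multimodal and rules out a naive implicit-function argument. The observation that makes it go through is that for $\phi\in[\hat{\mu}_i^{m'},\hat{\phi}_i^m]$ both correction terms in the identity carry a favourable sign:
\begin{equation*}
\hat{g}_i^{m'}(\phi)-\hat{g}_i^{m'}(\hat{\phi}_i^m)=\underbrace{\bigl[\hat{g}_i^{m}(\phi)-\hat{g}_i^{m}(\hat{\phi}_i^m)\bigr]}_{\ge\,0}+\underbrace{\bigl(\hat{r}_i^{m'}-\hat{r}_i^{m}\bigr)\bigl[\ell^{m'}(\phi)-\ell^{m'}(\hat{\phi}_i^m)\bigr]}_{\ge\,0}+\bigl[E(\phi)-E(\hat{\phi}_i^m)\bigr]\ge-2b_E\varepsilon,
\end{equation*}
the first bracket being nonnegative because $\hat{\phi}_i^m$ is a (smallest) global minimiser of $\hat{g}_i^m$, and the second because $\hat{r}_i^{m'}-\hat{r}_i^m<0$ while $\ell^{m'}(\phi)\le\ell^{m'}(\hat{\phi}_i^m)$, $\ell^{m'}$ being increasing to the right of $\hat{\mu}_i^{m'}$. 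Combining with the previous paragraph, $\hat{g}_i^{m'}(\phi)>\hat{g}_i^{m'}(\phi_R)$ for every admissible $\phi<\phi_R$, so no such $\phi$ is a global minimiser and hence $\hat{\phi}_i^{m'}\ge\phi_R=\hat{\phi}_i^m+2\varepsilon^{1/2}$. The second bullet is handled by the mirror-image version: now $\hat{r}_i^{m'}-\hat{r}_i^m>0$ enters with a positive coefficient, so the same Taylor estimate makes $\hat{g}_i^{m'}$ strictly increasing on $[\phi_L,\hat{\phi}_i^m]$ with $\phi_L\triangleq\hat{\phi}_i^m-2\varepsilon^{1/2}$, the identical sign analysis on $[\hat{\phi}_i^m,\hat{\mu}_{i^*}^{m'}]$ gives $\hat{g}_i^{m'}(\phi)>\hat{g}_i^{m'}(\phi_L)$ for every admissible $\phi>\phi_L$, and therefore $\hat{\phi}_i^{m'}\le\phi_L$. $\square$
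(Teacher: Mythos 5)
Your proof is correct, and while it shares the paper's overall skeleton (a ``tilting'' decomposition of the perturbed objective plus a global objective-value comparison to exclude minimizers on the wrong side), the mechanism you use for the key quantitative step is genuinely different. To force $\hat{\phi}_i^{m'}$ at least $2\varepsilon^{1/2}$ to the right of $\hat{\phi}_i^{m}$, the paper works with the first-order conditions at \emph{both} minimizers: since each solves its own stationarity equation and the empirical ratios $\hat{\alpha}_i/\hat{\alpha}_{i^*}$ differ by at least $b_{\alpha \phi 2}\varepsilon^{1/2}$, the map $\phi\mapsto -\Ical(\phi,\mu_{i^*},\sigma_{i^*}^2)/\Ical(\phi,\mu_i,\sigma_i^2)$ must change by roughly that amount between the two points, and the Lipschitz bound \eqref{ineq:Ider2_up} then forces $|\hat{\phi}_i^{m'}-\hat{\phi}_i^{m}|\ge 2\varepsilon^{1/2}$; a one-sided value comparison (the chain \eqref{ineq:phi_lb1}--\eqref{ineq:phi_lb4}) ruling out $\hat{\phi}_i^{m'}\le\hat{\phi}_i^{m}-\varepsilon^{1/2}$ then pins down the sign. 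You never invoke the stationarity of $\hat{\phi}_i^{m'}$: instead you show the tilted objective is strictly decreasing on $[\hat{\phi}_i^{m},\hat{\phi}_i^{m}+2\varepsilon^{1/2}]$ via a second-order Taylor bound at the critical point $\hat{\phi}_i^{m}$ combined with the $-b_{\alpha\phi 2}b_{\Ical}\varepsilon^{1/2}$ tilt, and then beat the entire left region through a sign analysis of the two correction terms --- the same favourable sign structure the paper exploits in \eqref{ineq:phi_lb1}--\eqref{ineq:phi_lb2}, but applied on all of $[\hat{\mu}_i^{m'},\hat{\phi}_i^{m}]$ with an $O(\varepsilon)$ tolerance rather than on a truncated interval with strict positivity. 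Your route buys a cleaner one-directional argument with no absolute-value step, at the cost of needing a uniform bound on the second derivative of the rescaled objective and a derivative analogue of \eqref{ineq:log_ub} for the individual $\Ical$ terms (the paper only records \eqref{ineq:diff_ub} for their ratio); both follow from the same compactness estimates already in place, but you should state that bound explicitly when writing this up.
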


\begin{lemma}\label{lem:upart}
	Let $ \varepsilon_3 \le \varepsilon_2$ denote a constant small enough. When $\varepsilon \le \varepsilon_3$ and $m,m' \ge  M_3(\varepsilon)$, there exists a constant $b_{up} > 0$ such that
	\begin{itemize}
		\item if $\hat{\alpha}_{i}^{m'}/ \hat{\alpha}_{i^*}^{m'}  \le \hat{\alpha}_{i}^m/ \hat{\alpha}_{i^*}^m  -  b_{\alpha \phi 2} \varepsilon^{\frac{1}{2}}
		$, then $\hat{\Ucal}^{*,m'}_{i}/\hat{\Ucal}^{m'}_{i}
		\le \left(1 - b_{up} \varepsilon^{ \frac{1}{2} }\right) \hat{\Ucal}^{*,m}_{i}/\hat{\Ucal}^{m}_{i}$
		
		\item if $\hat{\alpha}_{i}^{m'}/ \hat{\alpha}_{i^*}^{m'}  \ge \hat{\alpha}_{i}^m/ \hat{\alpha}_{i^*}^m  +  b_{\alpha \phi 2} \varepsilon^{\frac{1}{2}}
		$, then $\hat{\Ucal}^{*,m'}_{i}/\hat{\Ucal}^{m'}_{i} \ge \left(1 + b_{up} \varepsilon^{ \frac{1}{2} }\right) \hat{\Ucal}^{*,m}_{i}/\hat{\Ucal}^{m}_{i}$.
	\end{itemize}
\end{lemma}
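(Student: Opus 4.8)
The plan is to decompose the change in the ratio $\hat{\Ucal}^{*,m}_{i}/\hat{\Ucal}^{m}_{i}$ between times $m$ and $m'$ into two parts and control each separately. Introduce the auxiliary quantity
\[
Q \;\triangleq\; \frac{\log\bigl(1 + (\hat{\mu}^m_{i^*}-\hat{\phi}^{m'}_{i})^2/(\hat{\sigma}^m_{i^*})^2\bigr)}{\log\bigl(1 + (\hat{\mu}^m_{i}-\hat{\phi}^{m'}_{i})^2/(\hat{\sigma}^m_{i})^2\bigr)},
\]
which uses the time-$m$ estimates but the time-$m'$ minimizer, so that
\[
\frac{\hat{\Ucal}^{*,m'}_{i}}{\hat{\Ucal}^{m'}_{i}} - \frac{\hat{\Ucal}^{*,m}_{i}}{\hat{\Ucal}^{m}_{i}} \;=\; \Bigl(\frac{\hat{\Ucal}^{*,m'}_{i}}{\hat{\Ucal}^{m'}_{i}} - Q\Bigr) + \Bigl(Q - \frac{\hat{\Ucal}^{*,m}_{i}}{\hat{\Ucal}^{m}_{i}}\Bigr).
\]
The second bracket involves only a shift of the evaluation point (from $\hat{\phi}^m_{i}$ to $\hat{\phi}^{m'}_{i}$, with the time-$m$ parameters held fixed); it will be shown to be of order $\varepsilon^{1/2}$, with sign matching the hypothesis. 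The first bracket involves only a change of parameters (at the fixed point $\hat{\phi}^{m'}_{i}$); it will be shown to be $O(\varepsilon)$. The crucial input making the $\phi$-shift of order $\varepsilon^{1/2}$ is Lemma \ref{lem:phihatmon2}: the hypothesis $\hat{\alpha}^{m'}_{i}/\hat{\alpha}^{m'}_{i^*} \le \hat{\alpha}^{m}_{i}/\hat{\alpha}^{m}_{i^*} - b_{\alpha\phi 2}\varepsilon^{1/2}$ yields $\hat{\phi}^{m'}_{i}-\hat{\phi}^{m}_{i} \ge 2\varepsilon^{1/2}$, and the reverse hypothesis yields $\hat{\phi}^{m'}_{i}-\hat{\phi}^{m}_{i} \le -2\varepsilon^{1/2}$.

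For the $\phi$-shift term, consider, for fixed time-$m$ estimates, the map $\rho^m(\phi) \triangleq \log(1 + (\hat{\mu}^m_{i^*}-\phi)^2/(\hat{\sigma}^m_{i^*})^2)\big/\log(1 + (\hat{\mu}^m_{i}-\phi)^2/(\hat{\sigma}^m_{i})^2)$, with numerator $N(\phi)$ and denominator $D(\phi)$. By \eqref{ineq:kappa_r31} (and the flexibility in the choice of $m,m'$), both $\hat{\phi}^m_{i}$ and $\hat{\phi}^{m'}_{i}$ lie in a common interval that is bounded away from $\hat{\mu}^m_{i}$ and $\hat{\mu}^m_{i^*}$, on which \eqref{ineq:derv_lb} gives $N'(\phi) = \Ical(\phi,\hat{\mu}^m_{i^*},(\hat{\sigma}^m_{i^*})^2) \le -b_{\Ical}$ and $D'(\phi) = \Ical(\phi,\hat{\mu}^m_{i},(\hat{\sigma}^m_{i})^2) \ge b_{\Ical}$, while the reasoning behind \eqref{ineq:log_ub1}--\eqref{ineq:log_ub2} gives $b_{log2} \le N(\phi),D(\phi) \le b_{log1}$ throughout that interval. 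Hence $(\rho^m)'(\phi) = (N'D - N D')/D^2 \le -b_{\Ical}b_{log2}/b_{log1}^2 \triangleq -c_0 < 0$, so $\rho^m$ is strictly decreasing with derivative below $-c_0$. The mean value theorem then gives $Q - \hat{\Ucal}^{*,m}_{i}/\hat{\Ucal}^{m}_{i} = (\rho^m)'(\xi)(\hat{\phi}^{m'}_{i}-\hat{\phi}^{m}_{i})$ for some $\xi$ in the interval, so $Q \le \hat{\Ucal}^{*,m}_{i}/\hat{\Ucal}^{m}_{i} - 2c_0\varepsilon^{1/2}$ under the first hypothesis and $Q \ge \hat{\Ucal}^{*,m}_{i}/\hat{\Ucal}^{m}_{i} + 2c_0\varepsilon^{1/2}$ under the second.

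For the parameter-change term, since $Q$ and $\hat{\Ucal}^{*,m'}_{i}/\hat{\Ucal}^{m'}_{i}$ are evaluated at the same point $\hat{\phi}^{m'}_{i}$, their numerators differ by at most $b_{lU}\varepsilon$ and their denominators differ by at most $b_{lU}\varepsilon$ by \eqref{ineq:log_ub}; combined with $b_{log2} \le N,D \le b_{log1}$ this yields $|\hat{\Ucal}^{*,m'}_{i}/\hat{\Ucal}^{m'}_{i} - Q| \le b_{Q}\varepsilon$ for a suitable constant $b_{Q}$. Adding the two estimates and taking $\varepsilon_3 \le \varepsilon_2$ small enough that $b_{Q}\varepsilon_3^{1/2} \le c_0$, we obtain $\hat{\Ucal}^{*,m'}_{i}/\hat{\Ucal}^{m'}_{i} \le \hat{\Ucal}^{*,m}_{i}/\hat{\Ucal}^{m}_{i} - c_0\varepsilon^{1/2}$ under the first hypothesis and $\hat{\Ucal}^{*,m'}_{i}/\hat{\Ucal}^{m'}_{i} \ge \hat{\Ucal}^{*,m}_{i}/\hat{\Ucal}^{m}_{i} + c_0\varepsilon^{1/2}$ under the second. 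I would finish by converting these additive bounds into the multiplicative form of the statement: since $\hat{\Ucal}^{*,m}_{i}/\hat{\Ucal}^{m}_{i} \le b_{log1}/b_{log2}$, a decrease (resp. increase) by $c_0\varepsilon^{1/2}$ is at least a factor $1 - b_{up}\varepsilon^{1/2}$ (resp. $1 + b_{up}\varepsilon^{1/2}$) with $b_{up} \triangleq c_0 b_{log2}/b_{log1}$. The main obstacle I expect is the two-scale bookkeeping—keeping the $O(\varepsilon)$ parameter noise uniformly dominated by the $O(\varepsilon^{1/2})$ gain from the displacement of $\hat{\phi}_{i}$, while checking that all constants ($c_0$, $b_Q$, $b_{up}$, and the interval of validity) are path-independent—together with the minor but necessary check that $\hat{\phi}^m_{i}$ and $\hat{\phi}^{m'}_{i}$ both fall strictly inside the region where the derivative bound \eqref{ineq:derv_lb} applies, which requires slightly shrinking the $b_\phi$-margins of \eqref{ineq:kappa_r31}.
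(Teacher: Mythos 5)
Your proposal is correct and follows essentially the same route as the paper: both rest on Lemma \ref{lem:phihatmon2} for the $2\varepsilon^{1/2}$ displacement of $\hat{\phi}_{i}$, the derivative bound \eqref{ineq:derv_lb} together with the bounds $b_{log2},b_{log1}$ from \eqref{ineq:log_ub1}--\eqref{ineq:log_ub2}, the perturbation estimate \eqref{ineq:log_ub} for the change of parameters from $m$ to $m'$, and a choice of $\varepsilon_3$ making the $O(\varepsilon)$ noise dominated by the $O(\varepsilon^{1/2})$ gain. The only difference is bookkeeping: you apply the mean value theorem to the ratio $\rho^m$ and convert the additive decrease to the multiplicative form at the end, whereas the paper applies it to the numerator log alone (\eqref{ineq:upart_0}--\eqref{ineq:upart_1}), uses plain monotonicity for the denominator (\eqref{ineq:upart_2}), and carries multiplicative factors throughout (\eqref{ineq:upart_3}--\eqref{ineq:upart_6}); also, no shrinking of the $b_\phi/2$ margins is needed, since \eqref{ineq:kappa_r31} already places both $\hat{\phi}_i^m$ and $\hat{\phi}_i^{m'}$ inside the region where \eqref{ineq:derv_lb} holds.
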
	

\begin{lemma}\label{lem:vdiff}
	For $\varepsilon \le \varepsilon_3$, $m,m' \ge M_3(\varepsilon)$ and $m \ne m'$, we have for any non-best designs $i \ne j$ that
	\begin{align*}
		\hat{\Wcal}_{i}^{m'} -& \hat{\Wcal}_{j}^{m'}
		\le 2 (b_{\alpha U} + 1) b_{lU} \varepsilon  + \Big(\frac{\hat{\alpha}_{i}^{m'}}{\hat{\alpha}_{i^*}^{m'}} - \frac{\hat{\alpha}_{i}^m}{\hat{\alpha}_{i^*}^m}\Big) \hat{\Ucal}^{m}_{i}
		+ \hat{\Wcal}_{i}^m - \hat{\Wcal}_{j}^m +  \Big( \frac{\hat{\alpha}_{j}^m}{\hat{\alpha}_{i^*}^m} - \frac{\hat{\alpha}_{j}^{m'}}{\hat{\alpha}_{i^*}^{m'}} \Big)  \hat{\Ucal}^{m'}_{j}.
	\end{align*}
\end{lemma}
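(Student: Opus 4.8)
The plan is to exploit the decomposition $\hat{\Wcal}^m_i = r^m_i\,\hat{\Ucal}^m_i + \hat{\Ucal}^{*,m}_i$, where $r^m_i \triangleq \hat{\alpha}^m_i/\hat{\alpha}^m_{i^*}$, which is valid for every $m \ge M_3(\varepsilon)$ because then $i^{*,m}=i^*$ and $\hat{\phi}^m_i$ is the minimizer in \eqref{eq:vcalm}. The argument consists of two symmetric halves: an upper bound on $\hat{\Wcal}^{m'}_i$ and a lower bound on $\hat{\Wcal}^{m'}_j$, each obtained by ``transporting'' the minimizer from one time index to the other and then controlling the resulting log-terms via the uniform-continuity estimate \eqref{ineq:log_ub} and the proportion bounds of Lemma \ref{lem:one}.

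For the upper bound, since the minimization defining $\hat{\Wcal}^{m'}_i$ is unconstrained in $\phi_i$, plugging in $\hat{\phi}^m_i$ gives $\hat{\Wcal}^{m'}_i \le r^{m'}_i\log\!\bigl(1+(\hat{\mu}^{m'}_i-\hat{\phi}^m_i)^2/(\hat{\sigma}^{m'}_i)^2\bigr) + \log\!\bigl(1+(\hat{\mu}^{m'}_{i^*}-\hat{\phi}^m_i)^2/(\hat{\sigma}^{m'}_{i^*})^2\bigr)$. Because $\hat{\phi}^m_i$ lies in the compact range guaranteed by \eqref{ineq:kappa_r2} (equivalently Lemma \ref{lem:phibd}), \eqref{ineq:log_ub} lets me replace each of these log-terms by its time-$m$ counterpart $\hat{\Ucal}^m_i$, $\hat{\Ucal}^{*,m}_i$ at the cost $b_{lU}\varepsilon$ each, yielding $\hat{\Wcal}^{m'}_i \le r^{m'}_i\hat{\Ucal}^m_i + \hat{\Ucal}^{*,m}_i + (r^{m'}_i+1)b_{lU}\varepsilon$. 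Writing $r^{m'}_i\hat{\Ucal}^m_i = (r^{m'}_i-r^m_i)\hat{\Ucal}^m_i + r^m_i\hat{\Ucal}^m_i$, recognizing $r^m_i\hat{\Ucal}^m_i + \hat{\Ucal}^{*,m}_i = \hat{\Wcal}^m_i$, and using $r^{m'}_i \le b_{\alpha U}$ from Lemma \ref{lem:one}, I obtain $\hat{\Wcal}^{m'}_i \le (r^{m'}_i-r^m_i)\hat{\Ucal}^m_i + \hat{\Wcal}^m_i + (b_{\alpha U}+1)b_{lU}\varepsilon$.

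The lower bound on $\hat{\Wcal}^{m'}_j$ is obtained by the same device run in the opposite direction: plugging $\hat{\phi}^{m'}_j$ into the minimization defining $\hat{\Wcal}^m_j$ gives $\hat{\Wcal}^m_j \le r^m_j\log\!\bigl(1+(\hat{\mu}^m_j-\hat{\phi}^{m'}_j)^2/(\hat{\sigma}^m_j)^2\bigr) + \log\!\bigl(1+(\hat{\mu}^m_{i^*}-\hat{\phi}^{m'}_j)^2/(\hat{\sigma}^m_{i^*})^2\bigr)$; applying \eqref{ineq:log_ub} to pass to the time-$m'$ log-terms, splitting $r^m_j\hat{\Ucal}^{m'}_j = (r^m_j-r^{m'}_j)\hat{\Ucal}^{m'}_j + r^{m'}_j\hat{\Ucal}^{m'}_j$, recognizing $r^{m'}_j\hat{\Ucal}^{m'}_j + \hat{\Ucal}^{*,m'}_j = \hat{\Wcal}^{m'}_j$, and using $r^m_j \le b_{\alpha U}$, I rearrange to $\hat{\Wcal}^{m'}_j \ge \hat{\Wcal}^m_j - (r^m_j-r^{m'}_j)\hat{\Ucal}^{m'}_j - (b_{\alpha U}+1)b_{lU}\varepsilon$. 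Subtracting this from the upper bound on $\hat{\Wcal}^{m'}_i$ produces exactly the claimed inequality, the $2(b_{\alpha U}+1)b_{lU}\varepsilon$ term being the sum of the two $b_{lU}\varepsilon$ errors.

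I do not expect a genuine obstacle here; the only points needing care are (i) confirming that $m,m' \ge M_3(\varepsilon)$ forces $i^{*,m}=i^{*,m'}=i^*$ (so the normalization $\tfrac{2}{\hat{\alpha}^m_{i^{*,m}}}$ is indeed relative to $i^*$) and that $\hat{\phi}^m_i,\hat{\phi}^{m'}_j$ stay in the compact ranges on which \eqref{ineq:log_ub} and the $r$-bounds were established, and (ii) keeping the time superscripts straight so that the telescoping $r\,\hat{\Ucal} = (r-r')\hat{\Ucal} + r'\hat{\Ucal}$ is applied to the correct factor in each half. The asymmetry — using $\hat{\Ucal}^m_i$ in the first half but $\hat{\Ucal}^{m'}_j$ in the second — is dictated by which minimizer is being transported and matches the form of the target bound.
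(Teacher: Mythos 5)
Your argument is correct and is essentially the paper's own proof: both transport the time-$m$ minimizer $\hat{\phi}^m_i$ into the time-$m'$ objective (and $\hat{\phi}^{m'}_j$ into the time-$m$ objective), control the parameter change via \eqref{ineq:log_ub} with the ratio bound $r\le b_{\alpha U}$ from Lemma \ref{lem:one}, and split $r\,\hat{\Ucal}$ to recover $\hat{\Wcal}^m_i$ and $\hat{\Wcal}^{m'}_j$, which is exactly the paper's decomposition into $\Dcal_1+\Dcal_2+\Dcal_3$ with the auxiliary functions $\hat g_i^{m'},\hat g_i^{m}$. The only differences are presentational, and your side remarks (that $i^{*,m}=i^{*,m'}=i^*$ and that the minimizers lie in the compact range where \eqref{ineq:log_ub} applies) are precisely the facts the paper relies on implicitly through $M_3(\varepsilon)$ and Lemma \ref{lem:phibd}.
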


\begin{lemma}\label{lem:interv_opt}
	Let $ \varepsilon_4 \le \varepsilon_3$ denote a constant small enough and $M_4(\varepsilon) \ge M_3(\varepsilon)$ denote a large enough random time for any $\varepsilon \le \varepsilon_4$. When $\varepsilon \le \varepsilon_4$ and $m \ge M_4(\varepsilon)$, suppose $i \ne i^*$ is sampled at iteration $m+1$ and define $m' = \inf\{ s > m: i^{s+1} = i \}$. If there exists an iteration between iteration $m+1$ and $m'+1$ such that $i^*$ is simulated, there must exist a design $i^\dag$ such that $\hat{\alpha}_{i^\dag}^{m'}/ \hat{\alpha}_{i^*}^{m'}  > (\hat{\alpha}_{i^\dag}^m/ \hat{\alpha}_{i^*}^m)  ( 1- (2 b_{\alpha \phi 2}/ b_{\alpha L})  \varepsilon^{\frac{1}{2}} )$.
\end{lemma}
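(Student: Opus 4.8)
I would argue by contradiction. Write $r^s_j := \hat\alpha^s_j/\hat\alpha^s_{i^*} = N^s_j/N^s_{i^*}$ for $j\ne i^*$, set $\Sigma^s := \sum_{j\ne i^*}\hat{\Ucal}^{*,s}_j/\hat{\Ucal}^{s}_j$ and $c := 2b_{\alpha\phi2}/b_{\alpha L}$, and choose $M_4(\varepsilon)\ge M_3(\varepsilon)$ large enough (and $\varepsilon_4\le\varepsilon_3$ small enough) that, for $m\ge M_4(\varepsilon)$: the estimated best design equals $i^*$ with a unique argmax, $1/N^m_{i^*}<c\varepsilon^{1/2}$, and every error term of order $\varepsilon$ or $1/N^m_{i^*}$ appearing below is at most $\tfrac14 b_{up}\varepsilon^{1/2}$. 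Suppose, for contradiction, that $r^{m'}_j\le r^m_j(1-c\varepsilon^{1/2})$ for \emph{every} $j\ne i^*$. Since $r^m_j\ge b_{\alpha L}$ by Lemma \ref{lem:one}, this forces the additive gap $r^m_j-r^{m'}_j\ge b_{\alpha L}c\varepsilon^{1/2}=2b_{\alpha\phi2}\varepsilon^{1/2}\ge b_{\alpha\phi2}\varepsilon^{1/2}$, so Lemma \ref{lem:upart} (first bullet, with $m$ and $m'$ playing the roles of its two times) gives $\hat{\Ucal}^{*,m'}_j/\hat{\Ucal}^{m'}_j\le(1-b_{up}\varepsilon^{1/2})\,\hat{\Ucal}^{*,m}_j/\hat{\Ucal}^{m}_j$ for each $j\ne i^*$; summing, $\Sigma^{m'}\le(1-b_{up}\varepsilon^{1/2})\Sigma^m$.

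\textbf{Using the algorithm's decisions.} Because $i^{m+1}=i\ne i^*$ and $i^{m'+1}=i\ne i^*$, while Step 1 never triggers (unique argmax), both of these samplings come from the ``otherwise'' branch of Step 3, hence $\Sigma^m\le1$ and $\Sigma^{m'}\le1$. The hypothesis provides an iteration in $\{m+2,\dots,m'\}$ at which $i^*$ is sampled, so $m'\ge m+2$; let $u+1$ be the \emph{largest} such iteration. Then $\Sigma^u>1$ (Step 3's first branch), and for every iteration $u+2,\dots,m'$ the sampled design lies in $\{1,\dots,k\}\setminus\{i,i^*\}$ — not $i^*$ by maximality of $u$, not $i$ since $m'+1$ is the next $i$-sample. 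Hence $N^{m'}_{i^*}=N^u_{i^*}+1$, $N^{m'}_i=N^u_i=N^m_i+1$, and $N^{m'}_j\ge N^u_j$ for every $j$.

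\textbf{Closing the loop.} From iteration $u+1$ to $m'$ the count $N_{i^*}$ is frozen while every $N_j$ is non-decreasing, so $r^{m'}_j\ge r^{u+1}_j$ for all $j\ne i^*$; since $r\mapsto\Ucal^{*,\min}_j(r)/\Ucal^{\min}_j(r)$ is nondecreasing (this is Lemma \ref{lem:phimono0}(i) pushed through the ratio, exactly as in the proof of Lemma \ref{lem:umono}(i)), the same holds for the plug-in version up to the drift controlled by \eqref{ineq:log_ub}, so $\Sigma^{m'}\ge\Sigma^{u+1}-\tfrac14 b_{up}\varepsilon^{1/2}$. The single $i^*$-sample that takes $u$ to $u+1$ multiplies each $r_j$ by $1-1/(N^u_{i^*}+1)$, a change of size $O(1/N^u_{i^*})$ far below the $b_{\alpha\phi2}\varepsilon^{1/2}$ resolution of Lemma \ref{lem:phihatmon2}; tracking the resulting movement of $\hat\phi_j$ and of $\hat{\Ucal}^{*}_j/\hat{\Ucal}_j$ via Lemmas \ref{lem:phihatmon2}-\ref{lem:upart} gives $\Sigma^{u+1}\ge\Sigma^u-\tfrac12 b_{up}\varepsilon^{1/2}$, so $\Sigma^{m'}>1-b_{up}\varepsilon^{1/2}$. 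Combined with $\Sigma^{m'}\le(1-b_{up}\varepsilon^{1/2})\Sigma^m\le1-b_{up}\varepsilon^{1/2}$, this is a contradiction, so some $i^\dag\ne i^*$ must satisfy $r^{m'}_{i^\dag}>r^m_{i^\dag}(1-c\varepsilon^{1/2})$, which is the claim.

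\textbf{Main obstacle.} The delicate step is the inequality $\Sigma^{u+1}\ge\Sigma^u-\tfrac12 b_{up}\varepsilon^{1/2}$: because $\hat\phi_j$ can jump as a function of $r_j$ (Example \ref{rem:disc}, Lemma \ref{lem:phimono0}), a single unit change of $N_{i^*}$ is not automatically negligible for $\hat{\Ucal}^{*}_j/\hat{\Ucal}_j$, and controlling the cumulative effect of all the $i^*$-samplings is precisely what forces the use of Lemmas \ref{lem:phihatmon2}-\ref{lem:upart} (which resolve changes in $r_j$ at scale $\varepsilon^{1/2}$ rather than infinitesimally) and the freedom to enlarge $M_4(\varepsilon)$ so that residual $O(1/N_{i^*})$ terms are dominated by $b_{up}\varepsilon^{1/2}$. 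A secondary bookkeeping point is checking that the constant $2b_{\alpha\phi2}/b_{\alpha L}$ in the statement is exactly what is needed to convert the multiplicative contradiction hypothesis into the additive input required by Lemma \ref{lem:upart}.
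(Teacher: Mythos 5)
Your setup, the reduction of the multiplicative hypothesis to the additive gap $r^m_j - r^{m'}_j \ge b_{\alpha\phi 2}\varepsilon^{1/2}$, and the application of Lemma \ref{lem:upart} to conclude $\Sigma^{m'}\le(1-b_{up}\varepsilon^{1/2})\Sigma^m\le 1-b_{up}\varepsilon^{1/2}$ are all sound. The gap is in the ``closing the loop'' step, and it is exactly the obstacle you flag without resolving: the inequalities $\Sigma^{u+1}\ge\Sigma^u-\tfrac12 b_{up}\varepsilon^{1/2}$ and $\Sigma^{m'}\ge\Sigma^{u+1}-\tfrac14 b_{up}\varepsilon^{1/2}$ do not follow from Lemmas \ref{lem:phihatmon2}--\ref{lem:upart}. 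Those lemmas are one-sided: they guarantee a definite movement of $\hat\phi_j$ (and hence of $\hat{\Ucal}^{*}_j/\hat{\Ucal}_j$) when $r_j$ changes by \emph{at least} $b_{\alpha\phi 2}\varepsilon^{1/2}$, but they give no upper bound on the movement when $r_j$ changes by less. Because $\arg\min_{\phi}g_j(\phi,r)$ is discontinuous in $r$ (Example \ref{rem:disc}, Lemma \ref{lem:phimono0}(iv)), the single $i^*$-sample taking $u$ to $u+1$ decreases $r_j$ by only $O(1/N^u_{i^*})$ yet can push $r_j$ across a jump point, moving $\hat\phi_j$ upward toward $\hat\mu_{i^*}$ by an $O(1)$ amount; this \emph{decreases} $\hat{\Ucal}^{*}_j/\hat{\Ucal}_j$ by an $O(1)$ amount, which is precisely the direction that destroys your lower bound on $\Sigma^{u+1}$. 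Enlarging $M_4(\varepsilon)$ cannot repair this, since the size of the jump does not shrink with $1/N_{i^*}$; the same issue infects the monotonicity-plus-drift argument from $u+1$ to $m'$.

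The paper's proof sidesteps this entirely. It takes $m^*+1$ to be the \emph{last} iteration at or before $m'$ at which $i^*$ is sampled, so that $\Sigma^{m^*}>1$ by Step 3, while $N^{m^*}_{i^*}=N^{m'}_{i^*}-1$ and $N^{m^*}_{i_1}\le N^{m'}_{i_1}$ for all $i_1$. Hence $\hat\alpha^{m^*}_{i_1}/\hat\alpha^{m^*}_{i^*}\le \hat\alpha^{m'}_{i_1}/\hat\alpha^{m'}_{i^*}+\varepsilon$, and the contradiction hypothesis at $m'$ transfers to $m^*$ with gap $b_{\alpha\phi 2}\varepsilon^{1/2}$ (this is where $\varepsilon_4\le b_{\alpha\phi 2}^2$ is used). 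Applying Lemma \ref{lem:upart} to the pair $(m,m^*)$ then yields $\Sigma^{m^*}\le(1-b_{up}\varepsilon^{1/2})\Sigma^m\le 1$, contradicting $\Sigma^{m^*}>1$ directly; no tracking of $\Sigma$ across intermediate iterations is needed. If you replace your comparison of $\Sigma^u$ against $\Sigma^{m'}$ with a comparison of $\Sigma^{m^*}$ against $\Sigma^m$, your argument closes.
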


\begin{lemma}\label{lem:interv}
	Let $ \varepsilon_5 \le \varepsilon_4$ denote a constant small enough and $M_5(\varepsilon)  \ge M_4(\varepsilon)$ denote a large enough random time for any $\varepsilon \le \varepsilon_5$. When $\varepsilon \le \varepsilon_5$ and $m \ge M_5(\varepsilon)$, suppose $i \ne i^*$ is sampled at iteration $m+1$ and define $m' = \inf\{ s > m: i^{s+1} = i \}$. Then $N_{i_1}^{m'} / N_{i_1}^m  \le 1 + b_{\Wcal U 1} \varepsilon^{\frac{1}{2}}$ for all $i_1 \ne i$.
\end{lemma}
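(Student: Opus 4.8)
The plan is to prove the statement in two stages: first bound how much each ratio $r_{i_1}^s := N_{i_1}^s/N_{i^*}^s$ can increase over the window $[m,m']$, and then convert this into the stated bound on the raw counts $N_{i_1}^{m'}/N_{i_1}^m$. Throughout I use that for $m\ge M_5(\varepsilon)$ the conclusions of Lemmas \ref{lem:const}, \ref{lem:one}, \ref{lem:phibd} and the bounds \eqref{ineq:kappa_r31}--\eqref{ineq:log_ub2} are in force, and that, enlarging $M_5(\varepsilon)$ if necessary, $1/N_{i^*}^m\le\varepsilon^{1/2}$.

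\textbf{Stage 1 (ratio bound).} Fix a non-best $i_1\ne i$. If $i_1$ is not sampled at any iteration in $\{m+2,\dots,m'\}$, then $N_{i_1}^{m'}=N_{i_1}^m$ and $r_{i_1}^{m'}\le r_{i_1}^m$ since $N_{i^*}$ is nondecreasing, so there is nothing to do. Otherwise let $s^\star:=\sup\{s:\,m<s<m',\ i^{s+1}=i_1\}$ be the last iteration in the window at which $i_1$ is sampled; since $i$ is sampled at $m+1$ and not again before $m'+1$, we have $N_i^{s^\star}=N_i^m+1$ and $N_{i_1}^{s^\star}=N_{i_1}^{m'}-1$. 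Because $i^{m+1}=i\ne i^*$, Step 3 of $\ocbau$ forces $i=j^m\in\arg\min_{i'\ne i^*}\hat{\Vcal}^m_{i'}$, hence $\hat{\Wcal}^m_i\le\hat{\Wcal}^m_{i_1}$; likewise $i^{s^\star+1}=i_1$ gives $\hat{\Wcal}^{s^\star}_{i_1}\le\hat{\Wcal}^{s^\star}_i$, i.e. $\hat{\Wcal}^{s^\star}_i-\hat{\Wcal}^{s^\star}_{i_1}\ge 0$. Applying Lemma \ref{lem:vdiff} with the pair of times $(s^\star,m)$ and the roles ``$i$''$\mapsto i$, ``$j$''$\mapsto i_1$, then using $\hat{\Wcal}^m_i-\hat{\Wcal}^m_{i_1}\le 0$, the elementary estimate $r_i^{s^\star}-r_i^m\le 1/N_{i^*}^m$, and $\hat{\Ucal}^m_i\le b_{log1}$ (from \eqref{ineq:log_ub1}), I obtain
\[
\big(r_{i_1}^{s^\star}-r_{i_1}^m\big)\,\hat{\Ucal}^{s^\star}_{i_1}\ \le\ 2(b_{\alpha U}+1)b_{lU}\varepsilon+\tfrac{b_{log1}}{N_{i^*}^m}\ \le\ \big(2(b_{\alpha U}+1)b_{lU}+b_{log1}\big)\varepsilon^{1/2}.
\]
Dividing by the uniform lower bound $\hat{\Ucal}^{s^\star}_{i_1}\ge b_{log2}$ (which follows from Lemma \ref{lem:phibd} exactly as in \eqref{ineq:log_ub2}) gives $r_{i_1}^{s^\star}-r_{i_1}^m\le\tilde b\,\varepsilon^{1/2}$ with $\tilde b:=(2(b_{\alpha U}+1)b_{lU}+b_{log1})/b_{log2}$, and since $N_{i_1}^{m'}=N_{i_1}^{s^\star}+1$ and $N_{i^*}^{m'}\ge N_{i^*}^{s^\star}$ one more use of $1/N_{i^*}\le\varepsilon^{1/2}$ yields $r_{i_1}^{m'}-r_{i_1}^m\le(\tilde b+1)\varepsilon^{1/2}=:\hat b\,\varepsilon^{1/2}$. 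The same bound holds trivially for $i_1=i$ and for $i_1=i^*$, so $r_{i_1}^{m'}\le r_{i_1}^m+\hat b\,\varepsilon^{1/2}$ for every $i_1\ne i^*$.

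\textbf{Stage 2 (conversion to counts).} Writing $N_{i_1}^{m'}/N_{i_1}^m=(r_{i_1}^{m'}/r_{i_1}^m)\,(N_{i^*}^{m'}/N_{i^*}^m)$ and using $r_{i_1}^m\ge b_{\alpha L}$ (Lemma \ref{lem:one}), the first factor is at most $1+(\hat b/b_{\alpha L})\varepsilon^{1/2}$, so it suffices to show $N_{i^*}^{m'}/N_{i^*}^m\le 1+c\,\varepsilon^{1/2}$ for a constant $c$. I would split on whether $i^*$ is sampled at some iteration in $\{m+2,\dots,m'\}$. If it is not, then $N_{i^*}^{m'}=N_{i^*}^m$ and we are done. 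If it is, Lemma \ref{lem:interv_opt} applies and produces a design $i^\dag\ne i^*$ with $r_{i^\dag}^{m'}>r_{i^\dag}^m\big(1-(2b_{\alpha\phi2}/b_{\alpha L})\varepsilon^{1/2}\big)$, equivalently $N_{i^*}^{m'}/N_{i^*}^m<(N_{i^\dag}^{m'}/N_{i^\dag}^m)\big(1-(2b_{\alpha\phi2}/b_{\alpha L})\varepsilon^{1/2}\big)^{-1}$. When $i^\dag=i$ this closes immediately, since $N_i^{m'}/N_i^m=1+1/N_i^m\le 1+\varepsilon^{1/2}$ and hence $N_{i^*}^{m'}/N_{i^*}^m\le 1+c\,\varepsilon^{1/2}$ for suitable $c$. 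To handle the case in which Lemma \ref{lem:interv_opt} returns some other non-best design, I would instead set up Stage 1 and Stage 2 together as a contradiction: assume the conclusion fails, take $i_1$ to be the design maximising $N_{i_1}^{m'}/N_{i_1}^m$ over all $i_1\ne i$, and note that Stage 1 and the displayed inequality from Lemma \ref{lem:interv_opt} then force $N_{i^\dag}^{m'}/N_{i^\dag}^m$ to lie strictly between two quantities that contradict the maximality once $b_{\Wcal U1}$ is fixed large enough relative to $b_{\alpha\phi2}/b_{\alpha L}$. Combining the two factors produces $N_{i_1}^{m'}/N_{i_1}^m\le 1+b_{\Wcal U1}\varepsilon^{1/2}$ with $b_{\Wcal U1}$ an explicit constant built from $\hat b$, $b_{\alpha L}$ and $b_{\alpha\phi2}$.

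The main obstacle is precisely this last step: controlling the growth of $N_{i^*}$ over the window. Stage 1 says nothing directly about $N_{i^*}^{m'}/N_{i^*}^m$, because each sampling of $i^*$ shrinks \emph{every} ratio $r_{i_1}$ at once; what rescues the argument is that $\ocbau$ only samples $i^*$ while the ratio-sum test \eqref{eq:estratio} holds, and Lemma \ref{lem:interv_opt} distils from this the fact that such a run cannot persist unless some $r_{i^\dag}$ stays essentially fixed. Matching this ``anti-starvation'' statement against the one-sided ratio bound of Stage 1 — in particular ruling out a long alternation of $i^*$- and $i_1$-samplings that keeps \eqref{eq:estratio} pinned near its threshold — is the delicate point, and is why the proof needs the case analysis above together with a sufficiently generous choice of the constant $b_{\Wcal U1}$.
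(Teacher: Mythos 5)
Your Stage 1 is sound and matches one half of the paper's argument: using that $i$ minimizes $\hat{\Vcal}^m$ at time $m$ and $i_1$ minimizes it at the last time $s^\star$ it is sampled, Lemma \ref{lem:vdiff} together with $\hat{\Ucal}^{s^\star}_{i_1}\ge b_{log2}$ and $\hat{\Ucal}^m_i\le b_{log1}$ does give $r_{i_1}^{m'}-r_{i_1}^m\le O(\varepsilon^{1/2})$ for every non-best $i_1\ne i$. The gap is in Stage 2, in the case where Lemma \ref{lem:interv_opt} returns a non-best design $i^\dag\notin\{i,i^*\}$. There your three inequalities are all \emph{relative} multiplicative bounds: $N_{i_1^{\max}}^{m'}/N_{i_1^{\max}}^m\le(1+O(\varepsilon^{1/2}))\,N_{i^*}^{m'}/N_{i^*}^m$, $N_{i^*}^{m'}/N_{i^*}^m\le(1+O(\varepsilon^{1/2}))\,N_{i^\dag}^{m'}/N_{i^\dag}^m$, and $N_{i^\dag}^{m'}/N_{i^\dag}^m\le N_{i_1^{\max}}^{m'}/N_{i_1^{\max}}^m$ by maximality. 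Chaining them gives $x\le(1+O(\varepsilon^{1/2}))\,x$, which is vacuous: the system is perfectly consistent with $N_{i^*}$, $N_{i^\dag}$ and $N_{i_1^{\max}}$ all growing by a large common factor while every ratio $r_{i_1}$ stays essentially fixed. Your Stage 1 bound cannot rule this scenario out because it controls only ratios, not counts, and the maximality of $i_1^{\max}$ supplies no absolute anchor.

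What actually closes this case in the paper is a different mechanism that your proposal never invokes: a quantitative \emph{lower} bound on how much $r_i=N_i/N_{i^*}$ must have \emph{dropped}. If $N_{i^\dag}$ grows by a factor $1+\Omega(b_{\Wcal U 1}\varepsilon^{1/2})$ while $r_{i^\dag}$ stays within $O(\varepsilon)$ of its initial value (the paper's sub-case $N_{i^\dag}^{m^\dag}/N_{i^*}^{m^\dag}-N_{i^\dag}^m/N_{i^*}^m\le b_{\Wcal U 2}\varepsilon$), then $N_{i^*}$ has grown by the same factor, and since $N_i$ is frozen at $N_i^m+1$, one gets $r_i^{m^\dag}-r_i^m\le-\tfrac{b_{\Wcal U 1}b_{\alpha L}^2}{16}\varepsilon^{1/2}$. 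Feeding this \emph{negative} increment into Lemma \ref{lem:vdiff} at the last iteration $m^\dag+1$ at which $i^\dag$ is sampled yields $\hat{\Wcal}_i^{m^\dag}-\hat{\Wcal}_{i^\dag}^{m^\dag}<0$, contradicting the algorithm's choice of $i^\dag$ at that iteration; the complementary sub-case (where $r_{i^\dag}$ did increase by more than $b_{\Wcal U 2}\varepsilon$) is handled as in your Stage 1. You use only the one-sided estimate $r_i^{s^\star}-r_i^m\le 1/N_{i^*}^m$ and never exploit the decrease of $r_i$, so the ``long alternation of $i^*$- and $i^\dag$-samplings'' you correctly identify as the delicate point is not actually excluded by your argument. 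To repair the proof you need to return to the selection rule at $m^\dag$ and run this two-sided comparison of $\hat{\Wcal}_i$ against $\hat{\Wcal}_{i^\dag}$, rather than appealing to maximality of the count ratio.
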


Now, we are ready to complete the proof of Proposition \ref{proposv}. Let $\tilde{M}_5(\varepsilon) \ge M_5(\varepsilon)$ denote the first iteration after iteration $M_5(\varepsilon)$ where $N_i^{\tilde{M}_5(\varepsilon)} \ge N_i^{M_5(\varepsilon)} +1$ for all $i=1,\dots,k$. We will show that, when $m > \tilde{M}_5(\varepsilon)$, there exists a constant $b_{\Wcal}$ such that $\max_{i,i' \ne i^*} | \hat{\Wcal}_{i}^m - \hat{\Wcal}_{i'}^{m} | \le b_{\Wcal} \varepsilon^{\frac{1}{2}}$.

Consider a design $i \ne i^*$. Let $\{m_l(i),l=1,2,\dots\}$ denote a sequence of all iterations with $m_{l+1}(i) > m_{l}(i)$, $l=1,2,\dots$, such that $i^{m_l(i)+1} = i$. Let $m_{l_0}(i) \in \{m_l(i),l=1,2,\dots\}$ be an iteration satisfying $M_5(\varepsilon) \le m_{l_0}(i) \le \tilde{M}_5(\varepsilon)$. For any $i' \ne i ,i^*$ and $m_{l}(i) < m < m_{l+1}(i)$, $l=l_0,l_0+1,\dots$, we have
\begin{align*}
	&\frac{N_{i}^m}{N_{i^*}^m} - \frac{N_{i}^{m_l(i)}}{N_{i^*}^{m_l(i)}} = \frac{N_{i}^{m_l(i)}+1}{N_{i^*}^m} - \frac{N_{i}^{m_l(i)}}{N_{i^*}^{m_l(i)}}
	\le \frac{N_{i}^{m_l(i)}+1}{N_{i^*}^{m_l(i)}} - \frac{N_{i}^{m_l(i)}}{N_{i^*}^{m_l(i)}} \le \varepsilon,
\end{align*}
where the last inequality holds because $(N_{i^*}^{m_l(i)})^{-1} \le \varepsilon$ for $m_l(i) \ge M_5(\varepsilon)$, and
\begin{align}
	\frac{N_{i'}^{m_l(i)}}{N_{i^*}^{m_l(i)}} - \frac{N_{i'}^{m}}{N_{i^*}^m} \le& \frac{N_{i'}^{m_l(i)}}{N_{i^*}^{m_l(i)}} - \frac{N_{i'}^{m_l(i)}}{N_{i^*}^{m_{l+1}(i)}} = \frac{N_{i'}^{m_l(i)}}{N_{i^*}^{m_l(i)}} - \frac{N_{i'}^{m_l(i)}}{N_{i^*}^{m_l(i)}} \frac{N_{i^*}^{m_l(i)}}{N_{i^*}^{m_{l+1}(i)}}   \nonumber  \\
	\le&  \frac{N_{i'}^{m_l(i)}}{N_{i^*}^{m_l(i)}} \Big( 1 - \frac{1}{1+b_{\Wcal U 1} \varepsilon^{\frac{1}{2}}}  \Big) \label{ineq:propW_ub1} \\
	\le& b_{\alpha U} b_{\Wcal U 1} \varepsilon^{\frac{1}{2}} \nonumber,
\end{align}
where \eqref{ineq:propW_ub1} holds by Lemma \ref{lem:interv}. Notice that $\hat{\Wcal}_{i}^{m_l(i)} \le \hat{\Wcal}_{i'}^{m_l(i)}$. By Lemma \ref{lem:vdiff}, we have
\begin{align}
	\hat{\Wcal}_{i}^m - \hat{\Wcal}_{i'}^{m}
	\le& 2 (b_{\alpha U} + 1) b_{lU} \varepsilon + (\hat{\alpha}_{i}^m/\hat{\alpha}_{i^*}^m - \hat{\alpha}_{i}^{m_l(i)}/\hat{\alpha}_{i^*}^{m_l(i)}) \hat{\Ucal}^{m_l(i)}_{i}
	+  ( \hat{\alpha}_{i'}^{m_l(i)}/\hat{\alpha}_{i^*}^{m_l(i)} - \hat{\alpha}_{i'}^{m}/\hat{\alpha}_{i^*}^m ) \hat{\Ucal}^{m}_{i'}   \nonumber \\
	\le& 2 (b_{\alpha U} + 1) b_{lU} \varepsilon + b_{log1} \varepsilon + b_{log1} b_{\alpha U} b_{\Wcal U 1} \varepsilon^{\frac{1}{2}}  \label{ineq:propW_ub3} \\
	\le&  ( 2 (b_{\alpha U} + 1) b_{lU} + b_{log1} (1 +  b_{\alpha U} b_{\Wcal U 1}) ) \varepsilon^{\frac{1}{2}} \nonumber,
\end{align}
where \eqref{ineq:propW_ub3} holds by \eqref{ineq:log_ub1}.
Then, for any $m \ge \tilde{M}_5(\varepsilon) \ge m_{l_0}(i)$, we have $\hat{\Wcal}_{i}^m - \hat{\Wcal}_{i'}^{m} \le b_{\Wcal} \varepsilon^{\frac{1}{2}}$ where $b_{\Wcal} \triangleq  2 (b_{\alpha U} + 1) b_{lU} + b_{log1} (1 +  b_{\alpha U} b_{\Wcal U 1})$. By symmetry, $\hat{\Wcal}_{i'}^{m} - \hat{\Wcal}_{i}^m \le b_{\Wcal} \varepsilon^{\frac{1}{2}}$ for $m \ge \tilde{M}_5(\varepsilon)$. Thus, for any $m \ge \tilde{M}_5(\varepsilon^2/b_{\Wcal}^2)$ with $\varepsilon^2/b_{\Wcal}^2 \le \varepsilon_5$ and $i,i' \ne i^*$, we have $| \hat{\Wcal}_{i'}^{m} - \hat{\Wcal}_{i}^m | \le  \varepsilon$. $\square$

\subsection{Proof of Theorem \ref{th:algconv}}

Finally, we complete the proof of the main convergence result. We continue to use the various constants defined in Sections \ref{sec:alg_const}-\ref{sec:proofosv}; some intermediate results from these sections will also be used. We begin with two technical lemmas concerning the behavior of the empirical approximation of \eqref{eq:totalbalance} of the main text. The first lemma is proved in Section \ref{sec:lem:optnonub_proof}. The second uses very similar arguments and is stated without proof.

\begin{lemma}\label{lem:optnonub}
	Let $ \varepsilon_6 \le \varepsilon_5$ denote a constant small enough and $M_6(\varepsilon)  \ge M_5(\varepsilon)$ denote a large enough random time for any $\varepsilon \le \varepsilon_6$. When $\varepsilon \le \varepsilon_6$, if $i^*$ is sampled at iteration $m^*+1$ for $ m^* \ge M_6(\varepsilon)$, then for all $m' \ge m^*$, $\sum_{i \ne i^*} \hat{\alpha}_{i}^{m'} / \sum_{i \ne i^*} \hat{\alpha}_{i}^{m^*} < 1 +  b_{\Ucal} \varepsilon^{1/2}$.
\end{lemma}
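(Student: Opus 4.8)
The plan is to reduce the statement to a lower bound on $\hat\alpha_{i^*}^{m'}$ and to prove that bound by an induction that is closed using the optimality condition \eqref{eq:totalbalance}. Fix $\varepsilon_6\le\varepsilon_5$ small and $M_6(\varepsilon)\ge M_5(\varepsilon)$ large enough that, for every $m\ge M_6(\varepsilon)$, the estimates of Sections \ref{sec:alg_const}--\ref{sec:proofosv} are in force: $i^{*,m}=i^*$ with $\arg\max_i\hat\mu_i^m$ unique, the proportion bounds of Lemma \ref{lem:one}, the inclusion $\hat\phi_i^m\in[\hat\mu_i^m+b_\phi,\hat\mu_{i^*}^m-b_\phi]$ of Lemma \ref{lem:phibd} (which keeps every $\hat\Ucal_i^m,\hat\Ucal_i^{*,m}$ bounded away from $0$), the estimation bound \eqref{ineq:log_ub}, and $\max_{i,j\ne i^*}|\hat\Wcal_i^m-\hat\Wcal_j^m|\le\varepsilon$ from Proposition \ref{proposv}. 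Because $i^*$ is sampled at $m^*+1$, Step~3 of $\ocbau$ yields $\sum_{i\ne i^*}\hat\Ucal_i^{*,m^*}/\hat\Ucal_i^{m^*}>1$. Since $\sum_{i\ne i^*}\hat\alpha_i^m=1-\hat\alpha_{i^*}^m$ and $1-\hat\alpha_{i^*}^{m^*}\ge 1-b_{\alpha U2}>0$ by Lemma \ref{lem:one}, it is enough to produce a constant $b'>0$ with $\hat\alpha_{i^*}^{m'}\ge\hat\alpha_{i^*}^{m^*}-b'\varepsilon^{1/2}$ for all $m'\ge m^*$, and then take $b_{\Ucal}=b'/(1-b_{\alpha U2})$.

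I would prove this lower bound by induction on $m'\ge m^*$ (trivial at $m^*$). Since $N_{i^*}^{\,\cdot}$ is nondecreasing and changes only when $i^*$ is sampled, while $N^{\,\cdot}$ strictly increases, $\hat\alpha_{i^*}^{\,\cdot}$ goes up exactly when $i^*$ is sampled and otherwise drops by at most $N_{i^*}^{m-1}/\big(N^{m-1}(N^{m-1}+1)\big)\le 1/N^{m-1}\le\varepsilon$. Thus if $m'>m^*$ and $i^{m'}=i^*$ the bound at $m'$ follows from the one at $m'-1$; if $i^{m'}\ne i^*$, then by Step~3 of $\ocbau$ the inequality $\sum_{i\ne i^*}\hat\Ucal_i^{*,m'-1}/\hat\Ucal_i^{m'-1}>1$ failed at $m'-1$ and $\hat\alpha_{i^*}^{m'}\ge\hat\alpha_{i^*}^{m'-1}-\varepsilon$. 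Everything therefore reduces to the claim that \emph{$\hat\alpha_{i^*}^{m'-1}\le\hat\alpha_{i^*}^{m^*}-(b'-1)\varepsilon^{1/2}$ implies $\sum_{i\ne i^*}\hat\Ucal_i^{*,m'-1}/\hat\Ucal_i^{m'-1}>1$}: granting it, the supposedly bad case forces $i^{m'}=i^*$, a contradiction, and with the inductive hypothesis and $\varepsilon\le\varepsilon^{1/2}$ the induction closes.

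For the claim I would exploit that $\ocbau$ keeps the non-best proportions near the balanced ones: by Proposition \ref{proposv} the $\hat\Wcal_i^m$ are nearly equal across $i\ne i^*$, so by Lemma \ref{lem:vequal} the ratios $\hat\alpha_i^m/\hat\alpha_{i^*}^m$ lie near $r^f_i(\hat\alpha_{i^*}^m)$; running this through Lemmas \ref{lem:phihatmon2} and \ref{lem:upart} (which describe how $\hat\phi_i^m$, hence $\hat\Ucal_i^{*,m}/\hat\Ucal_i^m$, responds to the proportion ratio despite the possible discontinuity of $\phi^{\min}_i$) and through Lemmas \ref{lem:interv} and \ref{lem:interv_opt} (to pin down the counts when $i^*$ has been sampled repeatedly), one obtains that $\sum_{i\ne i^*}\hat\Ucal_i^{*,m}/\hat\Ucal_i^m$ differs by at most $O(\varepsilon^{1/2})$ from the fixed map $\mathcal{M}(\bar\alpha_{i^*})\triangleq\sum_{i\ne i^*}\Ucal^{*,\min}_i\!\big(r^f_i(\bar\alpha_{i^*})\big)/\Ucal^{\min}_i\!\big(r^f_i(\bar\alpha_{i^*})\big)$ evaluated at $\hat\alpha_{i^*}^m$. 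By Lemma \ref{lem:umono}(i) $\mathcal{M}$ is strictly decreasing, and $\mathcal{M}(\alpha_{i^*}^*)\ge 1$ by \eqref{eq:totalbalance}; since $\sum_{i\ne i^*}\hat\Ucal_i^{*,m^*}/\hat\Ucal_i^{m^*}>1$ forces $\hat\alpha_{i^*}^{m^*}$ to lie at or below $\alpha_{i^*}^*$ up to $O(\varepsilon^{1/2})$, the hypothesized gap puts $\hat\alpha_{i^*}^{m'-1}$ a definite $\Omega(b'\varepsilon^{1/2})$ below $\alpha_{i^*}^*$, so $\mathcal{M}(\hat\alpha_{i^*}^{m'-1})$ exceeds $1$ by an amount that --- once $b'$ is chosen large relative to the constants and to the rate at which $\mathcal{M}$ decreases just below $\alpha_{i^*}^*$ --- strictly beats the $O(\varepsilon^{1/2})$ approximation error; hence $\sum_{i\ne i^*}\hat\Ucal_i^{*,m'-1}/\hat\Ucal_i^{m'-1}>1$, which is the claim.

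The crux is this approximation $\sum_{i\ne i^*}\hat\Ucal_i^{*,m}/\hat\Ucal_i^m\approx\mathcal{M}(\hat\alpha_{i^*}^m)$ and the margin extracted from it. Because $\phi^{\min}_i$ can jump, the terms $\hat\Ucal_i^{*,m}/\hat\Ucal_i^m$ are non-differentiable, the approximation error is only $O(\varepsilon^{1/2})$ rather than $O(\varepsilon)$, and one must argue through the one-sided bounds of Lemmas \ref{lem:phihatmon2} and \ref{lem:upart} rather than a Taylor expansion --- and when some $r^f_i(\hat\alpha_{i^*}^m)$ sits near a jump of $\phi^{\min}_i$, $\hat\phi_i^m$ may take either of two well-separated values, so the corresponding term must be bounded directly, using that it only increases, and leaps upward at the jump, as $\hat\alpha_{i^*}^m$ decreases. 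Turning the \emph{strict} monotonicity in Lemma \ref{lem:umono}(i) into a bound $\mathcal{M}(\alpha_{i^*}^*-\delta)\ge 1+c\,\delta$ near $\alpha_{i^*}^*$ is immediate when \eqref{eq:totalbalance} holds with a jump (then already $\mathcal{M}(\alpha_{i^*}^*)>1$) but requires a short local argument otherwise. The reason the induction does not accumulate a geometric $(1+O(\varepsilon^{1/2}))$ factor is that $\hat\alpha_{i^*}^{m'-1}$ is always compared against the \emph{fixed} level $\alpha_{i^*}^*$, never against $\hat\alpha_{i^*}$ at an intermediate iteration; once these constants are fixed, so are $b'$ and $b_{\Ucal}$.
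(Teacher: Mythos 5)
Your overall reduction (to a lower bound on $\hat{\alpha}_{i^*}^{m'}$, proved by induction against the fixed level $\hat{\alpha}_{i^*}^{m^*}$) is sound in structure, but the step that closes the induction is where the argument breaks, and it breaks in two places. First, the approximation $\sum_{i\neq i^*}\hat{\Ucal}_i^{*,m}/\hat{\Ucal}_i^{m}\approx\mathcal{M}(\hat{\alpha}_{i^*}^m)$ with error $O(\varepsilon^{1/2})$ is not available: Proposition \ref{proposv} gives $|\hat{\Wcal}_i^m-\hat{\Wcal}_j^m|\le\varepsilon$, which at best puts $\hat{\alpha}_i^m/\hat{\alpha}_{i^*}^m$ within $O(\varepsilon)$ of $r_i^f(\hat{\alpha}_{i^*}^m)$, but if that ratio sits within $O(\varepsilon)$ of a jump of $\phi_i^{\min}$, then $\hat{\phi}_i^m$ and $\phi_i^{\min}(r_i^f(\hat{\alpha}_{i^*}^m))$ can land on different branches and the corresponding summand differs from its $\mathcal{M}$-counterpart by $O(1)$, with a sign you do not control (it depends on the sign of $\hat{\alpha}_i^m/\hat{\alpha}_{i^*}^m-r_i^f$, not on the sign of $\hat{\alpha}_{i^*}^m-\alpha_{i^*}^*$). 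Second, even granting the approximation, you need $\mathcal{M}(\alpha_{i^*}^*-\delta)\ge 1+c\delta$ and $\mathcal{M}(\alpha_{i^*}^*+\delta)\le 1-c\delta$; Lemma \ref{lem:umono}(i) gives only qualitative strict monotonicity, and because the chain $\bar{\alpha}_{i^*}\mapsto r_i^f\mapsto\phi_i^{\min}\mapsto\Ucal_i^{*,\min}/\Ucal_i^{\min}$ involves maps for which the paper establishes no lower Lipschitz rate (Lemma \ref{lem:vequal}(ii) bounds the increments of $\alpha_i^f$ only from above), this is not a ``short local argument'' but a collection of new quantitative estimates, each complicated by the same discontinuity. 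A related structural problem: your inductive hypothesis records only $\hat{\alpha}_{i^*}^{m'-1}$, but to force $\sum_{i\neq i^*}\hat{\Ucal}_i^{*,m'-1}/\hat{\Ucal}_i^{m'-1}>1$ via Lemma \ref{lem:upart} you need a \emph{per-design} increase of every ratio $\hat{\alpha}_i^{m'-1}/\hat{\alpha}_{i^*}^{m'-1}$ over its value at $m^*$; a drop in $\hat{\alpha}_{i^*}$ alone does not give this, since mass can shift among the non-best designs.

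The paper avoids all of this with a purely relative argument that never references $\alpha_{i^*}^*$ or $\mathcal{M}$. Assuming the conclusion fails, it identifies a design $i^\dag$ whose proportion ratio has grown by $(1+b_{\Ucal}\varepsilon^{1/2})$, passes to $m^\dag$, the \emph{last iteration before $m'$ at which $i^\dag$ is sampled}, and exploits that this is a decision time: $\hat{\Wcal}_{i'}^{m^\dag}\ge\hat{\Wcal}_{i^\dag}^{m^\dag}$ for all $i'\neq i^*$. Combining this ordering with the near-balance of $\hat{\Wcal}$ at $m^*$ and the increment formula of Lemma \ref{lem:vdiff} forces every ratio $\hat{\alpha}_{i'}^{m^\dag}/\hat{\alpha}_{i^*}^{m^\dag}$ (not just $i^\dag$'s) to exceed its value at $m^*$ by at least $b_{\alpha\phi 2}\varepsilon^{1/2}$; Lemma \ref{lem:upart} then lifts each term of the sum by a factor $(1+b_{up}\varepsilon^{1/2})$ relative to $m^*$, where the sum already exceeded $1$, so the algorithm would have sampled $i^*$ at $m^\dag+1$ --- a contradiction. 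The choice of the comparison time $m^\dag$ and the use of the $\hat{\Wcal}$-ordering at that time are the ideas your proposal is missing; once you add them, the detour through $\mathcal{M}$ and $\alpha_{i^*}^*$ becomes unnecessary.
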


\begin{lemma}\label{lem:optnonlb}
	When $\varepsilon \le \varepsilon_6$, if design $i \ne i^*$ is sampled at iteration $m+1$ for $ m \ge M_6(\varepsilon)$, then for all $m' \ge m$, $\sum_{i \ne i^*} \hat{\alpha}_{i}^{m'}/\sum_{i \ne i^*} \hat{\alpha}_{i}^m > 1 -  b_{\Ucal} \varepsilon^{1/2}$.
\end{lemma}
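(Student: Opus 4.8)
The statement is the exact mirror image of Lemma \ref{lem:optnonub}: it replaces ``$i^*$ is sampled'' by ``$i\neq i^*$ is sampled'', reverses the strict/non-strict inequality in the stopping test \eqref{eq:estratio}, and replaces the upper bound $1+b_{\Ucal}\varepsilon^{1/2}$ by the lower bound $1-b_{\Ucal}\varepsilon^{1/2}$. The plan is therefore to run the argument of Section \ref{sec:lem:optnonub_proof} with these substitutions. Fix $\varepsilon\le\varepsilon_6$ and suppose design $i\neq i^*$ is sampled at iteration $m+1$ with $m\ge M_6(\varepsilon)$; since $m\ge M_6(\varepsilon)\ge M_1(\varepsilon_1)$ we have $i^{*,m}=i^*$ and $\arg\max_{i'}\hat\mu^m_{i'}$ unique, so by Step 3 of $\ocbau$ this sampling decision means $\sum_{i'\neq i^*}\hat{\Ucal}^{*,m}_{i'}/\hat{\Ucal}^m_{i'}\le 1$. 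Argue by contradiction: let $m'$ be the smallest iteration $>m$ with $\sum_{i'\neq i^*}\hat{\alpha}^{m'}_{i'}\le(1-b_{\Ucal}\varepsilon^{1/2})\sum_{i'\neq i^*}\hat{\alpha}^m_{i'}$, for a constant $b_{\Ucal}$ to be fixed later.

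\textbf{Key steps.} Because $\sum_{i'\neq i^*}\hat{\alpha}^s_{i'}=1-\hat{\alpha}^s_{i^*}$ can only decrease from step $s$ to $s+1$ when $i^*$ is sampled, $i^*$ must be the design sampled at iteration $m'$; hence the stopping test held strictly at iteration $m'-1$, i.e. $\sum_{i'\neq i^*}\hat{\Ucal}^{*,m'-1}_{i'}/\hat{\Ucal}^{m'-1}_{i'}>1$. On the other hand, the defining inequality of $m'$ gives $\hat{\alpha}^{m'}_{i^*}\ge\hat{\alpha}^m_{i^*}+b_{\Ucal}\varepsilon^{1/2}\sum_{i'\neq i^*}\hat{\alpha}^m_{i'}\ge\hat{\alpha}^m_{i^*}+(k-1)b_{\alpha L2}b_{\Ucal}\varepsilon^{1/2}$, and minimality of $m'$ together with $1/N^s\le\varepsilon$ for $s\ge M_6(\varepsilon)$ keeps $\hat{\alpha}^{m'-1}_{i^*}$ within $\varepsilon$ of $\hat{\alpha}^{m'}_{i^*}$, so $\hat{\alpha}_{i^*}$ has net increased by an amount $\gtrsim\varepsilon^{1/2}$ over $[m,m'-1]$. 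Using Lemma \ref{lem:one} to keep all $\hat{\alpha}_{i'}$ bounded above and below, this net increase of $\hat{\alpha}_{i^*}$ forces, for every non-best design $i'$ whose own proportion did not grow over $[m,m'-1]$, a drop $\hat{\alpha}^{m'-1}_{i'}/\hat{\alpha}^{m'-1}_{i^*}\le\hat{\alpha}^m_{i'}/\hat{\alpha}^m_{i^*}-b_{\alpha\phi2}\varepsilon^{1/2}$ once $b_{\Ucal}$ is chosen large relative to $b_{\alpha\phi2}/b_{\alpha L}$; for such $i'$, Lemma \ref{lem:upart} yields $\hat{\Ucal}^{*,m'-1}_{i'}/\hat{\Ucal}^{m'-1}_{i'}\le(1-b_{up}\varepsilon^{1/2})\,\hat{\Ucal}^{*,m}_{i'}/\hat{\Ucal}^m_{i'}$. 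For the remaining non-best designs — those whose proportion relative to $i^*$ failed to drop by the threshold, which requires $\hat{\alpha}_{i'}$ to have grown while $\hat{\alpha}_{i^*}$ also grew — I would invoke Proposition \ref{proposv} (the rates $\hat{\Wcal}^s_{i'}$ are $\varepsilon$-balanced for $s\ge M(\varepsilon)$) together with Lemmas \ref{lem:interv} and \ref{lem:interv_opt} to show both that the total $\hat{\alpha}_{i^*}$ mass that can be ``absorbed'' by this exceptional set is itself $O(\varepsilon^{1/2})$ and that $\hat{\Ucal}^{*,m'-1}_{i'}/\hat{\Ucal}^{m'-1}_{i'}\le\hat{\Ucal}^{*,m}_{i'}/\hat{\Ucal}^m_{i'}+O(\varepsilon^{1/2})$ on it, with the implied constant absorbed into the final $b_{\Ucal}$ (using also \eqref{ineq:log_ub1}, \eqref{ineq:log_ub2} to keep each $\hat{\Ucal}$ bounded away from $0$ and $\infty$). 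Summing over $i'\neq i^*$ and using $\sum_{i'\neq i^*}\hat{\Ucal}^{*,m}_{i'}/\hat{\Ucal}^m_{i'}\le 1$ then gives $\sum_{i'\neq i^*}\hat{\Ucal}^{*,m'-1}_{i'}/\hat{\Ucal}^{m'-1}_{i'}<1$, contradicting the strict inequality obtained above. Hence no such $m'$ exists, which is the claim.

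\textbf{Main obstacle.} The delicate point — identical in nature to the one already handled in the proof of Lemma \ref{lem:optnonub} — is that the minimizer $\hat{\phi}^m_{i'}$ entering $\hat{\Ucal}^m_{i'}$ and $\hat{\Ucal}^{*,m}_{i'}$ can jump discontinuously as $\hat{\alpha}_{i'}/\hat{\alpha}_{i^*}$ crosses a critical value (Example \ref{rem:disc}, Lemma \ref{lem:phimono0}), so the empirical analog of $\sum_{i'\neq i^*}\Ucal^{*,\min}_{i'}/\Ucal^{\min}_{i'}$ is not differentiable in the allocation and one cannot argue monotonicity by a derivative computation. The quantitative one-sided estimates of Lemmas \ref{lem:phihatmon2}--\ref{lem:upart}, which convert an $\varepsilon^{1/2}$-sized move of the proportion ratio into a one-sided change of $\hat{\Ucal}^*_{i'}/\hat{\Ucal}_{i'}$ while swallowing the jump into an $O(\varepsilon^{1/2})$ margin, are exactly what lets the contradiction go through; the bulk of the remaining work is bookkeeping — simultaneously tracking the drift of $\hat{\mu}^m_{i'}$ and $(\hat{\sigma}^m_{i'})^2$ (controlled by Lemmas \ref{lem:bayes_consist}--\ref{lem:posterior_non}) and of the empirical proportions (controlled by Lemmas \ref{lem:one}, \ref{lem:interv}, and Proposition \ref{proposv}) so that every error term stays of order $\varepsilon^{1/2}$ and a single constant $b_{\Ucal}$ and random time $M_6(\varepsilon)$ serve uniformly. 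Since the statement asserts only the lower bound and the structure of the argument is the exact reflection of Lemma \ref{lem:optnonub}, the write-up can be kept brief by pointing to the corresponding steps of Section \ref{sec:lem:optnonub_proof}.
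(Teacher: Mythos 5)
Your high-level plan is the right one, and most of the skeleton is sound: sampling $i\neq i^*$ at $m+1$ gives $\sum_{i'\neq i^*}\hat{\Ucal}^{*,m}_{i'}/\hat{\Ucal}^{m}_{i'}\le 1$; taking the first drop time $m'$ correctly forces $i^*$ to be the design sampled there, hence the test was strict at $m'-1$; the $\Omega(\varepsilon^{1/2})$ net growth of $\hat{\alpha}_{i^*}$ and the bound $\hat{\alpha}_{i'}^{m}/\hat{\alpha}_{i^*}^{m}\ge b_{\alpha L}$ do give the ratio drop $\ge b_{\alpha\phi 2}\varepsilon^{1/2}$ for every non-best design whose proportion did not grow, and Lemma \ref{lem:upart} then caps those terms. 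The genuine gap is your treatment of the ``exceptional set.'' The bound you assert there, $\hat{\Ucal}^{*,m'-1}_{i'}/\hat{\Ucal}^{m'-1}_{i'}\le\hat{\Ucal}^{*,m}_{i'}/\hat{\Ucal}^{m}_{i'}+O(\varepsilon^{1/2})$ for designs whose ratio to $i^*$ moved by less than the threshold, is not provided by Proposition \ref{proposv} or Lemmas \ref{lem:interv}--\ref{lem:interv_opt}, and it is false in general: by Lemma \ref{lem:phimono0} and Example \ref{rem:disc}, $\hat{\phi}^m_{i'}$ can jump when $\hat{\alpha}_{i'}/\hat{\alpha}_{i^*}$ crosses a critical value, and the ratio $\hat{\Ucal}^{*}_{i'}/\hat{\Ucal}_{i'}$ then jumps \emph{up} by an $O(1)$ amount for an arbitrarily small increase of the allocation ratio; Lemmas \ref{lem:phihatmon2}--\ref{lem:upart} give one-sided control only when the ratio moves by at least $b_{\alpha\phi 2}\varepsilon^{1/2}$, and give nothing for smaller moves. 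Moreover, even granting such an additive bound, the bookkeeping ``absorb the constant into $b_{\Ucal}$'' cannot close the argument: enlarging $b_{\Ucal}$ enlarges the guaranteed ratio drop, but the guaranteed per-term decrease from Lemma \ref{lem:upart} stays the fixed factor $1-b_{up}\varepsilon^{1/2}$ (with $b_{up}$, $b_{log1}$, $b_{log2}$ fixed before $b_{\Ucal}$ is chosen), so an uncontrolled $C\varepsilon^{1/2}$ (let alone $O(1)$) increase on even one exceptional design can swamp it when the time-$m$ sum is close to $1$.

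What the mirrored argument of Section \ref{sec:lem:optnonub_proof} actually does at this point is show the exceptional set is \emph{empty}, not small. Suppose some $i'\neq i^*$ has $\hat{\alpha}_{i'}^{m}/\hat{\alpha}_{i^*}^{m}-\hat{\alpha}_{i'}^{m'-1}/\hat{\alpha}_{i^*}^{m'-1}< b_{\alpha\phi 2}\varepsilon^{1/2}$, while the design $i^\dag$ singled out by the contradiction hypothesis satisfies $\hat{\alpha}_{i^\dag}^{m'-1}/\hat{\alpha}_{i^*}^{m'-1}\le\hat{\alpha}_{i^\dag}^{m}/\hat{\alpha}_{i^*}^{m}-\tfrac{b_{\Ucal}b_{\alpha L}}{2}\varepsilon^{1/2}$. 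Applying Lemma \ref{lem:vdiff} with the pair $(i^\dag,i')$ between times $m$ and $m'-1$, the large negative term $-\tfrac{b_{\Ucal}b_{\alpha L}}{2}\varepsilon^{1/2}\,b_{log2}$ dominates $2(b_{\alpha U}+1)b_{lU}\varepsilon+b_{\Wcal}\varepsilon^{1/2}+b_{\alpha\phi 2}b_{log1}\varepsilon^{1/2}$ once $b_{\Ucal}\ge\max\{8b_{\Wcal}/(b_{\alpha L}b_{log2}),\,8b_{\alpha\phi 2}b_{log1}/(b_{\alpha L}b_{log2})\}$, forcing $\hat{\Wcal}_{i'}^{m'-1}-\hat{\Wcal}_{i^\dag}^{m'-1}$ to exceed the rate-balance tolerance of Proposition \ref{proposv} at time $m'-1$ --- a contradiction (this replaces the argmin inequality $\hat{\Wcal}_{i'}^{m^\dag}\ge\hat{\Wcal}_{i^\dag}^{m^\dag}$ used in Lemma \ref{lem:optnonub}, which is unavailable here because the iteration in question samples $i^*$; it is exactly why $b_{\Ucal}$ is defined with those two constants). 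With the exceptional set empty, Lemma \ref{lem:upart} applies to every non-best design, the sum at $m'-1$ is at most the sum at $m\le 1$, and the contradiction with the strict test at $m'-1$ follows. Repairing your proof therefore requires replacing the ``small exceptional set'' step by this balance-based emptiness argument.
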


Next, we present two propositions concerning the behavior of the empirical proportions $\hat{\alpha}^m_i$. The proofs are given in Sections \ref{sec:prop:optnonconv_proof} and \ref{sec:prop:nonconv_proof}.

\begin{proposition}\label{prop:optnonconv}
	Let $M_7(\varepsilon) \ge  M_6(\varepsilon)$ denote a large enough random time for any $\varepsilon \le \varepsilon_6$. When $\varepsilon \le \varepsilon_6$ and $m',m'' \ge  M_7(\varepsilon)$,
	\begin{align*}
		&| \sum_{i \ne i^*} \hat{\alpha}_{i}^{m'} - \sum_{i \ne i^*} \hat{\alpha}_{i}^{m''} | <   4 b_{\Ucal} \varepsilon^{1/2}, \ \  | \hat{\alpha}_{i^*}^{m'} - \hat{\alpha}_{i^*}^{m''} | <   4 b_{\Ucal} \varepsilon^{1/2}.
	\end{align*}
\end{proposition}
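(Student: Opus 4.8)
The plan is to reduce the whole statement to the single scalar $S^m \triangleq \sum_{i\ne i^*}\hat{\alpha}^m_i = 1-\hat{\alpha}^m_{i^*}$; the two displayed estimates are then literally the same assertion, so it is enough to trap $S^m$ in an interval of length below $4b_{\Ucal}\varepsilon^{1/2}$ for all large $m$. First I would record two routine facts. (i) One sampling step moves $S^m$ by less than $1/N^m$: sampling $i^*$ at iteration $m+1$ strictly decreases $S$ by $(N^m-N^m_{i^*})/(N^m(N^m+1))\in(0,1/N^m)$, while sampling any $i\ne i^*$ strictly increases $S$ by $N^m_{i^*}/(N^m(N^m+1))\in(0,1/N^m)$, so these increments vanish. (ii) By the consistency result (Lemma \ref{lem:const}) both $i^*$ and some non-best design are sampled infinitely often, hence there are infinitely many ``switches to $i^*$'': iterations $m_0$ with $i^{m_0}\ne i^*$ and $i^{m_0+1}=i^*$ (any infinite $\{0,1\}$-sequence in which both symbols occur infinitely often contains infinitely many $01$-patterns).

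The core of the argument is to apply Lemmas \ref{lem:optnonub}--\ref{lem:optnonlb} at a \emph{single} such switch. Fix $\varepsilon\le\varepsilon_6$ and choose a switch iteration $m_0$ with $m_0-1\ge M_6(\varepsilon)$ and with $N^{m_0-1}$ large enough that $(1+b_{\Ucal}\varepsilon^{1/2})/N^{m_0-1}<2b_{\Ucal}\varepsilon^{1/2}$; set $M_7(\varepsilon)=m_0$. Since $i^{m_0}\ne i^*$, Lemma \ref{lem:optnonlb} applied at $m=m_0-1$ gives $S^{m'}>(1-b_{\Ucal}\varepsilon^{1/2})S^{m_0-1}$ for all $m'\ge m_0-1$; since $i^{m_0+1}=i^*$, Lemma \ref{lem:optnonub} applied at $m^*=m_0$ gives $S^{m'}<(1+b_{\Ucal}\varepsilon^{1/2})S^{m_0}$ for all $m'\ge m_0$. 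Hence every $S^{m'}$ with $m'\ge m_0$ lies in an interval whose length is $2b_{\Ucal}\varepsilon^{1/2}S^{m_0-1}+(1+b_{\Ucal}\varepsilon^{1/2})(S^{m_0}-S^{m_0-1})$, and bounding $S^{m_0-1}<1$ together with $S^{m_0}-S^{m_0-1}<1/N^{m_0-1}$ from fact (i) makes this $<4b_{\Ucal}\varepsilon^{1/2}$. Consequently $|S^{m'}-S^{m''}|<4b_{\Ucal}\varepsilon^{1/2}$ for all $m',m''\ge M_7(\varepsilon)$, and the estimate for $|\hat{\alpha}^{m'}_{i^*}-\hat{\alpha}^{m''}_{i^*}|$ is immediate from $\hat{\alpha}^m_{i^*}=1-S^m$.

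All of the substantive work has already been absorbed into the barrier lemmas \ref{lem:optnonub}--\ref{lem:optnonlb}; in the present step the only non-mechanical idea — and the point I expect to need the most care in the write-up — is that the two barriers must be anchored at the \emph{consecutive} iterations $m_0-1$ and $m_0$ straddling a switch to $i^*$. That is exactly what forces the upper and lower barriers to differ by only the vanishing step $S^{m_0}-S^{m_0-1}$ plus a multiplicative $O(\varepsilon^{1/2})$ slack, rather than by a fixed factor. One also has to keep in mind that Lemmas \ref{lem:optnonub}--\ref{lem:optnonlb} control the \emph{ratios} $S^{m'}/S^{m_0}$ and $S^{m'}/S^{m_0-1}$, which is why the length estimate carries the extra factors $S^{m_0-1},S^{m_0}\in(0,1)$ — harmless, but they should be tracked rather than dropped.
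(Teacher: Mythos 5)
Your proof is correct and follows essentially the same route as the paper: both arguments reduce to the scalar $\sum_{i \ne i^*}\hat{\alpha}_{i}^{m}$ and anchor the two barrier results (Lemmas \ref{lem:optnonub} and \ref{lem:optnonlb}) at consecutive iterations straddling a switch between sampling $i^*$ and a non-best design, then trap all later values between the resulting upper and lower bounds. The only cosmetic difference is that the paper admits the switch in either order and controls the gap between the two anchor values multiplicatively (a factor $1+3\varepsilon$ absorbed using $\varepsilon \le \varepsilon_6$), whereas you fix the order non-best-then-best and control the gap additively by the one-step increment $1/N^{m_0-1}$, folding a mild extra largeness requirement on $N^{m_0-1}$ into the choice of $M_7(\varepsilon)$.
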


\begin{proposition}\label{prop:nonconv}
	Let $ \varepsilon_7 \le \varepsilon_6$ denote a constant small enough.
	When $\varepsilon \le \varepsilon_7 $ and $m',m'' \ge  M_7(\varepsilon)$, there exists a constant $b_{\alpha U 3}$ such that $|\hat{\alpha}_{i}^{m'} - \hat{\alpha}_{i}^{m''}| <   2 b_{\alpha U 3} \varepsilon^{1/2} $ for any $i \ne i^*$.
\end{proposition}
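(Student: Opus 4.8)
The plan is to argue by contradiction, exploiting a quantitative version of the strict monotonicity of $\hat{\Wcal}_i^m$ in the ratio $r_i^m\triangleq\hat\alpha_i^m/\hat\alpha_{i^*}^m$, together with the two preceding results: the approximate balancing $\max_{i,j\ne i^*}|\hat{\Wcal}_i^m-\hat{\Wcal}_j^m|\le\varepsilon$ from Proposition \ref{proposv}, and the near-constancy of $\hat\alpha_{i^*}^m$ and of $\sum_{i\ne i^*}\hat\alpha_i^m$ from Proposition \ref{prop:optnonconv}. We work on the almost sure event and enlarge $M_7(\varepsilon)$ if necessary so that, for $m',m''\ge M_7(\varepsilon)$: the conclusion of Proposition \ref{proposv} holds at level $\varepsilon$; $|\hat\alpha_{i^*}^{m'}-\hat\alpha_{i^*}^{m''}|<4b_{\Ucal}\varepsilon^{1/2}$ and $|\sum_{i\ne i^*}\hat\alpha_i^{m'}-\sum_{i\ne i^*}\hat\alpha_i^{m''}|<4b_{\Ucal}\varepsilon^{1/2}$; and the bounds of Lemmas \ref{lem:one} and \ref{lem:phibd} (so $r_i^m\in[b_{\alpha L},b_{\alpha U}]$, $\hat\alpha_i^m\in[b_{\alpha L 2},b_{\alpha U 2}]$, and $\hat\mu_i^m+b_\phi\le\hat\phi_i^m\le\hat\mu_{i^*}^m-b_\phi$) are in force.

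\textbf{Step 1: quantitative monotonicity.} Set $\hat g_i^m(\phi,r)=r\log(1+(\hat\mu_i^m-\phi)^2/(\hat\sigma_i^m)^2)+\log(1+(\hat\mu_{i^*}^m-\phi)^2/(\hat\sigma_{i^*}^m)^2)$, so $\hat{\Wcal}_i^m=\min_\phi\hat g_i^m(\phi,r_i^m)$, with the relevant minimizers lying in $[\hat\mu_i^m,\hat\mu_{i^*}^m]\subset[b_{eL},b_{eU}]$. Using \eqref{ineq:log_ub} and $r\le b_{\alpha U}$, for every fixed $r$ we get $|\min_\phi\hat g_i^{m'}(\phi,r)-\min_\phi\hat g_i^{m''}(\phi,r)|\le(b_{\alpha U}+1)b_{lU}\varepsilon$. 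Next, repeating the argument in the proof of Lemma \ref{lem:vmono0}(iii) but evaluating $\hat g_i^{m'}(\cdot,r_i^{m''})$ at the minimizer $\hat\phi_i^{m'}$ of $\hat g_i^{m'}(\cdot,r_i^{m'})$ and invoking $\hat\phi_i^{m'}\ge\hat\mu_i^{m'}+b_\phi$ from Lemma \ref{lem:phibd}, we obtain, when $r_i^{m'}>r_i^{m''}$, that $\min_\phi\hat g_i^{m'}(\phi,r_i^{m'})-\min_\phi\hat g_i^{m'}(\phi,r_i^{m''})\ge(r_i^{m'}-r_i^{m''})b_{log2}$, since $\log(1+(\hat\mu_i^{m'}-\hat\phi_i^{m'})^2/(\hat\sigma_i^{m'})^2)\ge b_{log2}$ by \eqref{ineq:log_ub2}. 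Combining the two displays gives $\hat{\Wcal}_i^{m'}-\hat{\Wcal}_i^{m''}\ge(r_i^{m'}-r_i^{m''})b_{log2}-(b_{\alpha U}+1)b_{lU}\varepsilon$, and symmetrically with $m',m''$ exchanged.

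\textbf{Step 2: from $\hat\alpha$-gaps to $r$-gaps, and pigeonhole.} Suppose, for contradiction, that $|\hat\alpha_i^{m'}-\hat\alpha_i^{m''}|\ge 2b_{\alpha U 3}\varepsilon^{1/2}$ for some $i\ne i^*$; say $\hat\alpha_i^{m'}>\hat\alpha_i^{m''}$. Since $\sum_{l\ne i^*}\hat\alpha_l^m$ moves by less than $4b_{\Ucal}\varepsilon^{1/2}$ between $m'$ and $m''$, there is $j\ne i^*$ with $\hat\alpha_j^{m''}-\hat\alpha_j^{m'}\ge(2b_{\alpha U 3}-4b_{\Ucal})\varepsilon^{1/2}/(k-2)$. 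Because $\hat\alpha_{i^*}^m\in[b_{\alpha L 2},b_{\alpha U 2}]$ and $|\hat\alpha_{i^*}^{m'}-\hat\alpha_{i^*}^{m''}|<4b_{\Ucal}\varepsilon^{1/2}$, dividing $\hat\alpha$ by $\hat\alpha_{i^*}$ converts these into $r_i^{m'}-r_i^{m''}\ge c\,\varepsilon^{1/2}$ and $r_j^{m''}-r_j^{m'}\ge c\,\varepsilon^{1/2}$, where $c$ can be made arbitrarily large by choosing $b_{\alpha U 3}$ large (relative to $b_{\Ucal},k,b_{\alpha L 2},b_{\alpha U 2}$). By Step 1, $\hat{\Wcal}_i^{m'}-\hat{\Wcal}_i^{m''}\ge c\,b_{log2}\varepsilon^{1/2}-(b_{\alpha U}+1)b_{lU}\varepsilon$ while $\hat{\Wcal}_j^{m'}-\hat{\Wcal}_j^{m''}\le -c\,b_{log2}\varepsilon^{1/2}+(b_{\alpha U}+1)b_{lU}\varepsilon$. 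On the other hand, $\hat{\Wcal}_i^{m'}-\hat{\Wcal}_i^{m''}=(\hat{\Wcal}_i^{m'}-\hat{\Wcal}_j^{m'})+(\hat{\Wcal}_j^{m'}-\hat{\Wcal}_j^{m''})+(\hat{\Wcal}_j^{m''}-\hat{\Wcal}_i^{m''})\le 2\varepsilon+(\hat{\Wcal}_j^{m'}-\hat{\Wcal}_j^{m''})$ by Proposition \ref{proposv}, whence $2c\,b_{log2}\varepsilon^{1/2}\le 2\varepsilon+2(b_{\alpha U}+1)b_{lU}\varepsilon$, i.e. $c\,b_{log2}\le(1+(b_{\alpha U}+1)b_{lU})\varepsilon^{1/2}$. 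After fixing $b_{\alpha U 3}$ (hence $c$) and then choosing $\varepsilon_7$ small enough that the right side is below $c\,b_{log2}$, this is impossible; therefore $|\hat\alpha_i^{m'}-\hat\alpha_i^{m''}|<2b_{\alpha U 3}\varepsilon^{1/2}$ for all $i\ne i^*$.

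\textbf{Main obstacle.} The delicate point is the quantification order: $b_{\alpha U 3}$ must be selected first, large enough to dominate all the constants ($b_{\Ucal},k,b_{\alpha L 2},b_{\alpha U 2}$ from the pigeonhole and $b_{\alpha U},b_{lU},b_{log2}$ from the monotonicity and plug-in errors), and only afterwards may $\varepsilon_7$ be shrunk. The quantitative monotonicity of Step 1 is the real workhorse, and it must be established \emph{without} any differentiability of $\hat\phi_i^m$ in $m$ or in $r$ — it rests solely on the uniform separation $\hat\phi_i^m-\hat\mu_i^m\ge b_\phi$ from Lemma \ref{lem:phibd}, which has to be carried through despite the plug-in parameters $(\hat\mu,\hat\sigma)$ and the estimated best index moving between $m'$ and $m''$.
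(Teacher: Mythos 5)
Your argument is correct and follows essentially the same route as the paper: a contradiction set-up in which some $\hat\alpha_i$ moves by more than $O(b_{\alpha U 3}\varepsilon^{1/2})$, Proposition \ref{prop:optnonconv} to pin down $\hat\alpha_{i^*}$ and the total non-best mass, a pigeonhole to extract a design $j$ moving the opposite way, conversion of the $\hat\alpha$-gaps into gaps of the ratios $r_i=\hat\alpha_i/\hat\alpha_{i^*}$, and a quantitative monotonicity of $\hat{\Wcal}_i$ in $r_i$ (resting on $\hat\phi_i^m-\hat\mu_i^m\ge b_\phi$ from Lemma \ref{lem:phibd} and the plug-in bound \eqref{ineq:log_ub}) that contradicts the balancing of Proposition \ref{proposv}. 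Your Step 1 is in substance a one-design specialization of Lemma \ref{lem:vdiff} combined with the lower bound $b_{log2}$, and comparing $m'$ with $m''$ directly instead of with the base time $M_7(\varepsilon)$ is an equivalent variant; the quantification order you emphasize (fix $b_{\alpha U 3}$ from the path constants, then shrink $\varepsilon_7$) matches the paper's.

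Two small repairs are needed. First, your pigeonhole divides by $k-2$, so the case $k=2$ is not covered; it must be treated separately, and it is immediate there because the unique non-best proportion equals $\sum_{l\ne i^*}\hat\alpha_l^m$, so Proposition \ref{prop:optnonconv} already gives the claim with $b_{\alpha U 3}=2b_{\Ucal}$ (this is exactly how the paper disposes of $k=2$). Second, you invoke Proposition \ref{proposv} at level $\varepsilon$ for $m\ge M_7(\varepsilon)$, which is not guaranteed by the $M_7(\varepsilon)$ coming from Proposition \ref{prop:optnonconv}; you patch this by enlarging the random time, which is admissible under the paper's ``large enough random time'' convention but changes the time appearing in the statement. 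The alternative, used in the paper, is to work with the $b_{\Wcal}\varepsilon^{1/2}$-level balancing that already holds for all $m\ge M_7(\varepsilon)$ and to absorb $b_{\Wcal}$ into the choice of $b_{\alpha U 3}$ (e.g.\ $b_{\alpha U 3}\ge 8 b_{\Wcal} b_{\alpha U 2}/(b_{\alpha L} b_{log2})$), so that no enlargement of the random time is needed. With these adjustments your proof is complete.
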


Now, we can show Theorem \ref{th:algconv} based on the previous lemmas and propositions. By Propositions \ref{prop:optnonconv} and \ref{prop:nonconv}, $(\hat{\alpha}_{1}^{m},\dots,\hat{\alpha}_{k}^{m})$ of $\ocbau$ converges as $m \to \infty$ such that there exists an allocation $\balpha^\circ = (\alpha_1^\circ,\dots,\alpha_k^\circ)$ with $\lim_{m \to \infty} \hat{\alpha}_{i}^m = \alpha_i^\circ$, $i=1,\dots,k$. Moreover, let $b_{\alpha} \triangleq \max\{4b_{\Ucal}, 2b_{\alpha U 3}\}$. For any given $\varepsilon \le \varepsilon_7$ and any $m \ge M_7(\varepsilon)$, $|\hat{\alpha}_{i}^m - \alpha_i^{\circ}| \le b_{\alpha} \varepsilon^{1/2}$. Thus for $m \ge M_7(\varepsilon^2/b_{\alpha}^2)$ where $\varepsilon$ satisfies $\varepsilon^2/b_{\alpha}^2 \le \varepsilon_7$, we have $|\hat{\alpha}_{i}^m - \alpha_i^{\circ}| \le \varepsilon$.

In the following, we show that $\balpha^\circ$ is the optimal allocation $\balpha^*$ in Theorem \ref{th:ocba}. First, we show that $\Vcal_{i}(\alpha_i^\circ,\alpha_{i^*}^\circ) = \Vcal_{j}(\alpha_j^\circ,\alpha_{i^*}^\circ)$, $i,j \ne i^*$. Suppose $m$ is large enough. For any $i \ne i^*$, we have by \eqref{ineq:log_ub} that $|\Vcal_i(\hat{\alpha}_{i}^m,\hat{\alpha}_{i^*}^m) - \hat{\Vcal}_{i}^m| \le b_{lU} \varepsilon$.
By the continuity of $\Vcal_i$ shown in Lemma \ref{lem:vmono0}, there exists $b_{\alpha \Vcal}$ such that $|\Vcal_i(\hat{\alpha}_{i}^m,\hat{\alpha}_{i^*}^m) - \Vcal_i(\alpha_{i}^\circ,\alpha_{i^*}^\circ)| \le b_{\alpha \Vcal} \varepsilon$.
Then, we have
\begin{align*}
	|\hat{\Vcal}_{i}^m - \Vcal_i(\alpha_{i}^\circ,\alpha_{i^*}^\circ)|  \le (b_{lU}+b_{\alpha \Vcal}) \varepsilon.
\end{align*}
Meanwhile, for any $i,j \ne i^*$, by Proposition \ref{proposv},
\begin{align*}
	|\hat{\Vcal}_{i}^m - \hat{\Vcal}_{j}^m| = \frac{\hat{\alpha}_{i^*}^m}{2} |\hat{\Wcal}_{i}^m - \hat{\Wcal}_{j}^m| \le \varepsilon.
\end{align*}
The above two inequalities lead to
$|\Vcal_i(\alpha_{i}^\circ,\alpha_{i^*}^\circ) - \Vcal_j(\alpha_{i}^\circ,\alpha_{i^*}^\circ)|  \le (2b_{lU}+2b_{\alpha \Vcal}+1) \varepsilon$.
Since $\varepsilon$ can be arbitrarily small,  we have $\Vcal_{i}(\alpha_i^\circ,\alpha_{i^*}^\circ) = \Vcal_{j}(\alpha_j^\circ,\alpha_{i^*}^\circ)$, $i,j \ne i^*$.

By Lemma \ref{lem:vequal}, the allocation $\balpha^\circ$ satisfies $\alpha_i^\circ = \alpha_i^f(\alpha_{i^*}^\circ)$, $i \ne i^*$. In the following, we show that $\alpha_{i^*}^\circ = \alpha_{i^*}^*$.
Notice that $r_i^f(\alpha_{i^*}) \triangleq \alpha_i^f(\alpha_{i^*})/\alpha_{i^*}$ for any $0 < \alpha_{i^*} < 1$. By similar reasons to Lemma \ref{lem:phihatmon2}, we can show that for any $m$ large enough and $\varepsilon$ small enough,
\begin{itemize}
	\item $\phi_{i}^{\min}(r_i^f(\alpha_{i^*}^*)) - \hat{\phi}_{i}^m \ge  2\varepsilon^{1/2}$ if $r_i^f(\alpha_{i^*}^*) - \hat{\alpha}_{i}^m/\hat{\alpha}_{i^*}^m \le - b_{\alpha \phi 2} \varepsilon^{1/2}$;
	
	\item $\phi_{i}^{\min}(r_i^f(\alpha_{i^*}^*)) - \hat{\phi}_{i}^m \le  -2\varepsilon^{1/2}$ if $r_i^f(\alpha_{i^*}^*) - \hat{\alpha}_{i}^m/\hat{\alpha}_{i^*}^m \ge b_{\alpha \phi 2} \varepsilon^{1/2}$.
\end{itemize}
Suppose $\alpha_{i^*}^\circ < \alpha_{i^*}^*$, which implies $r_i^f(\alpha_{i^*}^\circ) > r_i^f(\alpha_{i^*}^*)$, $i \ne i^*$ by Lemma \ref{lem:vequal}(ii). Since $\hat{\alpha}_{i}^m/\hat{\alpha}_{i^*}^m \to \alpha_i^\circ/\alpha_{i^*}^\circ = r_i^f(\alpha_{i^*}^\circ)$, for any $\varepsilon$ small enough and any $m$ large enough, we have $\hat{\alpha}_{i}^m/\hat{\alpha}_{i^*}^m > r_i^f(\alpha_{i^*}^*) + b_{\alpha \phi 2} \varepsilon^{1/2}$. Then $\phi_{i}^{\min}(r_i^f(\alpha_{i^*}^*)) - \hat{\phi}_{i}^m \ge  2\varepsilon^{1/2}$, which, by noting that $\mu_i < \hat{\phi}_{i}^m < \mu_{i^*}$ by \eqref{ineq:kappa_r31} and $\mu_i < \phi_{i}^{\min}(r_i^f(\alpha_{i^*}^*)) < \mu_{i^*}$ by Lemma \ref{lem:phimono0}, yields
\begin{align*}
	\sum_{i \ne i^*} \frac{\log \left(1 + (\mu_{i^*}-\hat{\phi}_{i}^m)^2/\sigma_{i^*}^2 \right)}{ \log \left(1 + (\mu_{i}-\hat{\phi}_{i}^m)^2/\sigma_{i}^2 \right) } > \sum_{i \ne i^*} \frac{\log (1 + (\mu_{i^*}-\phi_{i}^{\min}(r_i^f(\alpha_{i^*}^*)))^2/\sigma_{i^*}^2 )}{ \log (1 + (\mu_{i}-\phi_{i}^{\min}(r_i^f(\alpha_{i^*}^*)))^2/\sigma_{i}^2 ) } = \sum_{i\neq i^*} \frac{\Ucal^{*,\min}_i( r^f_i(\alpha_{i^*}^*) )}{\Ucal^{\min}_i( r^f_i(\alpha_{i^*}^*) )} \ge 1.
\end{align*}
Meanwhile, by the continuity in $\hat{\mu}_{j}^m$ and $(\hat{\sigma}_{j}^m)^2$, $j=i,i^*$,
\begin{align*}
	&\left| \sum_{i \ne i^*} \hat{\Ucal}_{i}^{*,m}/\hat{\Ucal}_{i}^{m} - \sum_{i \ne i^*} \frac{\log \left(1 + (\mu_{i^*}-\hat{\phi}_{i}^{m})^2/\sigma_{i^*}^2 \right)}{ \log \left(1 + (\mu_{i}-\hat{\phi}_{i}^{m})^2/\sigma_{i}^2 \right) }  \right|
\end{align*}
converges to zero as $m \to \infty$. Thus, $\sum_{i \ne i^*} \hat{\Ucal}_{i}^{*,m}/\hat{\Ucal}_{i}^{m} > 1$ for all $m$ large enough, which implies any non-best design $i \ne i^*$ will not be sampled for all $m$ large enough. This contradicts the conclusion in Lemma \ref{lem:const}. If $\alpha_{i^*}^\circ > \alpha_{i^*}^*$, we will obtain a similar contradiction.

In summary, for any $\varepsilon$ with $\varepsilon^2/b_{\alpha}^2 \le \varepsilon_7$ and $m \ge M_7(\varepsilon^2/b_{\alpha}^2)$, we have $|\hat{\alpha}_{i}^m - \alpha_i^*| \le \varepsilon$. $\square$

\section{Proofs for Section \ref{sec:appendixsec1}}\label{sec:proof_appendixsec1}

In the following, we prove Lemmas \ref{lem:mle_eoc_z}-\ref{lem:xi_ai}.

\subsection{Proof of Lemma \ref{lem:mle_eoc_z}}

Since $\bar{\phi}_i \in [\mu_i-\varepsilon,\mu_i+\varepsilon]$, $\bar{\phi}_{i^*} \in [\mu_{i^*}-\varepsilon,\mu_{i^*}+\varepsilon]$ and $\varepsilon \le \Delta \le \min_{j \ne i^*} (\mu_{i^*} - \mu_{j})/8$, we have $\bar{\phi}_i < \bar{\phi}_{i^*}$. Notice that $\log (1 + (\bar{\phi}_{i^*}-\phi_{i^*})^2/\bar{\psi}_{i^*})$ decreases with $\phi_{i^*}$ for $\phi_{i^*} \le \bar{\phi}_{i^*}$ and then increases with $\phi_{i^*}$ for $\phi_{i^*} > \bar{\phi}_{i^*}$; $\log (1 + (\bar{\phi}_{i}-\phi_{i})^2/\bar{\psi}_{i})$ decreases with $\phi_{i}$ for $\phi_i \le \bar{\phi}_{i}$ and then increases with $\phi_{i}$ for $\phi_i > \bar{\phi}_{i}$. The optimal solution $(\phi_{i}^{\min,i}(\bar{\psi}_{i},\bar{\psi}_{i^*},\bar{\phi}_{i},\bar{\phi}_{i^*}), \phi_{i^*}^{\min,i}(\bar{\psi}_{i},\bar{\psi}_{i^*},\bar{\phi}_{i},\bar{\phi}_{i^*}))$ should satisfy
\begin{align*}
	\phi_{i}^{\min,i}(\bar{\psi}_{i},\bar{\psi}_{i^*},\bar{\phi}_{i},\bar{\phi}_{i^*}) = \phi_{i^*}^{\min,i}(\bar{\psi}_{i},\bar{\psi}_{i^*},\bar{\phi}_{i},\bar{\phi}_{i^*})
\end{align*}
because one can always find a better solution if $\phi_{i} > \phi_{i^*}$. To see this: when $\phi_{i^*} \ge \bar{\phi}_{i^*}$, which together with $\phi_{i} > \phi_{i^*}$ and $ \bar{\phi}_{i^*} > \bar{\phi}_i$ implies $\phi_{i} > \bar{\phi}_{i}$, we can decrease $\phi_{i}$ to $\phi_{i^*}$ to obtain a better solution; when $\phi_{i^*} < \bar{\phi}_{i^*}$, we can increase $\phi_{i^*}$ to $\min\{\phi_{i},\bar{\phi}_{i^*}\}$ to obtain a better solution.
Thus, \eqref{ineq:phiiistar} is proved. Moreover,
\begin{align*}
	\phi_{i^*}^{\min,i}(\bar{\psi}_{i},\bar{\psi}_{i^*},\bar{\phi}_{i},\bar{\phi}_{i^*}) \in [\bar{\phi}_{i}, \bar{\phi}_{i^*}]
\end{align*}
holds, because if $\phi_{i} = \phi_{i^*} < \bar{\phi}_{i}$, both $\log (1 + (\bar{\phi}_{i^*}-\phi_{i^*})^2/\bar{\psi}_{i^*})$ and $\log (1 + (\bar{\phi}_{i}-\phi_{i})^2/\bar{\psi}_{i})$ can be reduced by a larger $\phi_{i^*}$; similarly, if $\phi_{i} = \phi_{i^*} > \bar{\phi}_{i^*}$, both $\log (1 + (\bar{\phi}_{i^*}-\phi_{i^*})^2/\bar{\psi}_{i^*})$ and $\log (1 + (\bar{\phi}_{i}-\phi_{i})^2/\bar{\psi}_{i})$ can be reduced by a smaller $\phi_{i^*}$.
Since $\bar{\phi}_i \in [\mu_i-\varepsilon,\mu_i+\varepsilon]$ and $\bar{\phi}_{i^*} \in [\mu_{i^*}-\varepsilon,\mu_{i^*}+\varepsilon]$, we have
$\phi_{i^*}^{\min,i}(\bar{\psi}_{i},\bar{\psi}_{i^*},\bar{\phi}_{i},\bar{\phi}_{i^*}) \in [ \mu_i - \varepsilon , \mu_{i^*}+\varepsilon ].
$
Thus, \eqref{ineq:eoc_phi_ep} is proved.

Notice that function $w(\bar{\psi}_{i},\bar{\psi}_{i^*},\bar{\phi}_{i},\bar{\phi}_{i^*},\phi_{i},\phi_{i^*}) $ defined in the bounded domain $\mathcal{D}_v \triangleq [\sigma_{i}^2- \Delta, \sigma_{i}^2+ \Delta] \times [\sigma_{i^*}^2- \Delta, \sigma_{i^*}^2+ \Delta] \times [\mu_{i}-\Delta, \mu_{i}+\Delta] \times [\mu_{i^*}-\Delta, \mu_{i^*}+\Delta] \times [\mu_{i}-\Delta, \mu_{i^*}+\Delta] \times [\mu_{i}-\Delta, \mu_{i^*}+\Delta]$ is a continuous function and has bounded gradients. This is because $\bar{\psi}_i \ge \sigma_i^2 - \Delta \ge \sigma_i^2 - \sigma_{\min}^2/4 \ge \sigma_i^2/2$, $\bar{\psi}_{i^*} \ge \sigma_{i^*}^2/2$, $|\bar{\phi}_i - \phi_i| \le \mu_{i^*}+\Delta - (\mu_i-\Delta) \le \mu_{\max} - \mu_{\min}+2\Delta$ and $|\bar{\phi}_{i^*} - \phi_{i^*}| \le \mu_{\max} - \mu_{\min}+2\Delta$ in domain $\mathcal{D}_v$ and $\alpha_i,\alpha_{i^*} \le 1$ such that
\begin{align*}
	&|\partial w / \partial \bar{\psi}_{i}| = \left| \frac{\alpha_{i}}{2} \frac{ -(\bar{\phi}_{i}-\phi_{i})^2/\bar{\psi}_{i}^2 }{1 + (\bar{\phi}_{i}-\phi_{i})^2/\bar{\psi}_{i}}\right| \le  \left| (\bar{\phi}_{i}-\phi_{i})^2 /\bar{\psi}_{i}^2 \right| \le (\mu_{\max}-\mu_{\min}+2\Delta)^2/(\sigma_{\min}^2/2)^2,  \\
	&|\partial w / \partial \bar{\phi}_{i}| = \left|\frac{\alpha_{i}}{2} \frac{ 2(\bar{\phi}_{i}-\phi_{i})/\bar{\psi}_{i} }{1 + (\bar{\phi}_{i}-\phi_{i})^2/\bar{\psi}_{i}}\right| \le |(\bar{\phi}_{i}-\phi_{i})/\bar{\psi}_{i}| \le (\mu_{\max}-\mu_{\min}+2\Delta)/(\sigma_{\min}^2/2),  \\
	&|\partial w / \partial \phi_{i}| = \left|\frac{\alpha_{i}}{2} \frac{ 2(\phi_{i}-\bar{\phi}_{i})/\bar{\psi}_{i} }{1 + (\bar{\phi}_{i}-\phi_{i})^2/\bar{\psi}_{i}}\right| \le |(\phi_{i}-\bar{\phi}_{i})/\bar{\psi}_{i}| \le (\mu_{\max}-\mu_{\min}+2\Delta)/(\sigma_{\min}^2/2),
\end{align*}
and similarly, $|\partial w / \partial \bar{\psi}_{i^*}|$, $|\partial w / \partial \bar{\phi}_{i^*}|$ and $|\partial w / \partial \phi_{i^*}|$ are also bounded. Then
we can find a constant $b_{a} > 0$ such that
\begin{align*}
	|w(\bar{\psi}_{i},\bar{\psi}_{i^*},\bar{\phi}_{i},\bar{\phi}_{i^*},\phi_{i},\phi_{i^*}) - w(\bar{\psi}_{i}',\bar{\psi}_{i^*}',\bar{\phi}_{i}',\bar{\phi}_{i^*}',\phi_{i}',\phi_{i^*}')| \le b_{a} \varepsilon
\end{align*}
if $\| (\bar{\psi}_{i},\bar{\psi}_{i^*},\bar{\phi}_{i},\bar{\phi}_{i^*},\phi_{i},\phi_{i^*}) - (\bar{\psi}_{i}',\bar{\psi}_{i^*}',\bar{\phi}_{i}',\bar{\phi}_{i^*}',\phi_{i}',\phi_{i^*}') \|_{\infty} \le \varepsilon$. Combining this property with the optimality of $(\phi_{i}^{\min,i}(\bar{\psi}_{i},\bar{\psi}_{i^*},\bar{\phi}_{i},\bar{\phi}_{i^*}), \phi_{i^*}^{\min,i}(\bar{\psi}_{i},\bar{\psi}_{i^*},\bar{\phi}_{i},\bar{\phi}_{i^*})) $, we have
\begin{align*}
	&a(\bar{\psi}_{i},\bar{\psi}_{i^*},\bar{\phi}_{i},\bar{\phi}_{i^*}) = -  w(\bar{\psi}_{i},\bar{\psi}_{i^*},\bar{\phi}_{i},\bar{\phi}_{i^*},\phi_{i}^{\min,i}(\bar{\psi}_{i},\bar{\psi}_{i^*},\bar{\phi}_{i},\bar{\phi}_{i^*}), \phi_{i^*}^{\min,i}(\bar{\psi}_{i},\bar{\psi}_{i^*},\bar{\phi}_{i},\bar{\phi}_{i^*})) \\
	\ge& -w(\bar{\psi}_{i},\bar{\psi}_{i^*},\bar{\phi}_{i},\bar{\phi}_{i^*},\phi_{i}^{\min,i}(\sigma_{i}^2,\sigma_{i^*}^2,\mu_{i},\mu_{i^*}), \phi_{i^*}^{\min,i}(\sigma_{i}^2,\sigma_{i^*}^2,\mu_{i},\mu_{i^*}))  \\
	\ge& -w(\sigma_{i}^2,\sigma_{i^*}^2,\mu_{i},\mu_{i^*},\phi_{i}^{\min,i}(\sigma_{i}^2,\sigma_{i^*}^2,\mu_{i},\mu_{i^*}), \phi_{i^*}^{\min,i}(\sigma_{i}^2,\sigma_{i^*}^2,\mu_{i},\mu_{i^*}))  - b_a \varepsilon \\
	=& a(\sigma_{i}^2,\sigma_{i^*}^2,\mu_{i},\mu_{i^*}) - b_a \varepsilon
\end{align*}
and
\begin{align*}
	a(\bar{\psi}_{i},\bar{\psi}_{i^*},\bar{\phi}_{i},\bar{\phi}_{i^*})
	\le&  -w(\sigma_{i}^2,\sigma_{i^*}^2,\mu_{i},\mu_{i^*},\phi_{i}^{\min,i}(\bar{\psi}_{i},\bar{\psi}_{i^*},\bar{\phi}_{i},\bar{\phi}_{i^*}), \phi_{i^*}^{\min,i}(\bar{\psi}_{i},\bar{\psi}_{i^*},\bar{\phi}_{i},\bar{\phi}_{i^*})) + b_{a} \varepsilon  \\
	\le&  a(\sigma_{i}^2,\sigma_{i^*}^2,\mu_{i},\mu_{i^*}) + b_{a} \varepsilon,
\end{align*}
which yields
$
\left| a(\bar{\psi}_{i},\bar{\psi}_{i^*},\bar{\phi}_{i},\bar{\phi}_{i^*}) - a(\sigma_{i}^2,\sigma_{i^*}^2,\mu_{i},\mu_{i^*}) \right| \le b_{a} \varepsilon.
$
Thus, \eqref{ineq:z_est} is proved. $\square$

\subsection{Proof of Lemma \ref{lem:mle}}

Define the log likelihood function for design $i$ as
\begin{align*}
	\iota(\phi_i,\psi_i) \triangleq& \log(L^{n} (\phi_{i},\psi_{i}))
	= - \frac{N_i}{2} \log (2 \pi) - \frac{N_i}{2} \log \psi_i - \frac{1}{ \psi_i } \sum_{l=1}^{N_i} \frac{(X_{il}-\phi_i)^2}{2},
\end{align*}
which can be simplified as
\begin{align}\label{eq:single_likelih}
	\iota(\phi_i,\psi_i) = - \frac{N_i}{2} \log (2 \pi) - \frac{N_i}{2} \log \psi_i - \frac{ N_i ( (\tilde{S}_{i}^n)^2 + (\bar{X}_{i}^n - \phi_{i})^2 ) }{ 2 \psi_i }.
\end{align}
It is well known that the MLE of $(\boldsymbol{\mu},\boldsymbol{\sigma}^2)$ is $(\bar{\boldsymbol{X}}^n,(\tilde{\boldsymbol{S}}^n)^2)$. Then $\boldsymbol{\phi}^{*,i^*} = \bar{\boldsymbol{X}}^n$ and $\boldsymbol{\psi}^{*,i^*} = (\tilde{\boldsymbol{S}}^n)^2$. We can simplify the following maximum likelihood estimation problem:
\begin{align*}
	&\max_{\boldsymbol{\phi}, \boldsymbol{\psi}} \sum_{j=1}^k \sum_{l=1}^{N_j} \log f (X_{jl} | \phi_{j},\psi_{j})
	= -\frac{n}{2} \left( \log (2\pi) + 1 \right) - \sum_{j=1}^k \frac{N_j}{2} \log (\tilde{S}_j^n)^2.
\end{align*}

Consider $\max_{(\boldsymbol{\phi}, \boldsymbol{\psi}) \in \Xi_{i}} \sum_{j=1}^k \sum_{l=1}^{N_j} \log f (X_{jl} | \phi_{j},\psi_{j})$. Taking the derivative of $\iota(\phi_j,\psi_j)$, we have
\begin{align*}
	&\partial \iota(\phi_j,\psi_j) / \partial \psi_j  = -  N_j /(2\psi_j) + N_j ( (\tilde{S}_{j}^n)^2 + (\bar{X}^n_{j} - \phi_{j})^2)/ (2 \psi_j^2)  ,  \\
	&\partial^2 \iota(\phi_j,\psi_j) / \partial \psi_j^2  =  N_j /(2\psi_j^2) - N_j ( (\tilde{S}_{j}^n)^2 + (\bar{X}^n_{j} - \phi_{j})^2)/ \psi_j^3  ,  \ \partial \iota(\phi_j,\psi_j) / \partial \phi_j  = - N_j(\phi_{j}-\bar{X}^n_{j})/ \psi_j  .
\end{align*}
Letting $\partial \iota(\phi_j,\psi_j) / \partial \psi_j = 0$ yields $\psi_j = \psi_j^*(\phi_j) \triangleq (\tilde{S}_{j}^n)^2 + (\bar{X}^n_{j} - \phi_{j})^2$.
Plugging $\psi_j^*(\phi_j)$ into $ \partial^2 \iota(\phi_j,\psi_j) / \partial \psi_j^2 $, we have
\begin{align*}
	\frac{\partial^2 \iota(\phi_j,\psi_j) }{ \partial \psi_j^2 } \Big|_{\psi_j = \psi_j^*(\phi_j)} (\psi_j^*(\phi_j))^3 = -\frac{N_j}{2} ((\tilde{S}_{j}^n)^2 + (\bar{X}^n_{j} - \phi_{j})^2)  < 0.
\end{align*}
Then for any $\phi_j$, the optimal solution for $\psi_j$ to optimize $\max_{\psi_j>0} \iota(\phi_j,\psi_j)$ is $\psi_j^*(\phi_j)$. Based on $\psi_{j}^*(\phi_{j})$, we analyze the maximum likelihood estimation problem under the constraint $\phi_{i} \ge \phi_{i^*}$ where $i \ne i^*$. The constrained problem can be simplified by plugging $\psi_{j}^*(\phi_{j})$ into the equation as
\begin{align*}
	&\max_{(\boldsymbol{\phi}, \boldsymbol{\psi}) \in \Xi_{i}} \sum_{j=1}^k \sum_{l=1}^{N_j} \log f (X_{jl} | \phi_{j},\psi_{j}) + \frac{n}{2} \left( \log (2\pi) + 1 \right)
	\\
	=&  \max_{ \phi_{i} \ge \phi_{i^*} } -\frac{N_{i}}{2}\log ((\tilde{S}_{i}^n)^2 + (\bar{X}^n_{i} - \phi_{i})^2) - \frac{N_{i^*}}{2}\log ((\tilde{S}_{i^*}^n)^2 + (\bar{X}^n_{i^*} - \phi_{i^*})^2)   - \sum_{j\ne i^*, i}  \left( \frac{N_j}{2}\log  (\tilde{S}_j^n)^2   \right) \nonumber,
\end{align*}
where the equality holds because the constraint is independent of design $j \ne i, i^*$ such that the optimal value for $\phi_j$ and $\psi_j$ are still $\bar{X}^n_j$ and $(\tilde{S}_j^n)^2$, as obtained in the unconstrained problem.

Since the number of samples for each design $j$ increases to infinity as total budget $n \to \infty$, by the strong law of large numbers, we have  that when $n$ is large enough, $| \bar{X}^n_j - \mu_j | \le \varepsilon$ and $| (\tilde{S}_j^n)^2 - \sigma_j^2 | \le \varepsilon$ for $\varepsilon < \Delta$. The optimal solution $(\phi_{i}^{*,i}, \phi_{i^*}^{*,i})$ to
\begin{align*}
	\max_{ \phi_{i} \ge \phi_{i^*} } -\frac{N_{i}}{2}\log ((\tilde{S}_{i}^n)^2 + (\bar{X}^n_{i} - \phi_{i})^2) - \frac{N_{i^*}}{2}\log ((\tilde{S}_{i^*}^n)^2 + (\bar{X}^n_{i^*} - \phi_{i^*})^2)
\end{align*}
is  $(\phi_{i}^{\min,i}((\tilde{S}_{i}^n)^2,(\tilde{S}_{i^*}^n)^2,\bar{X}^n_{i},\bar{X}^n_{i^*}), \phi_{i^*}^{\min,i}((\tilde{S}_{i}^n)^2,(\tilde{S}_{i^*}^n)^2,\bar{X}^n_{i},\bar{X}^n_{i^*}))$ for \eqref{ineq:eoc_num_z}. By \eqref{ineq:phiiistar} of Lemma \ref{lem:mle_eoc_z}, we have $\phi_{i}^{*,i} = \phi_{i^*}^{*,i}$. Thus,
\begin{align*}
	&\max_{(\boldsymbol{\phi}, \boldsymbol{\psi}) \in \Xi_{i}} \sum_{j=1}^k \sum_{l=1}^{N_j} \log f (X_{jl} | \phi_{j},\psi_{j}) + \frac{n}{2} \left( \log (2\pi) + 1 \right)
	\nonumber \\
	=&   \max_{ \phi_{i}  } -\frac{N_{i}}{2}\log ((\tilde{S}_{i}^n)^2 + (\bar{X}^n_{i} - \phi_{i})^2) - \frac{N_{i^*}}{2}\log ((\tilde{S}_{i^*}^n)^2 + (\bar{X}^n_{i^*} - \phi_{i})^2)   - \sum_{j\ne i^*, i}  \left( \frac{N_j}{2}\log  (\tilde{S}_j^n)^2   \right),
\end{align*}
which completes the proof. $\square$

\subsection{Proof of Lemma \ref{lem:xi_ai}}

Let $\Delta_1 \triangleq \min\{\Delta, \bar{\epsilon}\}$. Notice that by \eqref{eq:single_likelih},
\begin{align*}
	&\frac{1}{n}\sum_{j=1}^k \sum_{l=1}^{N_j} \log f (X_{jl} | \phi_{j},\psi_{j})
	= -\sum_{j=1}^k \frac{\alpha_j}{2}  \left( \log (2\pi) + \log \psi_j +  ((\tilde{S}_j^n)^2 + (\bar{X}^n_j-\phi_j)^2)/\psi_j \right).
\end{align*}
For notation simplicity, let $\boldsymbol{\iota}(\bar{\boldsymbol{\phi}}, \bar{\boldsymbol{\psi}}, \boldsymbol{\phi}, \boldsymbol{\psi})\triangleq -\sum_{j=1}^k \frac{\alpha_j}{2} (\log (2\pi\psi_j) + (\bar{\psi}_j + (\bar{\phi}_j-\phi_j)^2)/\psi_j) $. Notice that function $\boldsymbol{\iota}(\bar{\boldsymbol{\phi}}, \bar{\boldsymbol{\psi}}, \boldsymbol{\phi}, \boldsymbol{\psi})$ is a continuous function and has bounded gradient when $\bar{\boldsymbol{\phi}} \in [\mu_{\min}-\Delta_1,\mu_{\max}+\Delta_1]^k$, $\bar{\boldsymbol{\psi}} \in [\sigma_{\min}^2-\Delta_1,\sigma_{\max}^2+\Delta_1]^k$ and $
(\boldsymbol{\phi},\boldsymbol{\psi}) \in H_w$. This is because $\mu_{\min} - \bar{\epsilon} \le \phi_j \le \mu_{\max} + \bar{\epsilon} $ and $\psi_j \ge \sigma_{\min}^2 - \bar{\epsilon}$ for $
(\boldsymbol{\phi},\boldsymbol{\psi}) \in H_w$ and each design $j$ such that
\begin{align*}
	&|\partial \boldsymbol{\iota} / \partial \bar{\psi}_{j}| = \left| \alpha_j /(2\psi_j) \right| \le  1/(\sigma_{\min}^2 - \bar{\epsilon}) ,  \\
	&|\partial \boldsymbol{\iota} / \partial \bar{\phi}_{j}| = \left|  \alpha_j (\bar{\phi}_j-\phi_j)/\psi_j \right| \le (\mu_{\max}-\mu_{\min}+2\bar{\epsilon})/(\sigma_{\min}^2 - \bar{\epsilon}),  \\
	&|\partial \boldsymbol{\iota} / \partial \psi_{j}| = \left| \frac{\alpha_j}{2} ( \frac{1}{\psi_j} - \frac{\bar{\psi}_j + (\bar{\phi}_j-\phi_j)^2}{\psi_j^2} ) \right| \le \frac{1}{\sigma_{\min}^2 - \bar{\epsilon}} + \frac{( \sigma_{\max}^2 + \bar{\epsilon} + (\mu_{\max}-\mu_{\min}+2\bar{\epsilon})^2 )}{ (\sigma_{\min}^2 - \bar{\epsilon})^2},  \\
	&|\partial \boldsymbol{\iota} / \partial \phi_{j}| = \left| \alpha_j   (\phi_j-\bar{\phi}_j)/\psi_j \right| \le (\mu_{\max}-\mu_{\min}+2\bar{\epsilon}) / (\sigma_{\min}^2 - \bar{\epsilon}).
\end{align*}
Then, there exists $b_{\iota} > 0$ independent of $n$ and $\balpha$ such that $| \boldsymbol{\iota}(\bar{\boldsymbol{\phi}}, \bar{\boldsymbol{\psi}}, \boldsymbol{\phi}, \boldsymbol{\psi}) - \boldsymbol{\iota}(\bar{\boldsymbol{\phi}}', \bar{\boldsymbol{\psi}}', \boldsymbol{\phi}', \boldsymbol{\psi}')| \le \varepsilon$ if $\| (\bar{\boldsymbol{\phi}}, \bar{\boldsymbol{\psi}}, \boldsymbol{\phi}, \boldsymbol{\psi}) -  (\bar{\boldsymbol{\phi}}', \bar{\boldsymbol{\psi}}', \boldsymbol{\phi}', \boldsymbol{\psi}')\|_{\infty} \le b_{\iota} \varepsilon$ for $\bar{\boldsymbol{\phi}},\bar{\boldsymbol{\phi}}' \in [\mu_{\min}-\Delta_1,\mu_{\max}+\Delta_1]^k$, $\bar{\boldsymbol{\psi}},\bar{\boldsymbol{\psi}}' \in [\sigma_{\min}^2-\Delta_1,\sigma_{\max}^2+\Delta_1]^k$, $
(\boldsymbol{\phi},\boldsymbol{\psi}),(\boldsymbol{\phi}',\boldsymbol{\psi}') \in H_w	$.

For any sample path and any $\varepsilon < \Delta_1$, when $n$ is large enough,  with probability one, we have $| \bar{X}^n_j - \mu_j | \le \varepsilon$ and $|(\tilde{S}_j^n)^2 - \sigma_j^2| \le \varepsilon$ for each design $j$.
For a non-best design $i \ne i^*$, solution $( \boldsymbol{\phi}^{*,i}, \boldsymbol{\psi}^{*,i})$ is also optimal in $\max_{(\boldsymbol{\phi}, \boldsymbol{\psi}) \in \Xi_{i}} \boldsymbol{\iota}(\bar{\boldsymbol{X}}^n, (\tilde{\boldsymbol{S}}^n)^2, \boldsymbol{\phi}, \boldsymbol{\psi})$ because $\max_{(\boldsymbol{\phi}, \boldsymbol{\psi}) \in \Xi_{i}} \sum_{j=1}^k \sum_{l=1}^{N_j} \log f (X_{jl} | \phi_{j},\psi_{j})  = n \max_{(\boldsymbol{\phi}, \boldsymbol{\psi}) \in \Xi_{i}} \boldsymbol{\iota}(\bar{\boldsymbol{X}}^n, (\tilde{\boldsymbol{S}}^n)^2, \boldsymbol{\phi}, \boldsymbol{\psi})$.
By Lemma \ref{lem:mle},
\begin{itemize}
	\item $\mu_{j} - \varepsilon \le \phi_{j}^{*,i} = \bar{X}^n_j \le \mu_{j} + \varepsilon$ if $j \ne i,i^*$;
	
	\item $\phi_{i}^{*,i} = \phi_{i^{*}}^{*,i} \in [ \mu_{i} - \varepsilon, \mu_{i^*} + \varepsilon]$ for designs $i,i^*$;
	
	\item $\sigma_{j}^2 - \varepsilon \le \psi_{j}^{*,i} = (\tilde{S}_j^n)^2 \le \sigma_{j}^2 + \varepsilon$ if $j \ne i,i^*$;
	
	\item $\sigma_{i}^2 - \varepsilon \le \psi_{i}^{*,i} = (\tilde{S}_{i}^n)^2+(\bar{X}^n_{i}-\phi_{i}^{*,i})^2 \le \sigma_{i}^2 + \varepsilon + (\mu_{i^*} - \mu_{i} + 2\varepsilon)^2$ for design $i$;
	
	\item $\sigma_{i^*}^2 - \varepsilon \le \psi_{i^*}^{*,i} = (\tilde{S}_{i^*}^n)^2+(\bar{X}^n_{i^*}-\phi_{i}^{*,i})^2 \le \sigma_{i^*}^2 + \varepsilon + (\mu_{i^*} - \mu_{i} + 2\varepsilon)^2$ for design $i^*$.
\end{itemize}
If $2\varepsilon \le \bar{\epsilon}$ which implies $\varepsilon \le \bar{\epsilon}$, then
\begin{align*}
	(\boldsymbol{\phi}^{*,i},\boldsymbol{\psi}^{*,i}) \in [ \mu_{\min} - \varepsilon,\mu_{\max} + \varepsilon]^k \times [  \sigma_{\min}^2 - \varepsilon , \sigma_{\max}^2 + \varepsilon + ( \mu_{\max} - \mu_{\min} + 2 \varepsilon )^2 ]^k \subset H_w.
\end{align*}
Moreover, if $2(b_{\iota}+1)\varepsilon \le \bar{\epsilon}$, then any $( \boldsymbol{\phi}, \boldsymbol{\psi}) \in \Xi_{i}$ with
$\| ( \boldsymbol{\phi}, \boldsymbol{\psi}) -  ( \boldsymbol{\phi}^{*,i}, \boldsymbol{\psi}^{*,i})\|_{\infty} \le b_{\iota} \varepsilon$
satisfies
\begin{align*}
	(\boldsymbol{\phi},\boldsymbol{\psi}) \in& [\mu_{\min} - (b_{\iota}+1)\varepsilon,\mu_{\max} + (b_{\iota}+1) \varepsilon ]^k  \\
	&\times [ \sigma_{\min}^2 - (b_{\iota}+1) \varepsilon , \sigma_{\max}^2 + (b_{\iota}+1) \varepsilon + ( \mu_{\max} - \mu_{\min} + 2 \varepsilon )^2 ]^k \subset H_w.
\end{align*}
Then, when $\bar{\boldsymbol{\phi}} = \bar{\boldsymbol{X}}^n$ and $\bar{\boldsymbol{\psi}} = (\tilde{\boldsymbol{S}}^n)^2$ and if $( \boldsymbol{\phi}, \boldsymbol{\psi}) \in \Xi_{i}$ and
$\| ( \boldsymbol{\phi}, \boldsymbol{\psi}) -  ( \boldsymbol{\phi}^{*,i}, \boldsymbol{\psi}^{*,i})\|_{\infty} \le b_{\iota} \varepsilon$,
we have $ | \boldsymbol{\iota}(\bar{\boldsymbol{X}}^n, (\tilde{\boldsymbol{S}}^n)^2, \boldsymbol{\phi}^{*,i}, \boldsymbol{\psi}^{*,i}) - \boldsymbol{\iota}(\bar{\boldsymbol{X}}^n, (\tilde{\boldsymbol{S}}^n)^2, \boldsymbol{\phi}, \boldsymbol{\psi}) | \le \varepsilon $.
Moreover, the volume of the subset $H_{i}$ is independent of the budget $n$. Specifically,
\begin{align*}
	\int \int_{ ( \boldsymbol{\phi}, \boldsymbol{\psi}) \in H_{i} } d \boldsymbol{\phi} d \boldsymbol{\psi} =& \int \int_{ ( \boldsymbol{\phi}, \boldsymbol{\psi}): \| ( \boldsymbol{\phi}, \boldsymbol{\psi}) -  ( \boldsymbol{\phi}^{*,i}, \boldsymbol{\psi}^{*,i})\|_{\infty} \le b_{\iota} \varepsilon } \1 ( \phi_i \ge \phi_{i^*} ) d \boldsymbol{\phi} d \boldsymbol{\psi}  \\
	=&(2b_{\iota}\varepsilon)^{2k-2}\int_{ \phi_{i^*}^{*,i}-b_{\iota}\varepsilon }^{ \phi_{i^*}^{*,i}+b_{\iota}\varepsilon } \int_{ \phi_{i}^{*,i}-b_{\iota}\varepsilon }^{ \phi_{i}^{*,i}+b_{\iota}\varepsilon } \1 ( \phi_i \ge \phi_{i^*} ) d \phi_i d \phi_{i^*} \\
	=& (2b_{\iota}\varepsilon)^{2k-2} \int_{ \phi_{i^*}^{*,i}-b_{\iota}\varepsilon }^{ \phi_{i^*}^{*,i}+b_{\iota}\varepsilon } \int_{ \phi_{i^*}  }^{ \phi_{i^*}^{*,i}+b_{\iota}\varepsilon }  d \phi_i d \phi_{i^*}  = \frac{1}{2} (2b_{\iota}\varepsilon )^{2k},
\end{align*}
where the third equality holds because $\phi_{i}^{*,i} = \phi_{i^*}^{*,i}$ by Lemma \ref{lem:mle}.

Similarly, for the best design $i^*$, the optimal solution of $\max_{\boldsymbol{\phi}, \boldsymbol{\psi}} \boldsymbol{\iota}(\bar{\boldsymbol{X}}^n, (\tilde{\boldsymbol{S}}^n)^2, \boldsymbol{\phi}, \boldsymbol{\psi})$
is the MLE $( \boldsymbol{\phi}^{*,i^*}, \boldsymbol{\psi}^{*,i^*}) = (\bar{\boldsymbol{X}}^n, (\tilde{\boldsymbol{S}}^n)^2)$. Then, if $(b_{\iota}+1)\varepsilon \le \bar{\epsilon}$ and $( \boldsymbol{\phi}, \boldsymbol{\psi}) \in H_{i^*}$ where
$
\| ( \boldsymbol{\phi}, \boldsymbol{\psi}) -  (\bar{\boldsymbol{X}}^n, (\tilde{\boldsymbol{S}}^n)^2)\|_{\infty} \le b_{\iota} \varepsilon,
$
we have $( \boldsymbol{\phi}, \boldsymbol{\psi}) \in H_w$ and
$| \boldsymbol{\iota}(\bar{\boldsymbol{X}}^n, (\tilde{\boldsymbol{S}}^n)^2, \bar{\boldsymbol{X}}^n, (\tilde{\boldsymbol{S}}^n)^2) - \boldsymbol{\iota}(\bar{\boldsymbol{X}}^n, (\tilde{\boldsymbol{S}}^n)^2, \boldsymbol{\phi}, \boldsymbol{\psi}) | \le \varepsilon$. The volume of $H_{i^*}$ is $(2b_{\iota}\varepsilon)^{2k}$, which is independent of $n$. $\square$

\section{Proofs for Section \ref{sec:proofalgs}}\label{sec:propW_proof}

In the following, we prove Lemmas \ref{lem:phihatmon2}-\ref{lem:optnonub} and Propositions \ref{prop:optnonconv}-\ref{prop:nonconv}.

\subsection{Proof of Lemma \ref{lem:phihatmon2}}

Let $b_{\alpha \phi 2} \triangleq \max \{ 4 b_{dU} \varepsilon_1^{1/2}, 4 b_{\Ical \Hcal}, b_{\alpha \phi 1} \}+1$ where $b_{\alpha \phi 1} \triangleq  (2(b_{\alpha U} + 1)  b_{lU})  /  b_{\Ical} $. Suppose $\hat{\alpha}_{i}^{m'}/\hat{\alpha}_{i^*}^{m'} - \hat{\alpha}_{i}^m/\hat{\alpha}_{i^*}^m \le - b_{\alpha \phi 2} \varepsilon^{1/2}$. For $m \ge M_3(\varepsilon)$ and $\phi_i \in [ \hat{\mu}_{i}^m + b_{\phi}, \hat{\phi}_{i}^{m} - \varepsilon^{1/2} ]$,
\begin{align}
	&(\hat{\alpha}_{i}^{m'}/\hat{\alpha}_{i^*}^{m'}) \log \left(1 + (\hat{\mu}_{i}^{m'}-\phi_i)^2/(\hat{\sigma}_{i}^{m'})^2 \right) +  \log \left(1 + (\hat{\mu}_{i^*}^{m'}-\phi_{i})^2/(\hat{\sigma}_{i^*}^{m'})^2 \right)  \nonumber  \\
	& - (\hat{\alpha}_{i}^{m'}/\hat{\alpha}_{i^*}^{m'}) \log \left(1 + (\hat{\mu}_{i}^{m'}-\hat{\phi}_{i}^{m})^2/(\hat{\sigma}_{i}^{m'})^2 \right) -  \log \left(1 + (\hat{\mu}_{i^*}^{m'}-\hat{\phi}_{i}^{m})^2/(\hat{\sigma}_{i^*}^{m'})^2 \right)  \nonumber \\
	\ge& - (2 b_{\alpha U} + 2)  b_{lU} \varepsilon +  (\hat{\alpha}_{i}^m/\hat{\alpha}_{i^*}^m) \log \left(1 + (\hat{\mu}_{i}^m-\phi_i)^2/(\hat{\sigma}_{i}^m)^2 \right) +  \log \left(1 + (\hat{\mu}_{i^*}^m-\phi_{i})^2/(\hat{\sigma}_{i^*}^m)^2 \right)  \nonumber \\
	& - (\hat{\alpha}_{i}^m/\hat{\alpha}_{i^*}^m) \log \left(1 + (\hat{\mu}_{i}^m-\hat{\phi}_{i}^{m})^2/(\hat{\sigma}_{i}^m)^2 \right) -  \log \left(1 + (\hat{\mu}_{i^*}^m-\hat{\phi}_{i}^{m})^2/(\hat{\sigma}_{i^*}^m)^2 \right) \nonumber \\
	& + (\hat{\alpha}_{i}^{m'}/\hat{\alpha}_{i^*}^{m'} - \hat{\alpha}_{i}^m/\hat{\alpha}_{i^*}^m) \left( \log \left(1 + (\hat{\mu}_{i}^m-\phi_i)^2/(\hat{\sigma}_{i}^m)^2 \right) - \log \left(1 + (\hat{\mu}_{i}^m-\hat{\phi}_{i}^{m})^2/(\hat{\sigma}_{i}^m)^2 \right) \right) \label{ineq:phi_lb1}
	\\
	\ge& (\hat{\alpha}_{i}^{m'}/\hat{\alpha}_{i^*}^{m'} - \hat{\alpha}_{i}^m/\hat{\alpha}_{i^*}^m) \big( \log \big(1 + (\hat{\mu}_{i}^m-\phi_i)^2/(\hat{\sigma}_{i}^m)^2 \big) - \log \big(1 + (\hat{\mu}_{i}^m-\hat{\phi}_{i}^{m})^2/(\hat{\sigma}_{i}^m)^2 \big) \big) \nonumber \\
	& - (2 b_{\alpha U} + 2)  b_{lU} \varepsilon  \label{ineq:phi_lb2} \\
	\ge& b_{\alpha \phi 2} \varepsilon^{1/2} b_{\Ical} \varepsilon^{1/2} - (2 b_{\alpha U} + 2)  b_{lU} \varepsilon  \label{ineq:phi_lb3}\\
	>& 0, \label{ineq:phi_lb4}
\end{align}
where \eqref{ineq:phi_lb1} holds by \eqref{ineq:log_ub}, \eqref{ineq:phi_lb2} holds because $\hat{\phi}_{i}^{m}$ is the  solution that minimizes the function about $\phi_i$,
\eqref{ineq:phi_lb3} holds by $\hat{\alpha}_{i}^{m'}/\hat{\alpha}_{i^*}^{m'} - \hat{\alpha}_{i}^m/\hat{\alpha}_{i^*}^m < - b_{\alpha \phi 2} \varepsilon^{1/2}$ and the lower bound of the derivative in \eqref{ineq:derv_lb}, and \eqref{ineq:phi_lb4} holds because $b_{\alpha \phi 2} > b_{\alpha \phi 1} = (2(b_{\alpha U} + 1)  b_{lU})  /  b_{\Ical}$. Then, any solution $\phi_i \in [ \hat{\mu}_{i}^m + b_{\phi}, \hat{\phi}_{i}^{m} - \varepsilon^{1/2} ]$ is not the optimal solution $\hat{\phi}_{i}^{m'}$. If $\hat{\alpha}_{i}^{m'}/\hat{\alpha}_{i^*}^{m'} - \hat{\alpha}_{i}^m/\hat{\alpha}_{i^*}^m \le - b_{\alpha \phi 2} \varepsilon^{1/2}$,
\begin{align}\label{ineq:phihatmon}
	\hat{\phi}_{i}^{m'} - \hat{\phi}_{i}^{m} >  -\varepsilon^{1/2}.
\end{align}
Furthermore,
\begin{align}
	- \frac{\Ical ( \hat{\phi}_{i}^{m'}, \mu_{i^*}, \sigma_{i^*}^2 )}{\Ical( \hat{\phi}_{i}^{m'}, \mu_{i}, \sigma_{i}^2 )} + \frac{\Ical ( \hat{\phi}_{i}^{m}, \mu_{i^*}, \sigma_{i^*}^2 )}{\Ical( \hat{\phi}_{i}^{m}, \mu_{i}, \sigma_{i}^2 )}
	&\ge - \frac{\Ical ( \hat{\phi}_{i}^{m'}, \hat{\mu}_{i^*}^{m'}, (\hat{\sigma}_{i^*}^{m'})^2 )}{\Ical( \hat{\phi}_{i}^{m'}, \hat{\mu}_{i}^{m'}, (\hat{\sigma}_{i}^{m'})^2 )} + \frac{\Ical ( \hat{\phi}_{i}^{m}, \hat{\mu}_{i^*}^m, (\hat{\sigma}_{i^*}^m)^2 )}{\Ical( \hat{\phi}_{i}^{m}, \hat{\mu}_{i}^m, (\hat{\sigma}_{i}^m)^2 )} - 2b_{dU} \varepsilon  \label{ineq:hess_lb0} \\
	&= - \hat{\alpha}_{i}^{m'} / \hat{\alpha}_{i^*}^{m'}  +  \hat{\alpha}_{i}^m / \hat{\alpha}_{i^*}^m  - 2b_{dU} \varepsilon
	\ge  b_{\alpha \phi 2} \varepsilon^{1/2} - 2b_{dU} \varepsilon  \nonumber \\
	&\ge 4 b_{\Ical \Hcal} \varepsilon^{1/2},  \label{ineq:hess_lb}
\end{align}
where \eqref{ineq:hess_lb0} holds by \eqref{ineq:diff_ub}, \eqref{ineq:hess_lb} holds because $\frac{b_{\alpha \phi 2}}{2} \ge 2b_{dU} \varepsilon^{1/2}$ and $b_{\alpha \phi 2} \ge 8 b_{\Ical \Hcal}$.
By \eqref{ineq:Ider2_up}, we have
\begin{align*}
	\left| \Ical ( \phi_1, \mu_{i^*}, \sigma_{i^*}^2 )/\Ical( \phi_1, \mu_{i}, \sigma_{i}^2 ) - \Ical ( \phi_2, \mu_{i^*}, \sigma_{i^*}^2 )/\Ical( \phi_2, \mu_{i}, \sigma_{i}^2 ) \right| \le b_{\Ical \Hcal} \varepsilon
\end{align*}
for $ |\phi_1 - \phi_2| \le \varepsilon $. 	This together with \eqref{ineq:hess_lb} implies that
$|\hat{\phi}_{i}^{m'} - \hat{\phi}_{i}^{m}| \ge 2\varepsilon^{1/2}$.
Meanwhile, since $\hat{\phi}_{i}^{m'} > \hat{\phi}_{i}^{m} - \varepsilon^{1/2}$ by \eqref{ineq:phihatmon}, we have $\hat{\phi}_{i}^{m'} - \hat{\phi}_{i}^{m} \ge  2\varepsilon^{1/2}$.

Suppose $\hat{\alpha}_{i}^{m'}/\hat{\alpha}_{i^*}^{m'} - \hat{\alpha}_{i}^m/\hat{\alpha}_{i^*}^m \ge b_{\alpha \phi 2} \varepsilon^{1/2}$. The proof is similar and is thus omitted. $\square$

\subsection{Proof of Lemma \ref{lem:upart}}

Let $ \varepsilon_3 \triangleq \min \{ \varepsilon_2,  b_{log2} /(8b_{lU}), ( b_{\Ical} b_{log2} / (4 b_{lU} b_{log1}))^2, (b_{log1} / 4 b_{\Ical})^2    \}$ and $b_{up} \triangleq  b_{\Ical} / b_{log1}  $. If $\hat{\alpha}_{i}^{m'}/ \hat{\alpha}_{i^*}^{m'}$ $  \le \hat{\alpha}_{i}^m/ \hat{\alpha}_{i^*}^m  -  b_{\alpha \phi 2} \varepsilon^{\frac{1}{2}}
$, we have by Lemma \ref{lem:phihatmon2} that
$\hat{\phi}_{i}^{m'} - \hat{\phi}_{i}^{m} \ge  2\varepsilon^{\frac{1}{2}}$, which leads to
\begin{align}
	&\log \left(1 + (\hat{\mu}_{i^*}^m-\hat{\phi}_{i}^{m'})^2/(\hat{\sigma}_{i^*}^m)^2 \right)
	\le \log \left(1 + (\hat{\mu}_{i^*}^m-\hat{\phi}_{i}^{m})^2/(\hat{\sigma}_{i^*}^m)^2 \right) - 2 b_{\Ical} \varepsilon^{\frac{1}{2}} \label{ineq:upart_0} \\
	\le& \log \left(1 + (\hat{\mu}_{i^*}^m-\hat{\phi}_{i}^{m})^2/(\hat{\sigma}_{i^*}^m)^2 \right) \left( 1 - 2 b_{\Ical}  \varepsilon^{\frac{1}{2}} / b_{log1} \right), \label{ineq:upart_1}
\end{align}
where \eqref{ineq:upart_0} holds by the mean value theorem and \eqref{ineq:derv_lb}, and \eqref{ineq:upart_1} holds by the definition of $b_{log1}$ in \eqref{ineq:log_ub1}. Meanwhile, combining the definition of $b_{log2}$ in \eqref{ineq:log_ub2} with \eqref{ineq:log_ub}, we have
\begin{align}\label{ineq:upart_1s}
	&\left| \frac{\log \left(1 + (\hat{\mu}_{i^*}^{m'}-\hat{\phi}_{i}^{m'})^2/(\hat{\sigma}_{i^*}^{m'})^2 \right)}{ \log \left(1 + (\hat{\mu}_{i^*}^m-\hat{\phi}_{i}^{m'})^2/(\hat{\sigma}_{i^*}^m)^2 \right) } -1 \right| \le \left| \frac{ b_{lU}  \varepsilon }{ \log \left(1 + (\hat{\mu}_{i^*}^m-\hat{\phi}_{i}^{m'})^2/(\hat{\sigma}_{i^*}^m)^2 \right) } \right|
	\le  b_{lU}  \varepsilon / b_{log2}.
\end{align}
Moreover, since $\hat{\phi}_{i}^{m'} - \hat{\phi}_{i}^{m} \ge  2\varepsilon^{\frac{1}{2}}$ and $\hat{\phi}_{i}^{m} > \hat{\mu}_{i}^m$, we have
\begin{align}\label{ineq:upart_2}
	\log \left(1 + (\hat{\mu}_{i}^m-\hat{\phi}_{i}^{m'})^2/(\hat{\sigma}_{i}^m)^2 \right) \ge \log \left(1 + (\hat{\mu}_{i}^m-\hat{\phi}_{i}^{m})^2/(\hat{\sigma}_{i}^m)^2 \right).
\end{align}
Finally, we have
\begin{align}
	&\log \left(1 + (\hat{\mu}_{i^*}^{m'}-\hat{\phi}_{i}^{m'})^2/(\hat{\sigma}_{i^*}^{m'})^2 \right) / \log \left(1 + (\hat{\mu}_{i}^{m'}-\hat{\phi}_{i}^{m'})^2/(\hat{\sigma}_{i}^{m'})^2 \right)  \nonumber \\
	\le& \frac{1+b_{lU} \varepsilon/ b_{log2} }{ 1 - b_{lU} \varepsilon/ b_{log2}   } \log \left(1 + (\hat{\mu}_{i^*}^m-\hat{\phi}_{i}^{m'})^2/(\hat{\sigma}_{i^*}^m)^2 \right) / \log \left(1 + (\hat{\mu}_{i}^m-\hat{\phi}_{i}^{m'})^2/(\hat{\sigma}_{i}^m)^2 \right)  \label{ineq:upart_3} \\
	\le& (1 + 4 b_{lU} \varepsilon/ b_{log2} ) \log \left(1 + (\hat{\mu}_{i^*}^m-\hat{\phi}_{i}^{m'})^2/(\hat{\sigma}_{i^*}^m)^2 \right) / \log \left(1 + (\hat{\mu}_{i}^m-\hat{\phi}_{i}^{m'})^2/(\hat{\sigma}_{i}^m)^2 \right)  \label{ineq:upart_4} \\
	\le& \left(1 + \frac{4 b_{lU} \varepsilon}{ b_{log2} }\right) \left(1 - \frac{ 2 b_{\Ical} }{ b_{log1} } \varepsilon^{ \frac{1}{2} }\right) \log \left(1 + \frac{(\hat{\mu}_{i^*}^m-\hat{\phi}_{i}^{m})^2}{(\hat{\sigma}_{i^*}^m)^2} \right)/ \log \left(1 + \frac{(\hat{\mu}_{i}^m-\hat{\phi}_{i}^{m})^2}{(\hat{\sigma}_{i}^m)^2} \right)  \label{ineq:upart_5} \\
	\le& \left(1 -   b_{\Ical}    \varepsilon^{ \frac{1}{2} } / b_{log1}\right) \log \left(1 + (\hat{\mu}_{i^*}^m-\hat{\phi}_{i}^{m})^2/(\hat{\sigma}_{i^*}^m)^2 \right) / \log \left(1 + (\hat{\mu}_{i}^m-\hat{\phi}_{i}^{m})^2/(\hat{\sigma}_{i}^m)^2 \right), \label{ineq:upart_6}
\end{align}
where \eqref{ineq:upart_3} holds by \eqref{ineq:upart_1s}, \eqref{ineq:upart_4} holds because   $\varepsilon \le \varepsilon_3 < b_{log2} /(2b_{lU})$ such that
\begin{align*}
	\big(1 + \frac{4 b_{lU} \varepsilon}{ b_{log2} }\big) \big(1-\frac{b_{lU}\varepsilon}{b_{log2}}\big) = 1+\frac{3 b_{lU} \varepsilon}{ b_{log2} }-\frac{4b_{lU}^2\varepsilon^2}{b_{log2}^2} \ge 1+\frac{3 b_{lU} \varepsilon}{ b_{log2} }-\frac{2 b_{lU} \varepsilon}{ b_{log2} } = 1+\frac{b_{lU} \varepsilon}{ b_{log2} },
\end{align*}
\eqref{ineq:upart_5} holds by \eqref{ineq:upart_1} and \eqref{ineq:upart_2}, and \eqref{ineq:upart_6} holds because $\varepsilon \le ( b_{\Ical} b_{log2} / (4 b_{lU} b_{log1}) )^2$ such that $4 b_{lU} \varepsilon/ b_{log2} \le  b_{\Ical} / (b_{log1}) \varepsilon^{\frac{1}{2}} $. If $\hat{\alpha}_{i}^{m'}/ \hat{\alpha}_{i^*}^{m'}  \ge \hat{\alpha}_{i}^m/ \hat{\alpha}_{i^*}^m + b_{\alpha \phi 2} \varepsilon^{\frac{1}{2}}$, the proof is similar and is thus omitted. $\square$

\subsection{Proof of Lemma \ref{lem:vdiff}}

For notational simplicity, let
$\Dcal_1 \triangleq  \hat{\Wcal}_{i}^{m'} - \hat{\Wcal}_{i}^m$, $\Dcal_2 \triangleq \hat{\Wcal}_{i}^m - \hat{\Wcal}_{j}^m$, $\Dcal_3 \triangleq \hat{\Wcal}_{j}^m - \hat{\Wcal}_{j}^{m'}$,
\begin{align*}
	&\hat{g}_i^{m'} (\hat{\alpha}_{i}^{m'}/\hat{\alpha}_{i^*}^{m'},\hat{\phi}_{i}^{m}) \triangleq \log \left(1 + (\hat{\mu}_{i}^{m'}-\hat{\phi}_{i}^{m})^2/(\hat{\sigma}_{i}^{m'})^2 \right) \hat{\alpha}_{i}^{m'}/\hat{\alpha}_{i^*}^{m'} + \log \left(1 + (\hat{\mu}_{i^*}^{m'}-\hat{\phi}_{i}^{m})^2/(\hat{\sigma}_{i^*}^{m'})^2 \right), \\
	&\hat{g}_i^{m} (\hat{\alpha}_{i}^{m'}/\hat{\alpha}_{i^*}^{m'},\hat{\phi}_{i}^{m}) \triangleq \log \left(1 + (\hat{\mu}_{i}^m-\hat{\phi}_{i}^{m})^2/(\hat{\sigma}_{i}^m)^2 \right) \hat{\alpha}_{i}^{m'}/\hat{\alpha}_{i^*}^{m'} + \log \left(1 + (\hat{\mu}_{i^*}^m-\hat{\phi}_{i}^{m})^2/(\hat{\sigma}_{i^*}^m)^2 \right).
\end{align*}
Since
\begin{align}
	& \hat{\Wcal}_{i}^{m'}  -  \hat{g}_i^{m'} (\hat{\alpha}_{i}^{m'}/\hat{\alpha}_{i^*}^{m'},\hat{\phi}_{i}^{m}) \le 0, \label{ineq:w_ub1}\\
	& \hat{g}_i^{m'} (\hat{\alpha}_{i}^{m'}/\hat{\alpha}_{i^*}^{m'},\hat{\phi}_{i}^{m}) -  \hat{g}_i^{m} (\hat{\alpha}_{i}^{m'}/\hat{\alpha}_{i^*}^{m'},\hat{\phi}_{i}^{m}) \le  (b_{\alpha U} + 1) b_{lU} \varepsilon,  \label{ineq:w_ub2} \\
	&\hat{g}_i^{m} (\hat{\alpha}_{i}^{m'}/\hat{\alpha}_{i^*}^{m'},\hat{\phi}_{i}^{m})
	- \hat{\Wcal}_{i}^{m}  = (\hat{\alpha}_{i}^{m'}/\hat{\alpha}_{i^*}^{m'} - \hat{\alpha}_{i}^m/\hat{\alpha}_{i^*}^m) \log (1 + (\hat{\mu}_{i}^m-\hat{\phi}_{i}^{m})^2/(\hat{\sigma}_{i}^m)^2 ), \nonumber
\end{align}
where \eqref{ineq:w_ub1} holds because $\hat{\Wcal}_{i}^{m'}$ is the minimal value, and \eqref{ineq:w_ub2} holds by \eqref{ineq:log_ub}. We have
\begin{align*}
	\Dcal_1
	\le& (b_{\alpha U} + 1) b_{lU} \varepsilon + (\hat{\alpha}_{i}^{m'}/\hat{\alpha}_{i^*}^{m'} - \hat{\alpha}_{i}^m/\hat{\alpha}_{i^*}^m) \log (1 + (\hat{\mu}_{i}^m-\hat{\phi}_{i}^{m})^2/(\hat{\sigma}_{i}^m)^2 ).
\end{align*}
Similarly, $\Dcal_3 \le (b_{\alpha U} + 1) b_{lU} \varepsilon + ( \hat{\alpha}_{j}^m/\hat{\alpha}_{i^*}^m - \hat{\alpha}_{j}^{m'}/\hat{\alpha}_{i^*}^{m'} ) \log \big(1 + (\hat{\mu}_{j}^{m'}-\hat{\phi}_{j}^{m'})^2/(\hat{\sigma}_{j}^{m'})^2 \big)$.
Then
\begin{align*}
	&\hat{\Wcal}_{i}^{m'} - \hat{\Wcal}_{j}^{m'}
	= \Dcal_1 + \Dcal_2 + \Dcal_3 \nonumber \\
	\le& 2 (b_{\alpha U} + 1) b_{lU} \varepsilon + \hat{\Wcal}_{i}^m - \hat{\Wcal}_{j}^m + (\hat{\alpha}_{i}^{m'}/\hat{\alpha}_{i^*}^{m'} - \hat{\alpha}_{i}^m/\hat{\alpha}_{i^*}^m) \log (1 + (\hat{\mu}_{i}^m-\hat{\phi}_{i}^{m})^2/(\hat{\sigma}_{i}^m)^2 )  \nonumber \\
	& +  ( \hat{\alpha}_{j}^m/\hat{\alpha}_{i^*}^m - \hat{\alpha}_{j}^{m'}/\hat{\alpha}_{i^*}^{m'} ) \log (1 + (\hat{\mu}_{j}^{m'}-\hat{\phi}_{j}^{m'})^2/(\hat{\sigma}_{j}^{m'})^2 ) \\
	=& 2 (b_{\alpha U} + 1) b_{lU} \varepsilon + \hat{\Wcal}_{i}^m - \hat{\Wcal}_{j}^m + (\hat{\alpha}_{i}^{m'}/\hat{\alpha}_{i^*}^{m'} - \hat{\alpha}_{i}^m/\hat{\alpha}_{i^*}^m) \hat{\Ucal}^{m}_{i}
	+ ( \hat{\alpha}_{j}^m/\hat{\alpha}_{i^*}^m - \hat{\alpha}_{j}^{m'}/\hat{\alpha}_{i^*}^{m'} ) \hat{\Ucal}^{m'}_{j}.
\end{align*}
This completes the proof. $\square$

\subsection{Proof of Lemma \ref{lem:interv_opt}}

Let $ \varepsilon_4 \triangleq \min\{\varepsilon_3, b_{\alpha \phi 2}^2 \}$ and $M_4(\varepsilon) \triangleq \max\{ M_3(\varepsilon), \inf_{M} \{\forall m \ge M, \forall i=1,\dots,k:  (N_i^m - 1)^{-1}  \le  \varepsilon/ b_{\alpha U}  \} \}$. We show this lemma by contradiction. Suppose $\hat{\alpha}_{i_1}^{m'}/ \hat{\alpha}_{i^*}^{m'}  \le (\hat{\alpha}_{i_1}^m/ \hat{\alpha}_{i^*}^m)  ( 1- (2 b_{\alpha \phi 2}/ b_{\alpha L})  \varepsilon^{\frac{1}{2}} )$, $\forall i_1 \ne i^*$.
Since $\hat{\alpha}_{i_1}^m/ \hat{\alpha}_{i^*}^m \ge b_{\alpha L}$, we have $\hat{\alpha}_{i_1}^{m'}/ \hat{\alpha}_{i^*}^{m'} \le \hat{\alpha}_{i_1}^m / \hat{\alpha}_{i^*}^m  - 2 b_{\alpha \phi 2} \varepsilon^{\frac{1}{2}}$.

Let $m+1 < m^*+1 < m'+1$ denote the last iteration at or before iteration $m'$ where $i^*$ is sampled. Notice that $N_{i^*}^{m^*} = N_{i^*}^{m'} - 1$ and $(N_{i^*}^{m'} - 1)^{-1} \le  \varepsilon /b_{\alpha U}$ for $m' \ge M_4(\varepsilon)$.  Then
\begin{align*}
	\hat{\alpha}_{i_1}^{m^*} / \hat{\alpha}_{i^*}^{m^*} = N_{i_1}^{m^*}/ N_{i^*}^{m^*} \le N_{i_1}^{m'}/ (N_{i^*}^{m'} - 1)  \le N_{i_1}^{m'}/ N_{i^*}^{m'}  + \varepsilon = \hat{\alpha}_{i_1}^{m'} / \hat{\alpha}_{i^*}^{m'} + \varepsilon.
\end{align*}
Since $\varepsilon \le b_{\alpha \phi 2}^2$ and $\hat{\alpha}_{i_1}^{m'}/ \hat{\alpha}_{i^*}^{m'} \le \hat{\alpha}_{i_1}^m / \hat{\alpha}_{i^*}^m  - 2 b_{\alpha \phi 2} \varepsilon^{\frac{1}{2}}$, the above inequality leads to $\hat{\alpha}_{i_1}^{m^*} / \hat{\alpha}_{i^*}^{m^*}  \le \hat{\alpha}_{i_1}^m / \hat{\alpha}_{i^*}^m  -  b_{\alpha \phi 2} \varepsilon^{\frac{1}{2}}$.
By Lemma \ref{lem:upart}, we have $\hat{\Ucal}^{*,m^*}_{i_1}/\hat{\Ucal}^{m^*}_{i_1} \le (1 - b_{up} \varepsilon^{ \frac{1}{2} }) \hat{\Ucal}^{*,m}_{i_1}/\hat{\Ucal}^{m}_{i_1}$.
Then
\begin{align*}
	\sum_{i_1 \ne i^*} \hat{\Ucal}^{*,m^*}_{i_1}/\hat{\Ucal}^{m^*}_{i_1}
	\le (1 - b_{up} \varepsilon^{\frac{1}{2}}) \sum_{i_1 \ne i^*} \hat{\Ucal}^{*,m}_{i_1}/\hat{\Ucal}^{m}_{i_1}
	\le 1.
\end{align*}
This implies that $i^*$ is not sampled in iteration $m^*+1$, which contradicts the definition of $m^*+1$. $\square$

\subsection{Proof of Lemma \ref{lem:interv}}

Let $\varepsilon_5 \triangleq \min \{ \varepsilon_4, b_{\Wcal U 1}^{-2}, (b_{\Wcal U 1} b_{\alpha L}^2/ (16 b_{\Wcal U 2} b_{\alpha U}) )^2 \}$ where
\begin{align*}
	&b_{\Wcal U 1} \triangleq \max\{ 8 b_{\alpha \phi 2}/ b_{\alpha L} , (2 (b_{\alpha U} + 1) b_{lU} + (2 b_{\alpha \phi 2} b_{\alpha U}/ b_{\alpha L}  + 1) b_{log1} + 1)/ (b_{\alpha L}^2 b_{log2} /16)  \}, \\
	&b_{\Wcal U 2} \triangleq (2 (b_{\alpha U} + 1) b_{lU} + b_{log1} + 1) / b_{log2} , \\
	&M_5(\varepsilon) = \max\{ M_4(\varepsilon), \inf\nolimits_{M} \{\forall m \ge M, \forall i=1,\dots,k:     b_{\Wcal U 1} N_i^m \varepsilon^{\frac{1}{2}} \ge 4 \} \}.
\end{align*}
Suppose there exists a design $i_2=1,\dots,k$ and $i_2\ne i$ such that $ N_{i_2}^{m'} / N_{i_2}^{m} > 1 + b_{\Wcal U 1} \varepsilon^{\frac{1}{2}}$. Let $i^\dag \triangleq \arg \max_{ i_1 \ne i^* } N_{i_1}^{m'} / N_{i'}^{m} $. Consider the following two collectively exhaustive cases.
\begin{itemize}
	\item Suppose $ N_{i^\dag}^{m'} / N_{i^\dag}^{m} > 1 + b_{\Wcal U 1} \varepsilon^{\frac{1}{2}}$. Since
	\begin{align*}
		( 1- b_{\Wcal U 1} \varepsilon^{\frac{1}{2}}/2 ) (1+b_{\Wcal U 1} \varepsilon^{\frac{1}{2}} ) = 1 +  b_{\Wcal U 1} \varepsilon^{\frac{1}{2}} / 2 -  b_{\Wcal U 1}^2 \varepsilon / 2 \ge 1
	\end{align*}
	for $\varepsilon \le b_{\Wcal U 1}^{-2}$, we have $ N_{i^\dag}^{m} / N_{i^\dag}^{m'}  < (1 + b_{\Wcal U 1} \varepsilon^{1/2})^{-1} \le 1-\frac{1}{2} b_{\Wcal U 1} \varepsilon^{\frac{1}{2}}$.
	
	\item Suppose $ N_{i^\dag}^{m'} / N_{i^\dag}^{m} \le 1 + b_{\Wcal U 1} \varepsilon^{\frac{1}{2}}$. By the assumption, we must have
	\begin{align}\label{ineq:case2_optn}
		N_{i^*}^{m'} / N_{i^*}^m  > 1 + b_{\Wcal U 1} \varepsilon^{\frac{1}{2}}.
	\end{align}
	Notice that $b_{\Wcal U 1} \varepsilon^{\frac{1}{2}} N_{i^*}^m \ge 1$ when $m \ge M_5(\varepsilon)$, which, together with \eqref{ineq:case2_optn}, leads to $N_{i^*}^{m'}  \ge N_{i^*}^m + 1$. Then, there exist iterations between $m+1$ and $m'+1$ in which $i^*$ is sampled. By Lemma \ref{lem:interv_opt},  $i^\dag$ should satisfy $N_{i^\dag}^{m'}/ N_{i^*}^{m'}  > (N_{i^\dag}^m/ N_{i^*}^m)  ( 1- (2 b_{\alpha \phi 2}/ b_{\alpha L})  \varepsilon^{\frac{1}{2}} )$.
	Multiplying both sides by $N_{i^*}^{m'}/N_{i^\dag}^m$ and based on \eqref{ineq:case2_optn}, we obtain
	\begin{align}\label{ineq:exist_lb1}
		&N_{i^\dag}^{m'}/ N_{i^\dag}^m  > (N_{i^*}^{m'} / N_{i^*}^m)  ( 1- 2 b_{\alpha \phi 2}   \varepsilon^{\frac{1}{2}} / b_{\alpha L} ) \ge (1+b_{\Wcal U 1} \varepsilon^{\frac{1}{2}}) ( 1- 2 b_{\alpha \phi 2}   \varepsilon^{\frac{1}{2}} / b_{\alpha L} ).
	\end{align}
	Since $b_{\Wcal U 1} \ge  8 b_{\alpha \phi 2}/ b_{\alpha L} $ and $\varepsilon \le b_{\Wcal U 1}^{-2}$ (which implies $ b_{\Wcal U 1}^2 \varepsilon / 4 \le b_{\Wcal U 1} \varepsilon^{\frac{1}{2}} / 4$), we have
	\begin{align*}
		&(1+b_{\Wcal U 1} \varepsilon^{\frac{1}{2}}) ( 1- 2 b_{\alpha \phi 2}   \varepsilon^{\frac{1}{2}} / b_{\alpha L} )
		\ge (1+b_{\Wcal U 1} \varepsilon^{\frac{1}{2}}) ( 1- b_{\Wcal U 1}   \varepsilon^{\frac{1}{2}} / 4 ) \ge 1 + b_{\Wcal U 1} \varepsilon^{\frac{1}{2}} / 2, \\
		&( 1-b_{\Wcal U 1} \varepsilon^{\frac{1}{2}}/4 ) (1+b_{\Wcal U 1} \varepsilon^{\frac{1}{2}}/2 ) = 1 + b_{\Wcal U 1} \varepsilon^{\frac{1}{2}}/4 - b_{\Wcal U 1}^2 \varepsilon / 8 \ge 1,
	\end{align*}
	which, together with \eqref{ineq:exist_lb1}, yields $N_{i^\dag}^m / N_{i^\dag}^{m'}  < (1 + b_{\Wcal U 1} \varepsilon^{1/2}/2)^{-1} \le 1-b_{\Wcal U 1}  \varepsilon^{\frac{1}{2}} / 4$.
\end{itemize}
Thus, no matter which case happens, we have
\begin{align*}
	N_{i^\dag}^{m'} / N_{i^\dag}^m  \ge 1+b_{\Wcal U 1}  \varepsilon^{\frac{1}{2}} / 2, \quad N_{i^\dag}^m / N_{i^\dag}^{m'}  \le 1- b_{\Wcal U 1}  \varepsilon^{\frac{1}{2}} / 4.
\end{align*}
Since $\frac{b_{\Wcal U 1}}{2} \varepsilon^{\frac{1}{2}} N_{i^\dag}^m  \ge 2$ for $m \ge M_5(\varepsilon)$, we have $N_{i^\dag}^{m'} \ge N_{i^\dag}^{m} + 2$ such that $i^\dag \ne i$ and there exist iterations between $m+1$ and $m'$ where $i^{\dag}$ is sampled. Let $m^{\dag}+1$ denote the last iteration at or before iteration $m'$ where $i^{\dag}$ is sampled. We will need to discuss it case by case.
\begin{itemize}
	\item Suppose $ N_{i^\dag}^{m^\dag} / N_{i^*}^{m^\dag}  -  N_{i^\dag}^{m}  / N_{i^*}^m  \le b_{\Wcal U 2} \varepsilon $. Notice that $N_{i}^{m^\dag} = N_{i}^{m+1} = N_{i}^m+1$ such that
	\begin{align*}
		\frac{ N_{i^\dag}^m }{ N_{i^\dag}^{m^\dag} } \frac{ N_{i}^{m^\dag} }{ N_{i^\dag}^m } = \frac{ N_{i^\dag}^m }{ N_{i^\dag}^{m^\dag} } \frac{ N_{i}^m + 1 }{ N_{i^\dag}^m } \le (1-b_{\Wcal U 1}  \varepsilon^{\frac{1}{2}} /4)  \frac{ N_{i}^m }{ N_{i^\dag}^m } (1+  \varepsilon) \le (1-b_{\Wcal U 1}  \varepsilon^{\frac{1}{2}} /8) \frac{ N_{i}^m }{ N_{i^\dag}^m },
	\end{align*}
	because $(N_{i}^{m})^{-1} \le \varepsilon$ for $m \ge M_4(\varepsilon)$ and $\varepsilon \le \varepsilon_5 \le (b_{\Wcal U 1}/8)^2$. Then
	\begin{align*}
		\frac{ N_{i}^{m^\dag} }{ N_{i^*}^{m^\dag} } - \frac{ N_{i}^m  }{ N_{i^*}^m } =& \frac{ N_{i^\dag}^{m^\dag} }{ N_{i^*}^{m^\dag} } \frac{ N_{i^\dag}^m }{ N_{i^\dag}^{m^\dag} } \frac{ N_{i}^{m^\dag} }{ N_{i^\dag}^m }  - \frac{ N_{i^\dag}^m  }{ N_{i^*}^m } \frac{ N_{i}^m  }{ N_{i^\dag}^m }
		\le \frac{ N_{i^\dag}^{m^\dag} }{ N_{i^*}^{m^\dag} } (1-b_{\Wcal U 1}  \varepsilon^{\frac{1}{2}} / 8)  \frac{ N_{i}^m }{ N_{i^\dag}^{m} }  - \frac{ N_{i^\dag}^{m}  }{ N_{i^*}^m } \frac{ N_{i}^m  }{ N_{i^\dag}^{m} } \\
		=&  (\frac{ N_{i^\dag}^{m^\dag} }{ N_{i^*}^{m^\dag} } -  \frac{ N_{i^\dag}^{m}  }{ N_{i^*}^m }) \frac{ N_{i}^m }{ N_{i^\dag}^{m} } -\frac{b_{\Wcal U 1}}{8}  \varepsilon^{\frac{1}{2}} \frac{ N_{i^\dag}^{m^\dag} }{ N_{i^*}^{m^\dag} } \frac{ N_{i}^m }{ N_{i^\dag}^{m} }   \\
		\le& b_{\alpha U} b_{\Wcal U 2} \varepsilon - \frac{b_{\Wcal U 1} b_{\alpha L}^2}{8} \varepsilon^{\frac{1}{2}}  \\
		\le& - \frac{b_{\Wcal U 1} b_{\alpha L}^2}{16} \varepsilon^{\frac{1}{2}},
	\end{align*}
	where the last inequality holds  $\varepsilon \le (\frac{b_{\Wcal U 1} b_{\alpha L}^2}{ 16 b_{\Wcal U 2} b_{\alpha U} })^2$ such that $b_{\alpha U} b_{\Wcal U 2} \varepsilon \le \frac{b_{\Wcal U 1} b_{\alpha L}^2}{16} \varepsilon^{\frac{1}{2}}$.
	
	Moreover, notice that if $N_{i^*}^{m^\dag} = N_{i^*}^m$, then $N_{i^{\dag}}^{m}/N_{i^*}^m - N_{i^\dag}^{m^\dag}/N_{i^*}^{m^\dag} \le 0$;
	if $N_{i^*}^{m^\dag} > N_{i^*}^m$, then by Lemma \ref{lem:interv_opt}, $N_{i^\dag}^m/ N_{i^*}^m  - N_{i^\dag}^{m'}/ N_{i^*}^{m'}  \le (2 b_{\alpha \phi 2} b_{\alpha U} / b_{\alpha L})  \varepsilon^{\frac{1}{2}}$,
	which leads to that
	\begin{align*}
		\frac{N_{i^\dag}^{m}}{ N_{i^*}^m } - \frac{N_{i^\dag}^{m^\dag}}{ N_{i^*}^{m^\dag} } =& \frac{N_{i^\dag}^{m}}{ N_{i^*}^m } - \frac{N_{i^\dag}^{m'}-1}{ N_{i^*}^{m^\dag} } \le \frac{N_{i^\dag}^{m}}{ N_{i^*}^m } - \frac{N_{i^\dag}^{m'}-1}{ N_{i^*}^{m'} }
		\le \frac{2 b_{\alpha \phi 2} b_{\alpha U}}{ b_{\alpha L} } \varepsilon^{\frac{1}{2}} + \varepsilon.
	\end{align*}
	Then, $N_{i^{\dag}}^{m}/N_{i^*}^m - N_{i^\dag}^{m^\dag}/N_{i^*}^{m^\dag} \le \frac{2 b_{\alpha \phi 2} b_{\alpha U}}{ b_{\alpha L} } \varepsilon^{\frac{1}{2}} + \varepsilon$ no matter $N_{i^*}^{m^\dag} > N_{i^*}^m$ or $N_{i^*}^{m^\dag} = N_{i^*}^m$.
	
	Since $\hat{\Wcal}_{i}^m - \hat{\Wcal}_{i^\dag}^{m} \le 0$ by the definition of iteration $r$, we have by Lemma \ref{lem:vdiff} that
	\begin{align}
		\hat{\Wcal}_{i}^{m^\dag} - \hat{\Wcal}_{i^\dag}^{m^\dag}
		\le& 2 (b_{\alpha U} + 1) b_{lU} \varepsilon + (\hat{\alpha}_{i}^{m^\dag}/\hat{\alpha}_{i^*}^{m^\dag} - \hat{\alpha}_{i}^m/\hat{\alpha}_{i^*}^m) \hat{\Ucal}^{m}_{i}
		+  ( \hat{\alpha}_{i^\dag}^{m}/\hat{\alpha}_{i^*}^m - \hat{\alpha}_{i^\dag}^{m^\dag}/\hat{\alpha}_{i^*}^{m^\dag} ) \hat{\Ucal}^{m}_{i^\dag}  \nonumber \\
		\le&2 (b_{\alpha U} + 1) b_{lU} \varepsilon - \frac{b_{\Wcal U 1} b_{\alpha L}^2}{16}  \varepsilon^{\frac{1}{2}} b_{log2} + (\frac{2 b_{\alpha \phi 2} b_{\alpha U}}{ b_{\alpha L} } \varepsilon^{\frac{1}{2}} + \varepsilon) b_{log1}  \nonumber \\
		\le&  2 (b_{\alpha U} + 1) b_{lU} (\varepsilon - \varepsilon^{\frac{1}{2}})  + (\varepsilon - \varepsilon^{\frac{1}{2}}) b_{log1}  - \varepsilon^{\frac{1}{2}} \label{ineq:exist_ub1} \\
		<& 0,  \nonumber
	\end{align}
	where \eqref{ineq:exist_ub1} holds because $b_{\Wcal U 1} \ge  \frac{2 (b_{\alpha U} + 1) b_{lU} + (2 b_{\alpha \phi 2} b_{\alpha U}/ b_{\alpha L}  + 1) b_{log1} + 1}{ b_{\alpha L}^2 b_{log2} /16 } $.
	
	\item Suppose $ N_{i^\dag}^{m^\dag} / N_{i^*}^{m^\dag}  -  N_{i^\dag}^{m}  / N_{i^*}^m  > b_{\Wcal U 2} \varepsilon $. We have
	\begin{align*}
		\frac{N_{i}^{m^\dag}}{N_{i^*}^{m^\dag}} - \frac{N_{i}^m}{N_{i^*}^m} = \frac{N_{i}^m+1}{N_{i^*}^{m^\dag}} - \frac{N_{i}^m}{N_{i^*}^m} \le \frac{N_{i}^m+1}{N_{i^*}^m} - \frac{N_{i}^m}{N_{i^*}^m} \le \frac{N_{i}^m}{N_{i^*}^m} - \frac{N_{i}^m}{N_{i^*}^m} + \varepsilon = \varepsilon
	\end{align*}
	because $(N_{i^*}^m)^{-1}  \le \varepsilon$ for $ m \ge M_4(\varepsilon)$. We have by Lemma \ref{lem:vdiff} and $b_{\Wcal U 2}$'s definition that
	\begin{align*}
		\hat{\Wcal}_{i}^{m^\dag} - \hat{\Wcal}_{i^\dag}^{m^\dag} \le& 2 (b_{\alpha U} + 1) b_{lU} \varepsilon + (\hat{\alpha}_{i}^{m^\dag}/\hat{\alpha}_{i^*}^{m^\dag} - \hat{\alpha}_{i}^m/\hat{\alpha}_{i^*}^m) \hat{\Ucal}^{m}_{i}   +  ( \hat{\alpha}_{i^\dag}^{m}/\hat{\alpha}_{i^*}^m - \hat{\alpha}_{i^\dag}^{m^\dag}/\hat{\alpha}_{i^*}^{m^\dag} ) \hat{\Ucal}^{m}_{i^\dag} \\
		\le& 2 (b_{\alpha U} + 1) b_{lU} \varepsilon + b_{log1} \varepsilon - b_{\Wcal U 2} \varepsilon b_{log2}   < 0.
	\end{align*}
\end{itemize}

Thus, no matter $ N_{i^\dag}^{m^\dag} / N_{i^*}^{m^\dag}  -  N_{i^\dag}^{m}  / N_{i^*}^m  \le b_{\Wcal U 2} \varepsilon$ holds or not, $\hat{\Wcal}_{i}^{m^\dag} - \hat{\Wcal}_{i^\dag}^{m^\dag} < 0$, which means $i^{m^\dag+1} \ne i^\dag$. This contradicts the definition of $m^\dag+1$ where $i^\dag$ should be sampled. $\square$

\subsection{Proof of Lemma \ref{lem:optnonub}} \label{sec:lem:optnonub_proof}

Let $\varepsilon_6 \triangleq \min\{ \varepsilon_5, (b_{\Ucal}/6)^2, 1/6, (b_{\Wcal}/(2(b_{\alpha U}+1)b_{lU}))^2\}$ where
\begin{align*}
	&b_{\Ucal} \triangleq \max\{ 2 b_{\alpha \phi 2}/b_{\alpha L}, 8 b_{\Wcal} / (b_{\alpha L} b_{log2} ), 8 b_{\alpha \phi 2} b_{log1} / (b_{\alpha L} b_{log2})   \}.
\end{align*}
Let $\tilde{M}_6(\varepsilon) = \inf_{R} \{\forall m \ge R:    m \ge  \varepsilon^{-\frac{1}{2}} / (b_{\Ucal} b_{\alpha L 2}(1-b_{\alpha L 2})) \}$ and $M_6(\varepsilon) = \max \{ \tilde{M}_5(\varepsilon), \tilde{M}_6(\varepsilon) \} $ where $\tilde{M}_5(\varepsilon)$ was defined in the proof of Proposition \ref{proposv}. Since $i^*$ is sampled at iteration $m^*+1$,
\begin{align}\label{ineq:rs_def}
	\sum_{i \ne i^*} \hat{\Ucal}_{i}^{*,m^*}/\hat{\Ucal}_{i}^{m^*} > 1.
\end{align}
Suppose $(\sum_{i \ne i^*} \hat{\alpha}_{i}^{m'})/(\sum_{i \ne i^*} \hat{\alpha}_{i}^{m^*}) \ge 1 +  b_{\Ucal} \varepsilon^{1/2}$ for $m' > m^*$. Then $ \hat{\alpha}_{i^*}^{m'} \le \hat{\alpha}_{i^*}^{m^*} $ and there must exist $i^\dag \ne i^*$ with $\hat{\alpha}_{i^\dag}^{m'}/\hat{\alpha}_{i^\dag}^{m^*} \ge 1 +  b_{\Ucal} \varepsilon^{1/2}$, which yields $(\hat{\alpha}_{i^\dag}^{m'}/\hat{\alpha}_{i^\dag}^{m^*}) (\hat{\alpha}_{i^*}^{m^*}/\hat{\alpha}_{i^*}^{m'}) \ge 1 +  b_{\Ucal} \varepsilon^{1/2}$. That is,
\begin{align}\label{ineq:istar_ddag}
	\hat{\alpha}_{i^\dag}^{m'} / \hat{\alpha}_{i^*}^{m'} \ge (1 +  b_{\Ucal} \varepsilon^{1/2}) \hat{\alpha}_{i^\dag}^{m^*} / \hat{\alpha}_{i^*}^{m^*}.
\end{align}
Since $m' > m^*$, \eqref{ineq:istar_ddag} implies that $N_{i^\dag}^{m'} \ge N_{i^\dag}^{m^*} +  1$, and there exist iterations between $m^*+1$ and $m'$ in which $i^\dag$ is sampled. Let $m^\dag+1$ denote the last iteration before iteration $m'+1$ such that $i^{m^\dag+1} = i^\dag$. Notice that $(N_{i^\dag}^{m'}-1)/N_{i^\dag}^{m'} \ge 1-\varepsilon$ for $m' \ge M_6(\varepsilon)$ such that
\begin{align*}
	N_{i^\dag}^{m^\dag}/N_{i^*}^{m^\dag}  \ge (N_{i^\dag}^{m'}-1)/N_{i^*}^{m'} \ge (1-\varepsilon) N_{i^\dag}^{m'}/N_{i^*}^{m'} \ge (1-\varepsilon) (1 +  b_{\Ucal} \varepsilon^{1/2})  \hat{\alpha}_{i^\dag}^{m^*} / \hat{\alpha}_{i^*}^{m^*} .
\end{align*}
Since $\varepsilon \le \min\{(b_{\Ucal}/4)^2 , 1/4\}$, we have $\frac{b_{\Ucal}}{4}\varepsilon^{\frac{1}{2}} \ge \varepsilon$ and $\frac{b_{\Ucal}}{4} \varepsilon^{\frac{1}{2}} \ge b_{\Ucal} \varepsilon^{\frac{3}{2}}$ such that $(1-\varepsilon) (1 +  b_{\Ucal} \varepsilon^{1/2}) \ge 1 + b_{\Ucal} \varepsilon^{1/2}/2$.
Combining the above equation with $ \hat{\alpha}_{i^\dag}^{m^*}/ \hat{\alpha}_{i^*}^{m^*} \ge b_{\alpha L}$, we have
\begin{align}
	&\frac{\hat{\alpha}_{i^\dag}^{m^\dag}}{\hat{\alpha}_{i^*}^{m^\dag}} - \frac{\hat{\alpha}_{i^\dag}^{m^*}}{\hat{\alpha}_{i^*}^{m^*}} = \frac{N_{i^\dag}^{m^\dag}}{N_{i^*}^{^{m^\dag}}} - \frac{\hat{\alpha}_{i^\dag}^{m^*}}{\hat{\alpha}_{i^*}^{m^*}} \ge (1 +  b_{\Ucal} \varepsilon^{\frac{1}{2}}/2 ) \frac{ \hat{\alpha}_{i^\dag}^{m^*} }{ \hat{\alpha}_{i^*}^{m^*} } - \frac{\hat{\alpha}_{i^\dag}^{m^*}}{\hat{\alpha}_{i^*}^{m^*}} \ge  \frac{b_{\Ucal} b_{\alpha L}}{2} \varepsilon^{\frac{1}{2}}.  \label{ineq:rddag_dif}
\end{align}
Since $b_{\Ucal} b_{\alpha L}/2 \ge b_{\alpha \phi 2}$, we have by \eqref{ineq:rddag_dif} that $\hat{\alpha}_{i^\dag}^{m^\dag}/\hat{\alpha}_{i^*}^{m^\dag} - \hat{\alpha}_{i^\dag}^{m^*}/\hat{\alpha}_{i^*}^{m^*} \ge b_{\alpha \phi 2} \varepsilon^{\frac{1}{2}}$. By Lemma \ref{lem:upart},
\begin{align}
	&\hat{\Ucal}^{*,m^\dag}_{i^\dag}/\hat{\Ucal}^{m^\dag}_{i^\dag}
	\ge ( 1 + b_{up} \varepsilon^{\frac{1}{2}} )  \hat{\Ucal}^{*,m^*}_{i^\dag}/\hat{\Ucal}^{m^*}_{i^\dag}.   \label{ineq:U_lb1}
\end{align}

On the other hand, for any $i' \ne i^\dag$ and $i^*$, we have by Proposition \ref{proposv} that $ \hat{\Wcal}_{i'}^{m^*} - \hat{\Wcal}_{i^\dag}^{m^*} \le b_{\Wcal} \varepsilon^{\frac{1}{2}}$. Then, by Lemma \ref{lem:vdiff}, we have that
\begin{align*}
	&\hat{\Wcal}_{i'}^{m^\dag} - \hat{\Wcal}_{i^\dag}^{m^\dag}  \\
	\le& 2 (b_{\alpha U} + 1) b_{lU} \varepsilon + (\hat{\alpha}_{i'}^{m^\dag}/\hat{\alpha}_{i^*}^{m^\dag} - \hat{\alpha}_{i'}^{m^*}/\hat{\alpha}_{i^*}^{m^*}) \hat{\Ucal}^{m^*}_{i'} + \hat{\Wcal}_{i'}^{m^*} - \hat{\Wcal}_{i^\dag}^{m^*} +  ( \hat{\alpha}_{i^\dag}^{m^*}/\hat{\alpha}_{i^*}^{m^*} - \hat{\alpha}_{i^\dag}^{m^\dag}/\hat{\alpha}_{i^*}^{m^\dag} ) \hat{\Ucal}^{m^\dag}_{i^\dag}   \\
	\le& 2(b_{\alpha U} + 1) b_{lU} \varepsilon + (\hat{\alpha}_{i'}^{m^\dag}/\hat{\alpha}_{i^*}^{m^\dag} - \hat{\alpha}_{i'}^{m^*}/\hat{\alpha}_{i^*}^{m^*}) \hat{\Ucal}^{m^*}_{i'} + b_{\Wcal} \varepsilon^{\frac{1}{2}}  - \frac{b_{\Ucal} b_{\alpha L}}{2} \varepsilon^{\frac{1}{2}} b_{log2}  \\
	\le& (\hat{\alpha}_{i'}^{m^\dag}/\hat{\alpha}_{i^*}^{m^\dag} - \hat{\alpha}_{i'}^{m^*}/\hat{\alpha}_{i^*}^{m^*}) \hat{\Ucal}^{m^*}_{i'} - \frac{b_{\Ucal} b_{\alpha L}}{4} \varepsilon^{\frac{1}{2}} b_{log2},
\end{align*}
where the last inequality holds because  $\varepsilon \le (\frac{b_{\Wcal}}{2(b_{\alpha U}+1)b_{lU}})^2$ and $b_{\Ucal} \ge 8 b_{\Wcal} / (b_{\alpha L} b_{log2})$.
Since $\hat{\Wcal}_{i'}^{m^\dag} - \hat{\Wcal}_{i^\dag}^{m^\dag} \ge 0$ by the definition of $m^\dag$, the above inequality yields
\begin{align*}
	\hat{\alpha}_{i'}^{m^\dag}/\hat{\alpha}_{i^*}^{m^\dag} - \hat{\alpha}_{i'}^{m^*}/\hat{\alpha}_{i^*}^{m^*}  \ge \frac{b_{\Ucal} b_{\alpha L}}{4 \hat{\Ucal}^{m^*}_{i'}}  b_{log2} \varepsilon^{\frac{1}{2}} \ge \frac{b_{\Ucal} b_{\alpha L} b_{log2}}{4 b_{log1}}   \varepsilon^{\frac{1}{2}}  \ge b_{\alpha \phi 2} \varepsilon^{\frac{1}{2}},
\end{align*}
where the last inequality holds by $b_{\Ucal} \ge 4 b_{log1} b_{\alpha \phi 2} / ( b_{\alpha L}  b_{log2} )$.
Again by Lemma \ref{lem:upart}, we have
\begin{align}
	&\hat{\Ucal}^{*,m^\dag}_{i'}/\hat{\Ucal}^{m^\dag}_{i'}
	\ge ( 1 + b_{up} \varepsilon^{\frac{1}{2}} )  \hat{\Ucal}^{*,m^*}_{i'}/\hat{\Ucal}^{m^*}_{i'}.  \label{ineq:U_lb2}
\end{align}
Combining \eqref{ineq:rs_def}, \eqref{ineq:U_lb1} and \eqref{ineq:U_lb2}, we have $\sum_{i \ne i^*} \hat{\Ucal}^{*,m^\dag}_{i}/\hat{\Ucal}^{m^\dag}_{i} > 1$.
This means $i^{m^\dag+1} = i^*$, which contradicts the definition of iteration $m^\dag+1$. $\square$

\subsection{Proof of Proposition \ref{prop:optnonconv}}\label{sec:prop:optnonconv_proof}

Let $\tilde{M}_7(\varepsilon) \ge  M_6(\varepsilon)$ denote the first random time after $M_6(\varepsilon)$ where either $i^{\tilde{M}_7(\varepsilon)+1} = i^*$ and $i^{\tilde{M}_7(\varepsilon)+2} \ne i^*$ or $i^{\tilde{M}_7(\varepsilon)+1} \ne i^*$ and $i^{\tilde{M}_7(\varepsilon)+2} = i^*$ holds. That is, one of iterations $\tilde{M}_7(\varepsilon)+1$ and $\tilde{M}_7(\varepsilon)+2$ samples the best design $i^*$ and the other iteration samples a non-best design. Let $M_7(\varepsilon) \triangleq \tilde{M}_7(\varepsilon) +3$. For notational simplicity, let $m^*$ be the iteration among $\tilde{M}_7(\varepsilon)$ and $\tilde{M}_7(\varepsilon)+1$ such that $i^{m^*+1} = i^*$ and $m^\vartriangle$ be the other iteration among $\tilde{M}_7(\varepsilon)$ and $\tilde{M}_7(\varepsilon)+1$. For $\varepsilon \le 1$,
\begin{align*}
	\sum_{i \ne i^*} \hat{\alpha}_{i}^{m^*} = \sum_{i \ne i^*} \frac{ N_{i}^{m^*} }{N^{m^*}} \le \sum_{i \ne i^*} \frac{ N_{i}^{m^{\vartriangle}} + 1}{N^{m^{\vartriangle}}-1} \le (1 + \varepsilon)^2 \sum_{i \ne i^*} \hat{\alpha}_{i}^{m^{\vartriangle}} \le (1 + 3 \varepsilon) \sum_{i \ne i^*} \hat{\alpha}_{i}^{m^{\vartriangle}}
\end{align*}
because $N_{i}^{m^{\vartriangle}}+1 \le (1+\varepsilon) N_{i}^{m^{\vartriangle}}
$ and $1 / (N^{m^{\vartriangle}}-1) \le (1+\varepsilon) /N^{m^{\vartriangle}}$ for $m \ge M_7(\varepsilon) $.
Meanwhile, we have by Lemma \ref{lem:optnonub} that
$\sum_{i \ne i^*} \hat{\alpha}_{i}^{m'} < (1 +  b_{\Ucal} \varepsilon^{1/2}) \sum_{i \ne i^*} \hat{\alpha}_{i}^{m^*}$ for all $m' \ge m^*$. Then
\begin{align*}
	\sum_{i \ne i^*} \hat{\alpha}_{i}^{m'} \le (1 +  b_{\Ucal} \varepsilon^{1/2}) \sum_{i \ne i^*} \hat{\alpha}_{i}^{m^*} \le (1 +  b_{\Ucal} \varepsilon^{1/2}) (1+3\varepsilon) \sum_{i \ne i^*} \hat{\alpha}_{i}^{m^{\vartriangle}} \le (1 +  2b_{\Ucal} \varepsilon^{1/2}) \sum_{i \ne i^*} \hat{\alpha}_{i}^{m^{\vartriangle}}
\end{align*}
where the last inequality holds because $\varepsilon \le (b_{\Ucal}/6)^2$ such that $3\varepsilon \le \frac{b_{\Ucal}}{2}\varepsilon^{\frac{1}{2}}$ and $\varepsilon \le \frac{1}{6}$ such that $ 3b_{\Ucal} \varepsilon^{\frac{3}{2}} \le \frac{b_{\Ucal}}{2} \varepsilon^{\frac{1}{2}}$.
By noticing that $\sum_{i \ne i^*} \hat{\alpha}_{i}^{m^{\vartriangle}} \le 1$,
\begin{align}\label{ineq:nonbest_ub}
	\sum_{i \ne i^*} \hat{\alpha}_{i}^{m'} \le (1 +  2b_{\Ucal} \varepsilon^{1/2}) \sum_{i \ne i^*} \hat{\alpha}_{i}^{m^{\vartriangle}}
	\le \sum_{i \ne i^*} \hat{\alpha}_{i}^{m^{\vartriangle}} + 2b_{\Ucal}  \varepsilon^{1/2}.
\end{align}
Meanwhile, by Lemma \ref{lem:optnonlb} and for $m' \ge m^\vartriangle$,
$(\sum_{i \ne i^*} \hat{\alpha}_{i}^{m'}) / (\sum_{i \ne i^*} \hat{\alpha}_{i}^{m^{\vartriangle}}) > 1 -  b_{\Ucal} \varepsilon^{1/2}
$,
implying
\begin{align}\label{ineq:nonbest_lb}
	\sum_{i \ne i^*} \hat{\alpha}_{i}^{m'} > \sum_{i \ne i^*} \hat{\alpha}_{i}^{m^{\vartriangle}} -  b_{\Ucal} \varepsilon^{1/2} \sum_{i \ne i^*} \hat{\alpha}_{i}^{m^{\vartriangle}} \ge \sum_{i \ne i^*} \hat{\alpha}_{i}^{m^{\vartriangle}} -  b_{\Ucal} \varepsilon^{1/2}.
\end{align}
For $m',m'' \ge  M_7(\varepsilon) > m^{\vartriangle}$, we have by \eqref{ineq:nonbest_ub} and \eqref{ineq:nonbest_lb} that
\begin{align*}
	| \sum_{i \ne i^*} \hat{\alpha}_{i}^{m'} - \sum_{i \ne i^*} \hat{\alpha}_{i}^{m''} | \le | \sum_{i \ne i^*} \hat{\alpha}_{i}^{m'} - \sum_{i \ne i^*} \hat{\alpha}_{i}^{m^{\vartriangle}} | + | \sum_{i \ne i^*} \hat{\alpha}_{i}^{m^{\vartriangle}} - \sum_{i \ne i^*} \hat{\alpha}_{i}^{m''} | \le 4b_{\Ucal} \varepsilon^{1/2}. \ \square
\end{align*}

\subsection{Proof of Proposition \ref{prop:nonconv}}\label{sec:prop:nonconv_proof}

Let $\varepsilon_7 \triangleq \min \{\varepsilon_6, (b_{\alpha L 2} / (16 b_{\Ucal}))^{2}\} $.
If $k=2$, the conclusion follows immediately by Proposition \ref{prop:optnonconv} for $b_{\alpha U 3} \triangleq 2b_{\Ucal}$. Suppose $k > 2$ and let $b_{\alpha U 3} \triangleq \max\{ 16 b_{\Ucal} b_{\alpha U 2} /b_{\alpha L 2},  4 b_{\Ucal} ((k-2)  b_{\alpha U} + 1), 8 b_{\Wcal} b_{\alpha U 2} / (b_{\alpha L } b_{log2}) \}$. Since $b_{\alpha U 3} \ge \frac{ 16 b_{\Ucal} b_{\alpha U 2} }{b_{\alpha L 2}}$ such that $\frac{ b_{\alpha U 3} }{ 4 b_{\alpha U 2} } - \frac{4 b_{\Ucal}}{b_{\alpha L 2}} \ge 0$ and $\varepsilon \le \varepsilon_7 \le (\frac{b_{\alpha L 2} }{16 b_{\Ucal}})^{2}$ such that $\frac{ b_{\alpha U 3} }{ 4 b_{\alpha U 2} } \varepsilon^{1/2} - \frac{4 b_{\Ucal}}{b_{\alpha L 2}} \frac{ b_{\alpha U 3} }{ b_{\alpha U 2} } \varepsilon \ge 0$, we have
\begin{align}\label{ineq:prop_i1_lb00}
	(1+\frac{ b_{\alpha U 3} }{ b_{\alpha U 2} }\varepsilon^{\frac{1}{2}}) (1-\frac{4 b_{\Ucal}}{b_{\alpha L 2}}\varepsilon^{\frac{1}{2}})
	=  1+\frac{ b_{\alpha U 3} }{ b_{\alpha U 2} }\varepsilon^{\frac{1}{2}}-\frac{4 b_{\Ucal}}{b_{\alpha L 2}}\varepsilon^{\frac{1}{2}}-\frac{4 b_{\Ucal}}{b_{\alpha L 2}} \frac{ b_{\alpha U 3} }{ b_{\alpha U 2} } \varepsilon \ge 1+\frac{ b_{\alpha U 3} }{2 b_{\alpha U 2} } \varepsilon^{\frac{1}{2}}.
\end{align}

For notational simplicity, let $\bar{m} \triangleq M_7(\varepsilon)$. Suppose there exist an iteration $m > \bar{m}$ and design $i \ne i^*$ such that $\hat{\alpha}_{i}^{m} > \hat{\alpha}_{i}^{\bar{m}} + b_{\alpha U 3} \varepsilon^{1/2}$. Notice that $\hat{\alpha}_{i^*}^{m} \le \hat{\alpha}_{i^*}^{\bar{m}} + 4 b_{\Ucal} \varepsilon^{1/2}$ by Proposition \ref{prop:optnonconv}. Then
\begin{align}
	\frac{\hat{\alpha}_{i}^{m}}{\hat{\alpha}_{i^*}^{m}} \ge& \frac{ \hat{\alpha}_{i}^{\bar{m}} + b_{\alpha U 3} \varepsilon^{1/2} }{ \hat{\alpha}_{i^*}^{\bar{m}} + 4 b_{\Ucal} \varepsilon^{1/2} } = \frac{ \hat{\alpha}_{i}^{\bar{m}} + b_{\alpha U 3} \varepsilon^{1/2} }{ \hat{\alpha}_{i}^{\bar{m}} } \frac{ \hat{\alpha}_{i}^{\bar{m}} }{ \hat{\alpha}_{i^*}^{\bar{m}}  } \frac{ \hat{\alpha}_{i^*}^{\bar{m}} }{ \hat{\alpha}_{i^*}^{\bar{m}} + 4 b_{\Ucal} \varepsilon^{1/2} } \nonumber \\
	\ge& (1+\frac{ b_{\alpha U 3} }{ b_{\alpha U 2} }\varepsilon^{1/2}) \frac{ \hat{\alpha}_{i}^{\bar{m}} }{ \hat{\alpha}_{i^*}^{\bar{m}}  } (1-\frac{4 b_{\Ucal}\varepsilon^{1/2}}{b_{\alpha L 2}} )    \label{ineq:prop_i1_lb0} \\
	\ge& \frac{ \hat{\alpha}_{i}^{\bar{m}} }{ \hat{\alpha}_{i^*}^{\bar{m}}  } ( 1+\frac{ b_{\alpha U 3} }{2 b_{\alpha U 2} }\varepsilon^{1/2} ), \label{ineq:prop_i1_lb}
\end{align}
where \eqref{ineq:prop_i1_lb0} holds because
$(1-4 b_{\Ucal}\varepsilon^{1/2} /b_{\alpha L 2}) (1+4 b_{\Ucal}\varepsilon^{1/2} /b_{\alpha L 2}) \le 1$ and \eqref{ineq:prop_i1_lb} holds by \eqref{ineq:prop_i1_lb00}.
Meanwhile, since $\sum_{i' \ne i^*} \hat{\alpha}_{i'}^{m} - \sum_{i \ne i^*} \hat{\alpha}_{i'}^{\bar{m}} <   4 b_{\Ucal} \varepsilon^{1/2}$ by Proposition \ref{prop:optnonconv}, we have
\begin{align*}
	\sum_{i' \ne i^*,i} \hat{\alpha}_{i'}^{m} - \sum_{i' \ne i^*,i} \hat{\alpha}_{i'}^{\bar{m}} <  4 b_{\Ucal} \varepsilon^{1/2} - ( \hat{\alpha}_{i}^{m} - \hat{\alpha}_{i}^{\bar{m}} ) \le (4 b_{\Ucal}  - b_{\alpha U 3}) \varepsilon^{1/2}.
\end{align*}
There must exist a non-best design $j \ne i$ such that $\hat{\alpha}_{j}^{m} \le \hat{\alpha}_{j}^{\bar{m}} - (b_{\alpha U 3} - 4 b_{\Ucal}) \varepsilon^{1/2} / (k-2) $, which can be further upper bounded as $\hat{\alpha}_{j}^{m} \le \hat{\alpha}_{j}^{\bar{m}} - 4 b_{\Ucal} b_{\alpha U}  \varepsilon^{1/2} $ because $b_{\alpha U 3} \ge 4 (k-2) b_{\Ucal} b_{\alpha U} + 4 b_{\Ucal}$. Moreover, by Proposition \ref{prop:optnonconv}, $\hat{\alpha}_{i^*}^{m} \ge \hat{\alpha}_{i^*}^{\bar{m}} - 4 b_{\Ucal} \varepsilon^{1/2}$. Then
\begin{align*}
	\frac{\hat{\alpha}_{j}^{m}}{\hat{\alpha}_{i^*}^{m}} \le \frac{\hat{\alpha}_{j}^{\bar{m}} - 4 b_{\Ucal} b_{\alpha U}  \varepsilon^{1/2}}{ \hat{\alpha}_{i^*}^{\bar{m}} - 4 b_{\Ucal} \varepsilon^{1/2} } \le \frac{ \hat{\alpha}_{j}^{\bar{m}} - 4 b_{\Ucal} \varepsilon^{1/2} \hat{\alpha}_{j}^{\bar{m}}/\hat{\alpha}_{i^*}^{\bar{m}}  }{ \hat{\alpha}_{i^*}^{\bar{m}} - 4 b_{\Ucal} \varepsilon^{1/2} } = \frac{ \hat{\alpha}_{j}^{\bar{m}} }{ \hat{\alpha}_{i^*}^{\bar{m}}  }.
\end{align*}
By Lemma \ref{lem:vdiff} and \eqref{ineq:prop_i1_lb},
\begin{align*}
	\hat{\Wcal}_{j}^{m} - \hat{\Wcal}_{i}^{m}  \nonumber
	\le& 2 (b_{\alpha U} + 1) b_{lU} \varepsilon + \hat{\Wcal}_{j}^{\bar{m}} - \hat{\Wcal}_{i}^{\bar{m}} + (\hat{\alpha}_{j}^{m}/\hat{\alpha}_{i^*}^{m} - \hat{\alpha}_{j}^{\bar{m}}/\hat{\alpha}_{i^*}^{\bar{m}}) \hat{\Ucal}^{\bar{m}}_{j}
	+  ( \hat{\alpha}_{i}^{\bar{m}}/\hat{\alpha}_{i^*}^{\bar{m}} - \hat{\alpha}_{i}^{m}/\hat{\alpha}_{i^*}^{m} ) \hat{\Ucal}^{m}_{i}  \nonumber \\
	\le& 2 (b_{\alpha U} + 1) b_{lU} \varepsilon + b_{\Wcal} \varepsilon^{\frac{1}{2}} -  \frac{ \hat{\alpha}_{i}^{\bar{m}} }{ \hat{\alpha}_{i^*}^{\bar{m}}  } \frac{ b_{\alpha U 3} }{2 b_{\alpha U 2} }\varepsilon^{1/2}  b_{log2}  	\\
	\le& -2b_{\Wcal} \varepsilon^{\frac{1}{2}},
\end{align*}
where the last inequality holds because $\varepsilon \le \varepsilon_6 \le (\frac{b_{\Wcal}}{2(b_{\alpha U}+1)b_{lU}})^2$ and $b_{\alpha U 3} \ge 8 b_{\Wcal} b_{\alpha U 2} / (b_{\alpha L } b_{log2}) $. However, this result contradicts Proposition \ref{proposv}. Thus, $\hat{\alpha}_{i}^{m} \le \hat{\alpha}_{i}^{\bar{m}} + b_{\alpha U 3} \varepsilon^{1/2}$ for any $m \ge \bar{m}$ and $i \ne i^*$.
Similarly, we can show $\hat{\alpha}_{i}^{m} \ge \hat{\alpha}_{i}^{\bar{m}} - b_{\alpha U 3} \varepsilon^{1/2}$ for any $m \ge \bar{m}$ and $i \ne i^*$. The proposition can be proved by further noticing $|\hat{\alpha}_{i}^{m'} - \hat{\alpha}_{i}^{m''}| \le |\hat{\alpha}_{i}^{m'} - \hat{\alpha}_{i}^{\bar{m}}| + |\hat{\alpha}_{i}^{m''} - \hat{\alpha}_{i}^{\bar{m}}| $. $\square$

% Acknowledgments here
%\ACKNOWLEDGMENT{The authors gratefully acknowledge the existence of
%the Journal of Irreproducible Results and the support of the Society
%for the Preservation of Inane Research.}

% References here (outcomment the appropriate case)

% CASE 1: BiBTeX used to constantly update the references
%   (while the paper is being written).
%\bibliographystyle{informs2014} % outcomment this and next line in Case 1
%\bibliography{<your bib file(s)>} % if more than one, comma separated

% CASE 2: BiBTeX used to generate mypaper.bbl (to be further fine tuned)
%\input{mypaper.bbl} % outcomment this line in Case 2

%If you don't use BiBTex, you can manually itemize references as shown below.

%\bibliographystyle{nonumber}

%\newpage
%\appendix
%	
%\setcounter{page}{1}

%%%%%%%%%%%%%%%%%
\end{document}